\documentclass[11pt]{amsart}

\usepackage[T1]{fontenc}
\usepackage[utf8]{inputenc}
\usepackage{amsmath,amssymb,amsthm,mathtools}
\usepackage{enumitem}
\usepackage{booktabs}
\usepackage{aliascnt}
\usepackage{xcolor}
\usepackage{hyperref}
\usepackage[capitalize,noabbrev]{cleveref}
\usepackage[margin=1.15in]{geometry}
\usepackage{microtype}

\hypersetup{
  colorlinks=true,
  linkcolor=blue!55!black,
  citecolor=blue!55!black,
  urlcolor=blue!55!black,
  breaklinks=true,
  pdftitle={Hilbert--90 quotient maps, torsion defects, and symmetric monodromy},
  pdfauthor={Henry Shin},
  pdfsubject={Hilbert--90 quotient maps and finite-field permutation polynomials},
  pdfkeywords={Hilbert--90 quotient maps, torsion defects, permutation polynomials, trace-zero maps, finite fields, monodromy groups, exceptional covers}
}
\newcommand{\doilink}[1]{\href{https://doi.org/#1}{\nolinkurl{doi:#1}}}
\newcommand{\arxivlink}[1]{\href{https://arxiv.org/abs/#1}{\nolinkurl{arXiv:#1}}}

\newtheorem{theorem}{Theorem}[section]
\newaliascnt{corollary}{theorem}
\newtheorem{corollary}[corollary]{Corollary}
\aliascntresetthe{corollary}
\newaliascnt{proposition}{theorem}
\newtheorem{proposition}[proposition]{Proposition}
\aliascntresetthe{proposition}
\newaliascnt{lemma}{theorem}
\newtheorem{lemma}[lemma]{Lemma}
\aliascntresetthe{lemma}
\newaliascnt{conjecture}{theorem}
\newtheorem{conjecture}[conjecture]{Conjecture}
\aliascntresetthe{conjecture}
\theoremstyle{definition}
\newaliascnt{definition}{theorem}
\newtheorem{definition}[definition]{Definition}
\aliascntresetthe{definition}
\newaliascnt{remark}{theorem}
\newtheorem{remark}[remark]{Remark}
\aliascntresetthe{remark}
\newaliascnt{example}{theorem}
\newtheorem{example}[example]{Example}
\aliascntresetthe{example}
\newaliascnt{question}{theorem}
\newtheorem{question}[question]{Question}
\aliascntresetthe{question}

\crefname{theorem}{Theorem}{Theorems}
\crefname{proposition}{Proposition}{Propositions}
\crefname{lemma}{Lemma}{Lemmas}
\crefname{corollary}{Corollary}{Corollaries}
\crefname{definition}{Definition}{Definitions}
\crefname{remark}{Remark}{Remarks}
\crefname{question}{Question}{Questions}
\crefname{conjecture}{Conjecture}{Conjectures}

\DeclareMathOperator{\Tr}{Tr}
\DeclareMathOperator{\Nm}{Nm}

\DeclareMathOperator{\Frob}{Frob}

\newcommand{\F}{\mathbb F}
\newcommand{\Gm}{\mathbb G_m}
\newcommand{\PP}{\mathbb P}
\newcommand{\Z}{\mathbb Z}

\newcommand{\muGP}{\boldsymbol\mu}
\newcommand{\eps}{\varepsilon}
\newcommand{\Ql}{\mathbb Q_\ell}
\newcommand{\barF}{\overline{\mathbb F}}
\newcommand{\Thetaq}{\Theta_q}

\title[Hilbert--90 quotient maps]{Hilbert--90 quotient maps, torsion defects, and symmetric monodromy}
\author{Henry Shin}

\begin{document}
\begin{abstract}
Let $\tau(z)=-1-z^{-1}$.  We study the reduced rational maps
$h_d:\PP^1\to\PP^1$ obtained by cancelling common factors in
\[
        H_d^{\rm raw}(z)=z^d\frac{\tau(z)^d-1}{z^d-1}.
\]
These maps arise by Hilbert--90 descent from the trace-zero maps
$X^{dq}-X^d$ on $\ker\Tr_{\F_{q^3}/\F_q}$, but the principal object is
the resulting $\tau$-equivariant quotient-map family; nonconstant
separable members are viewed as covers.

We prove that cancellation is exactly a torsion-defect phenomenon.  If $\ell(-)$ denotes scheme-theoretic length and
$\boldsymbol\mu_d=\ker([d]:\Gm\to\Gm)$, then
\[
        \deg h_d=d-\ell\bigl((1+X+Y=0)\cap\boldsymbol\mu_d^2\bigr),
\]
and, in characteristic $p>0$ with $d=p^sd_0$ and $p\nmid d_0$,
\[
        h_d=\Frob_{p^s}\circ h_{d_0},\qquad
        \deg h_d=p^s\deg h_{d_0}.
\]
We classify the tame quotient strata of morphism degree at most one
and exactly two; the maximal-defect stratum yields a characteristic-two
Mersenne trace-zero permutation family.  In characteristic zero we
prove the main monodromy theorem: every non-linear quotient is Morse
and has full symmetric geometric monodromy,
$G_{h_d}=S_{\deg h_d}$; the proof rules out branch-value collisions via
a cyclotomic cross-ratio equation.  In positive characteristic we
isolate the following degeneration mechanisms: Frobenius-sparse Kummer
and Artin--Schreier quotients, a certificate-verified
characteristic-$19$ Klein-four Galois quotient, and the first nonsparse
Frobenius--lacunary tower up to its stated primitivity and
wild-inertia boundary.  A twisted off-diagonal fiber-square trace
formula turns $2$-transitive monodromy into a uniform obstruction to
$\tau$-twisted exceptionality.
\end{abstract}

\subjclass[2020]{Primary 14H30, 11T06, 20B15; Secondary 14G15, 14H05, 12F10}
\keywords{Hilbert--90 quotient maps; torsion defects; permutation polynomials; trace-zero maps; finite fields; monodromy groups; exceptional covers}

\maketitle

\section{Introduction}

This paper studies a natural family of rational maps of $\PP^1$ commuting with a fixed order-three automorphism; the nonconstant separable members are the covers to which monodromy is attached.  Let
\[
        \tau(z)=-1-z^{-1}.
\]
For each integer $d\ge1$ consider the raw Hilbert--90 quotient
\begin{equation}\label{eq:intro-Hd}
        H_d^{\rm raw}(z)=z^d\frac{\tau(z)^d-1}{z^d-1}
        =\frac{(-1)^d(z+1)^d-z^d}{z^d-1},
\end{equation}
and let $h_d$ be the reduced rational function obtained after cancelling common numerator and denominator factors.  The identity
\[
        h_d\circ\tau=\tau\circ h_d
\]
places these maps in a rigid cyclic-equivariant setting.  The central problem is algebraic: determine the cancellation divisor of $H_d^{\rm raw}$, the degree of $h_d$, the ramification and branch geometry of the resulting map, the geometric monodromy group in the nonconstant separable cases, and the positive-characteristic degenerations where the characteristic-zero behavior fails.

The finite-field trace-zero maps
\[
        P_d(X)=X^{dq}-X^d
\]
are the arithmetic origin of the family, but not the organizing object of the paper.  If $q=p^k$, put
\[
        \Gamma_q=\ker\Tr_{\F_{q^3}/\F_q},
        \qquad
        \Lambda_q=\{z:z^q=\tau(z)\}.
\]
Multiplicative Hilbert 90 identifies
\[
        \Gamma_q^*/\F_q^*\simeq \Lambda_q,
        \qquad
        x\longmapsto x^{q-1}.
\]
Under this identification, $P_d$ descends to the quotient expression \eqref{eq:intro-Hd}.  Thus the trace-zero permutation problem separates into a fiber condition, a denominator condition $\mu_d\cap\Lambda_q=\varnothing$, and the action of the quotient map $h_d$ on the twisted Frobenius fixed set $\Lambda_q$.  The paper therefore treats trace-zero permutations as arithmetic consequences of the geometry of the $\tau$-equivariant maps $h_d$, with cover language reserved for nonconstant separable members.

The guiding arc is
\[
\begin{gathered}
\text{Hilbert--90 quotient maps}
\longrightarrow
\text{torsion defects}
\longrightarrow
\text{branch geometry}\\
\longrightarrow
\text{monodromy}
\longrightarrow
\text{positive-characteristic degenerations}
\longrightarrow
\text{trace-zero arithmetic}.
\end{gathered}
\]
Each arrow is made explicit.  The quotient degree is a torsion-intersection number; characteristic-zero branch collisions are governed by a cyclotomic equation with no nondegenerate solutions; the resulting non-linear characteristic-zero maps, viewed as covers, have full symmetric monodromy; and positive characteristic is organized by sparse, sporadic, and lacunary degeneration mechanisms together with a precise residual wild-inertia problem.

\subsection*{Main theorem package}

Throughout the paper $G_f$ denotes the geometric monodromy group of the separable cover attached to a nonconstant rational function $f$.  In positive characteristic, purely inseparable Frobenius factors are removed before monodromy is taken; this convention is fixed precisely in \cref{def:quotient-conventions}.

The first theorem is the structural invariant behind the whole paper.  Let
\[
        L_0:\ 1+X+Y=0\subset\Gm^2.
\]
Here and below $\ell(-)$ denotes scheme-theoretic length.  In the tame case, meaning characteristic zero or positive characteristic $p$ with $p\nmid d$, cancellation in \eqref{eq:intro-Hd} occurs exactly at the torsion points of $L_0\cap\boldsymbol\mu_d^2$.  Scheme-theoretic multiplicity gives the correct wild factor.

\begin{theorem}[A. Torsion defect and quotient degree]\label{thm:intro-A-torsion}
Let $k$ be algebraically closed of characteristic $p\ge0$, and let $d\ge1$.  If $p=0$, then
\[
        \deg h_d
        =d-\ell\bigl(L_0\cap\boldsymbol\mu_d^2\bigr).
\]
If $p>0$ and $d=p^sd_0$ with $p\nmid d_0$, then
\[
        H_d^{\rm raw}=(H_{d_0}^{\rm raw})^{p^s},
        \qquad
        h_d=h_{d_0}^{p^s},
\]
where the exponent denotes Frobenius on values, not iteration, and
\[
        \deg h_d
        =d-\ell\bigl(L_0\cap\boldsymbol\mu_d^2\bigr)
        =p^s\deg h_{d_0}.
\]
In the tame case, namely in characteristic zero or when $p>0$ and $p\nmid d$, this is equivalently
\[
        \deg h_d=d-\#\{(x,y)\in\mu_d^2:1+x+y=0\}.
\]
\end{theorem}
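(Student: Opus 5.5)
The plan is to compute the common zeros of numerator and denominator directly. Write $N_d(z)=(-1)^d(z+1)^d-z^d$ and $D_d(z)=z^d-1$, so $H_d^{\rm raw}=N_d/D_d$. First I would record the degrees and the behaviour at infinity. We have $\deg D_d=d$, and the coefficient of $z^d$ in $N_d$ is $(-1)^d-1$, so $\deg N_d\le d$ with $\deg N_d<d$ exactly when $d$ is even. Hence the raw expression has degree $d$ as a rational map on $\PP^1$, counting both the possible pole or zero at $\infty$ and the finite part. Then $\deg h_d=d-\deg\gcd(N_d,D_d)$, where the gcd is taken in $k[z]$. No cancellation can occur at $\infty$, because $D_d$ has exact degree $d$. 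It also cannot occur at $z=0$ or $z=-1$: $D_d(0)\ne0$, and $N_d(-1)=-(-1)^d\ne0$.

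Second, I would identify the gcd with the torsion intersection. Use the substitution $x=z$, $y=-(z+1)$, which is inverted by $z=x$. This parametrizes $L_0$ isomorphically by $z\in\Gm\setminus\{-1\}$. Under it, $D_d=x^d-1$ and $N_d=y^d-x^d$. So on $D_d=0$ the condition $N_d=0$ is equivalent to $y^d=1$. Scheme-theoretically, the ideal $(x^d-1,\,y^d-x^d)=(x^d-1,\,y^d-1)$ restricted to $L_0\cong\operatorname{Spec}k[z,z^{-1},(z+1)^{-1}]$ is exactly $(D_d,N_d)$. Therefore
\[
\ell\bigl(L_0\cap\boldsymbol\mu_d^2\bigr)=\dim_k k[z]/(D_d,N_d)=\deg\gcd(N_d,D_d),
\]
using that $k[z]/(f,g)\cong k[z]/\gcd(f,g)$ for a PID. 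This gives the first formula in all characteristics. In the tame case $D_d$ is separable, so the gcd is squarefree, and the length equals the number of points $(x,y)\in\mu_d^2$ with $1+x+y=0$.

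Third, for $p>0$ and $d=p^sd_0$ with $p\nmid d_0$, I would observe that $N_d=N_{d_0}^{p^s}$ and $D_d=D_{d_0}^{p^s}$. This holds because $(-1)^{p^s d_0}=((-1)^{d_0})^{p^s}$ in characteristic $p$, which is automatic for $p$ odd and trivial for $p=2$ since $-1=1$. Hence $H_d^{\rm raw}=(H_{d_0}^{\rm raw})^{p^s}$, and since $\gcd(f^{p^s},g^{p^s})=\gcd(f,g)^{p^s}$ the reduced forms satisfy $h_d=h_{d_0}^{p^s}$. Raising to the $p^s$-th power composes with $\Frob_{p^s}$, which has degree $p^s$, so $\deg h_d=p^s\deg h_{d_0}$. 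The equality with $d-\ell$ is already supplied by step two.

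The main obstacle is the bookkeeping at $\infty$ for even $d$. There $\deg N_d<d$, so the reduced map has a zero at $\infty$ rather than a cancellation, and one must check that the degree of the rational map is still $\max(\deg N,\deg D)$ after removing the gcd. This holds because $\deg D_d=d$ always dominates. The rest is formal.
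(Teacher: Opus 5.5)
Your proof is correct, and it takes a different route from the paper's. The paper works pointwise in the tame case. Using that $D_d$ is squarefree, it identifies the canceled roots as the $\zeta\in\mu_d$ with $\tau(\zeta)\in\mu_d$, sends them to $L_0\cap\mu_d^2$ via $\zeta\mapsto(\zeta,-1-\zeta)$, and reads off $\deg h_d=d-\delta_d$. It then treats the wild case separately: the Frobenius identity for $N_d,D_d$ is combined with a completed-local-ring computation showing that each reduced point of $L_0\cap\boldsymbol\mu_{d_0}^2$ acquires length $p^s$. The degree formula for $d$ is finally deduced from the tame theorem applied to $d_0$.

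You instead use the ideal identity $(x^d-1,y^d-1)=(x^d-1,y^d-x^d)$, which on the chart $x=z$, $y=-1-z$ is exactly $(D_d,N_d)$, together with $k[z]/(f,g)\cong k[z]/\gcd(f,g)$. This gives $\ell(L_0\cap\boldsymbol\mu_d^2)=\deg\gcd(N_d,D_d)$ uniformly in every characteristic. The wild length statement therefore needs no local analysis, and your Frobenius step is used only for $h_d=h_{d_0}^{p^s}$ and $\deg h_d=p^s\deg h_{d_0}$.

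The paper's version buys an explicit $\tau$-theoretic description of the canceled points, which it reuses later (for instance in \cref{cor:reduced-denominator-degree} and in the $\mu_d\cap\Lambda_q$ analysis of \cref{cor:Qplus1-branch}). Yours buys a single characteristic-free argument in which scheme-theoretic length, including nilpotent structure, is handled automatically.

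Two small points. First, passing from $k[z,z^{-1},(z+1)^{-1}]$ to $k[z]$ in your second step is justified precisely by your earlier observation that $D_d(0)\ne0$ and $N_d(-1)\ne0$, so the gcd has no roots at $0$ or $-1$; you should invoke that observation explicitly there. Second, your claim that $\deg N_d<d$ exactly when $d$ is even is false in characteristic $2$, where the leading coefficient $(-1)^d-1$ vanishes for every $d$. This is harmless, since you only use $\deg N_d\le d=\deg D_d$. For the same reason, the ``main obstacle'' you flag at $\infty$ is not a real obstacle.
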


The next results describe the largest possible torsion collapses.  They are map-theoretic rigidity statements, with finite-field permutation families appearing as arithmetic shadows.  In particular, the characteristic-two Mersenne family below is not a separate finite-field construction: it is the arithmetic shadow of the maximal torsion-defect stratum.

\begin{theorem}[B. Low-degree rigidity and Mersenne arithmetic]\label{thm:intro-B-low-degree}
Among tame exponents, meaning characteristic zero or positive characteristic $p$ with $p\nmid d$, the reduced quotient has morphism degree at most one precisely for
\[
        d=1,\qquad d=3,\qquad\text{or}\qquad d=p^a-1\quad(a\ge1,\ p>0),
\]
and it has morphism degree two precisely for $d=2$ in characteristic zero or in characteristic $p\ne2,3$, and for the sporadic pair $(p,d)=(11,5)$.  The nonconstant degree-one covers in the degree-at-most-one list are obtained by deleting the constant reduced quotients under \cref{def:quotient-conventions}.  For a general positive-characteristic exponent $d=p^sd_0$ with $p\nmid d_0$, these low-degree assertions apply to the reduced quotient $h_{d_0}$; cover and monodromy assertions apply only when that reduced quotient is nonconstant and separable, while the full morphism degree of $h_d$ is $p^s\deg h_{d_0}$.  In the maximal-defect branch $p=2$, $d=2^a-1$, the trace-zero map $X^{dq}-X^d$ permutes $\Gamma_{2^k}$ if and only if
\[
        \gcd(a,k)=1
        \qquad\text{and}\qquad
        3\nmid a.
\]
\end{theorem}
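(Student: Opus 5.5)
By Theorem A, in the tame case $\deg h_d=d-N_d$, where $N_d=\#\{(x,y)\in\mu_d^2:1+x+y=0\}$. So the first two assertions reduce to classifying the tame $d$ with $N_d\ge d-1$ and with $N_d=d-2$. I would parametrize the relevant points by $x\in\mu_d$ with $-1-x\in\mu_d$, that is, by the common roots of $x^d-1$ and $(x+1)^d-(-1)^d$. This makes $N_d$ the degree of $\gcd\bigl(x^d-1,(x+1)^d-(-1)^d\bigr)$, which is exactly the cancelled factor in the definition of $H_d^{\rm raw}$.

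\emph{Characteristic zero.} The only solutions of $1+x+y=0$ in roots of unity are $\{x,y\}=\{\omega,\omega^2\}$ with $\omega$ a primitive cube root of unity, since three unit vectors summing to zero form an equilateral triangle. Hence $N_d=2$ if $3\mid d$ and $N_d=0$ otherwise. Then $\deg h_d\le1$ forces $d\le1$, or $d=3$ with $3\mid d$, giving $d\in\{1,3\}$. Likewise $\deg h_d=2$ forces $d=2$, since $d=4$ is not divisible by $3$.

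\emph{Tame characteristic $p$.} Every point of $L_0\cap\mu_d^2$ lies in $\F_{p^m}^{*2}$, where $m$ is the order of $p$ modulo $d$. If $d\mid p^a-1$, then $N_d\le\#\{x\in\mu_d:-1-x\in\mu_d\}\le d$. For $d=p^a-1$ we have $\mu_d=\F_{p^a}^*$, so $x$ ranges over $\F_{p^a}\setminus\{0,-1\}$ and $N_d=d-1$, giving $\deg h_d\le1$. For the converse I would use that $x\mapsto-1-x$ maps the set $S=\{x\in\mu_d:-1-x\in\mu_d\}$ into $\mu_d$. If $|S|\ge d-1$, then $\mu_d\cup\{0\}$ is nearly closed under $x\mapsto-1-x$, and together with multiplicative closure it becomes nearly closed under addition. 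I would formalize this as follows. Let $x_0$ be the at most one element of $\mu_d$ outside $S$. Then $\mu_d\cup\{0\}$ is closed under $u\mapsto u+v$ for all $u,v$ with $u/v\ne x_0$ (after rescaling $1+x=-y$ by $v$). A counting argument should then show that $\mu_d\cup\{0\}$ is a subfield $\F_{p^a}$, or that $d$ is so small that $\mu_d\cup\{0\}=\{0\}\cup\{\omega,\omega^2,1\}$ ($d=3$) or $d=1$. The degree-two case $N_d=d-2$ is handled the same way, with two exceptional elements. Either the resulting near-field forces $d=2$ (where $N_2=0$ requires $p\ne2,3$: for $p=3$, $d=2=3-1$ drops to degree one), or it forces a sporadic configuration, which a direct character-sum bound confines to small $p$. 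I would use the Weil estimate $|N_d-d^2/p^m|\lesssim d\sqrt{p^m}$ via Jacobi sums. In the sporadic case $\mu_5\subset\F_{11}$ one checks $N_5=3$, giving degree $2$. I expect this rigidity step, showing that near-closure forces a subfield with only finitely many exceptions that can then be checked by hand, to be the \textbf{main obstacle}. The cleanest route is probably to use Jacobi sums to bound $N_d$ whenever $\mu_d\cup\{0\}$ is not a field, and to finish with a finite check.

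\emph{Mersenne arithmetic.} For $p=2$ and $d=2^a-1$, the reduced quotient is a constant or Möbius map commuting with $\tau$. Using the fiber, denominator and $\Lambda_q$-action decomposition from the introduction, $P_d$ permutes $\Gamma_q$ exactly when three conditions hold. First, $\gcd(d,q-1)=1$, so that $X\mapsto X^d$ is injective on $\F_q^*$-fibers; this is $\gcd(2^a-1,2^k-1)=2^{\gcd(a,k)}-1=1$. Second, $\mu_d\cap\Lambda_q=\varnothing$. Third, $h_d$ is bijective on $\Lambda_q$. I would compute $h_d$ explicitly as $z\mapsto z^{2^a}$-twisted or as $\tau^j$ via the factorization over $\F_{2^a}$, and find that bijectivity on $\Lambda_q$ and disjointness fail precisely when a primitive cube root of unity lies in $\mu_d\cap\Lambda_q$. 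Since $\omega\in\Lambda_q$ always holds ($\omega^q=\tau(\omega)$ for the appropriate $q$), and $\omega\in\mu_d$ iff $3\mid 2^a-1$ iff $a$ is even, I would need to reconcile this with the stated condition $3\nmid a$. That suggests the actual obstruction involves order-$9$ or $\F_8$-type elements of $\Lambda_q$, which I would pin down by computing $\Lambda_q\cap\F_{2^a}^{*}$ directly.
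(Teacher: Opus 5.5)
Your characteristic-zero argument is correct; it is the root-of-unity count of \cref{thm:charzero-degree-strata} rather than the binomial-ratio argument of \cref{thm:degree-one}. Your checks that the listed cases have the stated degrees are also fine, including $N_5=3$ over $\F_{11}$.

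The gap is the positive-characteristic converse. Your near-closure step works only when $-1\in\mu_d$. In that case $-1\notin S$, so $|S|\ge d-1$ forces $S=\mu_d\setminus\{-1\}$. Then $1+A=A$ for $A=\mu_d\cup\{0\}$, so $g+A=g(1+A)=A$ for every $g\in\mu_d$, and $A$ is a field of order $d+1$. When $p$ and $d$ are both odd, however, $-1\notin\mu_d$, and rescaling $1+x=-y$ only gives $u+v\in-\mu_d$. There is then no additive closure at all, and this is exactly the regime containing $d=3$ and $(p,d)=(11,5)$.

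The Jacobi-sum fallback cannot cover this case. With $Q=p^m$ and $e=(Q-1)/d$, the actual estimate is $|N_d-(Q-2)/e^2|\le\sqrt Q$; your error term $d\sqrt{p^m}$ is weaker than the trivial bound $N_d\le d$. This estimate cannot exclude $N_d\ge d-2$ once $d\le\sqrt Q$, which is the typical case (e.g.\ large primes $d$ modulo which $p$ is a primitive root, so $Q=p^{d-1}$). Because the index $e$ is unbounded, nothing reduces to a finite check; \cref{thm:fixed-index} only controls a fixed index. So the degree-$\le1$ converse for odd $d$ in odd characteristic is unproved, and the degree-two converse, including the isolation of $(11,5)$, has no argument.

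The paper does not count torsion for the converse. Degree at most $m$ means $v\cdot\bigl((-1)^d(z+1)^d-z^d\bigr)=u\cdot(z^d-1)$ with $\deg u,\deg v\le m$. Since $z^d-1$ has no interior monomials, this gives an order-$m$ linear recurrence on $\binom d1,\ldots,\binom d{d-1}$. For $m=1$ the sequence is geometric with ratio forced to be $-1$, so $(1+z)^{d+1}=1+z^{d+1}$, and Lucas gives $d+1=p^a$ (\cref{lem:degree-one-geometric,lem:binom-geometric}). For $m=2$ the boundary coefficients force $(d-2)(3d-2)=0$ or $(d+2)(d+6)=0$, and the residuals $R_3,R_4,R_5$ leave only $p=11$, $d=5$ (\cref{lem:degree-two-elimination-details,thm:degree-two}).

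The Mersenne part has a second gap, and your sketch starts from a false premise. Since $\tau(\omega)=\omega$, one has $\omega\in\Lambda_q$ if and only if $q\equiv1\pmod 3$, i.e.\ $k$ even. Combined with $\omega\in\mu_d$ if and only if $a$ is even, this obstruction is already excluded by $\gcd(a,k)=1$, which is why it cannot produce the condition $3\nmid a$. Moreover $h_d=\tau^2$ exactly, from $(z+1)^{2^a}=z^{2^a}+1$. This is an automorphism preserving $\Lambda_q$, so quotient bijectivity is automatic. Everything therefore rests on the denominator condition $\mu_d\cap\Lambda_q=\varnothing$, which you do not prove in either direction.

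The missing argument is \cref{lem:denom}(iii). The map $A(z)=z^{2^k}$ has order $n=a/\gcd(a,k)$ on $\F_{2^a}$, and a denominator point solves $A(z)=\tau(z)$, so $z=A^n(z)=\tau^n(z)$. If $3\nmid n$, this forces $z^2+z+1=0$, and then $a$ and $k$ must both be even. If instead $3\mid a$ (so $3\nmid k$ under $\gcd(a,k)=1$), then $\F_8^*\subseteq\mu_d$. Roots of $z^3+z+1$ satisfy $z^2=\tau(z)$, and roots of $z^3+z^2+1$ satisfy $z^2=\tau^2(z)$. For $k\equiv1$, respectively $k\equiv2\pmod 3$, these give $z^{2^k}=\tau(z)$, i.e.\ explicit points of $\mu_d\cap\Lambda_q$. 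Your guess about ``$\F_8$-type elements'' points the right way, but this computation is the actual content of the criterion.
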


The displayed $d=1$ entry has morphism degree $0$ in characteristic $3$, since $h_1$ is constant there; it is therefore not a cover.  The displayed $d=3$ entry in the tame degree-one list means $p\ne3$ in positive characteristic.  In characteristic $3$, the exponent $d=3$ has separable part $d_0=1$ under the Frobenius convention, and the full reduced quotient is constant; it is therefore not an additional degree-one cover.

The second sparse branch is $d=p^a+1$.  It usually does not collapse to degree one, but after diagonalizing $\tau$ it becomes a Kummer monomial when $p\ne3$, and after conjugating $\tau$ to a translation it becomes an Artin--Schreier map when $p=3$.  In characteristic two this recovers the Ding--Song--Xiong $Q+1$ trace-zero branch.  Thus the two Frobenius-sparse families are structural degenerations of the quotient-map family, not isolated finite-field coincidences.

The characteristic-zero theorem is the main monodromy result.

\begin{theorem}[C. Characteristic-zero symmetric monodromy]\label{thm:intro-C-charzero}
Let $k$ be algebraically closed of characteristic zero, and let $n=\deg h_d$.  If $n>1$, then
\[
        G_{h_d}=S_n.
\]
Equivalently,
\[
        G_{h_d}=S_d\quad(3\nmid d,\ d\ge2),
        \qquad
        G_{h_d}=S_{d-2}\quad(3\mid d,\ d\ge6).
\]
\end{theorem}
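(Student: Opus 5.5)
The strategy is the classical one for full symmetric monodromy: show $h_d$ is a Morse function, with only simple critical points and distinct critical values. Then the monodromy group is generated by transpositions and, being transitive, must be $S_n$. First I pin down $n$. By \cref{thm:intro-A-torsion} in characteristic zero, $n=d-\#\{(x,y)\in\mu_d^2:1+x+y=0\}$. The only solutions are $(x,y)=(\omega,\omega^2)$ and $(\omega^2,\omega)$ with $\omega$ a primitive cube root of unity, and these exist exactly when $3\mid d$. Hence $n=d$ for $3\nmid d$ and $n=d-2$ for $3\mid d$. This gives the equivalent reformulation, and it shows $n>1$ exactly for $d\ge2$, $3\nmid d$, or $3\mid d$, $d\ge6$.

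\textbf{Ramification.} I would work with $F(z)=(-1)^d(z+1)^d-z^d$ and $G(z)=z^d-1$, dividing out the factor $z^2+z+1$ when $3\mid d$. The critical points come from the Wronskian
\[
W=F'G-FG'=(-1)^d d\bigl[(z+1)^{d-1}(z^d-1)-z^{d-1}(z+1)^d\bigr]+d z^{d-1},
\]
which simplifies to a combination of $(z+1)^{d-1}$, $z^{d-1}$ and $1$. Using the $\tau$-equivariance, I would move to the coordinates in which the three "special" points $0,-1,\infty$ are permuted, and read $W$ as the equation $x^{d-1}+y^{d-1}+\dots$ on the line $1+x+y=0$. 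Here $x=-1/z$-type coordinates turn critical points into solutions of a trinomial equation.

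Next I would verify simplicity. Each critical point is a simple root of $W$ after removing the cancelled torsion factors, by checking that $W$ and $W'$ have no common root. That reduces to an equation forcing $x,y$ to be roots of unity on $L_0$, hence to the excluded cube-root points. I would also check that $\infty$ and the poles contribute ramification only as Riemann--Hurwitz allows. The count $2n-2$ of simple critical points should then match.

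\textbf{Branch values (main obstacle).} I must show that two distinct critical points $c_1\neq c_2$ never have the same value, $h_d(c_1)=h_d(c_2)$. The intrinsic invariant attached to a critical point is a cross-ratio of $c$ with $0,-1,\infty$, equivalently a point $(x,y)\in L_0$. On the critical locus, the value $h_d(c)$ should be expressible as a monomial-type function of $x^d$ and $y^d$. Equality of values then yields a relation among $x_1^d,y_1^d,x_2^d,y_2^d$ which, combined with the critical equations, forces $(x_1/x_2,\,y_1/y_2)$ to satisfy a linear relation of the form $1+a+b=0$ with $a,b$ roots of unity. This is the promised cyclotomic cross-ratio equation. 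By the classification of vanishing sums of three roots of unity, the only solutions are the degenerate ones: either $c_1=c_2$, or $c_2$ lies in the $\tau$-orbit of $c_1$. Points in the same $\tau$-orbit have values in the same $\tau$-orbit, and equal values would force a fixed point of $\tau$, namely a cube root point, which I would exclude directly. This step is where the real work lies, and I would first try to derive the cross-ratio equation by eliminating $z$ between $W(c)=0$ and $h_d(c)=t$ via the resultant in $t$.

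\textbf{Conclusion.} With $2n-2$ distinct branch values, each having exactly one simple ramification point, the local monodromies are transpositions whose product is $1$. Since $\PP^1$ is irreducible, the group is transitive, and a transitive group generated by transpositions is $S_n$. Therefore $G_{h_d}=S_n$.
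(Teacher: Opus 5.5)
Your architecture matches the paper's: degree from the torsion defect, simple critical points, pairwise distinct critical values, transposition branch cycles, then transitivity. The gap is in the step you flag as the main obstacle, and the mechanism you propose there does not work.

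Attach to a critical point $c$ the point $(x,y)=(c,-1-c)\in L_0$ and put $m=d-1$. The skeleton $z^{m}-(-1)^d(z+1)^{m}(z^{m}+1)$ then reads $x^m+y^m+x^my^m=0$, and the critical value is $T=y^m$; the powers are $d-1$, not $d$. Equal critical values give $y_2=\beta y_1$, and hence $x_2=\alpha x_1$, with $\alpha,\beta\in\mu_m$. But the linear relation this produces is $1+\alpha x_1+\beta y_1=0$ together with $1+x_1+y_1=0$, not $1+\alpha+\beta=0$. Solving gives $x_1=(1-\beta)/(\beta-\alpha)$ and $y_1=(\alpha-1)/(\beta-\alpha)$. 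What must be excluded is therefore
\[
1+\Bigl(\frac{\beta-\alpha}{1-\beta}\Bigr)^m+\Bigl(\frac{\beta-\alpha}{\alpha-1}\Bigr)^m=0 .
\]
The last two terms are not roots of unity. Over $\mathbb C$ their absolute values are $m$-th powers of chord ratios such as $|\sin(\pi(s-r)/m)/\sin(\pi s/m)|$, which are generically $\neq1$. So the classification of three-term vanishing sums of roots of unity, which governs cancellation and the degree, says nothing about this equation. Neither your claimed conclusion (only $c_1=c_2$ or $\tau$-orbit solutions) nor a resultant computation for fixed $d$ yields an argument uniform in $d$.

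The missing ingredient is analytic. The paper writes $\alpha=e^{2\pi i r/m}$ and $\beta=e^{2\pi i s/m}$ with $0<r<s<m$, using the symmetry of the equation. It puts $i=s-r$, $j=r$, $k=m-s$ and $S_a=\sin(a\pi/m)$. The equation becomes the signed identity
\[
(-1)^j(S_iS_j)^m+(-1)^k(S_iS_k)^m+(-1)^m(S_jS_k)^m=0.
\]
If $i,j,k$ have equal parity, all three signs agree, which is impossible. Otherwise, with $a$ the parity outlier, the equation becomes $1=(S_a/S_b)^m+(S_a/S_c)^m$. This forces $a<b,c$, hence $1\le 2(S_a/S_{a+1})^m$. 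For $m\ge5$ that contradicts $m\log(S_{a+1}/S_a)\ge\pi\cot(2\pi/5)>\log 2$, and $m\le4$ is checked by hand.

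A smaller point concerns ramification. Showing that multiple roots of $W$ occur only at the canceled cube-root points does not show the reduced map is unramified there, since the reduced Wronskian is $dF_d/C_d^2$. You need the local computation at a canceled point $\alpha$; the paper gets $N_d''D_d'-N_d'D_d''=-d^2(d-1)/(\alpha^2(\alpha+1)^2)\ne0$. You also need that the poles are simple ($D_d$ is squarefree) and an explicit expansion at $z=\infty$. A Riemann--Hurwitz count does not supply these.
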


The proof is explicit.  In the coordinate $x=z/(z+1)$ the critical-value collision equation reduces to a cross-ratio equation in $(d-1)$-st roots of unity.  Over $\mathbb C$ this becomes a signed sine identity.  A parity argument and a sine-convexity estimate rule out all nondegenerate solutions.  The quotients are therefore Morse; their finite branch cycles are transpositions; and transitivity gives the full symmetric group.

The fourth theorem is a general cover-theoretic obstruction to finite-field exceptionality in this twisted setting.  For a separable $\tau$-equivariant map $f$, let $V\to U$ be its finite etale restriction and put
\[
        C_f=(V\times_UV)\setminus\Delta.
\]
This is the ordinary off-diagonal fiber square; compactly supported cohomology is used only to count twisted fixed points.  For this theorem, fix an auxiliary prime $\ell\ne p$ for $\ell$-adic cohomology; this prime is unrelated to the length notation $\ell(-)$.

\begin{theorem}[D. Off-diagonal fiber squares and twisted exceptionality]\label{thm:intro-D-collision}
Let $f$ be a separable nonconstant $\tau$-equivariant rational map over $\F_p$, let $B$ be its branch locus, put
\[
        U=\PP^1\setminus B,
        \qquad
        V=f^{-1}(U),
\]
and let
\[
        C_f=(V\times_UV)\setminus\Delta.
\]
For $q=p^k$, define the finite-etale collision count
\[
        N_f(q)=\#\{(x,y)\in(\Lambda_q\cap V)^2:x\ne y,\ f(x)=f(y)\},
        \qquad
        \Lambda_q=\PP^1(\barF_p)^{\tau^{-1}\Frob_q}.
\]
The corresponding count on all of $\Lambda_q$ differs from this finite-etale count by $O_f(1)$, because only finitely many branch fibers are omitted.  Then
\[
        N_f(q)=a_f(q)q+O_f(q^{1/2}),
\]
where
\[
        a_f(q)=\operatorname{Tr}\!\left(\tau^{-1}\Frob_q\mid
        \Ql[\pi_0(C_{f,\barF_p})]\right),
\]
equivalently the number of geometric components of $C_f$ fixed by $\tau^{-1}\Frob_q$.  After Tate twist, $H_c^2(C_{f,\bar{\mathbb F}_p},\mathbb Q_\ell)(1)$ is the permutation representation on the $G_f$-orbits on ordered pairs of distinct sheets.  In particular, if $\deg f>1$ and $G_f$ is $2$-transitive on the separable sheets, then $N_f(q)=q+O_f(q^{1/2})$, so $f$ is not $\tau$-twisted exceptional.
\end{theorem}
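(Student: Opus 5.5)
The plan is to reduce the collision count to a Lefschetz trace computation on the curve $C_f$, twisted by $\tau$. First I record the structural input. Since $f\circ\tau=\tau\circ f$ and $\tau$ is defined over $\F_p$, the automorphism $\tau$ preserves $B$, and hence preserves $U$ and $V$. The map $\sigma=\tau^{-1}\Frob_q$ therefore acts on $V\times_UV$ diagonally, preserves $\Delta$, and so acts on $C_f$. The points of $(\Lambda_q\cap V)^2$ with $x\ne y$ and $f(x)=f(y)$ are exactly the $\sigma$-fixed points of $C_f(\barF_p)$.

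Next I would observe that $\sigma$ is itself a Frobenius. Because $\tau$ has order $3$ and commutes with $\Frob_q$, the pair $(C_f,\sigma)$ is the twisted form $C_f'$ of $C_f$ over $\F_q$, obtained by Galois descent along $\F_{q^3}/\F_q$ via the cocycle determined by $\tau^{-1}$. Equivalently, $\sigma^3=\Frob_{q^3}$, so $\sigma$ is a $q$-Frobenius for an $\F_q$-structure. Under this structure, $N_f(q)=\#C_f'(\F_q)$, and the Grothendieck--Lefschetz trace formula gives
\[
N_f(q)=\sum_i(-1)^i\Tr\bigl(\sigma\mid H_c^i(C_{f,\barF_p},\Ql)\bigr).
\]

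Now $C_f$ is a smooth affine curve, because $V\times_UV\to V$ is finite étale, and it is equidimensional of dimension one. So $H_c^0=0$. Deligne's weight bounds give that $H_c^1$ is mixed of weights $\le1$, and its dimension depends only on $f$, not on $q$. Hence that term contributes $O_f(q^{1/2})$. The top group satisfies $H_c^2(C_{f,\barF_p},\Ql)(1)\cong\Ql[\pi_0(C_{f,\barF_p})]$, with $\sigma$ acting through its permutation of geometric components. This gives the main term $a_f(q)\,q$.

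To identify $\pi_0$, I would use the Galois closure $W\to U$ with group $G_f$, taken after removing the inseparable part per \cref{def:quotient-conventions}. The fiber of $V\times_UV\setminus\Delta$ over a geometric point is the set of ordered pairs of distinct sheets. Its connected components correspond to the $\pi_1^{\rm geom}(U)$-orbits, i.e. the $G_f$-orbits on such pairs, by the usual dictionary between finite étale covers and $\pi_1$-sets. If $G_f$ is $2$-transitive, then $C_{f,\barF_p}$ is irreducible. Its unique component is fixed by $\sigma$, so $a_f(q)=1$ and $N_f(q)=q+O_f(q^{1/2})$. This count is positive for large $q$, so $f$ is not injective on $\Lambda_q$. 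The $O_f(1)$ discrepancy from branch fibers is harmless: over the finitely many points of $B$ and their preimages, the collision pairs number at most $(\deg f)^2\#B$.

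I expect the main obstacle to be making the twisted-Frobenius step rigorous. One must check that $\tau^{-1}\Frob_q$ really is the Frobenius of an $\F_q$-form, so that Lefschetz applies with Deligne's weights. The clean route is to apply the trace formula for $\Frob_q$ composed with a finite-order automorphism commuting with it, which is the standard Deligne--Lusztig form. This requires that $\tau$ acts on $H_c^*$ with finite order, so eigenvalue absolute values are unchanged. That action is automatically of finite order because $\tau^3=1$, and it is also where $\tau$-equivariance of $f$ is genuinely used.
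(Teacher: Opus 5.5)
Your proposal is correct and follows essentially the same route as the paper: the equivariant Grothendieck--Lefschetz formula for $\tau^{-1}\Frob_q$ on the off-diagonal fiber square, $H_c^2(1)$ as the permutation representation on components indexed by $G_f$-orbits on ordered distinct pairs, and a weight bound on $H_c^1$. The only cosmetic difference is that the paper obtains $|\lambda|\le q^{1/2}$ on $H_c^1$ from $\Theta_q^3=\Frob_{q^3}$ and Deligne's bound, instead of realizing $\Theta_q$ as the Frobenius of a twisted $\F_q$-form; you note this alternative yourself.
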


The final group of results describes the positive-characteristic deviations from characteristic-zero genericity.

\begin{theorem}[E. Positive-characteristic proved mechanisms]\label{thm:intro-E-positive}
The following positive-characteristic mechanisms are proved in the body of the paper.
\begin{enumerate}[label=\textup{(\roman*)}]
\item The Frobenius-sparse branch $d=p^a-1$ gives the linear quotient $h_d=\tau^2$, while $d=p^a+1$ gives Kummer or Artin--Schreier normal forms.
\item The characteristic-$19$ degree-parameter $d=6$ quotient is, under successful execution of the certificate in \cref{app:computations}, a Klein-four Galois cover and is $\tau$-twisted exceptional.
\item For $p\ne2,3$ and $d=2p^a+1$, the first nonsparse Frobenius--lacunary tower has an explicit two-branch-value normal form and is indecomposable for every $a\ge1$; for $a=1$, the geometric monodromy of the separable quotient satisfies $A_n\le G_{h_d}\le S_n$, where $n=\deg h_d$ under the separable-quotient convention of \cref{def:quotient-conventions}.
\end{enumerate}
\end{theorem}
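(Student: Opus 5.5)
Theorem~E bundles three independent mechanisms, so the plan is to prove (i), (ii), (iii) separately, using Theorem~A throughout to reduce to the tame quotient $h_{d_0}$.

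\textbf{Part (i).} For $d=p^a-1$ we have $(z+1)^d=(z^{p^a}+1)/(z+1)$ and $z^d=z^{p^a}/z$. Substituting these into \eqref{eq:intro-Hd} writes numerator and denominator as $p^a$-th-power expressions divided by linear factors. Cancelling the common factor should leave a M\"obius map, which I would identify as $\tau^2(z)=-1/(z+1)$ by checking three values.

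For $d=p^a+1$ with $p\ne3$, I would diagonalize $\tau$ by the M\"obius map $w=(z-\omega)/(z-\omega^2)$, where $\omega$ is a primitive cube root of unity. In this coordinate $\tau$ acts as $w\mapsto\omega w$. Equivariance forces $h_d$ to be of the shape $w\mapsto c\,w^m\cdot(\text{invariant})$. Using $(z+1)^{p^a+1}=(z+1)(z^{p^a}+1)$, the quotient becomes a ratio of two $(1,p^a)$-bilinear forms, and I expect it to reduce to a monomial $c\,w^{p^a\pm1}$ or its reciprocal. For $p=3$, $\tau$ is unipotent, so I would instead conjugate it to $t\mapsto t+1$. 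The same bilinear structure then gives an additive polynomial $t^{3^a}-t$ composed with M\"obius maps, i.e.\ an Artin--Schreier normal form.

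\textbf{Part (ii).} This is a finite computation. I would compute $h_6$ over $\F_{19}$: the torsion defect removes the points of $L_0\cap\mu_6^2$, leaving degree $4$. I would then exhibit three explicit M\"obius involutions $\sigma$ commuting with $h_6$ and satisfying $h_6\circ\sigma=h_6$. Together with the identity they form a group of order $4=\deg h_6$, so the cover is Galois with group $V_4$. Twisted exceptionality follows by applying Theorem~D's trace formula: $C_f$ splits into three components, the graphs of the involutions. I would check that $\tau^{-1}\Frob_q$ fixes none of them for the relevant $q$, so $a_f(q)=0$, and then use the certificate to control the small-$q$ exceptions.

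\textbf{Part (iii).} This part carries the main obstacle. Writing $(z+1)^{2p^a+1}=(z+1)(z^{p^a}+1)^2$, the numerator and denominator become quadratic in $Z=z^{p^a}$. From this I would derive the two-branch-value normal form by computing the derivative: all affine critical points lie over two values. Indecomposability I would prove via L\"uroth. Any intermediate field lies between $k(z)$ and $k(h_d)$, and the lacunary shape in $Z$ constrains the possible decompositions to $Z$-degree considerations. Ramification data then exclude each candidate factorization.

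For $a=1$, primitivity comes from indecomposability. To get $A_n\le G_{h_d}$, I would look for tame inertia of cycle type a $p$-cycle or a prime cycle of length $<n-2$ and invoke Jordan's theorem. The hardest point is that the wild inertia at infinity blocks straightforward Riemann--Hurwitz bookkeeping. I would first try to isolate a tame branch point whose inertia is a single cycle of prime length $\ell\le n-3$.
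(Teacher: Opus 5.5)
Your part (i) follows the paper's route: direct simplification for $d=p^a-1$, the diagonalizing coordinate plus the bilinear structure in $(z,z^{p^a})$ for $p\ne3$, and a translation coordinate for $p=3$. Parts (iii) and (ii), however, each contain a step that would fail.

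\textbf{Part (iii).} The normal form you plan to derive shows that the only ramified points are the two fixed points of $\tau$ (the points $u=0,\infty$ of \cref{thm:two-r-plus-one-normal-form}). Each has index $p^a$, which must be read from a local expansion, not from the Wronskian order. So every branch point is wild, and there is no tame branch point to isolate. In any case, tame inertia has order prime to $p$ and could never yield a $p$-cycle.

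The missing idea is that for $a=1$ the wild point itself supplies the cycle. By \cref{lem:inertia-orbit-lengths}, inertia over that branch value has one orbit of length $p$ on the generic fiber and fixes the other $n-p$ sheets. Since $p$ is prime, Cauchy's theorem gives an element acting as a $p$-cycle on that orbit and trivially elsewhere. As $n-p\ge p-1\ge4$, Jordan's theorem (or \cref{lem:jones-cycle-applied}) together with primitivity gives $A_n\le G_{h_d}$. The wild inertia is thus the certificate, not the obstacle. It is also exactly what breaks for $a>1$, where a transitive $p$-group on $p^a$ local sheets need not contain a $p^a$-cycle.

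Your indecomposability sketch is also not yet a proof, since an intermediate field of $k(z)/k(h_d)$ has no reason to respect $Z=z^{p^a}$. The paper uses the pole divisor $rP+Q_1+\cdots+Q_{n-r}$ with $n=2r\pm1$ and $\gcd(r,n)=1$ (\cref{lem:pole-divisor-indecomp}). If $h_d=g\circ h$ nontrivially, there are two cases:
\begin{itemize}
\item $g$ has a multiple pole at $h(P)$. This forces $h^{-1}(h(P))=\{P\}$, so $\deg h$ divides both $r$ and $n$, and $\gcd(r,n)=1$ gives a contradiction.
\item $e_h(P)=r$. This forces $r\le\deg h\le n/2$, which is impossible for $n=2r-1$ and forces $\deg h=r\nmid n$ for $n=2r+1$.
\end{itemize}

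\textbf{Part (ii).} Knowing $a_f(q)=0$ only gives $N_f(q)=O(q^{1/2})$ from the asymptotic formula of \cref{thm:intro-D-collision}. That does not make $N_f(q)$ vanish for large $q$, and no finite certificate check can produce infinitely many $q$ with $N_f(q)=0$. You need an exact argument. The $\gamma_i$ are defined over $\F_{19}$, so $\tau^{-1}\Frob_q$ carries the graph of $\gamma_i$ to the graph of $\tau^{-1}\gamma_i\tau$. The certificate shows this cycles the three involutions, for every $q$. A fixed collision $(x,\gamma_ix)$ therefore lies on two distinct graphs, so $x$ is fixed by a nontrivial deck transformation, i.e.\ it is a ramification point. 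Equivalently, a cyclic permutation of the three components has trace zero on every $H^i_c$, so the exact Lefschetz formula gives $N_f(q)=0$.

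Moreover, $N_f$ ignores branch fibers by definition, so you must separately show that no ramification point lies in any twisted fixed set. The paper does this as follows:
\begin{itemize}
\item the fixed-point quadratics of the $\gamma_i$ are irreducible over $\F_{19}$;
\item Frobenius acts on their roots trivially for $k$ even and as $x\mapsto S_i-x$ for $k$ odd;
\item gcd computations show the resulting twisted fixed-point equations share no root with these quadratics.
\end{itemize}

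Finally, the involutions must not commute with $h_6$: a M\"obius map $\sigma$ with $h_6\circ\sigma=h_6=\sigma\circ h_6$ is the identity. What is needed is that conjugation by $\tau$ permutes them.
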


Outside these proved mechanisms, the paper does not claim a positive-characteristic classification.  The remaining cases are formulated as explicit wild-inertia or two-transitivity certificate problems in \cref{q:ramification-skeleton} and in the final question of \cref{sec:questions}.

\subsection*{Dependency map}

The following table records the architecture of the paper.
\begin{center}
\small
\begin{tabular}{c p{0.34\textwidth} p{0.43\textwidth}}
\toprule
Layer & Object & Output\\
\midrule
1 & $\Gamma_q^*/\F_q^*\simeq\Lambda_q$ & Hilbert--90 quotient map $h_d$\\
2 & $L_0\cap\boldsymbol\mu_d^2$ & exact torsion-defect formula for $\deg h_d$\\
3 & low quotient degrees & rigidity, Mersenne arithmetic, sparse normal forms\\
4 & critical equation & no characteristic-zero branch collisions\\
5 & branch cycles & $G_{h_d}=S_{\deg h_d}$ in characteristic zero\\
6 & $C_f=(V\times_UV)\setminus\Delta$ & obstruction to twisted exceptionality\\
7 & sparse/sporadic/lacunary degeneration & positive-characteristic structure and remaining wild-inertia problem\\
\bottomrule
\end{tabular}
\end{center}

\subsection*{Relation with prior work}

Within the broad literature on permutation polynomials over finite fields \cite{HouSurvey}, the closest comparison is the two-step construction of Ding, Song and Xiong \cite{DSX2026}.  Their work supplies the cubic and $Q+1$ trace-zero branches and several sparse full-field families.  The present paper uses the same descent philosophy in a different direction: the fixed-exponent quotient-map family \eqref{eq:intro-Hd} is treated as the primary algebraic object.  The main new contributions are the torsion-defect theorem, the low-degree rigidity results, the characteristic-two Mersenne trace-zero family arising from maximal torsion defect, characteristic-zero full symmetric monodromy, the off-diagonal fiber-square obstruction, the certificate-verified characteristic-$19$ Klein-four quotient, and the nonsparse Frobenius--lacunary tower.  A detailed comparison with Ding--Song--Xiong is placed in \cref{sec:dsx-comparison}.

The method also uses standard tools from the theory of exceptional covers and finite-field permutation polynomials: the AGW/fiber criterion \cite{AGW}, Hilbert 90, branch-cycle and monodromy arguments, Jacobi-sum estimates for cyclotomic defects, and the Grothendieck--Lefschetz trace formula for fiber squares.  The point is not to reprove this general machinery, but to show that the family \eqref{eq:intro-Hd} has an unusually rigid algebraic structure under these tools.  In particular, the finite-field statements are not presented as isolated constructions; they are consequences of the cancellation, ramification, and monodromy theory of the nonconstant quotient maps.

\subsection*{Scope of the positive-characteristic results}

The characteristic-zero monodromy theorem is complete: every non-linear characteristic-zero member of the family has full symmetric geometric monodromy.  In positive characteristic, the paper proves the Frobenius-sparse normal forms, gives computer-assisted proofs of the characteristic-$19$ Klein-four quotient and of full monodromy in the bad reduction $(p,d)=(7,5)$ under the certificate convention of \cref{app:computations}, and proves the first nonsparse Frobenius--lacunary tower up to the stated wild-inertia boundary.  Statements beyond these proved mechanisms are formulated either as conditional reductions, conjectural architecture, or concrete certificate problems.  This separation is intentional: it keeps the complete algebraic theorems distinct from the remaining positive-characteristic monodromy problem.

\subsection*{Organization}

Part I constructs the Hilbert--90 quotient-map family and records the equivariance and separability conventions.  Part II proves the torsion-defect formula, the low-degree rigidity theorems, the cyclotomic fixed-index estimates, and the sparse arithmetic consequences.  Part III proves the characteristic-zero branch-collision theorem and the full symmetric monodromy theorem.  Part IV develops the twisted off-diagonal fiber-square obstruction and analyzes the positive-characteristic degeneration mechanisms, including the certificate-verified sporadic Klein-four quotient, ramification skeletons, and Frobenius--lacunary towers.  The appendices collect attribution details, fixed-degree automata, higher-dimensional Hilbert--90 quotients, additive full-field lifts, and reproducible computational certificates.

\section*{Part I. The quotient-map family}
\section{Hilbert--90 quotient maps}\label{sec:descent}

Throughout this section $q=p^k$ with $k\ge1$, $L=\F_{q^3}$, $K=\F_q$, and $\sigma(x)=x^q$.  Write
\[
        \Gamma_q=\ker\Tr_{L/K},
        \qquad
        \Lambda_q=\{z\in L:z^{q+1}+z+1=0\},
        \qquad
        \tau(z)=-1-z^{-1}.
\]
For $B(X)=\sum b_iX^i\in L[X]$ we write
\[
        B^{(q)}(X)=\sum b_i^qX^i.
\]

\begin{proposition}[multiplicative Hilbert--90 quotient]\label{prop:h90-quotient}
The map
\[
        \lambda:\Gamma_q^*\longrightarrow \Lambda_q,
        \qquad
        \lambda(x)=\frac{\sigma x}{x}=x^{q-1},
\]
induces a bijection
\[
        \Gamma_q^*/K^*\xrightarrow{\sim}\Lambda_q.
\]
In particular $|\Lambda_q|=(q^2-1)/(q-1)=q+1$.
\end{proposition}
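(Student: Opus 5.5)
The plan is to first check that $\lambda$ actually lands in $\Lambda_q$, then to prove that it is onto with fibers exactly the $K^*$-cosets, and finally to count.

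\emph{Landing in $\Lambda_q$.} Take $x\in\Gamma_q^*$ and write $z=x^{q-1}$. The trace condition reads $x+x^q+x^{q^2}=0$. Dividing by $x$ gives
\[
1+z+x^{q^2-1}=0 .
\]
Since $x^{q^2-1}=z^{q+1}$, this is exactly $z^{q+1}+z+1=0$, so $z\in\Lambda_q$.

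\emph{Fibers.} If $c\in K^*$, then $(cx)^{q-1}=c^{q-1}x^{q-1}=x^{q-1}$, so $\lambda$ is constant on $K^*$-cosets. Conversely, suppose $x^{q-1}=y^{q-1}$ with $x,y\in L^*$. Then $(x/y)^{q}=x/y$, so $x/y\in K^*$. Hence the fibers of $\lambda$ are precisely the $K^*$-cosets, and the induced map $\Gamma_q^*/K^*\to\Lambda_q$ is injective.

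\emph{Surjectivity via Hilbert 90.} Let $z\in\Lambda_q$.
\begin{itemize}
\item Every root of $z^{q+1}+z+1$ is nonzero, since $z=0$ gives $1\neq0$.
\item The norm satisfies $\Nm_{L/K}(z)=z^{1+q+q^2}=1$. To see this, use $z^{q+1}=-1-z$ to get $z^{q}=\tau(z)$. Because $\tau$ has order three, $z^{q^2}=\tau^2(z)$, and a direct check gives $z\,\tau(z)\,\tau^2(z)=1$.
\item One point needs care: $z$ is assumed to lie in $L$ by definition, and the identity $z^{q^3}=\tau^3(z)=z$ confirms that every root of the defining polynomial is in $L$ anyway.
\end{itemize}
By multiplicative Hilbert 90 for the cyclic extension $L/K$, there is $x\in L^*$ with $z=\sigma x/x=x^{q-1}$. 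It remains to show $x\in\Gamma_q$. We have
\[
\Tr(x)=x\bigl(1+z+z^{q+1}\bigr)=0,
\]
so $x\in\Gamma_q^*$ and $\lambda$ is onto.

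\emph{Counting.} The trace $\Tr_{L/K}$ is a surjective $K$-linear map $L\to K$, so $|\Gamma_q|=q^2$ and $|\Gamma_q^*|=q^2-1$. Since the map is a bijection from $\Gamma_q^*/K^*$, we get $|\Lambda_q|=(q^2-1)/(q-1)=q+1$.

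As a consistency check, the polynomial $z^{q+1}+z+1$ is separable, because its derivative is $z^q+1$ and any common root would force $z=-1$ and then $(-1)^{q+1}=0$, which is false. So it has exactly $q+1$ distinct roots, matching the count.

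The main care point is the surjectivity step: verifying $\Nm(z)=1$ through the $\tau$-orbit identity. Everything else is routine.
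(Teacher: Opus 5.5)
Your proof is correct and follows the paper's argument essentially step for step: you show the image lies in $\Lambda_q$ by dividing the trace relation by $x$, get surjectivity from $\Nm_{L/K}(z)=z\,\tau(z)\,\tau^2(z)=1$ and multiplicative Hilbert 90, identify the fibers as $K^*$-cosets, and count using surjectivity of the trace. One small correction: $z^{q^2}=\tau^2(z)$ holds because $\tau$ has coefficients in $\F_p$ and so commutes with the $q$-power map, giving $z^{q^2}=\tau(z)^q=\tau(z^q)=\tau^2(z)$; it does not follow from $\tau$ having order three, which is instead what gives $z^{q^3}=z$.
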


\begin{proof}
If $x\in\Gamma_q^*$ and $z=\sigma x/x$, then
\[
        0=\frac{x+\sigma x+\sigma^2x}{x}=1+z+z\sigma z.
\]
Since $\sigma z=z^q$, this gives $z^{q+1}+z+1=0$, so $z\in\Lambda_q$.  Also $z\ne0$, and $z\ne-1$ because substituting $z=-1$ in $1+z+z\sigma z$ gives $1$.

Conversely, let $z\in\Lambda_q$.  Then $z^q=-1-z^{-1}=\tau(z)$.  From $1+z+z\sigma z=0$ one checks that
\[
        z\sigma z\sigma^2z=1,
\]
so $\Nm_{L/K}(z)=1$.  By multiplicative Hilbert 90 there is $x\in L^*$ with $z=\sigma x/x$.  Then
\[
        x+\sigma x+\sigma^2x=x(1+z+z\sigma z)=0,
\]
so $x\in\Gamma_q^*$.  The ambiguity in $x$ is exactly multiplication by elements of $K^*$, which proves the bijection.  Since $L/K$ is a finite separable extension, the trace map $\Tr_{L/K}:L\to K$ is a nonzero $K$-linear functional and hence is surjective.  Therefore $\Gamma_q$ has $K$-dimension $2$, so $|\Gamma_q|=q^2$, and the cardinality follows.
\end{proof}

\begin{proposition}[trace-zero multiplicative descent]\label{prop:mult-desc}
Let $r\ge1$ and $B\in L[X]$.  Define the function $F_{r,B}:\Gamma_q\to L$ by
\[
        F_{r,B}(x)=x^rB(x^{q-1})
\]
and put
\[
        R_{r,B}(z)=z^r\frac{B^{(q)}(\tau(z))}{B(z)}.
\]
Then $F_{r,B}$ permutes $\Gamma_q$ if and only if
\begin{enumerate}[label=\textup{(\roman*)}]
\item $\gcd(r,q-1)=1$;
\item $B$ has no zero on $\Lambda_q$;
\item under condition \textup{(ii)}, the rational expression $R_{r,B}$ is defined on all of $\Lambda_q$ and the resulting map $R_{r,B}:\Lambda_q\to\Lambda_q$ is bijective.
\end{enumerate}
\end{proposition}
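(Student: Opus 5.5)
This is the standard AGW-type fiber criterion, applied to the orbit structure of $\Gamma_q^*$ under $K^*$ from \cref{prop:h90-quotient}. The first step is the commutative square. For $x\in\Gamma_q^*$ put $z=\lambda(x)=x^{q-1}\in\Lambda_q$. Then
\[
\sigma(F_{r,B}(x))=x^{rq}B^{(q)}(\sigma z)=x^{rq}B^{(q)}(\tau(z)),
\]
using $z^q=\tau(z)$ on $\Lambda_q$. Hence, whenever $F_{r,B}(x)\neq0$,
\[
\lambda(F_{r,B}(x))=\frac{\sigma F_{r,B}(x)}{F_{r,B}(x)}=x^{r(q-1)}\frac{B^{(q)}(\tau(z))}{B(z)}=R_{r,B}(z).
\]
We also need $F_{r,B}(\Gamma_q)\subseteq\Gamma_q$, which is implicit in the statement "permutes $\Gamma_q$". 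I will check it via $\Tr(F_{r,B}(x))=F+\sigma F+\sigma^2 F$. The identity just proved says $\sigma F=R(z)F$. Applying $\sigma$ again and using $\sigma R(z)=R^{(q)}(\tau z)$, this should reduce to $F\,(1+w+w\,\sigma w)$ with $w=R(z)$. That vanishes exactly when $w\in\Lambda_q$, i.e. when $R$ maps into $\Lambda_q$. If the paper intends $F_{r,B}:\Gamma_q\to L$ with the permutation meaning a bijection onto $\Gamma_q$, then this membership must be read into condition (iii), and I will phrase the proof so that it does.

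Second, the fiber structure. On $K^*$-orbits, $F_{r,B}(cx)=c^rF_{r,B}(x)$ for $c\in K^*$, since $\lambda(cx)=\lambda(x)$. So $F_{r,B}$ maps the orbit $xK^*$ to the orbit $F(x)K^*$, through the map $c\mapsto c^r$ on $K^*$ followed by scaling. Also $F_{r,B}(0)=0$, and $F_{r,B}(x)=0$ for $x\ne0$ exactly when $B(z)=0$.

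Sufficiency: assume (i)–(iii). By (ii), $F$ has no nonzero zeros, so $F$ maps $\Gamma_q^*$ into $\Gamma_q^*$ (using the trace computation above). By (iii) together with the square, $F$ induces the bijection $R$ on orbits $\Gamma_q^*/K^*\simeq\Lambda_q$. By (i), each orbit $xK^*$ maps bijectively onto its image orbit. So $F$ is injective on $\Gamma_q$, hence bijective since the set is finite.

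Necessity: assume $F$ permutes $\Gamma_q$. Since $F(0)=0$ and $F$ is injective, no nonzero $x$ maps to $0$; as every $z\in\Lambda_q$ equals $\lambda(x)$ for some $x$, this gives (ii). Injectivity on a single orbit forces $c\mapsto c^r$ to be injective on $K^*$, which is (i). The square then shows that $R$ is well defined on $\Lambda_q$ and takes values in $\Lambda_q$, because $F(x)\in\Gamma_q^*$. Now suppose $R(z)=R(z')$ with $z=\lambda(x)$ and $z'=\lambda(x')$. Then $F(x)$ and $F(x')$ lie in the same orbit, and since $F$ maps the orbit $xK^*$ onto the whole orbit of $F(x)$, some $cx$ satisfies $F(cx)=F(x')$. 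Injectivity gives $x'\in xK^*$, so $z=z'$. Thus $R$ is injective on $\Lambda_q$, hence bijective.

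The main obstacle is the trace-compatibility step: making precise that $R(z)\in\Lambda_q$ is equivalent to $F(x)\in\Gamma_q$. This requires carefully checking the identity $\sigma(R(z))=R^{(q)}(\tau(z))$, and keeping track of the fact that $B^{(q)}$ twists the coefficients while $\sigma$ also acts on $z$. The remaining steps are routine orbit counting.
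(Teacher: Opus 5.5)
Your proof is correct and follows the paper's argument essentially step for step: the compatibility $\lambda(F_{r,B}(x))=R_{r,B}(\lambda(x))$, the scaling $F_{r,B}(cx)=c^rF_{r,B}(x)$ on $K^*$-fibers, and the identity $F+\sigma F+\sigma^2F=F(1+w+w\,\sigma w)$ with $w=\sigma F/F$, which gives $F_{r,B}(x)\in\Gamma_q$ from $R_{r,B}(z)\in\Lambda_q$. The step you flag as the main obstacle is immediate: $\sigma$ is just the $q$-power map on $L$, so $\sigma F=wF$ gives $\sigma^2F=\sigma(w)\,wF=w^{q+1}F$ directly, and no identity of the form $\sigma(R_{r,B}(z))=R_{r,B}^{(q)}(\tau(z))$ is needed.
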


\begin{proof}
The zero element is fixed by $F_{r,B}$.  The nonzero points are controlled by the quotient map of \cref{prop:h90-quotient}.  For $c\in K^*$ and $x\in\Gamma_q^*$,
\[
        F_{r,B}(cx)=c^rF_{r,B}(x),
        \qquad
        (cx)^{q-1}=x^{q-1}.
\]
Thus, on a nonzero quotient fiber on which $B$ does not vanish, the map along the $K^*$-fiber is multiplication by $c^r$; it is bijective exactly when $\gcd(r,q-1)=1$.

Assume first that $F_{r,B}$ permutes $\Gamma_q$.  If $B(z)=0$ for some $z\in\Lambda_q$, choose $x\in\Gamma_q^*$ with $x^{q-1}=z$ by \cref{prop:h90-quotient}; then $F_{r,B}(x)=0=F_{r,B}(0)$, contradicting injectivity.  Hence condition \textup{(ii)} holds, and the fiber calculation above gives condition \textup{(i)}.  For $z=x^{q-1}\in\Lambda_q$ we have $F_{r,B}(x)\in\Gamma_q^*$, so
\[
        \frac{\sigma(F_{r,B}(x))}{F_{r,B}(x)}
        =z^r\frac{B^{(q)}(\sigma z)}{B(z)}
        =z^r\frac{B^{(q)}(\tau(z))}{B(z)}
        =R_{r,B}(z)
\]
lies in $\Lambda_q$.  The quotient map induced by a bijection of $\Gamma_q^*$ is bijective on $\Gamma_q^*/K^*\simeq\Lambda_q$, so condition \textup{(iii)} follows.

Conversely, assume \textup{(i)--(iii)}.  If $x\in\Gamma_q^*$ and $z=x^{q-1}$, then $B(z)\ne0$, so $F_{r,B}(x)\ne0$.  The same computation gives
\[
        \sigma(F_{r,B}(x))/F_{r,B}(x)=R_{r,B}(z)\in\Lambda_q.
\]
By the defining equation of $\Lambda_q$, this implies
\[
        F_{r,B}(x)+\sigma(F_{r,B}(x))+\sigma^2(F_{r,B}(x))=0,
\]
so $F_{r,B}(x)\in\Gamma_q^*$.  Condition \textup{(iii)} gives bijectivity on the quotient $\Lambda_q$, and condition \textup{(i)} gives bijectivity on every $K^*$-fiber.  Together with $F_{r,B}(0)=0$, this proves that $F_{r,B}$ permutes $\Gamma_q$.
\end{proof}

\begin{remark}
\Cref{prop:mult-desc} is the Ding--Song--Xiong second descent criterion in the notation of this paper; compare \cite[Lemma 3.3]{DSX2026}.  Its role here is to isolate the fixed-exponent quotient formula $H_d^{\rm raw}$ and the reduced maps $h_d$ studied below.
\end{remark}

For
\[
        P_d(x)=x^{dq}-x^d=x^d((x^{q-1})^d-1)
\]
we have $r=d$ and $B(z)=z^d-1$, hence the raw quotient formula is
\begin{equation}\label{eq:Hd}
        H_d^{\rm raw}(z)=z^d\frac{\tau(z)^d-1}{z^d-1}
        =\frac{(-1)^d(z+1)^d-z^d}{z^d-1}.
\end{equation}
The denominator condition in \cref{prop:mult-desc} is
\begin{equation}\label{eq:denomcondition}
        \mu_d\cap\Lambda_q=\varnothing.
\end{equation}

\begin{lemma}[tame nonconstant quotients are separable]\label{lem:tame-quotient-separable}
Let $k$ be algebraically closed of characteristic $p\ge0$, let $e\ge1$, and assume $p\nmid e$ if $p>0$.  If the reduced quotient $h_e$ is nonconstant, then $h_e$ is separable.
\end{lemma}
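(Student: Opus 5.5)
A rational function $f=A/B$ over an algebraically closed field of characteristic $p>0$, with $\gcd(A,B)=1$, is inseparable exactly when its derivative vanishes identically. Equivalently, the Wronskian $A'B-AB'$ vanishes identically. By the standard lemma, $f=g(z^p)$ for some rational $g$, that is, $f\in k(z^p)$. In characteristic zero every nonconstant rational function is separable, so there is nothing to prove. The plan is therefore to assume $p>0$, $p\nmid e$, and that $h_e$ is nonconstant but inseparable, and derive a contradiction from the shape of the raw quotient.

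First reduce from $h_e$ to the raw quotient. Since $h_e$ is obtained from $H_e^{\rm raw}$ by cancelling common factors, the two define the same element of $k(z)$. Hence $h_e$ is inseparable iff $H_e^{\rm raw}\in k(z^p)$, iff $\frac{d}{dz}H_e^{\rm raw}=0$. Write
\[
        N(z)=(-1)^e(z+1)^e-z^e,\qquad D(z)=z^e-1 .
\]
Then $H_e^{\rm raw}=N/D$, and the condition becomes the polynomial identity $N'D=ND'$.

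Compute both sides:
\[
        N'=e\bigl((-1)^e(z+1)^{e-1}-z^{e-1}\bigr),\qquad D'=ez^{e-1}.
\]
Since $p\nmid e$, we may divide by $e$ and obtain
\[
        \bigl((-1)^e(z+1)^{e-1}-z^{e-1}\bigr)(z^e-1)=\bigl((-1)^e(z+1)^e-z^e\bigr)z^{e-1}.
\]
The $-z^{2e-1}$ terms cancel. After dividing by $(-1)^e(z+1)^{e-1}$, which is legitimate in $k[z]$ since this is a nonzero factor on both relevant terms, the identity simplifies to
\[
        (-1)^e(z+1)^{e-1}(z^e-1)+z^{e-1}=(-1)^e(z+1)^e z^{e-1},
\]
that is,
\[
        (-1)^e(z+1)^{e-1}\bigl(z^e-1-(z+1)z^{e-1}\bigr)=-z^{e-1},
\]
or
\[
        (-1)^e(z+1)^{e-1}\bigl(-z^{e-1}-1\bigr)=-z^{e-1}.
\]

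Now compare degrees, or evaluate at $z=0$. The left side has degree $2e-2$ with leading coefficient $\pm1\neq0$, while the right side has degree $e-1$. For $e\ge2$ this is a contradiction. For $e=1$ the identity reads $(-1)(-2)=-1$, i.e. $2=-1$, so it holds only in characteristic $3$. In that case $H_1^{\rm raw}=(-(z+1)-z)/(z-1)=(-2z-1)/(z-1)=(z-1)/(z-1)=1$ is constant, which is excluded by hypothesis.

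So a tame inseparable $h_e$ must be constant, which proves the lemma. The main obstacle is only bookkeeping: the algebraic simplification must be carried out with care, and the small cases $e=1,2$ must be checked by hand, because the degree comparison can degenerate there.
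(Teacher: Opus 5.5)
Your proof is correct and is essentially the paper's argument. Both compute $N'D-ND'=e\bigl(z^{e-1}-(-1)^e(z+1)^{e-1}(z^{e-1}+1)\bigr)$ and note that the bracket has exact degree $2e-2$ with leading coefficient $-(-1)^e$, so it cannot vanish for $e\ge2$ when $p\nmid e$; both then dispose of $e=1$ by hand, where the quotient is constant exactly in characteristic $3$. The differences are cosmetic: the paper passes to the reduced Wronskian through $N'D-ND'=C_e^2(n_e'd_e-n_ed_e')$, whereas you use that $h_e$ and $H_e^{\rm raw}$ are the same element of $k(z)$, and your phrase ``after dividing by $(-1)^e(z+1)^{e-1}$'' describes no step you actually take (you only collect terms) and should be dropped.
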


\begin{proof}
There is nothing to prove in characteristic zero.  Assume $p>0$ and $p\nmid e$.  Write
\[
        N_e=(-1)^e(z+1)^e-z^e,
        \qquad
        D_e=z^e-1,
        \qquad
        C_e=\gcd(N_e,D_e),
\]
and $h_e=n_e/d_e$ with $N_e=C_en_e$ and $D_e=C_ed_e$.  If $e=1$, then
\[
        H_1^{\rm raw}(z)=\frac{-2z-1}{z-1};
\]
in characteristic $3$ this quotient is constant, while in every other characteristic it has degree one and is separable.  Thus assume $e\ge2$.

A direct calculation gives
\[
        N_e'D_e-N_eD_e'=eF_e(z),
        \qquad
        F_e(z)=z^{e-1}-(-1)^e(z+1)^{e-1}(z^{e-1}+1),
\]
and, after cancellation,
\[
        N_e'D_e-N_eD_e'=C_e^2(n_e'd_e-n_ed_e').
\]
The polynomial $F_e$ is not identically zero, because its leading term is $-(-1)^e z^{2e-2}$.  Since $e\ne0$ in $k$, the Wronskian $n_e'd_e-n_ed_e'$ is not identically zero.  A nonconstant rational function over a perfect field of characteristic $p$ is inseparable exactly when its Wronskian is identically zero.  Hence $h_e$ is separable.
\end{proof}

\begin{definition}[raw, reduced, and separable quotients]\label{def:quotient-conventions}
The expression in \eqref{eq:Hd} is the \emph{raw quotient} and is denoted by $H_d^{\rm raw}$ whenever cancellation or degree is at issue.  The symbol $h_d$ denotes the reduced rational function obtained from $H_d^{\rm raw}$ by canceling the common factor of its numerator and denominator.  Unless explicitly stated otherwise, all quotient degrees are morphism degrees of $h_d$; by convention a constant reduced quotient has degree $0$.  A constant reduced quotient is not regarded as a cover.  Any statement about covers, branch cycles, or monodromy applies only to the associated nonconstant separable quotient.  Whenever indecomposability is used to infer primitivity of a geometric monodromy action, it means indecomposability after base change to the algebraic closure, unless a smaller field is explicitly specified.

In characteristic $p>0$, if $d=p^sd_0$ with $p\nmid d_0$, then \cref{prop:p-power-reduction} shows that $h_d=h_{d_0}^{p^s}$ as a purely inseparable Frobenius twist of $h_{d_0}$.  Concretely, if $h_{d_0}=A/B$ in lowest terms, then
\[
        h_{d_0}^{p^s}=\frac{A(z)^{p^s}}{B(z)^{p^s}}
        =\Frob_{p^s}\circ h_{d_0};
\]
this notation never denotes an iterate of the rational map $h_{d_0}$.  When $h_{d_0}$ is nonconstant, \cref{lem:tame-quotient-separable} shows that it is automatically separable; the separable cover associated with $h_d$ is therefore the cover associated with $h_{d_0}$, and any monodromy assertion for $h_d$ means the monodromy of that cover.  If $h_{d_0}$ is constant, no cover or monodromy is attached.  Throughout the paper, all geometric monodromy assertions in positive characteristic are assertions about the relevant nonconstant separable quotient; the full morphism degree is multiplied by the displayed Frobenius factor.
\end{definition}

\section{Equivariance, reduced quotients, and twisted fixed sets}\label{sec:equivariance}

The quotient maps studied in the paper are not arbitrary rational functions: they commute with the order-three automorphism $\tau$.  We record the equivariance and fix the conventions for reduced and separable quotients before entering the torsion and monodromy calculations.

\begin{proposition}[$\tau$-equivariance of the quotient]\label{prop:tau-equivariance}
For every $d\ge1$, as rational functions on $\PP^1$ one has
\[
        H_d^{\rm raw}\circ\tau=\tau\circ H_d^{\rm raw}.
\]
Consequently the reduced quotient $h_d$ also satisfies $h_d\circ\tau=\tau\circ h_d$.
\end{proposition}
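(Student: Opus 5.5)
The plan is to prove the identity in the function field $k(z)$, so that it holds in every characteristic and no case split is needed. The key input is a pair of elementary identities for the orbit of $z$ under $\tau$. A direct computation gives
\[
        \tau(z)=-\frac{z+1}{z},\qquad \tau^2(z)=-\frac{1}{z+1},\qquad \tau^3=\mathrm{id},
\]
and hence
\[
        z\,\tau(z)\,\tau^2(z)=1 .
\]
This is the function-field shadow of the norm-one relation $z\sigma z\sigma^2 z=1$ used in \cref{prop:h90-quotient}.

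Next I introduce the abbreviations
\[
        a=z^d,\qquad b=\tau(z)^d,\qquad c=\tau^2(z)^d,
\]
which lie in $k(z)$ and satisfy $abc=1$. None of $a-1$, $b-1$, $c-1$ is zero in $k(z)$: $z$ is transcendental over $k$, and so are $\tau(z)$ and $\tau^2(z)$, being images of $z$ under an automorphism of $k(z)$, so none of their $d$-th powers equals $1$.

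In this notation $H_d^{\rm raw}(z)=a(b-1)/(a-1)$. Since $\tau$ cyclically permutes $z\mapsto\tau(z)\mapsto\tau^2(z)$, substitution sends $(a,b)\mapsto(b,c)$. Using $bc=1/a$,
\[
        H_d^{\rm raw}(\tau z)=\frac{b(c-1)}{b-1}=\frac{bc-b}{b-1}=\frac{1-ab}{a(b-1)}.
\]
On the other side,
\[
        \tau\bigl(H_d^{\rm raw}(z)\bigr)
        =-1-\frac{a-1}{a(b-1)}
        =\frac{-ab+a-a+1}{a(b-1)}
        =\frac{1-ab}{a(b-1)} .
\]
The two expressions agree, which proves the identity for $H_d^{\rm raw}$. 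If $a(b-1)\neq0$ but $1-ab=0$, both sides are the constant $0$, so the computation is valid verbatim. Since $a(b-1)\neq0$ in $k(z)$, $H_d^{\rm raw}$ is never the constant $0$, so $\tau\circ H_d^{\rm raw}$ is always defined.

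For the reduced quotient, cancelling a common factor of numerator and denominator does not change the element of $k(z)$, so $h_d=H_d^{\rm raw}$ as rational functions. The identity therefore transfers immediately, including when $h_d$ is constant; in that case it says the constant is a fixed point of $\tau$, which is consistent with, e.g., $h_1$ in characteristic $3$. I expect no real obstacle. The only point needing care is to treat everything as an identity in $k(z)$ rather than pointwise, so that zeros of $a-1$, $b-1$ and the characteristic-$p$ Frobenius factors never cause trouble.
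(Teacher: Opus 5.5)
Your proof is correct and follows the paper's argument essentially verbatim: the paper also sets $A=z^d$, $B=\tau(z)^d$, $C=\tau^2(z)^d$ with $ABC=1$, shows that both $\tau(H_d^{\rm raw}(z))$ and $H_d^{\rm raw}(\tau z)$ equal $(1-AB)/(A(B-1))$, and passes to $h_d$ because cancellation does not change the rational function. Working directly in $k(z)$, as you do, rather than on the dense open set where the expressions are defined, as the paper does, is only a cosmetic difference.
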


\begin{proof}
Put
\[
        a=z,\qquad b=\tau(z),\qquad c=\tau^2(z).
\]
Then $abc=1$.  Let $A=a^d$, $B=b^d$, and $C=c^d$, so $ABC=1$.  From the definition,
\[
        H_d^{\rm raw}(a)=A\frac{B-1}{A-1}.
\]
Therefore
\[
\begin{aligned}
        \tau(H_d^{\rm raw}(a))
        &=-1-\frac1{H_d^{\rm raw}(a)}                                      \\
        &=-\frac{H_d^{\rm raw}(a)+1}{H_d^{\rm raw}(a)}                                \\
        &=-\frac{AB-1}{A(B-1)}                                    \\
        &=\frac{1-AB}{A(B-1)}.
\end{aligned}
\]
Since $C=(AB)^{-1}$,
\[
        H_d^{\rm raw}(b)=B\frac{C-1}{B-1}
        =B\frac{(AB)^{-1}-1}{B-1}
        =\frac{1-AB}{A(B-1)}.
\]
Thus $H_d^{\rm raw}(\tau z)=\tau(H_d^{\rm raw}(z))$ on the dense open set where the displayed expressions are defined, and hence as rational functions on $\PP^1$.  Passing to the equivalent reduced rational function gives the statement for $h_d$.
\end{proof}

\begin{proposition}[cyclic normal forms]\label{prop:cyclic-normal-form}
Let $k$ be an algebraically closed field and let $f\in k(z)$ be a nonconstant rational function satisfying $f\circ\tau=\tau\circ f$.
\begin{enumerate}[label=\textup{(\roman*)}]
\item If $\operatorname{char}k\ne3$, then after the change of coordinate
\[
        u=\frac{z-\omega}{z-\omega^2},
\]
where $\omega$ is a primitive cube root of unity and the coordinate is chosen so that $\tau$ acts as $u\mapsto\omega u$, the conjugate $\Phi(u)$ of $f$ has the form
\[
        \Phi(u)=uR(u^3)
\]
for some $R\in k(t)$.
\item If $\operatorname{char}k=3$, then after a fractional linear change of coordinate conjugating $\tau$ to $u\mapsto u+1$, the conjugate $\Phi(u)$ of $f$ has the form
\[
        \Phi(u)=u+R(u^3-u)
\]
for some $R\in k(t)$.
\end{enumerate}
\end{proposition}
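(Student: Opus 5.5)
The plan is to diagonalize $\tau$ (or, in characteristic $3$, turn it into a translation) by a Möbius change of coordinate. Then $f\circ\tau=\tau\circ f$ becomes a functional equation $\Phi(\omega u)=\omega\Phi(u)$ or $\Phi(u+1)=\Phi(u)+1$, and each of these is solved by an invariant-theory argument for the cyclic group $\langle\tau\rangle$ of order three acting on $k(u)$.

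For (i), assume $\operatorname{char}k\ne3$ and let $\omega$ be a primitive cube root of unity. The fixed points of $\tau$ solve $z=-1-z^{-1}$, that is, $z^2+z+1=0$, so they are $\omega$ and $\omega^2$, which are distinct. Put $u=(z-\omega)/(z-\omega^2)$, sending these fixed points to $0$ and $\infty$. The conjugate of $\tau$ is a Möbius map fixing $0$ and $\infty$, hence $u\mapsto cu$ with $c^3=1$ and $c\ne1$. Evaluating at one point gives $c=\omega$ or $c=\omega^2$; replacing $\omega$ by $\omega^2$ if necessary, we may take $c=\omega$, which is the normalization in the statement.

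The equivariance then reads $\Phi(\omega u)=\omega\Phi(u)$. Set $\Psi(u)=\Phi(u)/u$, so that $\Psi(\omega u)=\Psi(u)$; thus $\Psi$ lies in the fixed field of the automorphism $u\mapsto\omega u$ of $k(u)$. This fixed field is $k(u^3)$. Indeed, $k(u)/k(u^3)$ has degree $3$, being generated by a root of $T^3-u^3$, and $k(u^3)$ is contained in the fixed field. Artin's theorem gives $[k(u):k(u)^{\langle\tau\rangle}]=3$, so the two fields coincide. Hence $\Psi=R(u^3)$ and $\Phi=uR(u^3)$.

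For (ii), assume $\operatorname{char}k=3$. Then $z^2+z+1=(z-1)^2$, so $\tau$ has the single fixed point $z=1$ and is unipotent, since it has order $3$. Sending $1$ to $\infty$ by $w=1/(z-1)$ conjugates $\tau$ to an affine map $w\mapsto aw+b$ of order $3$ fixing only $\infty$. This forces $a=1$ and $b\ne0$, and rescaling $u=w/b$ gives $u\mapsto u+1$; a one-line computation confirms it.

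The equivariance becomes $\Phi(u+1)=\Phi(u)+1$. Set $\Psi=\Phi-u$, which is invariant under $u\mapsto u+1$. The fixed field of this order-$3$ automorphism contains $t=u^3-u$, because $(u+1)^3-(u+1)=u^3-u$. Also $[k(u):k(t)]=3$, since $u$ is a root of the Artin–Schreier polynomial $T^3-T-t$, which is irreducible because it has degree $3$ in $u$. By Artin's theorem again, the fixed field is $k(t)$, so $\Psi=R(u^3-u)$ and $\Phi=u+R(u^3-u)$.

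The only delicate points are bookkeeping: fixing the orientation of the coordinate so that $\tau$ acts by $\omega$ rather than $\omega^2$, and identifying the fixed fields exactly. Both are handled by the degree count through Lüroth/Artin, so I expect no serious obstacle.
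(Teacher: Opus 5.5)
Your argument is correct and follows the paper's proof: you conjugate $\tau$ to $u\mapsto\omega u$ (resp.\ $u\mapsto u+1$), note that $\Phi(u)/u$ (resp.\ $\Phi(u)-u$) is $\langle\tau\rangle$-invariant, and identify the fixed field as $k(u^3)$ (resp.\ $k(u^3-u)$), which you justify by Artin's theorem and a degree count where the paper simply asserts it. One cosmetic point: a direct computation gives $u(\tau z)=\omega\,u(z)$ for the displayed coordinate with either primitive cube root $\omega$, so your relabeling step is never needed (relabeling replaces $u$ by $1/u$ and keeps the multiplier equal to the label, so it could not fix a mismatch anyway), and in any case $\Phi=uR(u^3)$ holds for either multiplier.
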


\begin{proof}
Assume first that $\operatorname{char}k\ne3$.  The automorphism $\tau$ has two fixed points, the primitive cube roots $\omega,\omega^2$, and is diagonalizable in $\operatorname{PGL}_2(k)$.  The displayed coordinate sends these fixed points to $0$ and $\infty$, so $\tau$ becomes multiplication by a primitive cube root, which we denote again by $\omega$.  The equivariance relation is
\[
        \Phi(\omega u)=\omega\Phi(u).
\]
Thus $\Phi(u)/u$ is invariant under $u\mapsto\omega u$, and the invariant field is $k(u^3)$.  Hence $\Phi(u)=uR(u^3)$.

In characteristic $3$, every nontrivial order-three element of $\operatorname{PGL}_2(k)$ is unipotent and hence conjugate to $u\mapsto u+1$.  The equivariance relation becomes
\[
        \Phi(u+1)=\Phi(u)+1.
\]
Therefore $\Phi(u)-u$ is invariant under translation by $1$.  The invariant field for this Artin--Schreier action is $k(u^3-u)$, so $\Phi(u)=u+R(u^3-u)$.
\end{proof}

\begin{definition}[twisted exceptionality]\label{def:twisted-exceptional}
Let $f\in\F_p(z)$ be a nonconstant rational function satisfying $f\circ\tau=\tau\circ f$.  For $q=p^k$ put
\[
        \Thetaq=\tau^{-1}\circ\Frob_q
\]
on $\PP^1_{\barF_p}$.  We say that $f$ is $\tau$-twisted exceptional if $f$ induces a bijection on the finite set
\[
        \PP^1(\barF_p)^{\Thetaq}
\]
for infinitely many $k\ge1$.
\end{definition}

\begin{remark}\label{rem:fixedset-lambda}
The fixed-point condition $\Thetaq(z)=z$ is equivalent to $z^q=\tau(z)$.  The three points $0,-1,\infty$ are not fixed.  Hence
\[
        \PP^1(\barF_p)^{\Thetaq}
        =\{z\in\barF_p^*:z^q=-1-z^{-1}\}
        =\{z:z^{q+1}+z+1=0\}=\Lambda_q.
\]
Any solution of $z^q=\tau(z)$ satisfies $z^{q^3}=z$, because iterating gives $z^{q^2}=\tau^2(z)$ and $z^{q^3}=\tau^3(z)=z$.  Thus this geometric fixed set is in fact contained in $\F_{q^3}$, matching the finite-field set introduced in \cref{sec:descent}.
Consequently \cref{def:twisted-exceptional} is the quotient-level fixed-set condition associated with the trace-zero problem.  For the original map $P_d$ on $\Gamma_q$, \cref{prop:mult-desc} also requires the fiber condition and the denominator condition $\mu_d\cap\Lambda_q=\varnothing$.
\end{remark}

\section*{Part II. Torsion defects and arithmetic strata}
\section{Torsion defects: the exact degree formula}\label{sec:torsion-defect}

Let $k$ be an algebraically closed field of characteristic $p\ge0$.  In this section $d\ge1$, and when $p>0$ we first assume $p\nmid d$.  Put
\[
        N_d(z)=(-1)^d(z+1)^d-z^d,
        \qquad
        D_d(z)=z^d-1.
\]
Thus $H_d^{\rm raw}=N_d/D_d$ before cancellation.

\begin{definition}
Let
\[
        L_0:\ 1+X+Y=0\subset \Gm^2.
\]
For $p\nmid d$, define the $d$-torsion defect of $h_d$ by
\[
        \delta_d=\ell(L_0\cap\muGP_d^2),
\]
where the scheme-theoretic intersection is computed on the toric line $L_0$ with coordinate $X$ and $Y=-1-X$.  Since $\muGP_d$ is etale in the tame case, this length is simply
\[
        \delta_d=\#\{(x,y)\in\mu_d^2:1+x+y=0\}.
\]
\end{definition}

\begin{theorem}[toric torsion-defect formula]\label{thm:torsion-defect}
Assume $p\nmid d$ if $p>0$.  Then the reduced quotient $h_d$ satisfies
\[
        \deg h_d=d-\delta_d.
\]
Equivalently,
\[
        \deg h_d=d-
        \#\{\zeta\in\mu_d: \tau(\zeta)\in\mu_d\}.
\]
\end{theorem}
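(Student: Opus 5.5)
The plan is to compute the common factor $C_d=\gcd(N_d,D_d)$ directly and then read off the degree of the reduced quotient. For coprime polynomials $n,m$ with $m\ne0$, the morphism degree of $n/m$ is $\max(\deg n,\deg m)$. This holds for constants too, since a nonzero constant over a nonzero constant has degree $0$, matching the convention of \cref{def:quotient-conventions}. So if $N_d=C_dn_d$ and $D_d=C_dd_d$, then
\[
\deg h_d=\max(\deg N_d,\deg D_d)-\deg C_d .
\]
We have $\deg D_d=d$, and $\deg N_d\le d$ because $N_d=(-1)^d(z+1)^d-z^d$. The $z^d$ coefficient cancels when $d$ is even, but the degree never exceeds $d$. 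Also $N_d\ne0$, since its constant term is $(-1)^d\ne0$. Hence $\deg h_d=d-\deg C_d$, and it remains to show $\deg C_d=\delta_d$.

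Since $p\nmid d$, the polynomial $D_d=z^d-1$ is separable, with the $d$ distinct simple roots $\mu_d$. Therefore $C_d$ is squarefree, and $\deg C_d=\#\{\zeta\in\mu_d: N_d(\zeta)=0\}$. For $\zeta\in\mu_d$ we have $\zeta^d=1$, so
\[
N_d(\zeta)=0\iff(-1)^d(\zeta+1)^d=1\iff\bigl(-1-\zeta\bigr)^d=1 .
\]
Moreover $\tau(\zeta)^d=\bigl(-(\zeta+1)/\zeta\bigr)^d=(-1)^d(\zeta+1)^d$. So the condition is equivalent both to $\tau(\zeta)\in\mu_d$ and to $-1-\zeta\in\mu_d$. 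This gives the second displayed form of the theorem.

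Finally, the map $\zeta\mapsto(\zeta,-1-\zeta)$ is a bijection from $\{\zeta\in\mu_d:-1-\zeta\in\mu_d\}$ onto $\{(x,y)\in\mu_d^2:1+x+y=0\}$; its inverse is projection to $x$. Since $\muGP_d$ is étale, this set has cardinality equal to the length $\delta_d$, as recorded in the definition. Combining the three steps gives $\deg h_d=d-\delta_d$.

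The only point needing care is the degenerate bookkeeping. First, $\deg N_d$ may drop below $d$, which is harmless because the maximum is attained by $D_d$. Second, the reduced quotient may be constant, for example $d=1$ in characteristic $3$, where $N_1=D_1=z-1$ and $\delta_1=1$; the formula still yields $0$. No step presents a real obstacle: separability of $z^d-1$ in the tame case is exactly what turns the gcd degree into a point count.
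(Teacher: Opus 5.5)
Your proof is correct and follows essentially the same route as the paper: since $p\nmid d$, the polynomial $z^d-1$ is squarefree, so $\deg\gcd(N_d,D_d)$ is the number of $\zeta\in\mu_d$ with $(-1)^d(\zeta+1)^d=1$, and $\zeta\mapsto(\zeta,-1-\zeta)$ identifies this set with $L_0\cap\mu_d^2$. The degree then follows because the reduced denominator has degree exactly $d-\delta_d$ and the reduced numerator has degree at most that, with the constant case covered by the same bookkeeping, as in the paper.
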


\begin{proof}
The denominator $D_d=z^d-1$ is squarefree.  Its roots are the elements $\zeta\in\mu_d$.  Such a root is canceled by the numerator precisely when
\[
        0=N_d(\zeta)=(-1)^d(\zeta+1)^d-\zeta^d.
\]
Since $\zeta^d=1$, this is equivalent to
\[
        (-1)^d(\zeta+1)^d=1.
\]
But
\[
        \tau(\zeta)^d=\left(-\frac{\zeta+1}{\zeta}\right)^d
        =(-1)^d(\zeta+1)^d,
\]
again because $\zeta^d=1$.  Hence the common roots of $N_d$ and $D_d$ are exactly the $\zeta\in\mu_d$ with $\tau(\zeta)\in\mu_d$.

The map
\[
        \zeta\longmapsto (\zeta,\zeta\tau(\zeta))=(\zeta,-1-\zeta)
\]
identifies this set with
\[
        L_0\cap\mu_d^2,
        \qquad L_0:\ 1+X+Y=0.
\]
Because $D_d$ is squarefree, each cancellation is simple, so the cancellation degree is $\delta_d$.

Let $C_d=\gcd(N_d,D_d)$.  Since $D_d$ is squarefree, $C_d$ is a product of exactly the $\delta_d$ canceled linear factors.  After cancellation,
\[
        d_d=D_d/C_d,
        \qquad \deg d_d=d-\delta_d.
\]
The numerator satisfies
\[
        n_d=N_d/C_d,
        \qquad \deg n_d\le d-\delta_d,
\]
because $\deg N_d\le d$ and the same common factor $C_d$ of degree $\delta_d$ has been removed.  Hence
\[
        \deg h_d=\max(\deg n_d,\deg d_d)=d-\delta_d.
\]
This also covers the constant case: if $d-\delta_d=0$, both reduced numerator and denominator are constant and the convention gives morphism degree $0$.
\end{proof}

\begin{corollary}[reduced denominator degree]\label{cor:reduced-denominator-degree}
Under the hypotheses of \cref{thm:torsion-defect}, after writing $h_d=n_d/d_d$ in lowest terms with $d_d$ monic, one has
\[
        d_d=(z^d-1)/C_d,
        \qquad C_d=\gcd(N_d,D_d),
\]
and therefore $\deg d_d=d-\delta_d=\deg h_d$.
\end{corollary}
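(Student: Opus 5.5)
This is essentially a restatement of what the proof of \cref{thm:torsion-defect} already establishes, so I will extract it from that proof. First, fix the lowest-terms representation. Put $C_d=\gcd(N_d,D_d)$, normalized to be monic. Write $N_d=C_dn_d$ and $D_d=C_dd_d$; then $n_d/d_d$ is in lowest terms. Because $D_d=z^d-1$ is monic and $C_d$ is monic, the quotient $d_d=D_d/C_d$ is automatically monic. Any other lowest-terms representation differs from this one by a common nonzero scalar, so imposing monicity on the denominator pins down $d_d=(z^d-1)/C_d$ uniquely.

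Next, compute degrees. Since $p\nmid d$, the polynomial $D_d$ is squarefree (its derivative $dz^{d-1}$ shares no root with it). The proof of \cref{thm:torsion-defect} shows that $C_d$ is the product of the linear factors $z-\zeta$ over those $\zeta\in\mu_d$ with $\tau(\zeta)\in\mu_d$, and that there are exactly $\delta_d$ of them. Hence $\deg C_d=\delta_d$ and $\deg d_d=d-\delta_d$. The same proof gives $\deg h_d=\max(\deg n_d,\deg d_d)=d-\delta_d$, because $\deg n_d\le \deg N_d-\delta_d\le d-\delta_d$. Combining the two gives $\deg d_d=\deg h_d$.

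The only subtle point is the degenerate case. If $\delta_d=d$, then $d_d=1$ and $n_d$ is a constant. Under the convention of \cref{def:quotient-conventions} the morphism degree is then $0$, which agrees with $\deg d_d=0$. There is no real obstacle beyond this bookkeeping; the substantive input is entirely the squarefreeness of $D_d$ and the cancellation count from \cref{thm:torsion-defect}.
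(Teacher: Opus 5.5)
Your proposal is correct and matches the paper's own argument. Both read $d_d=D_d/C_d$ off the cancellation computation in the proof of \cref{thm:torsion-defect}, using squarefreeness of $z^d-1$ so that exactly the $\delta_d$ canceled linear factors are removed. Your monic normalization and the constant case $\delta_d=d$ are bookkeeping that the paper leaves implicit.
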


\begin{proof}
This is the denominator computation in the proof of \cref{thm:torsion-defect}.  The denominator $D_d=z^d-1$ is squarefree, so exactly the $\delta_d$ canceled linear factors are removed from $D_d$ and no other denominator factor is lost.
\end{proof}

\begin{corollary}[three-term vanishing sums]\label{cor:vanishing-sum}
For $p\nmid d$, the defect $\delta_d$ is the number of ordered three-term vanishing sums
\[
        1+x+y=0,
        \qquad x,y\in\mu_d.
\]
Thus the quotient degree is controlled by the torsion intersection of the toric line $L_0$ with the $d$-torsion subgroup of $\Gm^2$.
\end{corollary}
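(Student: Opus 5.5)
This corollary restates \cref{thm:torsion-defect} in additive language, so the plan is to unwind the definition of $\delta_d$ and match it with the count of vanishing sums. First, in the tame case $p\nmid d$, the group scheme $\muGP_d$ is étale. Hence the scheme-theoretic intersection $L_0\cap\muGP_d^2$ is reduced, and its length equals the number of its $k$-points. This gives
\[
        \delta_d=\#\{(x,y)\in\mu_d^2:1+x+y=0\},
\]
which is exactly the number of ordered pairs $(x,y)\in\mu_d^2$ with $1+x+y=0$. These pairs are the ordered three-term vanishing sums $1+x+y=0$ with $x,y\in\mu_d$ in the statement.

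The only point that needs care is the reducedness of the intersection, since a priori one must check that each point has multiplicity one. I would verify this directly. On $L_0$ we use the coordinate $X$ with $Y=-1-X$, so the intersection is cut out by $X^d-1$ and $(-1-X)^d-1$. The polynomial $X^d-1$ has derivative $dX^{d-1}$, which is nonzero at every root because $p\nmid d$. Hence $X^d-1$ is squarefree, and the scheme defined by the gcd of the two polynomials is reduced. This is the same squarefreeness of $D_d$ used in the proof of \cref{thm:torsion-defect}. The length therefore equals the number of common roots.

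Finally, \cref{thm:torsion-defect} gives $\deg h_d=d-\delta_d$. So the quotient degree is $d$ minus the number of $d$-torsion points of $\Gm^2$ lying on the toric line $L_0$, which is the second assertion. No step is a real obstacle; the only thing to state explicitly is the étaleness argument, which is where the hypothesis $p\nmid d$ is used.
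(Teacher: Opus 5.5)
Your proof is correct and follows the paper's route: the paper simply records the corollary as the second formulation of \cref{thm:torsion-defect}, and the reduction of the length $\delta_d$ to a point count via étaleness is already built into the definition of $\delta_d$. Your direct check that $\gcd\bigl(X^d-1,(-1-X)^d-1\bigr)$ is squarefree is a harmless extra verification of that reduction, not a different argument.
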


\begin{proof}
This is the second formulation of \cref{thm:torsion-defect}.  The connection with vanishing sums of roots of unity is classical; see \cite{Mann1965,ConwayJones1976,LamLeung} and \cite{GranvilleRudnick} for broader background.
\end{proof}

\begin{theorem}[wild torsion-defect decomposition]\label{prop:p-power-reduction}
Suppose $p>0$ and write $d=p^s d_0$ with $p\nmid d_0$.  Then
\[
        H_d^{\rm raw}(z)=\bigl(H_{d_0}^{\rm raw}(z)\bigr)^{p^s}
\]
as raw quotient expressions over characteristic $p$, where the exponent means Frobenius on values rather than iteration.  Equivalently $h_d=h_{d_0}^{p^s}$ after reduction.

Moreover, scheme-theoretically on the toric line $L_0$, one has
\[
        \ell\bigl(L_0\cap\boldsymbol\mu_d^2\bigr)
        =p^s\ell\bigl(L_0\cap\boldsymbol\mu_{d_0}^2\bigr),
\]
and therefore
\[
        \deg h_d
        =d-\ell\bigl(L_0\cap\boldsymbol\mu_d^2\bigr)
        =p^s\deg h_{d_0}.
\]
If, in addition, $q=p^k$, $L=\F_{q^3}$, $K=\F_q$, $\Gamma_q=\ker\Tr_{L/K}$, and
\[
        P_e(X)=X^{eq}-X^e\qquad(e\ge1),
\]
then
\[
        P_d(X)=P_{d_0}(X)^{p^s}
\]
as functions on $L$.  Consequently $P_d$ permutes $\Gamma_q$ if and only if $P_{d_0}$ permutes $\Gamma_q$.
\end{theorem}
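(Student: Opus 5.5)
The proof splits into three independent pieces: the Frobenius identity for the raw quotient, the length computation on $L_0$, and the permutation statement for $P_d$.

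For the first identity I will work in characteristic $p$ with $Q=p^s$. Since $x\mapsto x^Q$ is a ring endomorphism of $k[z]$ and $(-1)^Q=(-1)^{d_0 Q}\cdot(-1)^{d-d_0Q}$ equals $(-1)^d$ up to the fact that $d=Qd_0$, we get
\[
N_d(z)=(-1)^{d_0Q}(z+1)^{d_0Q}-z^{d_0Q}=\bigl((-1)^{d_0}(z+1)^{d_0}-z^{d_0}\bigr)^Q=N_{d_0}(z)^Q .
\]
This uses $((-1)^{d_0})^Q=(-1)^{d_0Q}$, which is trivially true. Likewise $D_d=(z^{d_0}-1)^Q=D_{d_0}^Q$. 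Hence $H_d^{\rm raw}=(H_{d_0}^{\rm raw})^{Q}$ as raw quotient expressions.

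Reduction commutes with this operation: $\gcd(A^Q,B^Q)=\gcd(A,B)^Q$, because $k[z]$ is a UFD and exponents of irreducible factors just scale by $Q$. So if $h_{d_0}=n/m$ in lowest terms, then $h_d=n^Q/m^Q$, again in lowest terms. Therefore $\deg h_d=Q\deg h_{d_0}$. This degree statement also follows from multiplicativity of degree under composition with $\Frob_Q$, and it includes the constant case, where both sides are $0$.

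For the length statement, I will note that $\boldsymbol\mu_d=\operatorname{Spec}k[X]/(X^d-1)$. On $L_0$, with coordinate $X$ and $Y=-1-X$, the intersection $L_0\cap\boldsymbol\mu_d^2$ is
\[
\operatorname{Spec}k[X]/\bigl(X^d-1,\;(-1-X)^d-1\bigr).
\]
Its length is $\deg\gcd(X^d-1,(-1-X)^d-1)$, counted with multiplicity. Both generators are $Q$-th powers of the corresponding $d_0$ polynomials, since $(-1-X)^{d_0Q}-1=((-1-X)^{d_0}-1)^Q$. The gcd is then the $Q$-th power of the tame gcd, which gives
\[
\ell(L_0\cap\boldsymbol\mu_d^2)=Q\,\ell(L_0\cap\boldsymbol\mu_{d_0}^2)=Q\delta_{d_0}.
\]
Combining this with \cref{thm:torsion-defect} for $d_0$ yields $\deg h_d=Q(d_0-\delta_{d_0})=d-\ell(L_0\cap\boldsymbol\mu_d^2)$.

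The one point to check carefully is the consistency between this gcd and the one in \cref{thm:torsion-defect}. There, cancellation was phrased via $\zeta\in\mu_d$ with $(-1)^d(\zeta+1)^d=1$, which is the same condition as $(-1-\zeta)^d=1$. In the tame case $X^{d_0}-1$ is squarefree, so the gcd is reduced, and its degree equals the point count $\delta_{d_0}$.

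For the final claim, $P_d(X)=X^{d_0Qq}-X^{d_0Q}=(X^{d_0q}-X^{d_0})^Q=P_{d_0}(X)^Q$ as polynomials, hence as functions on $L$. The map $x\mapsto x^Q$ is a field automorphism of $L$. It commutes with $\sigma$, so it preserves $\Gamma_q$ (indeed $\Tr(x^Q)=\Tr(x)^Q$) and restricts to a bijection $\Gamma_q\to\Gamma_q$. Hence $P_d|_{\Gamma_q}=\Frob_Q\circ P_{d_0}|_{\Gamma_q}$, where $P_{d_0}$ is viewed as a map $\Gamma_q\to L$. One of these maps permutes $\Gamma_q$ exactly when the other does, since $\Frob_Q$ is a bijection of $L$ that maps $\Gamma_q$ onto itself.

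I expect no real obstacle. The only delicate point is the bookkeeping that scheme-theoretic length equals the degree of the gcd with multiplicity, and that this matches the tame cancellation count.
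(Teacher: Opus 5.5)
Your proposal is correct and follows the paper's proof. It uses the same Frobenius identities for $N_d$, $D_d$ and $P_d$, and the same observation that the defining equations of $L_0\cap\boldsymbol\mu_d^2$ are $p^s$-th powers of the tame ones. The only difference is in the length step: you compute the length globally as $\deg\gcd$ in the PID $k[X]$ via $\gcd(A^Q,B^Q)=\gcd(A,B)^Q$, which also justifies that $h_d=h_{d_0}^{p^s}$ is in lowest terms. The paper instead works pointwise in $k[[u]]$, where the tame equations are unit multiples of $u$ and the wild ones generate $(u^{p^s})$.
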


\begin{proof}
In characteristic $p$,
\[
        z^d-1=(z^{d_0}-1)^{p^s}
\]
and
\[
        (-1)^d(z+1)^d-z^d
        =\left((-1)^{d_0}(z+1)^{d_0}-z^{d_0}\right)^{p^s}.
\]
This proves $H_d^{\rm raw}=(H_{d_0}^{\rm raw})^{p^s}$, and therefore $h_d=h_{d_0}^{p^s}$ after canceling common factors.

On the toric line $L_0$ we use the coordinate $X$, with $Y=-1-X$, and localize away from $X=0$ and $1+X=0$.  The intersection with $\boldsymbol\mu_d^2$ is defined by
\[
        X^d-1=0,
        \qquad (-1-X)^d-1=0.
\]
These two equations are the $p^s$-th powers of the corresponding equations for $d_0$.  At every closed point of the reduced intersection for $d_0$, the functions $X^{d_0}-1$ and $(-1-X)^{d_0}-1$ vanish simply because $p\nmid d_0$ and the point lies in the torus.  Thus, in the completed local ring $k[[u]]$, both tame local equations are unit multiples of $u$, while the wild local equations generate $(u^{p^s})$.  Each reduced point therefore contributes length $p^s$.  The degree identity follows from \cref{thm:torsion-defect} applied to $d_0$ and the equality $\deg h_d=p^s\deg h_{d_0}$.

For the finite-field assertion, fix $q=p^k$, $L=\F_{q^3}$, and $K=\F_q$.  Then
\[
        X^{dq}-X^d=(X^{d_0q}-X^{d_0})^{p^s}.
\]
The $p^s$-power Frobenius is a bijection of $L$ and preserves $\Gamma_q=\ker\Tr_{L/K}$, so the permutation property is unchanged.
\end{proof}

\section{Low-degree rigidity and maximal torsion collapse}\label{sec:rigidity}

This section proves \cref{thm:degree-one,thm:degree-two}.  We first isolate the elementary binomial rigidity needed for degree one.

\begin{lemma}[geometric binomial coefficients]\label{lem:binom-geometric}
Let $k$ be a field of characteristic $p>0$, let $d\ge4$ with $p\nmid d$, and suppose that the interior binomial coefficients
\[
        \binom d1,\binom d2,\ldots,\binom d{d-1}
\]
are nonzero and form a geometric progression in $k^*$.  Then $d=p^a-1$ for some $a\ge1$.
\end{lemma}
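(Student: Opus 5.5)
The plan is to turn the geometric-progression hypothesis into a linear congruence on $d$, pin down the common ratio, and finish with Lucas' theorem. The congruence itself is elementary; the real work is in the last step, where it has to be upgraded to the full digit condition.

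\emph{Step 1: the common ratio.} Let $r\in k^*$ be the common ratio, so $\binom d{j+1}=r\binom dj$ for $1\le j\le d-2$. To avoid dividing by integers that may vanish in $k$, I will work only with the integer identity $(j+1)\binom d{j+1}=(d-j)\binom dj$, read in $k$. Substituting the progression relation and cancelling the nonzero factor $\binom dj$ gives
\[
        (j+1)\,r=d-j\quad\text{in }k,\qquad 1\le j\le d-2 .
\]
Because $d\ge4$, this holds for both $j=1$ and $j=2$. Subtracting the two equations gives $r=-1$. Substituting back gives $d+1=0$ in $k$, i.e. $d\equiv-1\pmod p$.

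\emph{Step 2: closed form.} Since $\binom d1=d=-1$ in $k$ and $r=-1$, induction gives
\[
        \binom dj=(-1)^j\quad\text{in }k\qquad\text{for all }1\le j\le d-1 .
\]
The congruence $d\equiv-1$ alone does not finish the proof. For example, $p=5$, $d=9$ has all binomials nonzero, but they do not form a progression. This is the step I expect to be the main obstacle, and I will handle it by testing the closed form at prime powers via Lucas' theorem.

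\emph{Step 3: digits via Lucas.} Write $d=\sum_m d_mp^m$ in base $p$, with top index $t$ (so $d_t\ne0$). For each $m$ with $1\le m\le t$, we have $p^m\le d-1$, since $d\equiv-1\pmod p$ forces $d\ne p^m$. Lucas' theorem gives $\binom d{p^m}\equiv d_m\pmod p$, so the closed form yields
\[
        d_m=(-1)^{p^m}\quad\text{in }\F_p .
\]
For odd $p$ this says $d_m=p-1$. For $p=2$ it says $d_m=1$, which is again $p-1$. The bottom digit satisfies $d_0=p-1$ by Step 1. Hence every base-$p$ digit of $d$ equals $p-1$, and so $d=p^{t+1}-1$. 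This proves the lemma with $a=t+1\ge1$.
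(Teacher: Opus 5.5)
Your proof is correct and follows the paper's argument exactly through Steps 1 and 2: the identity $(j+1)\binom d{j+1}=(d-j)\binom dj$ at $j=1,2$ gives ratio $-1$ and $d=-1$, and then $\binom dj=(-1)^j$. The only difference is the finish. The paper checks the endpoint $\binom dd=(-1)^d$, multiplies by $1+z$ to obtain $(1+z)^{d+1}=1+z^{d+1}$, and applies Lucas to $d+1$. You instead apply Lucas at $j=p^m$ to read off each base-$p$ digit of $d$ directly, which avoids the endpoint check.
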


\begin{proof}
Write
\[
        \binom d{j+1}=\rho\binom dj
        \qquad(1\le j\le d-2)
\]
for some $\rho\in k^*$.  The identity
\[
        (j+1)\binom d{j+1}=(d-j)\binom dj
\]
gives
\[
        (j+1)\rho=d-j
        \qquad(1\le j\le d-2).
\]
Taking $j=1$ and $j=2$ and subtracting gives $\rho=-1$, without dividing by $2$ or $3$.  Then $2\rho=d-1$ gives $d+1=0$ in $k$.

Since $\binom d1=d=-1$ in $k$ and the ratio is $-1$, the interior coefficients satisfy
\[
        \binom dj=(-1)^j\qquad(1\le j\le d-1).
\]
For $j=d-1$ this says $d=(-1)^{d-1}$; as $d=-1$ in $k$, we get $(-1)^d=1$, so the endpoint $\binom dd=1$ also equals $(-1)^d$.  Hence
\[
        (1+z)^d=\sum_{j=0}^d(-1)^jz^j.
\]
Multiplying by $1+z$ gives
\[
        (1+z)^{d+1}=1+z^{d+1}.
\]
By Lucas' theorem \cite{Lucas1878}, equivalently by writing $d+1$ in base $p$, all intermediate binomial coefficients of $(1+z)^{d+1}$ vanish in characteristic $p$ if and only if $d+1$ is a power of $p$.  Thus $d+1=p^a$.
\end{proof}

\begin{lemma}[degree one forces a geometric progression]\label{lem:degree-one-geometric}
Assume $p\nmid d$ if $p>0$, and let $d\ge4$.  If the reduced rational function $h_d$ has degree at most one, then the interior binomial coefficients
\[
        \binom d1,\ldots,\binom d{d-1}
\]
are nonzero and form a geometric progression in the ground field.
\end{lemma}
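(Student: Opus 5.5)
The plan is to read off the shape of the numerator $N_d$ from the fact that almost all of $D_d=z^d-1$ cancels, and then compare coefficients. By \cref{thm:torsion-defect}, $\deg h_d=d-\delta_d\le 1$, so $\delta_d\ge d-1$. By \cref{cor:reduced-denominator-degree} the common factor $C_d=\gcd(N_d,D_d)$ is a product of $\delta_d$ distinct linear factors of $z^d-1$. Write $N_d=C_dn_d$ and $D_d=C_dd_d$. Since $\deg N_d\le d$, we get $\deg n_d\le d-\delta_d\le 1$, and $\deg d_d=d-\delta_d$.

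\emph{Case $\delta_d=d$.} Then $C_d=z^d-1$ divides $N_d$, and $\deg N_d\le d$, so $N_d=c(z^d-1)$ for some constant $c$. The coefficient of $z$ in $N_d=(-1)^d(z+1)^d-z^d$ is $(-1)^d\binom d1=(-1)^dd$. In $c(z^d-1)$ this coefficient is $0$, because $d\ge 4>1$. This contradicts $d\ne0$ in $k$, which holds since $p\nmid d$. So this case cannot occur.

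\emph{Case $\delta_d=d-1$.} Then $d_d=z-\zeta_0$ for a single $\zeta_0\in\mu_d$, and
\[
C_d=\frac{z^d-1}{z-\zeta_0}=\sum_{j=0}^{d-1}\zeta_0^{\,d-1-j}z^j ,
\]
using $\zeta_0^d=1$. Write $n_d=\alpha z+\beta$. For $1\le j\le d-1$, the coefficient of $z^j$ in $N_d=(\alpha z+\beta)C_d$ is
\[
\alpha\zeta_0^{\,d-j}+\beta\zeta_0^{\,d-1-j}=\zeta_0^{\,d-1-j}(\alpha\zeta_0+\beta).
\]
For the same range of $j$, the coefficient of $z^j$ in $N_d$ is $(-1)^d\binom dj$, since the $-z^d$ term only affects degree $d$. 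Hence
\[
\binom dj=(-1)^d(\alpha\zeta_0+\beta)\,\zeta_0^{\,d-1-j}\qquad(1\le j\le d-1).
\]
This is a geometric progression with ratio $\zeta_0^{-1}\in k^*$.

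For nonvanishing, first suppose $\alpha\zeta_0+\beta=0$. Then every interior coefficient would vanish, in particular $\binom d1=d$, contradicting $p\nmid d$. So $\alpha\zeta_0+\beta\ne0$, and since $\zeta_0$ is a unit, every term $\binom dj$ is nonzero.

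The constant-$h_d$ situation is automatically covered, because it only changes $\deg n_d$, and the argument above allows $\alpha=0$. The only point needing care is the bookkeeping that $C_d$ consists exactly of the cancelled roots and is squarefree. That is supplied by \cref{cor:reduced-denominator-degree}, so I do not expect a genuine obstacle.
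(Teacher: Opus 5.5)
Your proof is correct, but it takes a different route from the paper.

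\textbf{The paper's route.} It does not invoke the torsion-defect theorem at this point. It encodes degree at most one as a linear relation $(Cz+E)N_d=(Az+B)D_d$. Since $(Az+B)(z^d-1)$ has no terms in degrees $2,\ldots,d-1$, this gives the two-term recurrence $Cn_{j-1}+En_j=0$. It then excludes $C=0$ and $E=0$ using $n_1=n_{d-1}=(-1)^dd\ne0$, which yields nonvanishing and a geometric progression with ratio $-C/E$.

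\textbf{Your route.} You use \cref{thm:torsion-defect} and \cref{cor:reduced-denominator-degree} to identify the cancelled factor exactly as $(z^d-1)/(z-\zeta_0)$, where $\zeta_0\in\mu_d$ is the single surviving root. Because that cofactor is itself a geometric polynomial, multiplying by $\alpha z+\beta$ makes the interior coefficients geometric with ratio $\zeta_0^{-1}$ at once.

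\textbf{What each buys.} Your argument gives more information: the ratio is the inverse of the unique uncancelled pole. Combined with $\rho=-1$ from \cref{lem:binom-geometric}, this forces that pole to be $z=-1$, consistent with $h_d=\tau^2$. The paper's argument is a self-contained coefficient comparison. It does not need squarefreeness of $z^d-1$, the structure of the gcd, or passage to an algebraic closure, so it works verbatim over the ground field. It is also the same template later extended to the degree-two recurrence of \cref{lem:degree-two-elimination-details} and to the linear test of \cref{prop:fixed-m-linear-test}.

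\textbf{A small slip.} Your closing remark is slightly off. The constant case is not covered by allowing $\alpha=0$ in your second case: there $\deg d_d=1$, so $h_d=\beta/(z-\zeta_0)$ still has degree one. The constant case is exactly your first case $\delta_d=d$, which you correctly rule out.
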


\begin{proof}
Write $N_d=(-1)^d(z+1)^d-z^d$ and $D_d=z^d-1$.  Since the reduced form of $N_d/D_d$ has degree at most one, there are constants $A,B,C,E$, not all zero, such that
\begin{equation}\label{eq:linear-relation}
        (Cz+E)N_d=(Az+B)D_d.
\end{equation}
A constant map would force $N_d$ to be a scalar multiple of $D_d$, which is impossible for $d\ge4$ because the coefficient of $z$ in $N_d$ is $(-1)^d d\ne0$ whereas $D_d$ has no $z$-term.  Thus \eqref{eq:linear-relation} represents a genuine degree-at-most-one reduced map.

Let $n_j$ be the coefficient of $z^j$ in $N_d$ for $1\le j\le d-1$.  Then $n_j=(-1)^d\binom dj$ and $n_1=n_{d-1}=(-1)^d d\ne0$.  We first show $C,E\ne0$.  If $E=0$, then comparing coefficients of $z^j$ for $2\le j\le d-1$ in \eqref{eq:linear-relation} gives $Cn_{j-1}=0$, hence $n_1=0$, contradiction.  If $C=0$, the same comparison gives $En_j=0$ for $2\le j\le d-1$, hence $n_{d-1}=0$, again a contradiction.

Now for $2\le j\le d-1$, the coefficient of $z^j$ on the right side of \eqref{eq:linear-relation} is zero, because $(Az+B)(z^d-1)$ has only the terms $Az^{d+1}$, $Bz^d$, $-Az$, and $-B$.  Hence
\[
        Cn_{j-1}+En_j=0
        \qquad(2\le j\le d-1).
\]
Since $C,E\ne0$ and $n_1\ne0$, the sequence $n_1,\ldots,n_{d-1}$ is nonzero and geometric.  Therefore the same is true of the interior binomial coefficients.
\end{proof}

\begin{theorem}[degree-one rigidity]\label{thm:degree-one}
Let $k$ be an algebraically closed field of characteristic $p\ge0$, let $d\ge1$, and assume $p\nmid d$ if $p>0$.  Let $h_d$ be the reduced rational function associated with \eqref{eq:Hd}.  If $p=0$, then $\deg h_d\le1$ if and only if $d=1$ or $d=3$.  If $p>0$, then
\[
        \deg h_d\le1
        \quad\Longleftrightarrow\quad
        d=1,
        \quad d=3,
        \quad\text{or}\quad d=p^a-1\text{ for some }a\ge1.
\]
Here the positive-characteristic entry $d=3$ is subject to the standing hypothesis $p\nmid d$, hence means $p\ne3$.  For $d=p^a-1$ one has $h_d=\tau^2$.
\end{theorem}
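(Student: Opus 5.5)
The plan is to split into small exponents $d\le3$, handled by direct computation, and large exponents $d\ge4$, handled by \cref{lem:degree-one-geometric} and \cref{lem:binom-geometric}; the sufficiency direction for $d=p^a-1$ is checked by an explicit identity.

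\emph{Small $d$.} For $d=1$ the raw quotient is $(-2z-1)/(z-1)$, which has degree at most one in every characteristic. For $d=2$ we have $N_2=2z+1$ and $D_2=z^2-1$, with $p\ne2$. By \cref{thm:torsion-defect}, $\deg h_2=2-\delta_2$. Here $\delta_2$ counts $x,y\in\{\pm1\}$ with $1+x+y=0$, which happens only when $p=3$ (all three of $1,x,y$ equal to $1$), giving $\delta_2=1$. Thus $\deg h_2\le1$ exactly when $p=3$, and then $d=2=3^1-1$ is in the list. For $d=3$ with $p\ne3$, the pairs $(x,y)=(\omega,\omega^2)$ and $(\omega^2,\omega)$ give $\delta_3\ge2$, so $\deg h_3\le1$. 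When $p=2$, $d=3=2^2-1$ is also of Mersenne type, which is consistent.

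\emph{Necessity for $d\ge4$.} If $\deg h_d\le1$, then \cref{lem:degree-one-geometric} makes the interior binomial coefficients nonzero and geometric. In characteristic zero this is impossible for $d\ge4$: the ratios $\binom d2/\binom d1=(d-1)/2$ and $\binom d3/\binom d2=(d-2)/3$ would have to agree, forcing $3d-3=2d-4$, i.e.\ $d=-1$. In characteristic $p$, \cref{lem:binom-geometric} yields $d=p^a-1$.

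\emph{Sufficiency and $h_d=\tau^2$ for $d=p^a-1$.} Put $Q=p^a$, so that $(z+1)^Q=z^Q+1$ and $(-1)^d=(-1)^{Q-1}$. Multiply $N_d$ and $D_d$ by $z(z+1)$; it is enough to compare cross-multiplied expressions. We have $z^{d}=z^Q/z$ and $(z+1)^d=(z^Q+1)/(z+1)$. Then
\[
N_d=(-1)^{Q-1}\frac{z^Q+1}{z+1}-\frac{z^Q}{z},\qquad D_d=\frac{z^Q-z}{z}.
\]
Now use $(-1)^Q=-1$, which holds for both odd and even $p$ (for $p=2$ signs are irrelevant). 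This gives
\[
zN_d(z+1)=(z^Q+1)z-z^Q(z+1)=z-z^Q=-(z^Q-z).
\]
Hence
\[
H_d^{\rm raw}=\frac{-(z^Q-z)}{z(z+1)}\cdot\frac{z}{z^Q-z}=-\frac{1}{z+1}.
\]
Finally, $\tau^2(z)=\tau(-1-1/z)=-1+z/(z+1)=-1/(z+1)$, so $h_d=\tau^2$, which has degree one.

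The main obstacle is the sign bookkeeping for $(-1)^d$ across the parities of $p$, together with making sure the small cases $d=2,3$ land exactly in the claimed list, including $p=2$ with $d=3$ and $p=3$ with $d=2$. Beyond that, the proof is assembly of the earlier lemmas.
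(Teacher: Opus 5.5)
Your proposal is correct and follows the paper's proof essentially step for step: explicit treatment of $d\le 3$, then \cref{lem:degree-one-geometric} and \cref{lem:binom-geometric} for $d\ge4$ (with the ratio comparison $(d-1)/2\ne(d-2)/3$ in characteristic zero), and the Frobenius identity $(z+1)^{p^a}=z^{p^a}+1$ giving $h_{p^a-1}=\tau^2$. The only difference is cosmetic: you settle $d=2$ and $d=3$ through the torsion-defect count of \cref{thm:torsion-defect}, whereas the paper writes out $H_2^{\rm raw}$ and $H_3^{\rm raw}$ directly; both are equally valid.
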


\begin{proof}
The small cases are explicit.  For $d=1$,
\[
        H_1^{\rm raw}(z)=\frac{-(z+1)-z}{z-1}=\frac{-2z-1}{z-1}.
\]
For $d=2$,
\[
        H_2^{\rm raw}(z)=\frac{(z+1)^2-z^2}{z^2-1}=\frac{2z+1}{z^2-1},
\]
which has degree $2$ unless characteristic $3$, in which case $d=2=3^1-1$.  For $d=3$ and $p\ne3$,
\[
        H_3^{\rm raw}(z)=\frac{-(z+1)^3-z^3}{z^3-1}
        =-\frac{2z+1}{z-1}.
\]
In the small overlaps with the branch $d=p^a-1$, these formulas also give the asserted identity $h_d=\tau^2$: in characteristic $2$ for $d=1$ and $d=3$, and in characteristic $3$ for $d=2$, the displayed expressions reduce to $-1/(z+1)=\tau^2(z)$.

Now assume $d\ge4$ and $p=0$.  If $\deg h_d\le1$, the same coefficient comparison as in \cref{lem:degree-one-geometric} implies that the integer binomial coefficients $\binom d1,\ldots,\binom d{d-1}$ form a geometric progression in $\mathbb Q^*$.  This is impossible for $d\ge4$ since the ratios $\binom d2/\binom d1=(d-1)/2$ and $\binom d3/\binom d2=(d-2)/3$ are unequal over $\mathbb Q$.

Finally assume $p>0$ and $d\ge4$.  By \cref{lem:degree-one-geometric,lem:binom-geometric}, degree at most one forces $d=p^a-1$.  Conversely, if $d=p^a-1$, then in characteristic $p$,
\[
        (z+1)^{p^a}=z^{p^a}+1.
\]
Using $d=p^a-1$ gives $(-1)^d=1$ in the ground field.  Therefore
\[
        (z+1)^d=\frac{z^{p^a}+1}{z+1},
        \qquad
        z^d=\frac{z^{p^a}}{z}.
\]
Thus
\[
\begin{aligned}
        H_d^{\rm raw}(z)
        &=\frac{(z^{p^a}+1)/(z+1)-z^{p^a}/z}{z^{p^a}/z-1}  \\
        &=\frac{z-z^{p^a}}{z(z+1)}\cdot\frac{z}{z^{p^a}-z}
        =-\frac1{z+1}=\tau^2(z)
\end{aligned}
\]
as an identity in $k(z)$, so the reduced quotient is $h_d=\tau^2$.  This completes the proof.
\end{proof}

\begin{corollary}[maximal torsion defect]\label{cor:max-defect}
Assume $p\nmid d$.  Then
\[
        \ell(L_0\cap\muGP_d^2)\ge d-1
\]
if and only if $d=1$, $d=3$, or, in positive characteristic, $d=p^a-1$ for some $a\ge1$.
\end{corollary}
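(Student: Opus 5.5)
This is a direct translation of \cref{thm:degree-one} through the tame degree formula \cref{thm:torsion-defect}.

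Under the standing hypothesis $p\nmid d$, \cref{thm:torsion-defect} gives $\deg h_d=d-\delta_d$ with $\delta_d=\ell(L_0\cap\muGP_d^2)$. The condition $\ell(L_0\cap\muGP_d^2)\ge d-1$ is therefore the same as $\deg h_d\le 1$. This holds also when $d-\delta_d=0$, since the convention of \cref{def:quotient-conventions} assigns morphism degree $0$ to a constant reduced quotient, so that case is included in $\deg h_d\le1$.

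With this identification, \cref{thm:degree-one} gives the equivalence directly. In characteristic zero, $\deg h_d\le1$ holds exactly for $d=1$ or $d=3$. In characteristic $p>0$ with $p\nmid d$, it holds exactly for $d=1$, $d=3$ (necessarily $p\ne3$), or $d=p^a-1$. These are precisely the three alternatives listed in the corollary. Note that $d=p^a-1$ is automatically prime to $p$, so it satisfies the standing hypothesis.

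The only point needing care is bookkeeping at the boundary. In characteristic $3$, $h_1$ is constant, so $\delta_1=1$. In characteristic $2$, the cases $d=1$ and $d=3$ coincide with $p^a-1$. Both situations fall on the same side of the equivalence, because the corollary asks only for $\delta_d\ge d-1$, never for equality. Beyond this, I expect no real obstacle: the substantive work, namely the binomial geometric-progression rigidity of \cref{lem:binom-geometric,lem:degree-one-geometric}, has already been done.
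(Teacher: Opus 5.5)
Your proposal is correct and is exactly the paper's argument: the torsion-defect formula $\deg h_d=d-\delta_d$ turns $\ell(L_0\cap\muGP_d^2)\ge d-1$ into $\deg h_d\le1$ (with the degree-$0$ convention for constant quotients), and \cref{thm:degree-one} then supplies the list. Your boundary remarks are also accurate: the constant quotient $h_1$ in characteristic $3$ has $\delta_1=1$, and in characteristic $2$ the cases $d=1,3$ coincide with $2^1-1$ and $2^2-1$.
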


\begin{proof}
By \cref{thm:torsion-defect}, the inequality is equivalent to $\deg h_d\le1$.  Apply \cref{thm:degree-one}.
\end{proof}

We now classify the next stratum.  The proof is a coefficient-comparison argument.  The exceptional pair $(p,d)=(11,5)$ is not an artifact of the method; it is the index-two cyclotomic case over $\F_{11}$ in \cref{cor:index2-low}.  The following lemma spells out the algebraic eliminations used in the characteristic-zero and characteristic-$p\ge5$ part of the proof.

\begin{lemma}[degree-two elimination details]\label{lem:degree-two-elimination-details}
Let $d\ge4$, let $b_j=\binom dj$, let $\eps=(-1)^d$, and suppose that
\[
        Q(z)N_d(z)=P(z)D_d(z),
        \qquad Q(z)=z^2-Sz+T,
        \qquad T\ne0,
\]
with $\deg P\le2$.  For $3\le j\le d-1$ put
\[
        R_j=b_{j-2}-S b_{j-1}+T b_j.
\]
Then coefficient comparison gives $R_j=0$ in this range.  In characteristic zero or characteristic $p\ge5$, the boundary coefficients in degrees $0,1,2,d,d+1,d+2$ give the following explicit eliminations, all interpreted in the ground field.  Whenever a displayed residual contains a denominator, it is used only in characteristics where that denominator is invertible; the only residual with denominator $120$ is also recorded below in cleared form before it is specialized.

If $\eps=1$, the boundary equations are
\[
        b_2-Sd+T=0,
        \qquad S=d(T+1),
        \qquad b_2T-Sd+1=0.
\]
If $d+1\ne0$, they imply
\[
        (d-2)(3d-2)=0.
\]
On the branch $d=2$ one obtains $S=0$ and $T=-1$, and $R_3=d-b_3$.  On the branch $3d-2=0$, after substituting the boundary solution one has
\[
        R_3=-\frac{d(d-3)(d+1)}3,
        \qquad
        R_4=-\frac{d(d-4)(d+1)(5d-6)}{48}.
\]

If $\eps=-1$, the boundary equations are
\[
        b_2-Sd+3T=0,
        \qquad 3S=d(T+1),
        \qquad b_2T-Sd+3=0.
\]
If $d=3$ in the ground field, these equations reduce to $T=S-1$ and $R_3=2-2S$, so $R_3=0$ forces $S=1$ and hence $T=0$.  If $d-3\ne0$, the boundary equations imply
\[
        (d+2)(d+6)=0.
\]
Modulo this relation, the residual at $j=3$ is
\[
        R_3=-\frac{d^3+3d^2+2d-12}{6}.
\]
On the branch $d=-6$, if this residual vanishes, then the characteristic is $11$.  On the same branch the $j=5$ residual satisfies the cleared identity
\[
        120R_5=-(d-2)(d-1)(3d^3+3d^2-32d-72).
\]
Consequently, after the preceding residual has forced characteristic $11$, this is equivalently
\[
        R_5=-\frac{(d-2)(d-1)(3d^3+3d^2-32d-72)}{120}.
\]
\end{lemma}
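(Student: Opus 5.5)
The plan is direct coefficient comparison in $Q N_d=P D_d$, done degree by degree. I would write $N_d=\eps(z+1)^d-z^d=\sum_j n_jz^j$, with $n_j=\eps b_j$ for $1\le j\le d-1$, $n_0=\eps$, and $n_d=\eps-1$. Then I would expand
\[
        QN_d=z^2N_d-Sz N_d+TN_d .
\]
Since $P D_d=P(z)(z^d-1)$ with $\deg P\le2$, the right side has no monomials in degrees $3,\dots,d-1$. This works because $d\ge4$: in degrees $3\le j\le d-1$ the right side contributes only through $z^dP$, which starts at degree $d$, and through $-P$, which stops at degree $2$. Hence for $3\le j\le d-1$ the coefficient of $z^j$ on the left is $\eps(b_{j-2}-Sb_{j-1}+Tb_j)=\eps R_j$. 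This needs a small check at the endpoints: for $j=3$ the term $b_{j-2}=b_1$ is the interior coefficient $d$, and for $j=d-1$ the term $b_j=b_{d-1}$ is still interior. So $R_j=0$ in that range.

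For the boundary, write $P=p_2z^2+p_1z+p_0$. Degrees $0,1,2$ give $-p_0,-p_1,-p_2$ in terms of $T\eps$, $\eps(Td-S)$ and $\eps(b_2-Sd+T)$. Degrees $d,d+1,d+2$ give $p_0,p_1,p_2$ in terms of the top coefficients, using $n_d=\eps-1$, $n_{d-1}=\eps d$ and $n_{d-2}=\eps b_2$. Eliminating $p_0,p_1,p_2$ produces three equations. I would treat $\eps=1$ (where $n_d=0$) and $\eps=-1$ (where $n_d=-2$) separately; this accounts for the factors $1$ versus $3$ that appear after adding the two contributions. I would then verify that the resulting equations match the displayed boundary systems.

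The elimination itself proceeds as follows.

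In the case $\eps=1$, substitute $S=d(T+1)$ into the other two equations. Subtracting them gives $(b_2-1)(1-T)=0$ up to $d$-terms. One branch leads to a quadratic in $d$ after using $b_2=d(d-1)/2$; here I would clear the factor $d+1$ and obtain $(d-2)(3d-2)=0$. Then I would substitute back into $R_3$ and $R_4$, writing $b_3$ and $b_4$ as polynomials in $d$ over the invertible denominators $6$ and $24$.

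In the case $\eps=-1$, I would handle it the same way. With $d=3$ in the field, the linear solution is immediate. Otherwise $S=d(T+1)/3$, and elimination yields $(d+2)(d+6)=0$. On $d=-6$, the residual $R_3$ becomes an integer, which I expect to evaluate to $\pm 132/6$ or similar. Its vanishing then forces the characteristic to divide $22$, hence to be $11$ since $p\ge5$. Finally I would compute $120R_5$ as a polynomial identity in $d$ after substituting $S,T$; it is cleared first precisely so that division by $120$ is only invoked in characteristic $11$.

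The main obstacle is bookkeeping rather than ideas. Each residual must be reduced modulo the branch relation, and on each branch I must track which denominators ($2,3,6,24,48,120$) are legitimately invertible. I would do these substitutions symbolically, as polynomial identities in $\mathbb Z[d]$, and specialize only at the end.
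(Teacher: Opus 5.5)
Your plan is correct and is essentially the paper's proof: the same coefficient comparison, the same six boundary equations, and the same elimination. Your difference $(b_2-1)(1-T)=0$ of the two linear equations in $T$ just splits the paper's resultant $(1-d^2)^2-(b_2-d^2)^2=(1-b_2)(1+b_2-2d^2)$ into its two branches. Note that both branches are needed: $b_2=1$ gives $(d-2)(d+1)=0$ and $T=1$ gives $(3d-2)(d+1)=0$. The rest matches the paper as well: substitution into $R_3,R_4,R_5$, with $120R_5$ kept in cleared form.

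One slip to fix: the $z^2$ coefficient of $QN_d$ is $n_0-Sn_1+Tn_2=\eps(1-Sd+Tb_2)$, not $\eps(b_2-Sd+T)$. As written, the degree-$2$/degree-$(d+2)$ comparison would not produce the third boundary equation $b_2T-Sd+1=0$ (resp.\ $b_2T-Sd+3=0$), and for $\eps=1$ it would simply duplicate the first.
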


\begin{proof}
The recurrence $R_j=0$ is exactly the coefficient comparison in the degrees $3\le j\le d-1$, because the right side $P(z)(z^d-1)$ has no terms in those degrees.  The boundary equations are obtained by comparing the remaining coefficients in degrees $0,1,2,d,d+1,d+2$ and using $b_2=d(d-1)/2$.

For $\eps=1$, substituting $S=d(T+1)$ into the first and third boundary equations gives two linear equations in $T$:
\[
        (1-d^2)T+\left(b_2-d^2\right)=0,
        \qquad
        \left(b_2-d^2\right)T+(1-d^2)=0.
\]
Their resultant is
\[
        (1-d^2)^2-\left(b_2-d^2\right)^2
        =\frac{(d-2)(d+1)^2(3d-2)}4.
\]
Since $d+1\ne0$, this gives $(d-2)(3d-2)=0$.  If $d=2$, the boundary equations give $S=0$ and $T=-1$, hence $R_3=b_1-b_3=d-b_3$.  On the branch $3d-2=0$, the first two boundary equations give
\[
        S=\frac{d(d-2)}{2d-2},
        \qquad
        T=-\frac d{2d-2},
\]
and substitution into $R_3$ and $R_4$ gives the two displayed formulas.

For $\eps=-1$, if $d=3$ in the ground field, the displayed reductions are immediate from the three boundary equations and the formula for $R_3=b_1-Sb_2+Tb_3$.  Now assume $d-3\ne0$.  If $d+3=0$, the first two boundary equations are inconsistent: they reduce to $S+T+2=0$ and $S+T+1=0$.  Hence $d+3\ne0$.  Solving the first two boundary equations gives
\[
        S=\frac{d(d+2)}{2d+6},
        \qquad
        T=\frac d{2d+6}.
\]
Substitution into the third boundary equation gives
\[
        -\frac{(d-3)(d+2)(d+6)}{4(d+3)}=0,
\]
so, under $d-3\ne0$, one obtains $(d+2)(d+6)=0$.  Substitution into $R_3$ gives
\[
        R_3+\frac{d^3+3d^2+2d-12}{6}
        =\frac{(d-1)(d+2)(d+6)}{2(d+3)},
\]
which proves the displayed residual modulo $(d+2)(d+6)$.  Similarly, after clearing the factor $120$ and the already-invertible factor $d+3$, one has
\[
\begin{aligned}
 &(d+3)\left(120R_5+(d-2)(d-1)(3d^3+3d^2-32d-72)\right) \\
 &\qquad=(d-3)(d-2)(d-1)(d+2)(d+6)^2 .
\end{aligned}
\]
Since $d+3\ne0$, the cleared $R_5$ identity holds on the branch $d=-6$.  Finally, the $j=3$ residual on $d=-6$ is $22$, so its vanishing forces characteristic $11$; only after this point do we divide by $120$.
\end{proof}

\begin{theorem}[degree-two rigidity]\label{thm:degree-two}
Let $k$ be an algebraically closed field of characteristic $p\ge0$, let $d\ge1$, and assume $p\nmid d$ if $p>0$.  Then $\deg h_d=2$ if and only if either
\[
        d=2\quad\text{and}\quad \bigl(p=0\text{ or }p\ne2,3\bigr),
\]
or
\[
        (p,d)=(11,5).
\]
In the exceptional case,
\[
        h_5(z)=\frac{-2z^2-2z+1}{z^2+4z+1}
        \qquad\text{in characteristic }11.
\]
\end{theorem}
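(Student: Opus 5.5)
Small cases first.  The case $d=1$ never gives degree two, by \cref{thm:degree-one}.  For $d=2$ the formula $H_2^{\rm raw}=(2z+1)/(z^2-1)$ from the proof of \cref{thm:degree-one} settles everything.  The standing hypothesis $p\nmid d$ excludes $p=2$.  In characteristic $3$ we have $d=2=3^1-1$, so the degree collapses to one.  Otherwise $2z+1$ is not a root of $z^2-1$, since $z=-1/2$ would give $1/4=1$, i.e.\ $p=3$; hence the degree is exactly two.  For $d=3$ with $p\ne3$ the degree is at most one, again by \cref{thm:degree-one}.

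It therefore suffices to treat $d\ge4$.  Suppose $\deg h_d=2$.  By \cref{cor:reduced-denominator-degree} the reduced denominator is a monic quadratic $Q=z^2-Sz+T$ dividing $z^d-1$, so $T\ne0$.  We then have $QN_d=PD_d$, where $P$ is the reduced numerator, up to the constant normalizations, and $\deg P\le2$.  This is exactly the situation of \cref{lem:degree-two-elimination-details}, and in characteristic $0$ or $p\ge5$ we run through its branches.

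\emph{Branch $\eps=1$.}  If $d+1=0$ in $k$, then $d=p^a-1$ is possible only when $d+1$ is a $p$-power; otherwise $R_j=0$ is still a constraint.  We handle this directly.  From $S=d(T+1)=-(T+1)$ and $b_j=(-1)^j$ in the $p^a-1$ case, \cref{thm:degree-one} already gives degree one.  In general we would use the $R_j$ recurrence to force geometric behavior and contradict.  If $d+1\ne0$, then $(d-2)(3d-2)=0$.
\begin{itemize}
\item On $d=2$ in $k$ (so $p\mid d-2$ with $d\ge4$), $R_3=d-b_3=2-\binom d3$ must vanish.  Together with $R_4$ and the recurrence, this yields $(1+z)^{d}$-type Lucas constraints, which we rule out.
\item On $3d-2=0$, both $R_3$ and $R_4$ must vanish.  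In characteristic zero this already fails, since $d=2/3$ is not an integer.  In characteristic $p\ge5$ we check that $d(d-3)(d+1)=0$ and $(d-4)(5d-6)=0$ are incompatible with $3d=2$ except in small primes, which we then enumerate.
\end{itemize}

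\emph{Branch $\eps=-1$.}  If $d=3$ in $k$, then $R_3=0$ forces $T=0$, a contradiction.  Otherwise $(d+2)(d+6)=0$.  In characteristic zero both options are impossible for positive $d$.  For $p\ge5$, the branch $d=-2$ reduces the residual $R_3=-(d^3+3d^2+2d-12)/6$ to $2$, which is nonzero.  On $d=-6$, vanishing of $R_3$ forces $p=11$, so $d\equiv5\pmod{11}$.  Then $R_5=0$ (when $d\ge6$) together with the cleared identity pins down $d$; since $p\nmid d$ and $d$ is odd, further recurrence terms together with Lucas' theorem must force $d=5$ exactly.  For $d=5$, $p=11$ we verify the displayed formula for $h_5$ by computing $\gcd(N_5,z^5-1)$ directly, giving the cancelled cubic.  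We identify it as the index-two cyclotomic collapse, since $\mu_5\subset\F_{11}$ has defect $3$.

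Characteristics $2$ and $3$ are handled separately, and this is the main obstacle.  Here I would bypass the eliminations and use the torsion defect count of \cref{thm:torsion-defect}: degree two means $\delta_d=d-2$.  That is, all but two elements $\zeta\in\mu_d$ satisfy $\tau(\zeta)\in\mu_d$.  Since $\tau$ permutes this set with orbits of size three, apart from the $\tau$-fixed points (primitive cube roots), a counting argument modulo $3$, combined with the Frobenius-stable structure of $\mu_d$, should force $\mu_d$ to be almost $\tau$-stable.  Sums of $\mu_d$ under $1+x+y=0$ then make $\mu_d\cup\{0,-1\}$ nearly closed under $z\mapsto -1-z$, which forces $\mu_d\cup\{0\}$ to be almost a subfield, i.e.\ $d=p^a-1$.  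That case has degree one, a contradiction.  The delicate part, which I expect to require the most care, is making this near-subfield argument rigorous and uniform in $d$, and likewise the higher-$d$ exclusion on the $p=11$ branch.
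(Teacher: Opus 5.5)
\textbf{Characteristics $2$ and $3$.} These cases are not proved. ``Almost $\tau$-stable'' and ``almost a subfield'' are never turned into $d=p^a-1$, and that step is the whole content. The missing observation is that you never need to leave the coefficient comparison. The identity $QN_d=PD_d$, its three boundary equations, and the recurrence $b_{j-2}-Sb_{j-1}+Tb_j=0$ for $3\le j\le d-1$ hold in every characteristic. Only the eliminations in \cref{lem:degree-two-elimination-details} divide by $2,3,6,48$. The paper therefore just specializes the boundary equations:
\begin{itemize}
\item For $p=2$: $d=1$ in $k$ forces $b_2=1$ and $T=S+1$. The recurrence $b_{j-2}+Sb_{j-1}+(S+1)b_j=0$ on the $\F_2$-valued $b_j$ then forces every interior $b_j=1$.
\item For $p=3$ and $\eps=-1$: one gets $T=-1$, $b_2=0$, $S=0$, so $b_{d-1}=b_2=0\ne d$.
\item For $p=3$ and $\eps=1$: $d=1$ in $k$ is inconsistent. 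The value $d=-1$ gives $S+T=-1$, and induction using $T\ne0$ gives $b_j=(-1)^j$.
\end{itemize}
In the geometric cases, \cref{lem:binom-geometric} gives $d=p^a-1$, which is a degree-one case.

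Your torsion-defect route can also be closed, but by symmetry, not by near-subfield structure. The defect set $E=\{\zeta\in\mu_d:\tau(\zeta)\in\mu_d\}$ is stable under $\tau$ and under $\zeta\mapsto\zeta^{-1}$.
\begin{itemize}
\item In characteristic $3$, the only fixed point of $\tau$ is $1$, not a primitive cube root, and $1\in E$. Hence $\delta_d\equiv1\pmod3$, and $\deg h_d=2$ would force $3\mid d$.
\item In characteristic $2$, $1\notin E$, and $1$ is the only self-inverse element of $\mu_d$. Hence $d-\delta_d=|\mu_d\setminus E|$ is odd.
\end{itemize}

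\textbf{Characteristic zero and $p\ge5$.} Here you follow the paper's route through \cref{lem:degree-two-elimination-details}, but several branches are left as promises, and some proposed fixes misread the situation.
\begin{itemize}
\item When $\eps=1$ and $d=-1$ in $k$, you must actually run the induction: $b_1=-1$, $b_2=1$, $S=-(1+T)$ and $T\ne0$ give $b_j=(-1)^j$. Then apply \cref{lem:binom-geometric}.
\item On the branch $d=2$ in $k$, no Lucas argument is needed. Here $b_3=d(d-1)(d-2)/6=0$, so $R_3=2\ne0$.
\item On $3d-2=0$, the condition $R_3=0$ forces $d=3$ and $p=7$. Then $d\ne4$ puts $j=4$ in range, and $R_4\ne0$ there. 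There is nothing further to enumerate.
\item Most importantly, on the $p=11$ branch $R_5$ does not ``pin down $d$.'' For $j<p$ the coefficients $b_j$, and hence $S$, $T$ and $R_j$, depend only on $d\bmod 11$. The cleared identity therefore gives the constant $R_5=9\in\F_{11}$ for every $d\equiv5\pmod{11}$. Every $d>5$, for which $j=5$ is in range, is excluded at once, with no further recurrence terms or Lucas input.
\end{itemize}
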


\begin{proof}
The case $d=2$ follows from
\[
        H_2^{\rm raw}(z)=\frac{2z+1}{z^2-1}.
\]
It has degree $2$ in characteristic zero and in characteristic $p\ne2,3$; in characteristic $3$ it is the degree-one case $d=2=3^1-1$.  A direct calculation in characteristic $11$ gives
\[
        \gcd\bigl(z^5-1,-(z+1)^5-z^5\bigr)=z^3-4z^2+4z-1,
\]
(the Sage verification in \cref{app:computations} records this computation reproducibly),
and hence
\[
        h_5(z)=\frac{-2z^2-2z+1}{z^2+4z+1}.
\]
Thus the listed cases have degree $2$.

Conversely, suppose $\deg h_d=2$ and $d\ge4$.  By \cref{thm:degree-one}, we may assume that we are not in a degree-one case.  By \cref{cor:reduced-denominator-degree}, the reduced denominator has degree $2$, equivalently exactly two denominator factors of $D_d$ remain uncanceled.  Let
\[
        Q(z)=z^2-Sz+T,
        \qquad T\ne0,
\]
be the denominator of the reduced form of $h_d$.  Since the unreduced denominator is $z^d-1$ and is squarefree, $Q$ is the product of the two uncanceled denominator factors.  There is a polynomial $P$ of degree at most $2$ such that
\begin{equation}\label{eq:quadratic-identity}
        Q(z)N_d(z)=P(z)D_d(z).
\end{equation}
Put $b_j=\binom dj$ and $\eps=(-1)^d$.  For $1\le j\le d-1$ the coefficient of $z^j$ in $N_d$ is $\eps b_j$.  In the coefficient-comparison argument below, equations such as $d=-1$, $3d-2=0$, or $d\equiv5\pmod {11}$ are equations for the image of the integer $d$ in the ground field unless the text explicitly says ``as an integer.''  Comparing coefficients of $z^j$ in \eqref{eq:quadratic-identity} for $3\le j\le d-1$ gives
\begin{equation}\label{eq:quadratic-recurrence}
        b_{j-2}-S b_{j-1}+T b_j=0
        \qquad(3\le j\le d-1).
\end{equation}
The boundary coefficients in degrees $0,1,2,d,d+1,d+2$ will be used repeatedly below.

First assume $p=2$.  Then $d$ is odd and $\eps=1$.  The boundary coefficients specialize to
\[
        b_2-Sd+T=0,
        \qquad S=d(T+1),
        \qquad b_2T-Sd+1=0.
\]
Since $d=1$ in the ground field, the first two equations give
\[
        b_2=1,
        \qquad T=S+1,
\]
and the third equation is then redundant.  Thus \eqref{eq:quadratic-recurrence} becomes
\[
        b_{j-2}+S b_{j-1}+(S+1)b_j=0.
\]
Here every $b_j$ lies in $\F_2$, and $b_1=b_2=1$.  If no consecutive pair $b_{j-1},b_j$ differs, then all interior coefficients are already equal to $1$.  Otherwise, for some $j$ one has $b_{j-1}\ne b_j$; the displayed recurrence then determines
\[
        S=b_{j-2}+b_j\in\F_2.
\]
If $S=0$, the recurrence is $b_j=b_{j-2}$, so the initial values force all $b_j=1$.  If $S=1$, the recurrence is $b_{j-2}=b_{j-1}$ for every $j$ in the range; shifting the index gives $b_3=b_2=1$, $b_4=b_3=1$, and so on, while $b_{d-1}=d=1$.  Hence again all interior binomial coefficients are $1$.  They are therefore nonzero and geometric, so \cref{lem:binom-geometric} gives $d=2^a-1$, a degree-one case.  This contradiction excludes characteristic $2$.

Now assume $p=3$.  If $\eps=-1$, the boundary coefficients specialize to
\[
        b_2-Sd=0,
        \qquad d(T+1)=0,
        \qquad b_2T-Sd=0.
\]
Since $d\ne0$ in the ground field, the second equation gives $T=-1$; comparing the first and third then gives $b_2=0$, and hence $S=0$.  Thus
\[
        T=-1,
        \qquad b_2=0,
        \qquad S=0.
\]
Then \eqref{eq:quadratic-recurrence} gives $b_j=b_{j-2}$ for all $3\le j\le d-1$.  Since $d$ is odd in this branch, $d-1$ is even, so $b_{d-1}=b_2=0$, contradicting $b_{d-1}=d\ne0$.  If $\eps=1$, the boundary coefficients are
\[
        b_2-Sd+T=0,
        \qquad S=d(T+1),
        \qquad b_2T-Sd+1=0.
\]
If $d=1$ in the ground field, then $b_2=0$, and the first two equations give $T=S$ and $S=T+1$, a contradiction.  Hence $d=-1$ in the ground field; then $b_2=d(d-1)/2=1$, and the second equation gives $S+T=-1$ while the third gives the same relation.  Thus
\[
        d=-1,
        \qquad b_2=1,
        \qquad S+T=-1.
\]
The recurrence then forces $b_j=(-1)^j$ for every $1\le j\le d-1$: if $T\ne0$ this follows by induction from $b_1=-1$ and $b_2=1$, and if $T=0$ the shifted recurrence gives $b_j=-b_{j-1}$ up to $j=d-2$, while $b_{d-1}=d=-1$ supplies the last term.  Hence the interior binomial coefficients are nonzero and geometric, so \cref{lem:binom-geometric} gives $d=3^a-1$, again a degree-one case.  Thus characteristic $3$ gives no degree-two cases.

It remains to consider characteristic zero or characteristic $p\ge5$.  The eliminations used below are those of \cref{lem:degree-two-elimination-details}.  We split according to $\eps$.

Suppose first that $\eps=1$.  Comparing the six boundary coefficients gives
\begin{equation}\label{eq:eps-plus}
        b_2-Sd+T=0,
        \qquad S=d(T+1),
        \qquad b_2T-Sd+1=0.
\end{equation}
If $d=-1$ in the ground field, then \eqref{eq:eps-plus} gives $S+T=-1$, and the recurrence, with initial values $b_1=-1$ and $b_2=1$, forces the interior coefficients to be geometric as above.  This is a degree-one case, contrary to our assumption.  Hence $d+1\ne0$.

Eliminating $S$ and $T$ from \eqref{eq:eps-plus}, using $b_2=d(d-1)/2$, gives
\[
        (d-2)(3d-2)=0
\]
in the ground field.  If $d=2$ in the ground field, then \eqref{eq:eps-plus} gives $S=0$ and $T=-1$.  The recurrence at $j=3$ would then give $b_3=b_1=d=2$, whereas the binomial identity gives
\[
        b_3=\frac{d(d-1)(d-2)}6=0,
\]
a contradiction.  Therefore $3d-2=0$.

On the branch $3d-2=0$, the recurrence coefficient at $j=3$ reduces to
\[
        -\frac{d(d-3)(d+1)}3.
\]
Since $d\ne0$ and $d+1\ne0$, this can vanish only in characteristic $7$.  In characteristic $7$ the same branch gives $d=3$ in the ground field.  Then $d$ cannot be the integer $4$, so $j=4$ lies in the recurrence range.  The recurrence coefficient at $j=4$ reduces on this branch to
\[
        -\frac{d(d-4)(d+1)(5d-6)}{48},
\]
which is nonzero at $d=3$ in characteristic $7$.  This contradiction excludes the branch $\eps=1$.

Finally suppose that $\eps=-1$.  The boundary equations are
\begin{equation}\label{eq:eps-minus}
        b_2-Sd+3T=0,
        \qquad 3S=d(T+1),
        \qquad b_2T-Sd+3=0.
\end{equation}
If $d=3$ in the ground field and $d>3$ as an integer, then \eqref{eq:eps-minus} gives $T=S-1$, and applying \eqref{eq:quadratic-recurrence} at $j=3$ gives $S=1$, hence $T=0$, impossible.  Thus, apart from the already excluded integer $d=3$, we may assume $d-3\ne0$.

For $d-3\ne0$, equations \eqref{eq:eps-minus} imply
\[
        (d+2)(d+6)=0.
\]
After substituting the corresponding value of $T$, the recurrence coefficient at $j=3$ is
\[
        -\frac{d^3+3d^2+2d-12}{6}.
\]
The branch $d=-2$ gives the nonzero value $2$.  On the branch $d=-6$, this coefficient is $22$, so the characteristic must be $11$.  Thus $d\equiv5\pmod{11}$.  If $d=5$, this is the exceptional case already verified.  If $d>5$, then $j=5$ lies in the range of \eqref{eq:quadratic-recurrence}.  At this point the previous residual has already forced characteristic $11$, so $120$ is invertible.  By the cleared $R_5$ identity of \cref{lem:degree-two-elimination-details}, the corresponding recurrence coefficient is
\[
        -\frac{(d-2)(d-1)(3d^3+3d^2-32d-72)}{120},
\]
which becomes $9\in\F_{11}$ after substituting $d\equiv5\pmod{11}$.  This contradiction proves that $d=5$ is the only remaining case.
\end{proof}

\section{Cyclotomic fixed-index defects}\label{sec:cyclotomic}

The torsion-defect formula converts quotient degree into a cyclotomic counting problem whenever the $d$-torsion lies in a finite field.  This section makes that conversion explicit and records quantitative consequences.  These results are not needed for the proof of the Mersenne family, but they show that the full-unit case $d=Q-1$ is the unique bounded-degree source in fixed-index subfield towers.

Let $Q$ be a power of $p$, let $d\mid Q-1$, and put
\[
        e=\frac{Q-1}{d}.
\]
Let $H\le\F_Q^*$ be the subgroup of order $d$, equivalently the subgroup of $e$-th powers.

\begin{proposition}[cyclotomic defect]\label{prop:cyclotomic}
With the notation above,
\[
        \deg h_d=d-N_e(Q),
        \qquad
        N_e(Q)=\#\{x\in H:-1-x\in H\}.
\]
Thus $N_e(Q)$ is the cyclotomic number attached to the pair of cyclotomic classes $(H,-1-H)$.
\end{proposition}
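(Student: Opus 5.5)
The plan is to specialize \cref{thm:torsion-defect} to this setting. Since $d\mid Q-1$ and $Q$ is a power of $p$, we have $p\nmid d$, so the tame hypothesis of \cref{thm:torsion-defect} holds. It gives
\[
        \deg h_d=d-\#\{\zeta\in\mu_d:\tau(\zeta)\in\mu_d\}.
\]
Equivalently, by \cref{cor:vanishing-sum}, the subtracted term is the number of pairs $(x,y)\in\mu_d^2$ with $1+x+y=0$.

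The first step is to identify $\mu_d\subset\barF_p$ with $H$. Because $\F_Q^*$ is cyclic of order $Q-1$ and $d\mid Q-1$, the group $\mu_d(\barF_p)$ consists of $d$ distinct elements, all lying in $\F_Q^*$. These form the unique subgroup of order $d$, which is $H$. It is also the image of $x\mapsto x^e$, since that map has kernel of order $e$ and hence image of order $(Q-1)/e=d$.

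The second step is to rewrite the count. The projection $(x,y)\mapsto x$ is a bijection from $\{(x,y)\in H^2:1+x+y=0\}$ onto $\{x\in H:-1-x\in H\}$, because $y=-1-x$ is determined by $x$. This gives the set defining $N_e(Q)$. Alternatively, one can use the form $\zeta\mapsto\tau(\zeta)=-1-\zeta^{-1}$ directly: membership $\tau(\zeta)\in H$ is equivalent to $-\zeta-1\in\zeta H=H$, so the same set appears. Substituting into the degree formula yields $\deg h_d=d-N_e(Q)$.

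The closing remark is interpretive. $H$ is the cyclotomic class $C_0$ of index $e$, and $N_e(Q)=|H\cap(-1-H)|$, which is the classical cyclotomic number counting elements of $C_0$ whose translate by $-1$ lies in the class of $-1$. There is no real obstacle here. The only point needing care is that $\mu_d$ is étale and lies entirely in $\F_Q$, which the divisibility $d\mid Q-1$ guarantees.
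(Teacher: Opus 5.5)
Your proposal is correct and is essentially the paper's argument: identify $\mu_d$ with the unique subgroup $H\le\F_Q^*$ of order $d$, note that $y=-1-x$ is determined by $x$, and read off $\deg h_d=d-N_e(Q)$ from the tame torsion-defect formula. Your explicit check that $p\nmid d$, which is needed for the tame formula to apply, and your alternative route through $\tau(\zeta)=-1-\zeta^{-1}$ are both correct; the paper leaves these points implicit.
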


\begin{proof}
The set $\mu_d$ is $H$.  For each $x\in H$, the equation $1+x+y=0$ determines $y=-1-x$.  Therefore the intersection count in \cref{thm:torsion-defect} is exactly $N_e(Q)$.
\end{proof}

Let $\mathcal X_e$ denote the group of multiplicative characters of $\F_Q^*$ that are trivial on $H$, extended to $\F_Q$ by assigning value $0$ at $0$.  This group has order $e$ and contains the extended trivial character $\varepsilon$.

\begin{theorem}[Jacobi-sum defect formula]\label{thm:jacobi-defect}
For $d=(Q-1)/e$,
\[
        N_e(Q)=\frac1{e^2}\sum_{\chi,\psi\in\mathcal X_e}J_-(\chi,\psi),
        \qquad
        J_-(\chi,\psi)=\sum_{x\in\F_Q}\chi(x)\psi(-1-x).
\]
Consequently
\[
        \left|N_e(Q)-\frac{Q-2}{e^2}\right|
        \le \frac{e^2-1}{e^2}\sqrt Q.
\]
\end{theorem}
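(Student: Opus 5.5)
The plan is to write the indicator of $H$ via the characters in $\mathcal X_e$ and then bound each Jacobi sum with Weil's estimate. For $x\in\F_Q$ we have
\[
        \frac1e\sum_{\chi\in\mathcal X_e}\chi(x)=\mathbf 1_H(x),
\]
since $\mathcal X_e$ is the dual of $\F_Q^*/H$, a group of order $e$. At $x=0$ both sides vanish, because every character, including the extended trivial $\varepsilon$, is set to $0$ there. Hence
\[
        N_e(Q)=\sum_{x\in\F_Q}\mathbf 1_H(x)\mathbf 1_H(-1-x)
        =\frac1{e^2}\sum_{\chi,\psi\in\mathcal X_e}\sum_{x\in\F_Q}\chi(x)\psi(-1-x),
\]
which is the displayed formula once we interchange the finite sums.

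For the estimate we split the $e^2$ pairs $(\chi,\psi)$ into three kinds.

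\emph{Both trivial.} When $\chi=\psi=\varepsilon$ the sum counts the $x$ with $x\ne0$ and $x\ne-1$. This gives exactly $Q-2$ and produces the main term $(Q-2)/e^2$.

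\emph{Exactly one trivial.} Say $\chi\ne\varepsilon$ and $\psi=\varepsilon$. Then
\[
        J_-=\sum_{x\ne-1}\chi(x)=-\chi(-1),
\]
which has absolute value $1$. The symmetric case $\chi=\varepsilon$, $\psi\ne\varepsilon$ works the same way.

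\emph{Both nontrivial.} Substitute $x=-t$. Then $\chi(x)\psi(-1-x)=\chi(-1)\chi(t)\psi(-1)\psi(1-t)$, so
\[
        J_-(\chi,\psi)=\chi(-1)\psi(-1)J(\chi,\psi),
\]
where $J$ is the classical Jacobi sum. Classically $|J(\chi,\psi)|=\sqrt Q$ when $\chi\psi\ne\varepsilon$. When $\chi\psi=\varepsilon$ we instead get $|J|=1$, since $J(\chi,\chi^{-1})=-\chi(-1)$.

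In every pair other than $(\varepsilon,\varepsilon)$ the Jacobi sum therefore has absolute value at most $\sqrt Q$, using $1\le\sqrt Q$. There are $e^2-1$ such pairs. Dividing by $e^2$ and applying the triangle inequality gives the stated bound $\frac{e^2-1}{e^2}\sqrt Q$.

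The only step requiring care is the sign and normalization bookkeeping. The extension convention at $0$ must make the trivial-character sum come out to exactly $Q-2$, and the reflection $x\mapsto -x$ must convert $J_-$ into the standard Jacobi sum $J(\chi,\psi)=\sum_t\chi(t)\psi(1-t)$. Once that is done, the proof rests on the standard evaluation $|J|=\sqrt Q$, which is elementary through Gauss sums via $J=g(\chi)g(\psi)/g(\chi\psi)$. No deeper Weil input is needed.
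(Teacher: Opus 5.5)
Your proposal is correct and follows the same route as the paper. Both write $\mathbf 1_H$ as the average of the characters in $\mathcal X_e$ (extended by $0$ at $0$), isolate the $(\varepsilon,\varepsilon)$ term $Q-2$, and bound each of the remaining $e^2-1$ sums by $1$ (exactly one character trivial, or $\chi\psi=\varepsilon$) or by $\sqrt Q$ (when $\chi,\psi,\chi\psi$ are all nontrivial); your explicit reflection $x=-t$, turning $J_-$ into the classical Jacobi sum, simply spells out a step the paper leaves to its cited references.
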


\begin{proof}
The subgroup indicator is
\[
        1_H(x)=\frac1e\sum_{\chi\in\mathcal X_e}\chi(x)
\]
for every $x\in\F_Q$, because all characters in the sum vanish at $0$.  Hence
\[
        N_e(Q)=\sum_{x\in\F_Q}1_H(x)1_H(-1-x)
        =\frac1{e^2}\sum_{\chi,\psi\in\mathcal X_e}J_-(\chi,\psi).
\]
The main term is $J_-(\varepsilon,\varepsilon)=Q-2$.  If exactly one of $\chi,
\psi$ is trivial, the corresponding sum has absolute value at most $1$.  If both are nontrivial and $\chi\psi=\varepsilon$, the corresponding Jacobi sum has absolute value $1$.  If $\chi$, $\psi$, and $\chi\psi$ are all nontrivial, the standard Jacobi-sum bound gives absolute value $\sqrt Q$; see \cite[Chapter 5]{BerndtEvansWilliams}, \cite[Chapter 5]{LidlNiederreiter}, or Storer's classical cyclotomy reference \cite{Storer}.  Since $1\le\sqrt Q$, every non-main term has absolute value at most $\sqrt Q$, and there are $e^2-1$ such terms.  This proves the estimate.
\end{proof}

\begin{corollary}[indices one and two]\label{cor:index12}
For $e=1$ one has
\[
        N_1(Q)=Q-2,
        \qquad
        \deg h_{Q-1}=1.
\]
For $e=2$ and $Q$ odd, let $\chi$ be the quadratic character of $\F_Q$.  Then
\[
        N_2(Q)=\frac14\bigl(Q-2-3\chi(-1)\bigr),
\]
that is
\[
        N_2(Q)=
        \begin{cases}
        (Q-5)/4,& Q\equiv1\pmod4,\\[3pt]
        (Q+1)/4,& Q\equiv3\pmod4,
        \end{cases}
\]
and hence
\[
        \deg h_{(Q-1)/2}=
        \begin{cases}
        (Q+3)/4,& Q\equiv1\pmod4,\\[3pt]
        (Q-3)/4,& Q\equiv3\pmod4.
        \end{cases}
\]
\end{corollary}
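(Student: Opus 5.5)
The plan is to apply \cref{prop:cyclotomic} directly. For $e=1$ the count is elementary. For $e=2$ I will evaluate the four terms of \cref{thm:jacobi-defect} exactly instead of bounding them. The degree formulas then follow from $\deg h_d=d-N_e(Q)$. Since $d\mid Q-1$, we have $p\nmid d$, so \cref{thm:torsion-defect} applies, and \cref{prop:cyclotomic} is already stated in this setting.

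\emph{Index one.} Here $H=\F_Q^*$ and $d=Q-1$. The condition $x\in H$, $-1-x\in H$ excludes only $x=0$ and $x=-1$, so $N_1(Q)=Q-2$. Hence $\deg h_{Q-1}=(Q-1)-(Q-2)=1$. This is consistent with \cref{thm:degree-one} for $d=p^a-1$.

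\emph{Index two.} Take $Q$ odd. Then $\mathcal X_2=\{\varepsilon,\chi\}$ with $\chi$ quadratic, and
\[
        N_2(Q)=\tfrac14\bigl(J_-(\varepsilon,\varepsilon)+J_-(\chi,\varepsilon)+J_-(\varepsilon,\chi)+J_-(\chi,\chi)\bigr).
\]
I will compute each term in turn.
\begin{itemize}
\item[(a)] $J_-(\varepsilon,\varepsilon)=Q-2$.
\item[(b)] $J_-(\chi,\varepsilon)=\sum_{x\ne0,-1}\chi(x)=0-\chi(-1)=-\chi(-1)$, using that $\chi$ sums to zero over $\F_Q^*$.
\item[(c)] Substituting $y=-1-x$ turns $J_-(\varepsilon,\chi)$ into the same sum, so it also equals $-\chi(-1)$.
\item[(d)] $J_-(\chi,\chi)=\sum_x\chi(-x^2-x)$. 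The values $x=0,-1$ contribute zero, consistent with the extension $\chi(0)=0$.
\end{itemize}

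The only step needing care is (d). I will use the classical evaluation $\sum_{x\in\F_Q}\chi(ax^2+bx+c)=-\chi(a)$ for $a\ne0$ and $b^2-4ac\ne0$. To verify it, complete the square: the sum becomes $\chi(a)\sum_u\chi(u^2-D)$ with $D\ne0$. This inner sum equals $-1$, because $\#\{(u,v):u^2-v^2=D\}=Q-1$ (factor as $(u-v)(u+v)=D$) and each value $t=u^2-D$ is hit $1+\chi(t)$ times. Here $a=-1$ and the discriminant is $1\ne0$, so $J_-(\chi,\chi)=-\chi(-1)$.

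Summing (a)--(d) gives $N_2(Q)=\tfrac14\bigl(Q-2-3\chi(-1)\bigr)$. Since $\chi(-1)=1$ exactly when $Q\equiv1\pmod4$, this yields the two cases $(Q-5)/4$ and $(Q+1)/4$. Subtracting from $d=(Q-1)/2$ gives $(Q+3)/4$ and $(Q-3)/4$ respectively, which is the stated formula for $\deg h_{(Q-1)/2}$.
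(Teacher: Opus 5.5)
Your proposal is correct and follows the paper's proof essentially step for step. For index one you exclude only $x=0,-1$, and for index two you expand $1_H=(\varepsilon+\chi)/2$ into the same four character sums, with values $Q-2$, $-\chi(-1)$, $-\chi(-1)$, $-\chi(-1)$. The one addition is that you verify the classical evaluation $\sum_x\chi(ax^2+bx+c)=-\chi(a)$ by counting points on $u^2-v^2=D$, whereas the paper simply invokes it after noting that $x(1+x)$ has distinct roots.
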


\begin{proof}
The case $e=1$ is immediate: $H=\F_Q^*$, and the only excluded $x$ are $0$ and $-1$.

For $e=2$, the subgroup $H$ is the square subgroup and
\[
        1_H(x)=\frac{\varepsilon(x)+\chi(x)}2,
\]
where $\varepsilon$ is the trivial multiplicative character extended by $\varepsilon(0)=0$.  Hence
\[
        N_2(Q)=\frac14\sum_{x\in\F_Q}
        (\varepsilon(x)+\chi(x))(\varepsilon(-1-x)+\chi(-1-x)).
\]
The four sums are
\[
        \sum_x\varepsilon(x)\varepsilon(-1-x)=Q-2,
\]
\[
        \sum_x\varepsilon(x)\chi(-1-x)=\sum_x\chi(-1-x)-\chi(-1)= -\chi(-1),
\]
\[
        \sum_x\chi(x)\varepsilon(-1-x)=\sum_x\chi(x)-\chi(-1)= -\chi(-1),
\]
and
\[
        \sum_x\chi(x)\chi(-1-x)=\chi(-1)\sum_x\chi(x(1+x))=-\chi(-1),
\]
because $x(1+x)$ is a nonconstant quadratic polynomial with distinct roots.  Adding gives the formula for $N_2(Q)$, and the degree formula follows from \cref{prop:cyclotomic}.
\end{proof}

\begin{corollary}[low-degree quotients in the index-two tower]\label{cor:index2-low}
Let $Q$ be odd and $d=(Q-1)/2$.  Then $\deg h_d=0$ occurs exactly at $Q=3$ $(d=1)$; $\deg h_d=1$ occurs exactly at $Q=7$ $(d=3)$; and
\[
        \deg h_d=2
        \quad\Longleftrightarrow\quad
        (Q,d)=(5,2)\text{ or }(11,5).
\]
\end{corollary}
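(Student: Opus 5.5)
The plan is to read everything off the explicit degree formula of \cref{cor:index12}. Its two branches let us solve directly for each target degree $0,1,2$. Note that $Q$ is odd, so $p\ne2$, and $d=(Q-1)/2$ is prime to $p$ because it divides $Q-1$. Hence the tame formula of \cref{thm:torsion-defect} applies, and \cref{cor:index12} gives $\deg h_d=(Q+3)/4$ for $Q\equiv1\pmod 4$ and $\deg h_d=(Q-3)/4$ for $Q\equiv3\pmod4$.

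\textbf{Case $Q\equiv1\pmod4$.} Here $Q\ge5$, so $\deg h_d=(Q+3)/4\ge2$, and equality holds exactly at $Q=5$, $d=2$. Degrees $0$ and $1$ never occur in this branch.

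\textbf{Case $Q\equiv3\pmod4$.} The condition $\deg h_d=m$ is equivalent to $Q=4m+3$.
\begin{itemize}
\item $m=0$ gives $Q=3$, $d=1$.
\item $m=1$ gives $Q=7$, $d=3$.
\item $m=2$ gives $Q=11$, $d=5$.
\end{itemize}
Each of $3,7,11$ is prime, so it is an admissible prime power, and each is indeed $\equiv3\pmod4$. This gives the three listed cases and no others.

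\textbf{Cross-checks.} The results should agree with the earlier rigidity theorems.
\begin{itemize}
\item $Q=3$: $d=1=3^1-1$, and $h_1$ is constant in characteristic $3$, as in \cref{lem:tame-quotient-separable}.
\item $Q=7$: $d=3$ with $p=7\ne3$ is the degree-one case of \cref{thm:degree-one}.
\item $Q=5$: $d=2$ with $p=5$ is a degree-two case of \cref{thm:degree-two}.
\item $Q=11$: $d=5$ is the sporadic pair $(11,5)$ of \cref{thm:degree-two}.
\end{itemize}
These checks also confirm the sign conventions in \cref{cor:index12}.

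There is no real obstacle here. The only care needed is that the formula of \cref{cor:index12} applies for every odd $Q$, including $Q=3$ where $d=1$, and that the degree-$0$ convention of \cref{def:quotient-conventions} matches $(Q-3)/4=0$. Both points are immediate, since \cref{thm:torsion-defect} explicitly covers the constant case.
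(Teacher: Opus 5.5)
Your argument is correct and is exactly the paper's proof: you read off the two branches $\deg h_d=(Q+3)/4$ and $\deg h_d=(Q-3)/4$ of \cref{cor:index12} and solve for degrees $0,1,2$, which gives $Q=5$ from the first branch and $Q=3,7,11$ from the second. One harmless slip occurs in your optional cross-checks: $3^1-1=2$, not $1$, so $d=1$ in characteristic $3$ is not a Frobenius-sparse exponent; its constancy comes instead from the direct computation $H_1^{\rm raw}=(-2z-1)/(z-1)=1$ over $\F_3$, as in \cref{lem:tame-quotient-separable}.
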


\begin{proof}
This is immediate from \cref{cor:index12}.  If $Q\equiv1\pmod4$, then $\deg h_d=(Q+3)/4$, which equals $2$ only for $Q=5$ and never equals $0$ or $1$ for an odd prime power.  If $Q\equiv3\pmod4$, then $\deg h_d=(Q-3)/4$, which equals $0,1,2$ for $Q=3,7,11$, respectively.
\end{proof}

\begin{proposition}[index three]\label{prop:index3}
Assume $3\mid Q-1$, and let $\chi$ be a multiplicative character of order $3$ of $\F_Q^*$.  Let $J(\alpha,\beta)=\sum_{t\in\F_Q}\alpha(t)\beta(1-t)$ be the Jacobi sum, with characters extended by $0$ at $0$.  For $d=(Q-1)/3$,
\[
        N_3(Q)=\frac{Q-8+J(\chi,\chi)+J(\chi^2,\chi^2)}{9}.
\]
In particular,
\[
        \left|N_3(Q)-\frac{Q-8}{9}\right|\le \frac{2}{9}\sqrt Q.
\]
\end{proposition}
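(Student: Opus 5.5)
The plan is to specialize \cref{thm:jacobi-defect} to $e=3$ and evaluate every term in the sum explicitly. Since $H$ has index $3$, the group $\mathcal X_3$ is $\{\varepsilon,\chi,\chi^2\}$. The sum therefore has nine terms $J_-(\alpha,\beta)$ with $\alpha,\beta\in\mathcal X_3$, and the formula is
\[
N_3(Q)=\tfrac19\sum_{\alpha,\beta}J_-(\alpha,\beta).
\]

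\textbf{Step 1: reduce $J_-$ to $J$.} Substitute $x=-t$ in $J_-(\alpha,\beta)=\sum_x\alpha(x)\beta(-1-x)$. This gives
\[
J_-(\alpha,\beta)=\alpha(-1)\beta(-1)\,J(\alpha,\beta).
\]
For every $\alpha\in\mathcal X_3$ we have $\alpha(-1)=1$. Indeed, $\alpha(-1)^2=\alpha(1)=1$, so $\alpha(-1)=\pm1$, and $\alpha(-1)$ is also a cube root of unity. Hence $J_-=J$ throughout; this uses only $\chi(-1)=1$ and needs no assumption on $p$.

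\textbf{Step 2: evaluate the nine terms.}
\begin{enumerate}[label=\textup{(\alph*)}]
\item The pair $(\varepsilon,\varepsilon)$ contributes $Q-2$, exactly as in \cref{thm:jacobi-defect}.
\item There are four mixed terms, with exactly one of $\alpha,\beta$ trivial. For example,
\[
J(\varepsilon,\chi)=\sum_{t\ne0}\chi(1-t)=\sum_{u}\chi(u)-\chi(1)=-1.
\]
The same computation gives $-1$ for $(\chi,\varepsilon)$, $(\varepsilon,\chi^2)$ and $(\chi^2,\varepsilon)$. Together they contribute $-4$.
\item The two conjugate pairs $(\chi,\chi^2)$ and $(\chi^2,\chi)$ each contribute $-\chi(-1)=-1$, by the standard identity $J(\chi,\bar\chi)=-\chi(-1)$. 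To verify it, write
\[
\sum_{t\ne0,1}\chi\bigl(t/(1-t)\bigr),
\]
and note that $t\mapsto t/(1-t)$ is a bijection from $\F_Q\setminus\{0,1\}$ onto $\F_Q\setminus\{0,-1\}$. Together these contribute $-2$.
\item The remaining two terms are $J(\chi,\chi)$ and $J(\chi^2,\chi^2)$.
\end{enumerate}

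\textbf{Step 3: conclude.} Summing gives
\[
Q-2-4-2+J(\chi,\chi)+J(\chi^2,\chi^2),
\]
and dividing by $9$ yields the stated formula for $N_3(Q)$. For the estimate, in both surviving Jacobi sums the characters $\chi,\chi$ and their product $\chi^2$ (respectively $\chi^2,\chi^2$ and $\chi^4=\chi$) are all nontrivial. The classical bound then gives $|J|=\sqrt Q$ for each, so the error term is at most $2\sqrt Q/9$.

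The only delicate point is the sign bookkeeping in Step 1, namely checking that $\chi(-1)=1$ so that $J_-$ and $J$ agree. The rest is the routine character-sum evaluation already used in \cref{cor:index12}.
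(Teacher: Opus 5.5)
Your proof is correct and is essentially the paper's argument. Like the paper, you expand the cube indicator $(\varepsilon+\chi+\chi^2)/3$ (equivalently, specialize \cref{thm:jacobi-defect} to $e=3$), use $\chi(-1)=1$ to identify each $J_-$ with $J$, and evaluate the nine terms as $Q-2$, four terms equal to $-1$, the two conjugate-pair terms equal to $-1$, and the two surviving cubic Jacobi sums. Your explicit checks of the $-1$ values, which the paper only asserts, are accurate.
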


\begin{proof}
The indicator of the subgroup of cubes is
\[
        I(x)=\frac{\varepsilon(x)+\chi(x)+\chi^2(x)}{3},
\]
with all characters extended by zero at $0$.  Thus
\[
        N_3(Q)=\frac19\sum_{i,j=0}^2\sum_x\chi^i(x)\chi^j(-1-x).
\]
Substitute $x=-t$.  Since $\chi(-1)=1$ for cubic characters, the inner sums are Jacobi sums $J(\chi^i,\chi^j)$.  The $(i,j)=(0,0)$ term is $Q-2$.  The four terms with exactly one of $i,j$ equal to $0$ are all $-1$.  The two mixed nontrivial terms $J(\chi,\chi^2)$ and $J(\chi^2,\chi)$ are also $-1$.  The remaining terms are $J(\chi,\chi)$ and $J(\chi^2,\chi^2)$.  This gives the formula.  The bound follows from $|J(\alpha,\beta)|=\sqrt Q$ when $\alpha$, $\beta$, and $\alpha\beta$ are nontrivial.
\end{proof}

\begin{theorem}[effective fixed-index obstruction]\label{thm:fixed-index}
Fix $e\ge2$ and let $Q$ vary over prime powers with $e\mid Q-1$.  Set $d=(Q-1)/e$.  Then
\[
        \deg h_d
        \ge
        \frac{(e-1)Q+(2-e)-(e^2-1)\sqrt Q}{e^2}.
\]
In particular, $\deg h_{(Q-1)/e}\to\infty$ as $Q\to\infty$ for every fixed proper index $e\ge2$.

More precisely, if $\deg h_d\le M$, then, writing $T=\sqrt Q$,
\[
        T\le
        \frac{(e^2-1)+
        \sqrt{(e^2-1)^2+4(e-1)(e^2M+e-2)}}
        {2(e-1)}.
\]
Thus bounded-degree quotients in fixed-index proper subgroup towers occur only for explicitly bounded $Q$.
\end{theorem}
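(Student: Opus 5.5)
The proof combines \cref{prop:cyclotomic} with the error estimate of \cref{thm:jacobi-defect}. First I would check that the statement really is about tame exponents. Since $d=(Q-1)/e$ divides $Q-1$, we have $p\nmid d$. So \cref{thm:torsion-defect} applies, and \cref{prop:cyclotomic} gives $\deg h_d=d-N_e(Q)$.

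\cref{thm:jacobi-defect} gives the upper bound
\[
N_e(Q)\le\frac{Q-2}{e^2}+\frac{e^2-1}{e^2}\sqrt Q .
\]
Substituting this and $d=(Q-1)/e$ yields
\[
\deg h_d\ge\frac{Q-1}{e}-\frac{Q-2}{e^2}-\frac{(e^2-1)\sqrt Q}{e^2}
=\frac{e(Q-1)-(Q-2)-(e^2-1)\sqrt Q}{e^2}.
\]
The numerator equals $(e-1)Q+(2-e)-(e^2-1)\sqrt Q$, which is the displayed lower bound. For fixed $e\ge2$ the leading coefficient $e-1$ is positive, and the $\sqrt Q$ term is of lower order. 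Hence the lower bound tends to infinity with $Q$, and so does $\deg h_d$.

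For the explicit bound, suppose $\deg h_d\le M$ and put $T=\sqrt Q$. Combining $\deg h_d\le M$ with the lower bound and clearing $e^2$ gives the quadratic inequality
\[
(e-1)T^2-(e^2-1)T-\bigl(e^2M+e-2\bigr)\le0 .
\]
The leading coefficient $e-1$ is positive, so $T$ cannot exceed the larger root of this quadratic. The quadratic formula gives that root as
\[
\frac{(e^2-1)+\sqrt{(e^2-1)^2+4(e-1)(e^2M+e-2)}}{2(e-1)} ,
\]
which is exactly the claimed bound. The discriminant is positive because $e^2M+e-2\ge0$ for $e\ge2$ and $M\ge0$.

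No step here is a genuine obstacle. The only care needed is the bookkeeping of constants and signs: the Jacobi-sum estimate must be used as an upper bound on $N_e(Q)$, and the tame hypothesis of \cref{thm:torsion-defect} must be confirmed through $d\mid Q-1$.
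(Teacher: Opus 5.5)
Your proposal is correct and follows the paper's proof: you substitute the upper bound on $N_e(Q)$ from \cref{thm:jacobi-defect} into $\deg h_d=d-N_e(Q)$ from \cref{prop:cyclotomic}, then solve the resulting quadratic inequality in $T=\sqrt Q$. Your explicit check that $p\nmid d$ (since $d\mid Q-1$), which makes the tame torsion-defect formula applicable, is a step the paper leaves implicit.
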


\begin{proof}
By \cref{prop:cyclotomic,thm:jacobi-defect},
\[
\begin{aligned}
        \deg h_d
        &=\frac{Q-1}{e}-N_e(Q) \\
        &\ge \frac{Q-1}{e}-\frac{Q-2}{e^2}-\frac{e^2-1}{e^2}\sqrt Q \\
        &=\frac{(e-1)Q+(2-e)-(e^2-1)\sqrt Q}{e^2}.
\end{aligned}
\]
If $\deg h_d\le M$, then
\[
        (e-1)T^2-(e^2-1)T+(2-e-e^2M)\le0.
\]
Solving this quadratic inequality for $T\ge0$ gives the displayed bound.
\end{proof}

\section{Arithmetic consequence: the Mersenne trace-zero family}\label{sec:finite-field}

The preceding rigidity theorem identifies the maximal torsion-defect stratum of the quotient-map family.  This section extracts its trace-zero arithmetic consequence: in characteristic two, the Mersenne exponents $d=2^a-1$ give a new permutation family on the trace-zero plane, subject exactly to the fiber and denominator conditions below.  This is not a classification of all exponents $d$ for which $P_d$ may permute $\Gamma_q$.

We now return to $L=\F_{q^3}$, $K=\F_q$, $q=p^k$.

\begin{lemma}[denominator criteria]\label{lem:denom}
Let $d\ge1$.
\begin{enumerate}[label=\textup{(\roman*)}]
\item For $d=1$, the map $P_1(x)=x^q-x$ permutes $\Gamma_q$ if and only if $p\ne3$.
\item For $d=3$, the condition $\mu_3\cap\Lambda_q=\varnothing$ holds if and only if $q\equiv2\pmod3$.
\item Let $q=2^k$ with $k\ge1$, let $a\ge1$, let $d=2^a-1$, and put $g=\gcd(a,k)$.  Then
\[
        \mu_d\cap\Lambda_q=\varnothing
        \quad\Longleftrightarrow\quad
        g\text{ is odd and }3\nmid \frac ag.
\]
In particular, under the fiber condition $\gcd(a,k)=1$, this is equivalent to $3\nmid a$.
\end{enumerate}
\end{lemma}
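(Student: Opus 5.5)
For part \textup{(i)} I would argue directly rather than through \cref{prop:mult-desc}. The map $x\mapsto x^q-x$ is $\F_q$-linear on the two-dimensional $\F_q$-space $\Gamma_q$, so it permutes $\Gamma_q$ if and only if its kernel there is trivial. The kernel is $\Gamma_q\cap\F_q=\{c\in\F_q:\Tr_{L/K}(c)=3c=0\}$. This vanishes exactly when $p\ne3$ and is all of $\F_q$ when $p=3$. As a consistency check with \cref{prop:mult-desc}: for $p=3$ one has $1\in\Lambda_q$, since $1+1+1=0$, so the denominator condition fails.

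For part \textup{(ii)} I would test the elements of $\mu_3$ against the defining equation $z^{q+1}+z+1=0$ of $\Lambda_q$.
\begin{itemize}[label=--]
\item The element $\zeta=1$ lies in $\Lambda_q$ if and only if $p=3$.
\item If $p\ne3$ and $\omega$ is a primitive cube root of unity, then $-\omega-1=\omega^2$. So $\omega\in\Lambda_q$ if and only if $\omega^{q+1}=\omega^2$, that is $\omega^q=\omega$, that is $q\equiv1\pmod3$.
\end{itemize}
Hence the intersection is empty exactly when $p\ne3$ and $q\not\equiv1\pmod 3$, which is $q\equiv2\pmod3$.

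For part \textup{(iii)}, take $\zeta\in\mu_d\cap\Lambda_q$. Then $\zeta\in\F_{2^a}^*$, and by \cref{rem:fixedset-lambda} also $\zeta\in\F_{q^3}$, so $\zeta\in\F_{2^m}$ with $m=\gcd(a,3k)$. Conversely, every element of $\F_{2^m}^*$ lies in $\mu_d$ because $m\mid a$. So the question is whether some $\zeta\in\F_{2^m}$ satisfies $\Frob_q\zeta=\tau\zeta$. Write $a=ga'$ and $k=gk'$ with $\gcd(a',k')=1$, so that $m=g\gcd(a',3)$. I would split into two cases.

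Case $3\nmid a'$. Then $m=g$ and $g\mid k$, so $\Frob_q$ is the identity on $\F_{2^g}$ and the condition becomes $\tau\zeta=\zeta$. The fixed points of $\tau$ are the primitive cube roots of unity $\omega$, which lie in $\F_{2^g}$ if and only if $g$ is even. So in this case the intersection is nonempty exactly when $g$ is even.

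Case $3\mid a'$. Then $m=3g$ and $3\nmid k'$, so $\Frob_q$ restricts to a generator of $\operatorname{Gal}(\F_{2^{3g}}/\F_{2^g})$. Put $q'=2^g$.
\begin{itemize}[label=--]
\item If $k'\equiv1\pmod 3$, then $\Frob_q=\Frob_{q'}$ on $\F_{2^{3g}}$. The required set is $\Lambda_{q'}$, which has $q'+1$ elements by \cref{prop:h90-quotient}, all in $\F_{q'^3}^*$.
\item If $k'\equiv2\pmod3$, then $\Frob_q=\Frob_{q'}^2$ on $\F_{2^{3g}}$. Applying $\Frob_{q'}$ to $\Frob_{q'}^2\zeta=\tau\zeta$ and using that $\tau$ is defined over $\F_2$ turns the condition into $\Frob_{q'}\zeta=\tau^2\zeta$. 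The set of such $\zeta$ is nonempty: the substitution $z\mapsto1/z$ conjugates $\tau$ to $\tau^2$ and commutes with Frobenius, so it identifies this set with $\Lambda_{q'}$. Alternatively, Lang's theorem says the twisted form of $\PP^1$ has $q'+1$ rational points, and these avoid $0,-1,\infty$.
\end{itemize}
In either subcase the intersection is nonempty. Combining the two cases, $\mu_d\cap\Lambda_q=\varnothing$ if and only if $g$ is odd and $3\nmid a/g$.

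The final sentence of \textup{(iii)} is then immediate: setting $g=1$ leaves only the condition $3\nmid a$.

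I expect the main obstacle to be the nonemptiness in the case $3\mid a/g$, specifically the subcase $k'\equiv2\pmod3$. My first attempt there will be the explicit conjugation $z\mapsto1/z$. I will check by direct computation that $1/\tau(1/z)=\tau^2(z)$, which makes the reduction to $\Lambda_{q'}$ rigorous.
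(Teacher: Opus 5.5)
Your proof is correct and follows the paper's argument. Part \textup{(i)} uses the same kernel computation, part \textup{(ii)} the same test of the elements of $\mu_3$ against $z^{q+1}+z+1=0$, and part \textup{(iii)} the same split on whether $3$ divides $a/g$, with solutions taken from the twisted fixed sets for $Q=2^g$ in the second case. The differences are minor. You first restrict to $\F_{2^{\gcd(a,3k)}}$ using $\Lambda_q\subseteq\F_{q^3}$, where the paper iterates $A^n=\mathrm{id}$ to force a $\tau$-fixed point. In the subcase $k'\equiv2\pmod3$ you use the conjugation $z\mapsto1/z$, which carries roots of $z^{Q+1}+z+1$ to roots of the paper's reciprocal polynomial $z^{Q+1}+z^Q+1$.
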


\begin{proof}
For (i), $P_1=\sigma-1$.  Its kernel on $\Gamma_q$ is $K\cap\Gamma_q$, which is the kernel of multiplication by $3$ on $K$.  Hence the kernel is zero if and only if $p\ne3$.

For (ii), if $q\equiv0\pmod3$, then $1\in\Lambda_q\cap\mu_3$.  If $q\equiv1\pmod3$, a primitive cube root $\omega\in K$ satisfies $\omega^{q+1}+\omega+1=\omega^2+\omega+1=0$, so $\omega\in\Lambda_q\cap\mu_3$.  If $q\equiv2\pmod3$ and $\lambda\in\Lambda_q\cap\mu_3$, then $\lambda^{q+1}=1$, so $1+\lambda+1=0$, hence $\lambda=-2$.  This is incompatible with $\lambda^3=1$ unless $p=3$, which is excluded by $q\equiv2\pmod3$.

For (iii), $\mu_d=\F_{2^a}^*$.  Let $A:z\mapsto z^{2^k}$ on $\F_{2^a}$ and let
\[
        \tau(z)=1+z^{-1}
\]
in characteristic two.  The maps $A$ and $\tau$ commute, and $\tau^3=1$ on $\PP^1$.  The automorphism $A$ has order
\[
        n=\frac a{\gcd(a,k)}=\frac ag
\]
on $\F_{2^a}$.  A denominator point is exactly a solution of
\[
        A(z)=\tau(z),\qquad z\in\F_{2^a}^*.
\]

Assume first that $3\nmid n$.  If $A(z)=\tau(z)$, then
\[
        z=A^n(z)=\tau^n(z),
\]
so $z$ is fixed by $\tau$ or by $\tau^2$.  In characteristic two the two fixed-point equations are the same:
\[
        z^2+z+1=0.
\]
Thus any solution lies in $\F_4\setminus\F_2$.  Such points lie in $\F_{2^a}$ if and only if $a$ is even, and on them $\tau$ acts trivially.  Hence the equation $A(z)=\tau(z)$ is solvable in this case if and only if $A$ also acts trivially on $\F_4$, i.e. if and only if $k$ is even.  Therefore, when $3\nmid n$, denominator points exist exactly when $a$ and $k$ are both even, equivalently exactly when $g$ is even.

Now assume $3\mid n$.  Put $Q=2^g$ and write $a=gn$, $k=gk'$ with $\gcd(k',n)=1$.  Then $3\nmid k'$ and $\F_{Q^3}\subseteq\F_{2^a}$.  On $\F_{Q^3}$ the map $A$ is the $k'$-th power of the $Q$-Frobenius.  If $k'\equiv1\pmod3$, choose a root of
\[
        z^Q=\tau(z),
        \qquad\text{equivalently}\qquad z^{Q+1}+z+1=0.
\]
This polynomial has degree $Q+1$, is squarefree, and every root lies in $\F_{Q^3}$ by iterating the equation three times.  Thus a nonzero solution exists in $\F_{Q^3}\subseteq\F_{2^a}$, and it satisfies $A(z)=\tau(z)$.  If $k'\equiv2\pmod3$, choose instead a root of
\[
        z^Q=\tau^2(z),
        \qquad\text{equivalently}\qquad z^{Q+1}+z^Q+1=0.
\]
The same squarefreeness and iteration argument gives $Q+1$ roots in $\F_{Q^3}$, and for such a root $A(z)=z^{Q^2}=\tau(z)$.  Hence denominator points exist whenever $3\mid n$.

Combining the two cases, $\mu_d\cap\Lambda_q$ is empty exactly when $3\nmid n$ and $g$ is odd, as claimed.
\end{proof}

\begin{theorem}[degree-one trace-zero classification]\label{thm:trace-zero-classification}
Let $q=p^k$ and let $d\ge1$.  Remove the $p$-power part of $d$ by writing $d=p^sd_0$ with $p\nmid d_0$.  In the degree-one quotient sector, i.e. when the reduced quotient $h_{d_0}$ has morphism degree at most one, the map
\[
        P_d(X)=X^{dq}-X^d
\]
permutes $\Gamma_q$ precisely in the following cases:
\begin{enumerate}[label=\textup{(\roman*)}]
\item $d_0=1$ and $p\ne3$;
\item $d_0=3$ and $q\equiv2\pmod3$;
\item $p=2$, $d_0=2^a-1$ with $a\ge1$, $\gcd(a,k)=1$, and $3\nmid a$.
\end{enumerate}
The list is not disjoint: in characteristic two, $d_0=1$ and $d_0=3$ are the Mersenne cases $a=1$ and $a=2$.
\end{theorem}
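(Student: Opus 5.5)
First reduce to the tame exponent.  By \cref{prop:p-power-reduction}, $P_d=P_{d_0}^{p^s}$ on $L$, and the $p^s$-power Frobenius is a bijection of $L$ preserving $\Gamma_q$.  Hence $P_d$ permutes $\Gamma_q$ if and only if $P_{d_0}$ does, so we may assume $d=d_0$ with $p\nmid d$.  By \cref{thm:degree-one}, the hypothesis $\deg h_d\le1$ then means exactly that $d=1$, or $d=3$ with $p\ne3$, or $d=p^a-1$ in positive characteristic; in the last case $h_d=\tau^2$.  We test each case against the three conditions of \cref{prop:mult-desc} with $r=d$ and $B(z)=z^d-1$: the fiber condition $\gcd(d,q-1)=1$, the denominator condition \eqref{eq:denomcondition}, and bijectivity of the induced map on $\Lambda_q$.

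\textbf{The case $d=1$.}  This is \cref{lem:denom}(i) directly.

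\textbf{The case $d=3$, $p\ne3$.}  Here the reduced quotient is the degree-one map $-(2z+1)/(z-1)$.  The denominator condition holds if and only if $q\equiv2\pmod3$, by \cref{lem:denom}(ii).  Under that congruence $3\nmid q-1$, so the fiber condition also holds.  For condition (iii), once no point of $\Lambda_q$ is a pole, the rational expression is defined on $\Lambda_q$, and the descent computation in the proof of \cref{prop:mult-desc} shows it maps $\Lambda_q$ into $\Lambda_q$.  Being a M\"obius map, it is injective, hence bijective on this finite set.  Conversely, if $q\not\equiv2\pmod3$, then either the fiber condition fails ($3\mid q-1$) or the denominator condition fails, the latter again by \cref{lem:denom}(ii).

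\textbf{The case $d=p^a-1$, and why it forces $p=2$.}  The reduced map is $\tau^2$, which is a bijection of $\Lambda_q$ since $\tau$ commutes with the twisted Frobenius.  The main obstacle is to show that this branch contributes nothing for odd $p$.
\begin{itemize}
\item If $p$ is odd, then $d=p^a-1$ is even and $q-1$ is even, so $\gcd(d,q-1)\ge2$ and the fiber condition fails.  The overlaps $d=1,3$ appearing in odd characteristic are already covered by the first two cases.
\item If $p=2$, then $\gcd(2^a-1,2^k-1)=2^{\gcd(a,k)}-1$.  The fiber condition is therefore exactly $\gcd(a,k)=1$.
\item Still for $p=2$, under $\gcd(a,k)=1$, \cref{lem:denom}(iii) turns the denominator condition into $3\nmid a$.
\end{itemize}
This gives case (iii) of the statement.

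Finally, the non-disjointness remark is bookkeeping: in characteristic two, $a=1$ gives $d_0=1$ and $a=2$ gives $d_0=3$.  One can check consistency directly.  For $a=2$, the conditions $\gcd(2,k)=1$ and $3\nmid2$ mean $k$ odd, which is equivalent to $q\equiv2\pmod3$, matching case (ii).

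The only genuinely delicate point is condition (iii) of \cref{prop:mult-desc}.  We must make sure the map being tested is the reduced $h_d$ rather than the raw quotient.  This is legitimate because cancelled factors correspond to roots in $\mu_d$, and the denominator condition has already excluded those from $\Lambda_q$, so the raw and reduced expressions agree pointwise on $\Lambda_q$.
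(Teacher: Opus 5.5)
Your proposal is correct and follows the paper's proof essentially step for step. You reduce by the $p$-power, use \cref{thm:degree-one} to get $d_0\in\{1,3,p^a-1\}$, and then check the fiber, denominator and quotient conditions of \cref{prop:mult-desc} case by case with \cref{lem:denom}.

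The only difference is in showing $h_3(\Lambda_q)\subseteq\Lambda_q$. You derive it from the descent identity, which tacitly uses that $P_3(x)=\sigma(x^3)-x^3$ always has trace zero, so that it is nonzero and lies in $\Gamma_q^*$ under the denominator condition. The paper instead uses the $\tau$-equivariance of $h_3$ and the fact that it is defined over $\F_p$. Either route works, and your explicit remark that the raw and reduced quotients agree on $\Lambda_q$ makes precise a point the paper leaves implicit.
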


\begin{proof}
By \cref{prop:p-power-reduction}, the $p$-power part of $d$ does not affect the permutation property.  Thus assume $p\nmid d$.  By \cref{thm:degree-one}, the only degree-one quotient cases are $d=1$, $d=3$, and $d=p^a-1$.  The descent criterion \cref{prop:mult-desc} requires the fiber condition $\gcd(d,q-1)=1$, the denominator condition $\mu_d\cap\Lambda_q=\varnothing$, and quotient bijectivity.

For $d=1$, this is exactly \cref{lem:denom}(i).  For $d=3$, the standing hypothesis gives $p\ne3$.  If $P_3$ permutes $\Gamma_q$, then the denominator condition in \cref{prop:mult-desc} holds, and \cref{lem:denom}(ii) gives $q\equiv2\pmod3$.  Conversely, if $q\equiv2\pmod3$, then $\gcd(3,q-1)=1$, and \cref{lem:denom}(ii) gives $\mu_3\cap\Lambda_q=\varnothing$.  The reduced quotient is the degree-one map
\[
        h_3(z)=-\frac{2z+1}{z-1}
\]
from the proof of \cref{thm:degree-one}; it is defined over $\F_p$ and commutes with $\tau$ by \cref{prop:tau-equivariance}.  Therefore, for every $z\in\Lambda_q$,
\[
        h_3(z)^q=h_3(z^q)=h_3(\tau z)=\tau(h_3(z)),
\]
so $h_3$ preserves $\Lambda_q$.  Being an automorphism of $\PP^1$, it restricts to a bijection of this finite set.  Thus all three conditions of \cref{prop:mult-desc} hold exactly when $q\equiv2\pmod3$.

For $d=p^a-1$, the fiber condition $\gcd(d,q-1)=1$ is restrictive.  If $p$ is odd, then $p^a-1$ and $p^k-1$ are both even, so it fails.  Hence $p=2$.  In that case
\[
        \gcd(2^a-1,2^k-1)=2^{\gcd(a,k)}-1,
\]
so the fiber condition is $\gcd(a,k)=1$.  Under this condition, \cref{lem:denom}(iii) gives the denominator condition $3\nmid a$.  The reduced quotient map is $h_d=\tau^2$, an automorphism of the twisted quotient set, so the quotient condition is automatic.  This proves the classification.
\end{proof}

\begin{corollary}[Mersenne trace-zero permutations]\label{cor:mersenne}
Let $q=2^k$ with $k\ge1$, let $a\ge1$, and put $d=2^a-1$.  Then
\[
        X^{dq}-X^d
\]
permutes $\Gamma_q=\ker\Tr_{\F_{q^3}/\F_q}$ if and only if
\[
        \gcd(a,k)=1
        \qquad\text{and}\qquad
        3\nmid a.
\]
\end{corollary}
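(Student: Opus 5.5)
The corollary follows from the descent criterion \cref{prop:mult-desc}, applied with $r=d=2^a-1$ and $B(z)=z^d-1$; all three of its conditions have already been analysed. This is the specialization of \cref{thm:trace-zero-classification}(iii) to $p=2$. Since $d=2^a-1$ is odd and $p=2$, we have $p\nmid d$, so $d_0=d$ and no Frobenius reduction through \cref{prop:p-power-reduction} is needed. By \cref{thm:degree-one}, the reduced quotient is $h_d=\tau^2$, which places us in the degree-one sector. The remaining task is to verify the three conditions of \cref{prop:mult-desc} one at a time.

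\emph{Fiber condition.} We have $\gcd(2^a-1,2^k-1)=2^{\gcd(a,k)}-1$. Hence condition (i), $\gcd(d,q-1)=1$, holds exactly when $\gcd(a,k)=1$.

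\emph{Denominator condition.} By \cref{lem:denom}(iii), $\mu_d\cap\Lambda_q=\varnothing$ holds exactly when $g=\gcd(a,k)$ is odd and $3\nmid a/g$. Under $g=1$ this reduces to $3\nmid a$.

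\emph{Quotient condition.} Assume the denominator condition, so the raw expression is defined on $\Lambda_q$. There it coincides with $h_d=\tau^2$: the identity $H_d^{\rm raw}=\tau^2$ in $k(z)$ holds at every point where $z^d-1\ne0$. Since $\tau$ is defined over $\F_2$ and commutes with $\tau^2$, the same argument used for $h_3$ applies: for $z\in\Lambda_q$ we get $\tau^2(z)^q=\tau^2(z^q)=\tau^2(\tau z)=\tau(\tau^2 z)$. Thus $\tau^2$ maps $\Lambda_q$ into itself, and being injective on a finite set, it is a bijection.

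Combining the three conditions, \cref{prop:mult-desc} shows that $X^{dq}-X^d$ permutes $\Gamma_q$ if and only if $\gcd(a,k)=1$ and $3\nmid a$. The argument only assembles earlier results; the one step needing care is the denominator analysis. That analysis is already settled in \cref{lem:denom}(iii), and the work here is simply to check that the condition ``$g$ odd'' there is subsumed by $g=1$, so that only $3\nmid a$ survives.
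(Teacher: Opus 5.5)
Your proof is correct and follows the paper's own argument. The corollary is the $p=2$ case of \cref{thm:trace-zero-classification}(iii), whose proof likewise checks the fiber condition via $\gcd(2^a-1,2^k-1)=2^{\gcd(a,k)}-1$, the denominator condition via \cref{lem:denom}(iii), and quotient bijectivity via $h_d=\tau^2$ from \cref{thm:degree-one}; your explicit check that $\tau^2$ preserves $\Lambda_q$, and that the raw quotient agrees with $\tau^2$ away from the zeros of $z^d-1$, fills in a step the paper simply calls automatic.
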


\begin{corollary}[inverse via a linear Hilbert--90 lift on the trace-zero plane]\label{cor:quotient-inverse}
Assume the hypotheses of \cref{cor:mersenne}.  Let $y\in\Gamma_q$ and let $P_d(x)=x^{dq}-x^d$.  The inverse of $P_d$ on $\Gamma_q$ is obtained as follows.
\begin{enumerate}[label=\textup{(\roman*)}]
\item If $y=0$, then $x=0$.
\item If $y\ne0$, compute
\[
        w_y=y^{q-1}\in\Lambda_q,
        \qquad
        z=\tau(w_y).
\]
\item Find a nonzero solution $u\in L$ of the $K$-linear equation
\[
        \sigma(u)=zu,
        \qquad\text{equivalently}\qquad u^q=zu.
\]
Then $u\in\Gamma_q^*$ and $u^{q-1}=z$.
\item Put $x=cu$, where $c=1$ if $q=2$, and if $q>2$ then
\[
        c=\left(\frac{y}{P_d(u)}\right)^e\in\F_q^*,
        \qquad
        ed\equiv1\pmod{q-1}.
\]
\end{enumerate}
Then $P_d(x)=y$.  The expression $y/P_d(u)$ lies in $\F_q^*$, and the result is independent of the chosen lift $u$.
\end{corollary}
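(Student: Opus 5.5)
The plan is to trace the descent of \cref{prop:h90-quotient} and \cref{prop:mult-desc} backwards, using that $h_d=\tau^2$ (\cref{thm:degree-one}). Throughout, $p=2$, $d=2^a-1$, $\gcd(a,k)=1$ and $3\nmid a$. By \cref{cor:mersenne} $P_d$ permutes $\Gamma_q$, so for each $y$ there is a unique preimage; it suffices to show the recipe produces an $x$ with $P_d(x)=y$. The case $y=0$ is immediate since $P_d(0)=0$. For $y\ne0$, \cref{prop:h90-quotient} gives $w_y=y^{q-1}\in\Lambda_q$.

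For any $x\in\Gamma_q^*$, the computation in the proof of \cref{prop:mult-desc} gives $P_d(x)^{q-1}=H_d^{\rm raw}(x^{q-1})$. By the denominator condition of \cref{lem:denom}(iii), $x^{q-1}\notin\mu_d$, so the raw and reduced quotients agree there and this equals $h_d(x^{q-1})=\tau^2(x^{q-1})$. Therefore we need $\tau^2(x^{q-1})=w_y$, that is, $x^{q-1}=\tau(w_y)=z$. Since $\tau$ preserves $\Lambda_q$ (it commutes with $\Frob_q$ on the twisted fixed set), $z\in\Lambda_q$.

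For step (iii), the equation $u^q=zu$ is $K$-linear in $u$ because $z$ is fixed and $\sigma$ is additive. By \cref{prop:h90-quotient} there is a $u\in\Gamma_q^*$ with $u^{q-1}=z$, so a nonzero solution exists. Conversely, any nonzero solution satisfies $u^{q-1}=z$. The identity $u+\sigma u+\sigma^2u=u(1+z+z\sigma z)=0$ then shows $u\in\Gamma_q^*$. Two nonzero solutions differ by a factor in $L^*$ fixed by $\sigma$, which lies in $K^*$.

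It remains to fix the $K^*$-scalar. Both $P_d(u)$ and $y$ lie in $\Gamma_q^*$ and have the same image $w_y$ under $v\mapsto v^{q-1}$. By \cref{prop:h90-quotient}, $y/P_d(u)\in K^*$. Since $P_d(cu)=c^dP_d(u)$ for $c\in K^*$, we need $c^d=y/P_d(u)$. This is solved by $c=(y/P_d(u))^e$ with $ed\equiv1\pmod{q-1}$, which exists by the fiber condition $\gcd(d,q-1)=2^{\gcd(a,k)}-1=1$.

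When $q=2$ we have $K^*=\{1\}$, so $y/P_d(u)=1$ and $c=1$. Replacing $u$ by $bu$ with $b\in K^*$ multiplies $P_d(u)$ by $b^d$ and $c$ by $b^{-de}=b^{-1}$, so $x=cu$ is unchanged. This gives independence of the lift, which also follows from injectivity.

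The only delicate point is the identity $P_d(x)^{q-1}=\tau^2(x^{q-1})$ on all of $\Gamma_q^*$, and not just generically. This requires the uncanceled raw expression to have no pole or indeterminacy on $\Lambda_q$, which is exactly the denominator condition of \cref{lem:denom}(iii) used above.
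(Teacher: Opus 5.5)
Your proposal is correct and follows essentially the same route as the paper: invert $h_d=\tau^2$ by $\tau$, lift $z=\tau(w_y)$ through the one-dimensional $K$-linear Hilbert--90 equation $\sigma(u)=zu$, and fix the $K^*$-scalar using $\gcd(d,q-1)=1$. Your explicit appeal to the denominator condition $\mu_d\cap\Lambda_q=\varnothing$ is a useful addition. The paper leaves implicit that this condition gives $P_d(u)\ne0$ and that the raw quotient agrees with $\tau^2$ at every point of $\Lambda_q$, and you make both points precise.
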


\begin{proof}
Under the Mersenne hypotheses, \cref{thm:degree-one} gives $h_d=\tau^2$, and $\tau$ has order three.  Hence the inverse quotient map is $\tau$.  Thus if $w_y=y^{q-1}$ and $z=\tau(w_y)$, a lift over the desired quotient point is obtained by solving the $K$-linear equation $\sigma(u)=zu$.  Such a nonzero solution exists by multiplicative Hilbert 90, equivalently because $z\in\Lambda_q$ has norm one; the solution space is a one-dimensional $K$-vector space.  For any nonzero solution,
\[
        u+\sigma(u)+\sigma^2(u)=u(1+z+zz^q)=0,
\]
so $u\in\Gamma_q^*$ and $u^{q-1}=z$.  Since $\lambda(P_d(u))=h_d(z)=w_y=\lambda(y)$, the ratio $y/P_d(u)$ lies in $K^*=\F_q^*$.  If $q=2$, then $K^*$ is trivial, so this ratio is $1$ and $c=1$.  If $q>2$, multiplication by $c\in K^*$ on the $K^*$-fiber above $z$ changes $P_d(u)$ by the factor $c^d$.  Because $\gcd(d,q-1)=1$, exponentiation by $d$ is invertible on $K^*$, with any inverse exponent $e$ satisfying $ed\equiv1\pmod{q-1}$.  This gives the stated formula.

It remains only to check independence of the Hilbert--90 lift.  If $u'=tu$ with $t\in K^*$, then $P_d(u')=t^dP_d(u)$.  For $q>2$ the corresponding scalar is
\[
        c'=\left(\frac{y}{P_d(u')}\right)^e
           =t^{-de}c=t^{-1}c,
\]
because $ed\equiv1\pmod{q-1}$ on $K^*$.  Hence $c'u'=cu$.  For $q=2$ there is no nontrivial choice of $t\in K^*$, so the same conclusion is automatic.
\end{proof}

\begin{example}[small Mersenne exponents]
For $q=2^k$, the Mersenne criterion gives the following initial cases.
\[
\begin{array}{c|c|c|c}
 a & d=2^a-1 & \text{permutation condition on }k & \text{comment}\\
\hline
1&1& \text{all }k & \sigma-1\text{ on }\Gamma_q\\
2&3& k\text{ odd} & \text{DSX cubic case in characteristic }2\\
3&7& \text{none} & 3\mid a\\
4&15& k\text{ odd} & \text{all odd }k\\
5&31& 5\nmid k & \text{Mersenne branch}\\
6&63& \text{none} & 3\mid a
\end{array}
\]
In particular, for $a=4$ the map $X^{15q}-X^{15}$ permutes the trace-zero plane over $\F_{2^k}$ exactly when $k$ is odd.
\end{example}

\begin{example}[small quotient degrees]
The following sample values illustrate the first quotient-degree strata.
\[
\begin{array}{c|c|c|c}
\text{characteristic} & d & \deg h_d & \text{source}\\
\hline
p\ne3 & 1 & 1 & \text{linear case}\\
3 & 1 & 0 & h_1\text{ is constant}\\
p=0\text{ or }p\ne2,3 & 2 & 2 & \text{basic degree-two case}\\
p\ne3 & 3 & 1 & \text{DSX cubic quotient}\\
p>0 & p^a-1 & 1 & \text{Mersenne quotient}\\
11 & 5 & 2 & \text{exceptional degree-two case}
\end{array}
\]
The last line is supplied by the index-two tower over $\F_{11}$ and is also the exceptional case in \cref{thm:degree-two}.
\end{example}

\section{The second sparse mechanism: the \texorpdfstring{$p^a+1$}{p to the a plus 1} branch}\label{sec:frobenius-sparse}

The degree-one theorem identifies the branch $d=p^a-1$.  There is a second sparse branch, $d=p^a+1$ with $a\ge1$, which is invisible if one only asks for degree-one quotients.  It is the branch responsible for cyclic, rather than symmetric, monodromy.  This section proves the precise normal forms and the corresponding twisted-exceptionality criteria.

Throughout the first part of the section we assume $\operatorname{char} k\ne3$, choose a primitive cube root of unity $\omega$, and use the coordinate
\[
        M(u)=\frac{\omega-\omega^2u}{1-u},
        \qquad
        M^{-1}(z)=\frac{z-\omega}{z-\omega^2}.
\]
Then $M(\omega u)=\tau(M(u))$.  For a $\tau$-equivariant map $f$ we write
\[
        \Phi_f=M^{-1}\circ f\circ M.
\]
For $f=h_d$ we abbreviate $\Phi_d=M^{-1}\circ h_d\circ M$.

\begin{proposition}[root-of-unity filtered normal form]\label{prop:filtered-normal-form}
Assume $\operatorname{char} k\ne3$.  With
\[
        M(u)=\frac{\omega-\omega^2u}{1-u},
        \qquad
        M^{-1}(z)=\frac{z-\omega}{z-\omega^2},
        \qquad
        \Phi_d=M^{-1}\circ h_d\circ M,
\]
for $r\in\{0,1,2\}$ define
\[
        A_{d,r}(T)=\sum_{\substack{j\ge0\\ r+3j\le d}}
        (-1)^{r+3j}\binom d{r+3j}T^j.
\]
Let $r_+=\langle 2-d\rangle_3$ and $r_- =\langle 1-d\rangle_3$ be the least nonnegative residues modulo $3$.  Then
\[
        \Phi_d(u)
        =\omega^2 u^{r_+-r_-}\frac{A_{d,r_+}(u^3)}{A_{d,r_-}(u^3)}.
\]
In particular $\Phi_d(u)=uR_d(u^3)$ for some $R_d\in k(T)$.
\end{proposition}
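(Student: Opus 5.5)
The plan is a direct computation: substitute $z=M(u)$ into the raw quotient, clear the common factor $(1-u)^{-d}$, and then apply $M^{-1}$. Cancelling common factors does not change the rational function, so $h_d$ and $H_d^{\rm raw}$ define the same element of $k(z)$. It therefore suffices to prove the identity for $M^{-1}\circ H_d^{\rm raw}\circ M$.

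\textbf{Step 1: rescale $z$ and $z+1$ by $(1-u)$.} Using $1+\omega+\omega^2=0$, I first record
\[
 z(1-u)=\omega(1-\omega u),\qquad -(z+1)(1-u)=\omega^2(1-\omega^2u).
\]
Multiplying numerator and denominator of $H_d^{\rm raw}$ by $(1-u)^d$ then gives
\[
 \widetilde N=\omega^{2d}(1-\omega^2u)^d-\omega^d(1-\omega u)^d,\qquad
 \widetilde D=\omega^d(1-\omega u)^d-(1-u)^d .
\]

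\textbf{Step 2: sort each binomial expansion by residue classes mod $3$.} For any cube root of unity $c$, expanding $(1-cu)^d$ and grouping exponents $r+3j$ gives
\[
(1-cu)^d=\sum_{r=0}^2 c^r u^r A_{d,r}(u^3),
\]
because $c^{r+3j}=c^r$. Substituting this into Step 1, and using $\omega^{2d+2r}=(\omega^{d+r})^2$, I expect
\[
 \widetilde D=\sum_r(\omega^{d+r}-1)\,u^rA_{d,r}(u^3),\qquad
 \widetilde N=\sum_r \omega^{d+r}(\omega^{d+r}-1)\,u^rA_{d,r}(u^3).
\]

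\textbf{Step 3: apply $M^{-1}$ and isolate a single class.} Write $H=\widetilde N/\widetilde D$. Then
\[
M^{-1}(H)=\frac{\widetilde N-\omega\widetilde D}{\widetilde N-\omega^2\widetilde D}.
\]
The coefficient of $u^rA_{d,r}$ in $\widetilde N-\omega^i\widetilde D$ is $(\omega^{d+r}-1)(\omega^{d+r}-\omega^i)$. Since $\omega^{d+r}$ runs over the three cube roots of unity as $r$ varies, each of the two differences keeps exactly one term:
\begin{itemize}
\item for $i=1$, only the class with $\omega^{d+r}=\omega^2$ survives, that is $r=r_+$, with coefficient $(\omega^2-1)(\omega^2-\omega)$;
\item for $i=2$, only the class with $\omega^{d+r}=\omega$ survives, that is $r=r_-$, with coefficient $(\omega-1)(\omega-\omega^2)$.
\end{itemize}
The ratio of these two constants is $-(\omega+1)=\omega^2$. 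This gives exactly the displayed formula
\[
\Phi_d(u)=\omega^2 u^{r_+-r_-}\frac{A_{d,r_+}(u^3)}{A_{d,r_-}(u^3)}.
\]

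\textbf{Step 4: the $uR_d(u^3)$ form.} The difference $r_+-r_-$ is $1$ or $-2$, so in both cases $u^{r_+-r_-}$ equals $u$ times a power of $u^3$. Hence $\Phi_d(u)=uR_d(u^3)$. This also agrees with \cref{prop:cyclic-normal-form}(i).

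\textbf{Main obstacle: bookkeeping and degeneracies.} The calculation is routine once the expansions are grouped; the real risk is sign and power-of-$\omega$ errors in Step 1, so I would check these identities first. Two degenerate points need care.
\begin{itemize}
\item The hypothesis $\operatorname{char}k\ne3$ makes $\omega\ne\omega^2$ and $\omega\ne1$. This keeps $M$ invertible and the surviving coefficients $(\omega^2-1)(\omega^2-\omega)$ and $(\omega-1)(\omega-\omega^2)$ nonzero.
\item If some $A_{d,r_\pm}$ vanishes identically, the formula should still be read as an identity of rational functions, with the constant cases $\Phi_d\in\{0,\infty\}$ handled by the same convention used for constant reduced quotients.
\end{itemize}
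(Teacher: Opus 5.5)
Your proposal is correct and is essentially the paper's own proof. The paper also writes $H_d^{\rm raw}(M(u))$ in terms of $(1-u)^d$, $(1-\omega u)^d$, $(1-\omega^2u)^d$, applies $M^{-1}$, and observes that the root-of-unity factor survives only in the class $j\equiv 2-d$ in the numerator (value $3\omega$) and $j\equiv 1-d$ in the denominator (value $3\omega^2$); your factorization $(\omega^{d+r}-1)(\omega^{d+r}-\omega^i)$ encodes exactly this, since $(\omega^2-1)(\omega^2-\omega)=3\omega$ and $(\omega-1)(\omega-\omega^2)=3\omega^2$. Your degeneracy caveat is never triggered, because $A_{d,r_+}\equiv0$ or $A_{d,r_-}\equiv0$ would force $h_d$ to be the constant $\omega$ or $\omega^2$, whereas $h_d(0)=-(-1)^d=\pm1$.
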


\begin{proof}
Put
\[
        A=(1-u)^d,
        B=(1-\omega u)^d,
        C=(1-\omega^2u)^d.
\]
Since
\[
        M(u)=\omega\frac{1-\omega u}{1-u},
        \qquad
        M(\omega u)=\omega\frac{1-\omega^2u}{1-\omega u},
\]
we have
\[
        H_d^{\rm raw}(M(u))
        =\omega^d\frac{\omega^dC-B}{\omega^dB-A}.
\]
Applying $M^{-1}(z)=(z-\omega)/(z-\omega^2)$ gives
\[
        \Phi_d(u)
        =\frac{\omega A+\omega^{d+2}B+\omega^{2d}C}
        {\omega^2A+\omega^{d+1}B+\omega^{2d}C}.
\]
The coefficient of $u^j$ in the numerator is
\[
        (-1)^j\binom dj
        \bigl(\omega+\omega^{d+2+j}+\omega^{2d+2j}\bigr).
\]
The root-of-unity factor in parentheses is nonzero precisely when
\[
        j\equiv 2-d\pmod3,
\]
and in that case it equals $3\omega$.  It is zero for the other two residue classes.  Thus the numerator is the corresponding root-of-unity projection of the binomial expansion; in positive characteristic some projected terms may still vanish because the binomial coefficient itself vanishes in $k$.  Similarly, the root-of-unity factor in the denominator is nonzero precisely when
\[
        j\equiv1-d\pmod3,
\]
and in that case it equals $3\omega^2$, with the same possible further vanishing of binomial coefficients.  Dividing the two filtered sums gives the displayed formula.
\end{proof}

\begin{theorem}[Frobenius-sparse normal forms]\label{thm:frob-sparse-normal-forms}
Let $k$ be algebraically closed of characteristic $p>0$, and let $a\ge1$.
\begin{enumerate}[label=\textup{(\roman*)}]
\item If $d=p^a-1$, then
\[
        h_d=\tau^2.
\]
\item Assume $p\ne3$, put $r=p^a$, and let $d=r+1$.  In the coordinate $u=M^{-1}(z)$ one has
\[
        \Phi_{r+1}(u)=
        \begin{cases}
        \omega^2u^{-(r+1)},& r\equiv1\pmod3,\\[3pt]
        \omega^2u^{r-1},& r\equiv2\pmod3.
        \end{cases}
\]
\item Assume $p=3$, put $r=3^a$, and let $d=r+1$.  In the coordinate
\[
        u=\frac1{z-1},
        \qquad z=1+\frac1u,
\]
which conjugates $\tau$ to $u\mapsto u+1$, one has
\[
        \Phi_{r+1}(u)=-(u^r+u+1).
\]
\end{enumerate}
\end{theorem}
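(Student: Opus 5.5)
Part (i) needs no new argument. It is the identity $h_{p^a-1}=\tau^2$ established at the end of the proof of \cref{thm:degree-one}, which also covers the small overlaps $d=1,2,3$. I would simply cite it.

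For (ii) I would specialize \cref{prop:filtered-normal-form} to $d=r+1$ with $r=p^a$ and $p\ne3$. By Lucas' theorem, $(1+X)^{r+1}=(1+X^r)(1+X)$ in characteristic $p$. So the only nonzero binomial coefficients $\binom{r+1}{j}$ occur at $j\in\{0,1,r,r+1\}$, and each equals $1$ in $k$. Consequently each filtered polynomial $A_{d,s}(T)$ is a single monomial, or a sum of at most two monomials, and the only bookkeeping is sorting $j\in\{0,1,r,r+1\}$ into residue classes modulo $3$. I would split into two cases.
\begin{itemize}
\item If $r\equiv1\pmod3$, then $d\equiv2$, so $r_+=0$ and $r_-=2$.
 \begin{itemize}
 \item The class $j\equiv0$ contains only $j=0$, so the numerator filtered sum is $1$.
 \item The class $j\equiv2$ contains only $j=r+1$, contributing $(-1)^{r+1}(u^3)^{(r-1)/3}$.
 \item The prefactor is $u^{r_+-r_-}=u^{-2}$.
 \end{itemize}
 This yields $\Phi_{r+1}=\omega^2(-1)^{r+1}u^{-(r+1)}$.
\item If $r\equiv2\pmod3$, then $d\equiv0$, so $r_+=2$ and $r_-=1$.
 \begin{itemize}
 \item The class $j\equiv2$ contains only $j=r$, contributing $(-1)^r u^{r-2}$ once the prefactor exponent $r_+$ is restored.
 \item The class $j\equiv1$ contains only $j=1$, with coefficient $-(r+1)=-1$ in $k$.
 \end{itemize}
 This yields $\Phi_{r+1}=\omega^2(-1)^{r+1}u^{r-1}$.
\end{itemize}
The sign $(-1)^{r+1}$ is $+1$, since $r$ is odd for $p$ odd and signs are irrelevant for $p=2$. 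Because \cref{prop:filtered-normal-form} already identifies $\Phi_d$ with the conjugate of the reduced map, no further cancellation issue arises.

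For (iii) I would compute directly in characteristic $3$.
\begin{enumerate}
\item First check that $z=1+1/u$ conjugates $\tau$ to $u\mapsto u+1$. Since $1$ is the unique fixed point of $\tau$, this is a one-line verification.
\item Since $d=r+1$ is even, $(z+1)^{r+1}=(z^r+1)(z+1)$ gives $N_d=z^r+z+1$ and $D_d=z^{r+1}-1$.
\item Substituting $z^r=1+u^{-r}$ and using $3=0$ gives $N_d=u^{-r}+u^{-1}$ and $D_d=u^{-r}+u^{-1}+u^{-r-1}$.
\item Hence $h-1=(N_d-D_d)/D_d=-u^{-r-1}/D_d$.
\item The conjugated map is $1/(h-1)=-u^{r+1}D_d=-(u^r+u+1)$.
\end{enumerate}

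The main obstacle is purely bookkeeping. One must track the residue-class assignments and the exponent shift $u^{r_+-r_-}$ in \cref{prop:filtered-normal-form} against the substitution $T=u^3$, and one must keep the sign conventions consistent in characteristic $2$. I would double-check each case by testing $r=2,4,5$ numerically before writing it out.
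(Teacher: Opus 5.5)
Your proof is correct. Parts (i) and (iii) coincide with the paper's argument; for (iii) the paper also computes $H_{r+1}^{\rm raw}=(u^r+u)/(u^r+u+1)$ and then takes $1/(H-1)$, which is exactly your sequence of steps.

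For (ii) you take a genuinely different route. The paper does not invoke \cref{prop:filtered-normal-form} at this point. It writes $H_{r+1}^{\rm raw}(z)=(z^r+z+1)/(z^{r+1}-1)$ and uses two facts about $r$-power Frobenius: $M(u)^r=M(u^r)$ when Frobenius fixes $\omega$ (the case $r\equiv1\pmod3$), and $M(u)^r=M(u^{-r})$ when it swaps $\omega,\omega^2$ (the case $r\equiv2\pmod3$). It then applies the single two-variable identity $M^{-1}\bigl((M(v)+M(u)+1)/(M(u)M(v)-1)\bigr)=\omega^2/(uv)$ with $v=u^{\pm r}$.

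That route explains the dichotomy between exponents $-(r+1)$ and $r-1$ conceptually: it records whether Frobenius fixes or interchanges the two fixed points of $\tau$. It also needs no sign or residue bookkeeping.

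Your route is more mechanical, but it reuses an already proved proposition. It is also exactly the template the paper itself uses for the lacunary branch $d=2r+1$ in \cref{thm:two-r-plus-one-normal-form}, so it treats the sparse and lacunary branches uniformly. Its cost is the bookkeeping you flagged: you need $\binom{r+1}{1}=\binom{r+1}{r}=1$ and $(-1)^{r+1}=1$ in $k$.

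There is one small wording slip. In the case $r\equiv2\pmod3$, the expression $(-1)^ru^{r-2}$ is $A_{d,2}(u^3)$ before the factor $u^{r_+}=u^2$ is restored; after restoring it the term is $(-1)^ru^r$. Your final formula $\omega^2(-1)^{r+1}u^{r-1}$ is nonetheless correct.
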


\begin{proof}
Part (i) is the final computation in \cref{thm:degree-one}.

For (ii), since $d=r+1$, the Frobenius identity gives
\[
        H_{r+1}^{\rm raw}(z)=\frac{z^r+z+1}{z^{r+1}-1}.
\]
If $r\equiv1\pmod3$, then $M(u)^r=M(u^r)$; if $r\equiv2\pmod3$, then $M(u)^r=M(u^{-r})$.  For arbitrary $u,v$ a direct substitution gives
\[
        M^{-1}\left(
        \frac{M(v)+M(u)+1}{M(u)M(v)-1}
        \right)=\frac{\omega^2}{uv}.
\]
Taking $v=u^r$ in the first case and $v=u^{-r}$ in the second gives the two formulas.

For (iii), in characteristic $3$ the coordinate $u=1/(z-1)$ satisfies
\[
        u(\tau z)=u(z)+1.
\]
Again
\[
        H_{r+1}^{\rm raw}(z)=\frac{z^r+z+1}{z^{r+1}-1}.
\]
Writing $z=1+u^{-1}$ and using $r=3^a$, we get
\[
        z^r=1+u^{-r},
\]
so
\[
        H_{r+1}^{\rm raw}(z)
        =\frac{u^{-r}+u^{-1}}{u^{-1}+u^{-r}+u^{-r-1}}
        =\frac{u^r+u}{u^r+u+1}.
\]
Therefore
\[
        \Phi_{r+1}(u)=\frac1{H_{r+1}^{\rm raw}(z)-1}=-(u^r+u+1).
\]
\end{proof}

\begin{corollary}[monodromy of the Frobenius-sparse branch]\label{cor:sparse-monodromy}
Let $a\ge1$ and put $d=p^a+1$.
\begin{enumerate}[label=\textup{(\roman*)}]
\item If $p\ne3$, then the reduced quotient $h_d$ has cyclic geometric monodromy.  More precisely, with
\[
        m=
        \begin{cases}
        -(p^a+1),&p^a\equiv1\pmod3,\\
        p^a-1,&p^a\equiv2\pmod3,
        \end{cases}
\]
its geometric monodromy is cyclic of order $|m|$, except in the degree-one overlap $|m|=1$.
\item If $p=3$, then $h_{3^a+1}$ has elementary abelian geometric monodromy of order $3^a$.
\end{enumerate}
In particular these quotients are not covered by any theorem asserting $2$-transitive monodromy for all non-degree-one $h_d$.
\end{corollary}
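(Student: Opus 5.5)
The plan is to read off the monodromy directly from the normal forms of \cref{thm:frob-sparse-normal-forms}. Geometric monodromy is invariant under conjugation by M\"obius transformations on source and target, and under composition with automorphisms on the target. So it suffices to compute the monodromy of $\Phi_d$.

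For part (i), assume $p\ne3$. By \cref{thm:frob-sparse-normal-forms}(ii), $\Phi_d(u)=\omega^2u^{m}$ with $m=-(p^a+1)$ or $m=p^a-1$ according to $p^a\bmod 3$. In both cases $p\nmid m$: indeed $m\equiv\mp1\pmod p$. Hence the map is separable of degree $|m|$. This agrees with the torsion-defect count, since $\deg h_d=|m|$. Composing with the scalar $\omega^2$ and, when $m<0$, with the inversion $u\mapsto u^{-1}$ on the target reduces to the Kummer map $u\mapsto u^{|m|}$. This map is a Galois cover of $\PP^1$ with group $\mu_{|m|}$ acting by $u\mapsto\zeta u$. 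It is tame because $p\nmid|m|$, and it is branched only over $0,\infty$. Its function-field extension $k(u)/k(u^{|m|})$ is Galois with group $\mu_{|m|}\cong\Z/|m|$, so the geometric monodromy group, namely the Galois group of the Galois closure, is cyclic of order $|m|$. When $|m|=1$ (only possible for $p^a=2$, giving $m=1$) the map is linear and there is no nontrivial cover, which is the excluded overlap.

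For part (ii), assume $p=3$. By \cref{thm:frob-sparse-normal-forms}(iii), $\Phi_d(u)=-(u^r+u+1)$ with $r=3^a$. After the affine change $w=-v-1$ on the target, this becomes the additive polynomial $L(u)=u^r+u$. This polynomial is separable since $L'=1$, and it has degree $3^a$. Its kernel $W=\{c:c^r+c=0\}$ is an $\F_3$-subspace of $k$ of order $3^a$. The translations $u\mapsto u+c$, $c\in W$, preserve the fibers, and $|W|=\deg L$. Hence $k(u)/k(L(u))$ is Galois with group $W\cong(\Z/3)^a$, so the geometric monodromy is elementary abelian of order $3^a$. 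The cover is totally and wildly ramified over $\infty$ and \'etale elsewhere.

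Finally, a regular abelian permutation group of degree $n>2$ is not $2$-transitive, because a $2$-transitive group has order at least $n(n-1)>n$. This gives the concluding remark for $|m|\ge3$ and for every $a\ge1$ when $p=3$. In the degree-two case $|m|=2$, which only occurs for $p=3$ and is therefore excluded here, the remark would need separate care. Nothing here is a real obstacle. The only points needing checking are that the reduced quotient $h_d$ agrees with the raw expression used in the normal form, so that the degrees match, and that separability holds, which is immediate from $p\nmid m$ and $L'=1$.
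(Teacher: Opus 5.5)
Your proposal is correct and follows essentially the same route as the paper: for $p\ne3$ conjugate to the Kummer monomial $\omega^2u^m$ with $p\nmid m$ and read off the cyclic group $\mu_{|m|}$, and for $p=3$ reduce (via an affine change on the target) to the additive polynomial $u^{3^a}+u$ and identify the group as the translations by $\ker(X^{3^a}+X)$. Your extra check that $|m|=2$ never occurs, and that a regular group of degree $n>2$ cannot be $2$-transitive, justifies the final ``in particular'' clause, which the paper leaves implicit. One small nit: both values of $m$ are $\equiv-1\pmod p$, not $\mp1$, but this does not affect the conclusion $p\nmid m$.
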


\begin{proof}
For $p\ne3$, \cref{thm:frob-sparse-normal-forms} conjugates the map to $u\mapsto \omega^2u^m$.  Since $a\ge1$, the integer $m$ is prime to $p$, so this is a separable Kummer cover of degree $|m|$ and is Galois with cyclic group generated by multiplication of $u$ by an $|m|$-th root of unity.

For $p=3$, the equation $t=-(u^{3^a}+u+1)$ is equivalent to
\[
        u^{3^a}+u=-t-1.
\]
The additive polynomial $X^{3^a}+X$ is separable and has $3^a$ roots.  Translation by any root preserves the cover, and these translations act simply transitively on a geometric generic fiber.  Hence the geometric monodromy group is the elementary abelian group $\ker(X^{3^a}+X)$.
\end{proof}

\begin{lemma}[twisted sets in sparse coordinates]\label{lem:twisted-sets-normal}
Let $q=p^k$.
\begin{enumerate}[label=\textup{(\roman*)}]
\item If $p\ne3$ and $u=M^{-1}(z)$, then
\[
        M^{-1}(\Lambda_q)=
        \begin{cases}
        \{0,\infty\}\cup\{u\in\barF_p^*:u^{q-1}=\omega\},&q\equiv1\pmod3,\\[3pt]
        \{u\in\barF_p^*:u^{q+1}=\omega^2\},&q\equiv2\pmod3.
        \end{cases}
\]
\item If $p=3$ and $u=1/(z-1)$, then
\[
        u(\Lambda_q)=\{\infty\}\cup\{u\in\barF_3:u^q-u=1\}.
\]
\end{enumerate}
\end{lemma}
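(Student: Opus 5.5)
We compute the twisted fixed set $\Lambda_q=\{z:z^q=\tau(z)\}$ (by \cref{rem:fixedset-lambda}) in each coordinate. The key fact is that the coordinate change is defined over a field where its Frobenius twist is known. Since $u\mapsto z$ conjugates $\tau$ to a simple map $\tau_u$, the condition $z^q=\tau(z)$ becomes $\Frob_q^{M}(u)=\tau_u(u)$, where $\Frob_q^{M}$ is the Frobenius expressed in the $u$-coordinate. That is, we need $M^{-1}(M(u)^q)$. We then solve the resulting equation in $u$ and separately check the points $u=0,\infty$, and $u=1$ (respectively $u=\infty$) where the formulas degenerate.

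For (i), the proof of \cref{thm:frob-sparse-normal-forms}(ii) already records that, as rational functions, $M(u)^q=M(u^q)$ when $q\equiv1\pmod3$ and $M(u)^q=M(u^{-q})$ when $q\equiv2\pmod3$. This holds because $M$ has coefficients in $\F_p(\omega)$ and $\omega^q=\omega$ or $\omega^2$ accordingly; one checks that swapping $\omega\leftrightarrow\omega^2$ in $M$ amounts to $u\mapsto u^{-1}$. We also have $\tau(M(u))=M(\omega u)$. Thus $z\in\Lambda_q$ becomes $M(u^q)=M(\omega u)$, i.e. $u^q=\omega u$, in the first case, and $M(u^{-q})=M(\omega u)$, i.e. $u^{-q}=\omega u$, in the second. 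Since $M$ is injective on $\PP^1$, these are equivalences on all of $\PP^1$.

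In the first case, $u^q=\omega u$ is satisfied by $u=0$ and $u=\infty$; these are the fixed points $\omega,\omega^2$ of $\tau$, which are Frobenius-fixed because $\omega\in\F_q$. The remaining solutions are $u\in\barF_p^*$ with $u^{q-1}=\omega$. In the second case, $u^{q+1}=\omega^{-1}=\omega^2$, and $u=0,\infty$ are excluded since $\omega^q=\omega^2\neq\tau(\omega)=\omega$. Counting confirms the result: $2+(q-1)=q+1$ and $q+1$ points respectively, matching $|\Lambda_q|=q+1$ from \cref{prop:h90-quotient}.

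For (ii), $u=1/(z-1)$ is defined over $\F_3$, so Frobenius commutes with the coordinate change, and $\tau$ becomes $u\mapsto u+1$ (as stated in \cref{thm:frob-sparse-normal-forms}(iii); we verify this quickly via $\tau(z)-1=-(z+1)/z$ in characteristic $3$). Thus $z^q=\tau(z)$ becomes $u^q=u+1$ on $\PP^1$. The point $u=\infty$, i.e. $z=1$, satisfies this; indeed $1^{q+1}+1+1=3=0$. The finite solutions satisfy $u^q-u=1$, giving $1+q$ points in total, which again agrees with \cref{prop:h90-quotient}.

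The only delicate step is the identity $M(u)^q=M(u^{\pm q})$, together with correct bookkeeping of the sign $\omega$ versus $\omega^2$ under the paper's normalization $M(\omega u)=\tau(M(u))$. I would verify it by direct substitution: $M(u)^q=\frac{\omega^q-\omega^{2q}u^q}{1-u^q}$. For $q\equiv2$, $\frac{\omega^2-\omega u^q}{1-u^q}$ should equal $M(u^{-q})=\frac{\omega-\omega^2u^{-q}}{1-u^{-q}}=\frac{\omega u^q-\omega^2}{u^q-1}$, which it does.
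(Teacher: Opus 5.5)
Your proposal is correct and follows essentially the same route as the paper. In both, $z^q=\tau(z)$ is transported through $M(u)^q=M(u^{\pm q})$ and $\tau\circ M=M\circ(\omega\,\cdot)$ (respectively through the $\F_3$-rational coordinate $u=1/(z-1)$, in which $\tau$ is translation), and the solutions are read off on $\PP^1$.

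One small slip: in characteristic $3$ one has $\tau(z)-1=(z-1)/z$, not $-(z+1)/z$ (the latter is $\tau(z)$ itself). The conclusion $u(\tau z)=u(z)+1$ that you actually use is still correct.
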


\begin{proof}
For $p\ne3$, if $q\equiv1\pmod3$, then Frobenius fixes $\omega$ and
\[
        M(u)^q=M(u^q).
\]
The equation $z^q=\tau(z)$ becomes $M(u^q)=M(\omega u)$, hence $u^q=\omega u$ on $\PP^1$.  This gives the first line.  If $q\equiv2\pmod3$, then Frobenius swaps $\omega$ and $\omega^2$ and
\[
        M(u)^q=M(u^{-q}).
\]
Thus $M(u^{-q})=M(\omega u)$, equivalently $u^{q+1}=\omega^2$; neither $0$ nor $\infty$ satisfies this equation.

For $p=3$, the coordinate conjugates $\tau$ to $u\mapsto u+1$ and commutes with $q$-Frobenius.  Hence $z^q=\tau(z)$ becomes $u^q=u+1$ for finite $u$.  The point $u=\infty$ is fixed by both Frobenius and translation, and corresponds to the fixed point $z=1$ of $\tau$.
\end{proof}

\begin{theorem}[twisted exceptionality of the $p^a+1$ branch]\label{thm:plus-branch-exceptionality}
Let $a\ge1$ and put $d=p^a+1$.  This is a quotient-level statement about the action of $h_d$ on $\Lambda_q$; trace-zero permutation of $P_d$ additionally requires the fiber and denominator conditions in \cref{prop:mult-desc}.
\begin{enumerate}[label=\textup{(\roman*)}]
\item If $p=2$, put
\[
        e_a=2^a+(-1)^a.
\]
Then $h_d$ induces a permutation of $\Lambda_{2^k}$ if and only if
\[
        \gcd\bigl(e_a,2^k-(-1)^k\bigr)=1.
\]
In particular $h_d$ is $\tau$-twisted exceptional; indeed the displayed condition holds for every odd $k$.
\item If $p=3$, then $h_{3^a+1}$ induces a permutation of $\Lambda_{3^k}$ if and only if
\[
        \frac{k}{\gcd(a,k)}
\]
 is odd.  In particular $h_{3^a+1}$ is $\tau$-twisted exceptional.
\item If $p$ is odd and $p\ne3$, then $h_{p^a+1}$ induces no permutation of any $\Lambda_{p^k}$.
\end{enumerate}
\end{theorem}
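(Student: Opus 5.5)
The plan is to push everything into the sparse coordinates of \cref{thm:frob-sparse-normal-forms}, where $h_{p^a+1}$ becomes a monomial or an additive map. By \cref{lem:twisted-sets-normal}, $\Lambda_q$ becomes a coset of a cyclic group, possibly together with $\{0,\infty\}$, or an additive coset together with $\infty$. Since $h_d$ is defined over $\F_p$ and commutes with $\tau$, it maps $\Lambda_q$ into itself, exactly as in the $h_3$ argument of \cref{thm:trace-zero-classification}. So bijectivity on this finite set reduces to injectivity, i.e.\ to a kernel or gcd computation.

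\textbf{Case $p\ne3$.} Write $\Phi_d(u)=\omega^2u^m$ as in \cref{cor:sparse-monodromy}.

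If $q\equiv2\pmod3$, the twisted set is a coset $c\mu_{q+1}$. Writing $u=cv$ gives $\Phi_d(u)=\omega^2c^mv^m$, which is injective on the coset iff $\gcd(|m|,q+1)=1$.

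If $q\equiv1\pmod3$, the twisted set is $\{0,\infty\}\cup c\mu_{q-1}$. The monomial permutes $\{0,\infty\}$, and the same argument gives the condition $\gcd(|m|,q-1)=1$.

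For $p=2$ the bookkeeping closes up. We have $r=2^a\equiv1\pmod3$ iff $a$ is even, so $|m|=2^a+(-1)^a=e_a$ in both cases. Also $q\equiv1\pmod3$ iff $k$ is even, so the relevant modulus $q\mp1$ equals $2^k-(-1)^k$. This gives (i).

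For the "every odd $k$" clause I will use the standard facts
\[
\gcd(2^a-1,2^k+1)=1\quad\text{for }a\text{ odd},
\]
which follows because it divides $2^{\gcd(a,2k)}-1=2^{\gcd(a,k)}-1\mid2^k-1$, and
\[
\gcd(2^a+1,2^k+1)=1\quad\text{for }a\text{ even},\ k\text{ odd},
\]
which follows because $\gcd(2^a+1,2^k+1)=2^{g}+1$ requires $a/g$ and $k/g$ both odd, with $g=\gcd(a,k)$.

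For odd $p\ne3$, both $|m|=p^a\pm1$ and $q\pm1$ are even. Hence the gcd is at least $2$ and the monomial is never injective on the coset, which gives (iii).

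\textbf{Case $p=3$.} Here $\Phi(u)=-(u^r+u+1)$ with $r=3^a$, and the twisted set is $\{\infty\}\cup(v_0+\F_q)$ with $v_0^q-v_0=1$. The point $\infty$ is fixed. Writing $u=v_0+x$ with $x\in\F_q$, $\Phi$ becomes a constant minus $L(x)=x^{r}+x$. So bijectivity is equivalent to $\ker L\cap\F_q=0$.

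Now $\ker L\subseteq\F_{3^{2a}}$, since $x^{3^a}=-x$ implies $x^{3^{2a}}=x$. Therefore it suffices to work in $\F_{3^k}\cap\F_{3^{2a}}=\F_{3^{\gcd(2a,k)}}$. Put $g=\gcd(a,k)$.

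If $k/g$ is odd, then $\gcd(2a,k)=g$. On $\F_{3^g}$ the map $x\mapsto x^{3^a}$ is the identity, so $L(x)=2x$, and the kernel is trivial.

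If $k/g$ is even, then $\gcd(2a,k)=2g$ and $a/g$ is odd. Hence $x\mapsto x^{3^a}$ acts on $\F_{3^{2g}}$ as $x\mapsto x^{3^g}$. A nonzero solution of $x^{3^g-1}=-1$ exists there because $2\mid 3^g+1$. This yields (ii).

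Twisted exceptionality in (i) and (ii) then follows by exhibiting infinitely many admissible $k$: all odd $k$ in (i), and for (ii) all $k$ coprime to $2a$, for which $k/\gcd(a,k)=k$ is odd.

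The main obstacle is the exact matching of signs and moduli in the $p=2$ case, that is, checking that $|m|=e_a$ and that the coset modulus is $2^k-(-1)^k$ consistently across the four parity combinations of $(a,k)$. Transcribing \cref{lem:twisted-sets-normal} into cosets, including the role of $\{0,\infty\}$ when $q\equiv1\pmod 3$, also needs care. I would first verify these on $a,k\le2$ by hand.
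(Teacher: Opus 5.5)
Your proposal is correct and follows the paper's proof essentially step for step: the same monomial and Artin--Schreier normal forms, the same coset description of $\Lambda_q$, injectivity reduced to $\gcd(|m|,q\mp1)=1$ for $p\ne3$, and the kernel of $x\mapsto x^{3^a}+x$ on $\F_{3^k}$ for $p=3$. The differences are cosmetic. For the odd-$k$ clause you use the standard gcd identities for $2^a\pm1$ and $2^k+1$, where the paper argues with multiplicative orders of $2$. For $p=3$ you locate the kernel inside $\F_{3^{\gcd(2a,k)}}$, where the paper counts solutions of $x^{3^a-1}=-1$ via $\gcd(3^a-1,3^k-1)=3^g-1$ and the parity of $1+3^g+\cdots+3^{k-g}$.
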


\begin{proof}
Assume first that $p\ne3$.  By \cref{thm:frob-sparse-normal-forms}, the map is conjugate to $u\mapsto \omega^2u^m$, with
\[
        m=-(p^a+1)\quad\text{if }p^a\equiv1\pmod3,
        \qquad
        m=p^a-1\quad\text{if }p^a\equiv2\pmod3.
\]
The congruence $m\equiv1\pmod3$ holds in both cases.  Therefore the monomial preserves the cosets described in \cref{lem:twisted-sets-normal}.  On the nonzero coset it is bijective exactly when $m$ is invertible modulo the size of the underlying cyclic group.  Thus, if $q\equiv1\pmod3$, the condition is $\gcd(|m|,q-1)=1$, while if $q\equiv2\pmod3$, the condition is $\gcd(|m|,q+1)=1$.

For $p=2$ this is precisely the stated criterion, since $|m|=2^a+1$ when $a$ is even and $|m|=2^a-1$ when $a$ is odd, while $q=2^k$ satisfies $q\equiv1\pmod3$ for $k$ even and $q\equiv2\pmod3$ for $k$ odd.  If $k$ is odd and $a$ is even, a common prime divisor of $2^a+1$ and $2^k+1$ would have the order of $2$ dividing both $2a$ and $2k$ but neither $a$ nor $k$; this is impossible because $a$ is even and $k$ is odd.  If $k$ is odd and $a$ is odd, a common divisor of $2^a-1$ and $2^k+1$ would have odd order dividing $a$ and also dividing $2k$ but not $k$, again impossible.  Hence every odd $k$ satisfies the criterion.

If $p$ is odd and $p\ne3$, then $|m|$ is even.  But $q-1$ is even when $q\equiv1\pmod3$, and $q+1$ is even when $q\equiv2\pmod3$.  The required gcd is therefore never $1$.

It remains to treat $p=3$.  By \cref{thm:frob-sparse-normal-forms,lem:twisted-sets-normal}, the finite part of $\Lambda_{3^k}$ is an affine coset $u_0+\F_{3^k}$ with $u_0^{3^k}-u_0=1$, and the point $\infty$ is fixed.  Since $h_{3^a+1}$ is defined over $\F_3$ and commutes with $\tau$, it preserves $\Lambda_{3^k}$; equivalently, the following affine formula is a self-map of the coset $u_0+\F_{3^k}$.  On this coset,
\[
        u_0+x\longmapsto -(u_0^{3^a}+u_0+1)-(x^{3^a}+x),
        \qquad x\in\F_{3^k}.
\]
Thus the induced map is bijective if and only if the $\F_3$-linear part
\[
        x\longmapsto x^{3^a}+x
\]
has zero kernel on $\F_{3^k}$.  A nonzero kernel element satisfies $x^{3^a-1}=-1$.  Let $g=\gcd(a,k)$.  Since
\[
        \gcd(3^a-1,3^k-1)=3^g-1,
\]
the equation $x^{3^a-1}=-1$ is solvable in $\F_{3^k}^*$ if and only if $3^g-1$ divides $(3^k-1)/2$.  This happens if and only if
\[
        \frac{3^k-1}{3^g-1}=1+3^g+\cdots+3^{k-g}
\]
 is even, equivalently if and only if $k/g$ is even.  Therefore bijectivity holds exactly when $k/g$ is odd.
\end{proof}

\begin{corollary}[the $Q+1$ trace-zero branch revisited]\label{cor:Qplus1-branch}
Let $q=2^k$, let $a\ge1$, let $Q=2^a$, and put $d=Q+1=2^a+1$.  Then
\[
        P_d(X)=X^{dq}-X^d
\]
permutes $\Gamma_q$ if and only if
\[
        \gcd(d,q-1)=1.
\]
This recovers the Ding--Song--Xiong $Q+1$ trace-zero theorem from the cyclic quotient normal form.
\end{corollary}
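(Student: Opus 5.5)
The plan is to apply the descent criterion \cref{prop:mult-desc} with $r=d$ and $B(z)=z^d-1$. This reduces the permutation property of $P_d$ on $\Gamma_q$ to three conditions: the fiber condition $\gcd(d,q-1)=1$, the denominator condition $\mu_d\cap\Lambda_q=\varnothing$, and bijectivity of $h_d$ on $\Lambda_q$. Necessity of the fiber condition is immediate from \cref{prop:mult-desc}. So the work is to show that, in characteristic two with $d=2^a+1$, the fiber condition alone forces the other two.

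\textbf{Quotient bijectivity.} By \cref{thm:plus-branch-exceptionality}(i), this holds if and only if $\gcd(e_a,2^k-(-1)^k)=1$, where $e_a=2^a+(-1)^a$. The approach is to compare this with $\gcd(2^a+1,2^k-1)=1$ by the standard order-of-$2$ argument, separating the parities of $k$.

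\begin{itemize}
\item For $k$ odd, the criterion holds unconditionally, as shown in that proof.
\item For $k$ even, $q\equiv1\pmod3$ and the condition becomes $\gcd(|m|,2^k-1)=1$. The proof of \cref{thm:plus-branch-exceptionality} gives $|m|=2^a+1$ when $a$ is even. When $a$ is odd it gives $|m|=2^a-1$. In that case $3\mid 2^a+1$ and $3\mid 2^k-1$, so the fiber condition fails anyway. Hence, whenever the fiber condition holds with $k$ even, $a$ is even, and the quotient condition coincides with the fiber condition.
\end{itemize}

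\textbf{Denominator condition.} Here $\mu_d$ has odd order $2^a+1$. I would pass to the diagonal coordinate $u=M^{-1}(z)$, noting that $\omega\in\mu_d$ exactly when $3\mid d$, i.e.\ when $a$ is odd. By \cref{lem:twisted-sets-normal}:

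\begin{itemize}
\item For $k$ even, $\Lambda_q$ consists of $\{0,\infty\}$ together with the $u$ satisfying $u^{q-1}=\omega$. I would try to show that an element $\zeta\in\mu_d\cap\Lambda_q$ satisfies $\zeta^{q}=\tau(\zeta)$ with $\zeta^d=1$. Combined with $\zeta^{q^3}=\zeta$, this forces the order of $\zeta$ to divide $\gcd(2^a+1,2^{3k}-1)$.
\item For $k$ odd, I would instead argue directly. Suppose $\zeta^{2^a}=\zeta^{-1}$ and $\zeta^{q}=1+\zeta^{-1}$. Using $\gcd(a,k)$ and iterating $\zeta\mapsto\zeta^{2^a}=\zeta^{-1}$, one shows $\zeta^{2^{\gcd}}$ is determined. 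This forces $\zeta$ to be fixed by a power of $\tau$, hence $\zeta\in\F_4\setminus\F_2$. That contradicts $\zeta^q=\tau(\zeta)$ for odd $k$, by the same computation as in \cref{lem:denom}(iii).
\end{itemize}

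\textbf{Main obstacle.} I expect the denominator step to be the main obstacle. Tracking the simultaneous relations $\zeta^{2^a}=\zeta^{-1}$ and $\zeta^q=\tau(\zeta)$ requires exploiting that $\tau$ and the Frobenius $A$ commute. The resulting reduction to $\tau$-fixed points $z^2+z+1=0$ needs care about the order of $A$ modulo $3$. I would try to mimic the $3\nmid n$ versus $3\mid n$ split of \cref{lem:denom}(iii), now using that inversion composed with Frobenius acts on $\mu_{2^a+1}$.
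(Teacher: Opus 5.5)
Your treatment of the fiber and quotient conditions is correct and matches the paper. For odd $k$ the criterion of \cref{thm:plus-branch-exceptionality} holds automatically. For even $k$ the fiber condition forces $a$ even, and then that criterion is literally $\gcd(d,q-1)=1$.

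The gap is the denominator condition $\mu_d\cap\Lambda_q=\varnothing$. You flag it as the main obstacle but never establish it.
\begin{itemize}
\item In the even-$k$ bullet you stop at $\operatorname{ord}(\zeta)\mid\gcd(2^a+1,2^{3k}-1)$ without showing that this gcd is $1$.
\item In the odd-$k$ bullet, the step from ``$\zeta^{2^{\gcd(a,k)}}$ is determined'' to ``$\zeta$ is fixed by a power of $\tau$'' is asserted rather than derived.
\item The diagonal coordinate and the description of $M^{-1}(\Lambda_q)$ from \cref{lem:twisted-sets-normal} are never actually used.
\end{itemize}

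The missing idea in the paper's proof is to tie denominator points to cancellation. If $z\in\mu_d\cap\Lambda_q$, then $\tau(z)=z^q\in\mu_d$, so $z$ is a canceled point in the sense of \cref{thm:torsion-defect}. There are exactly $d-\deg h_d$ canceled points, and the normal form of \cref{thm:frob-sparse-normal-forms} gives this count.
\begin{itemize}
\item For $a$ even the degree is $d$, so there are no canceled points at all.
\item For $a$ odd the degree is $d-2$, and the two canceled points are the $\tau$-fixed primitive cube roots. The fiber condition then forces $k$ odd, Frobenius swaps those two roots, and so neither satisfies $z^q=\tau(z)=z$.
\end{itemize}

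Either of your own lines can also be closed quickly.
\begin{itemize}
\item First line. One has $\gcd(2^a+1,2^m-1)=1$ if and only if $v_2(m)\le v_2(a)$. Indeed, a common prime has $2$-order $o$ with $v_2(o)=v_2(a)+1$ and $o\mid m$. Conversely, if $v_2(m)>v_2(a)$, any prime factor of $2^{2^{v_2(a)}}+1$ divides both numbers. Since $v_2(3k)=v_2(k)$, the fiber condition gives $\mu_d\cap\F_{q^3}^*=\{1\}$ for every $k$. Finally $1\notin\Lambda_q$, because $1+1+1\neq0$ in characteristic two.
\item Second line. $\Frob_{2^a}$ acts on $\zeta$ as inversion and $\Frob_q$ acts as $\tau$. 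These Frobenius powers commute with each other and with $\tau$ and inversion, so $\tau(\zeta^{-1})=\tau(\zeta)^{-1}$. That is, $1+\zeta=\zeta/(\zeta+1)$, whence $\zeta^2+\zeta+1=0$. Then $\zeta^q=\tau(\zeta)=\zeta$ forces $k$ even, while $\zeta\in\mu_d$ forces $3\mid d$. Together these contradict $\gcd(d,q-1)=1$.
\end{itemize}
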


\begin{proof}
The fiber condition in \cref{prop:mult-desc} is exactly $\gcd(d,q-1)=1$, so it is necessary.  Assume it holds.

If $a$ is even, then \cref{thm:frob-sparse-normal-forms} gives a monomial of degree $d$, so the torsion defect is zero and hence $\mu_d\cap\Lambda_q=\varnothing$.  The quotient condition follows from \cref{thm:plus-branch-exceptionality}: for $k$ even it is the same gcd condition, and for $k$ odd the gcd $\gcd(2^a+1,2^k+1)$ is automatically $1$ by the order argument used in the proof of \cref{thm:plus-branch-exceptionality}.

If $a$ is odd, then $d$ is divisible by $3$.  The fiber condition forces $k$ to be odd, since $2^k-1$ is divisible by $3$ for $k$ even.  For odd $k$, \cref{thm:plus-branch-exceptionality} gives the quotient condition.  The normal form in \cref{thm:frob-sparse-normal-forms} has degree $2^a-1=d-2$, so the cancellation degree is $2$.  The primitive cube roots, equivalently the two roots of $z^2+z+1=0$, lie in $\mu_d$ and are fixed by $\tau$; hence they are canceled denominator points.  Conversely, if $z\in\mu_d\cap\Lambda_q$, then $z^q=\tau(z)$ and therefore $\tau(z)\in\mu_d$, so $z$ is a canceled denominator point.  Since there are exactly two cancellations, these are all the possible denominator points.  In characteristic two they are the two nontrivial elements of $\F_4^*$.  For odd $k$ Frobenius interchanges them, so they do not satisfy $z^q=\tau(z)=z$.  Thus $\mu_d\cap\Lambda_q=\varnothing$.  The descent criterion now proves sufficiency.
\end{proof}

\begin{remark}
The branch $d=p^a-1$ with $a\ge1$ is the linear Frobenius-sparse branch; in characteristic two it gives the Mersenne trace-zero permutations of \cref{cor:mersenne}.  The branch $d=p^a+1$ with $a\ge1$ is the cyclic or Artin--Schreier Frobenius-sparse branch; in characteristic two it recovers the known $Q+1$ trace-zero branch of Ding--Song--Xiong.  Outside these sparse branches, the expected behavior is generic monodromy rather than exceptional monodromy.
\end{remark}

\section*{Part III. Branch geometry and symmetric monodromy}
\section{Characteristic-zero quotient degrees}\label{sec:charzero-degree}

The torsion-defect formula immediately gives a complete degree calculation in characteristic zero.  This short result is placed here, before the monodromy theorem, because it identifies the precise separable degree of every non-linear characteristic-zero quotient.

\begin{theorem}[complete quotient-degree strata in characteristic zero]\label{thm:charzero-degree-strata}
Over an algebraically closed field of characteristic zero,
\[
        \deg h_d=
        \begin{cases}
        d-2,&3\mid d,\\
        d,&3\nmid d.
        \end{cases}
\]
Consequently, for every $m\ge1$,
\[
        \{d\ge1:\deg h_d=m\}
        =
        \bigl(\{m\}:3\nmid m\bigr)
        \cup
        \bigl(\{m+2\}:3\mid m+2\bigr),
\]
where a parenthesized singleton is omitted when its displayed condition fails.  In particular there is no characteristic-zero quotient of degree $3$.
\end{theorem}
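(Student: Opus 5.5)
The plan is to apply \cref{thm:torsion-defect} directly. In characteristic zero every $d$ is tame, so $\deg h_d=d-\delta_d$ with
\[
\delta_d=\#\{(x,y)\in\mu_d^2:1+x+y=0\}.
\]
It therefore suffices to show that $\delta_d=2$ when $3\mid d$ and $\delta_d=0$ otherwise. After that, the description of the fibres $\{d:\deg h_d=m\}$ is bookkeeping.

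The key step is the classical three-term vanishing-sum lemma: if $x,y$ are roots of unity in $\mathbb C$ (or any characteristic-zero algebraically closed field, via an embedding of $\mathbb Q(\mu_\infty)$) with $1+x+y=0$, then $\{x,y\}=\{\omega,\omega^2\}$. I would prove it by taking absolute values. From $|1+x|=|y|=1$ and $|x|=1$ we get $2+2\operatorname{Re}x=1$, so $\operatorname{Re}x=-1/2$ and $x\in\{\omega,\omega^2\}$; then $y=-1-x$ is the other primitive cube root. This is cleaner than citing Mann's theorem, though \cref{cor:vanishing-sum} already points to \cite{Mann1965}. Consequently $L_0\cap\mu_d^2$ consists of the two points $(\omega,\omega^2)$ and $(\omega^2,\omega)$ when $\mu_3\subseteq\mu_d$, i.e.\ when $3\mid d$, and is empty otherwise. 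In the tame case the length equals this count, so $\delta_d\in\{0,2\}$ as claimed. There is a quick consistency check: these cancelled points are the $\tau$-fixed points, matching the proof of \cref{cor:Qplus1-branch}.

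For the strata, fix $m\ge1$. If $3\nmid d$, then $\deg h_d=m$ forces $d=m$, which is allowed only when $3\nmid m$. If $3\mid d$, then $d=m+2$, which requires $3\mid m+2$. These two possibilities give the displayed union; when $3\mid m$ both conditions fail and the set is empty. In particular $m=3$ gives the empty set. There is one small edge check: $d=3$ yields degree $1$, which agrees with \cref{thm:degree-one}.

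The only real content is the vanishing-sum lemma, and I expect no serious obstacle beyond making the embedding into $\mathbb C$ explicit for a general algebraically closed field of characteristic zero.
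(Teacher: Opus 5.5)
Your proposal is correct and is essentially the paper's proof: you reduce to counting $L_0\cap\mu_d^2$ via \cref{thm:torsion-defect}, pass to $\mathbb C$ through an embedding of the cyclotomic field, and use $|1+x|=|x|=1$ to force $\operatorname{Re}x=-1/2$, so that $\{x,y\}=\{\omega,\omega^2\}$. The strata then follow, as in the paper, by solving $m=d$ with $3\nmid d$ and $m=d-2$ with $3\mid d$.
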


\begin{proof}
By the torsion-defect formula, the only issue is to count pairs $(x,y)$ of $d$-th roots of unity with $1+x+y=0$.  These equations involve only $d$-th roots of unity and hence are defined over a cyclotomic field; choosing an embedding of that cyclotomic field into $\mathbb C$ preserves algebraic existence and nonexistence.  We may therefore count complex roots of unity.  Since $|x|=|y|=1$ and $y=-1-x$, the equality $|1+x|=1$ gives
\[
        2+x+\bar x=1,
        \qquad\text{hence}\qquad
        \operatorname{Re}(x)=-\frac12.
\]
Thus $x$ is one of the two primitive third roots of unity, and then $y$ is the other one.  Hence the defect is $2$ if $3\mid d$ and is $0$ otherwise.  Applying \cref{thm:torsion-defect} gives the degree formula.  The fixed-degree classification follows immediately by solving $m=d$ with $3\nmid d$ and $m=d-2$ with $3\mid d$.
\end{proof}

\section{Branch collisions and critical values}\label{sec:branch-collision}

The torsion-defect formula controls cancellation and degree.  This section develops the analogous invariant for branch-value collisions.  It is the key calculation behind the characteristic-zero monodromy theorem: over characteristic zero, the critical-value collision equation has no nondegenerate cyclotomic solutions.

Throughout this section $k$ is algebraically closed of characteristic $p\ge0$; divisibility hypotheses involving $p$ are void when $p=0$.  We use the coordinate
\[
        x=\frac{z}{z+1},\qquad z=\frac{x}{1-x}.
\]
In this coordinate the order-three automorphism is
\[
        \tau_x(x)=\frac1{1-x},
\]
and the raw quotient formula becomes
\begin{equation}\label{eq:Kd-coordinate}
        K_d^{\rm raw}(x)=H_d^{\rm raw}\left(\frac{x}{1-x}\right)
        =\frac{\eps-x^d}{x^d-(1-x)^d},
        \qquad \eps=(-1)^d.
\end{equation}
The symbol $k_d$ denotes the reduced rational function obtained from $K_d^{\rm raw}$ after canceling the common factor of its numerator and denominator.  Critical points, pole points, and critical values in this section refer to the reduced map $k_d$.

\begin{proposition}[critical equation in Hilbert--90 coordinates]\label{prop:critical-equation-x}
Assume $d\ge2$ and either $p=0$, or $p>0$ with $p\nmid d(d-1)$.  Put $m=d-1$.  Let $x$ be a finite critical point of the reduced quotient $k_d$, away from cancellation and pole points.  Then $x\notin\{0,1\}$,
\[
        x^m+(1-x)^m\ne0,
\]
and
\begin{equation}\label{eq:critical-equation-x}
        x^m(1-x)^m=\eps\bigl(x^m+(1-x)^m\bigr).
\end{equation}
Moreover, if $T=k_d(x)$ is the corresponding critical value, then
\begin{equation}\label{eq:critical-value-x}
        T=-\frac{\eps}{(1-x)^m},
        \qquad
        T+1=\frac{\eps}{x^m}.
\end{equation}
\end{proposition}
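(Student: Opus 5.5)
The plan is to compute the Wronskian of the raw quotient $K_d^{\rm raw}$ directly, in the same way as the proof of \cref{lem:tame-quotient-separable}, and then pass to the reduced map $k_d$. Write
\[
        N(x)=\eps-x^d,\qquad D(x)=x^d-(1-x)^d,\qquad a=x^m,\qquad b=(1-x)^m .
\]
Expanding $N'D-ND'$ and collecting terms, I expect the $x^{2d-1}$ terms to cancel. The mixed terms $x^{m}(1-x)^{d}+x^{d}(1-x)^{m}$ should then combine, via $(1-x)+x=1$, to give
\[
        N'D-ND'=d\bigl(x^m(1-x)^m-\eps(x^m+(1-x)^m)\bigr).
\]
This short but sign-sensitive expansion is the one real computation.

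Next I pass to the reduced map. Let $C=\gcd(N,D)$ and write $N=Cn$, $D=Cd'$. Then $N'D-ND'=C^2(n'd'-nd')$, exactly as in \cref{lem:tame-quotient-separable}. Take a finite point $x$ that is neither a root of $C$ (a cancellation point) nor a root of $d'$ (a pole point). At such a point $k_d$ is regular, and $x$ is critical exactly when $n'd'-nd'$ vanishes there. Since $C(x)\ne0$, this is equivalent to $N'D-ND'$ vanishing at $x$. The hypothesis $p\nmid d$ lets us divide by $d$, which yields \eqref{eq:critical-equation-x}. The hypothesis $p\nmid d-1$ is not needed for this step, though it guarantees that $m$ is a genuine tame exponent for the later sections.

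The exclusions follow directly from \eqref{eq:critical-equation-x}. If $x=0$, the left side is $0$ while the right side is $\eps\ne0$. The case $x=1$ is symmetric. If $x^m+(1-x)^m=0$, then \eqref{eq:critical-equation-x} forces $x^m(1-x)^m=0$, so $x\in\{0,1\}$, which has just been excluded. Hence $a,b\ne0$ and $a+b\ne0$.

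For the critical value, $x$ is not a cancellation point and not a pole, so $D(x)\ne0$. Therefore $T=k_d(x)=N(x)/D(x)=(\eps-xa)/(xa-(1-x)b)$. To verify $T=-\eps/b$, cross-multiply:
\[
        \eps b-xab=-\eps xa+\eps(1-x)b .
\]
This reduces to $x\bigl(ab-\eps(a+b)\bigr)=0$, which is the critical equation \eqref{eq:critical-equation-x}. The identity $T+1=\eps/a$ is checked the same way. Since $T+1=(\eps-(1-x)b)/D(x)$, cross-multiplication again reduces it to $(1-x)\bigl(ab-\eps(a+b)\bigr)=0$. Alternatively, one can note that $-\eps/b+1=\eps/a$ is itself equivalent to $ab=\eps(a+b)$. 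The main obstacle is purely bookkeeping: getting the signs in the Wronskian expansion right.
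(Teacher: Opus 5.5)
Your proof is correct, but it is organized differently from the paper's.

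\textbf{The paper's route.} It first rules out $x=0$ and $x=1$ by local expansions of $K_d^{\rm raw}$, whose linear coefficients are $\pm d$. It then writes the fiber equation $(T+1)x^d-T(1-x)^d=\eps$ at the critical value $T$ and differentiates it to get $(T+1)x^m+T(1-x)^m=0$. With $A=x^m$ and $B=(1-x)^m$ known to be nonzero, and after checking $A+B\neq0$ from the derivative equation, it solves $T=-A/(A+B)$. Substituting back gives $AB=\eps(A+B)$. The critical-value formulas are then read off from the solved expression for $T$.

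\textbf{Your route.} You compute the raw Wronskian once, $N'D-ND'=d\bigl(x^m(1-x)^m-\eps(x^m+(1-x)^m)\bigr)$, and your expansion is right. You then get the $T$-free critical equation directly from $N'D-ND'=C^2W(k_d)$. The facts $x\notin\{0,1\}$ and $x^m+(1-x)^m\ne0$ come out as consequences of that equation rather than as prior inputs, so no local expansions are needed. The cost is a separate cross-multiplication to recover $T$ and $T+1$, which you carry out correctly.

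\textbf{What your version buys.} It shows that the $x$-coordinate critical polynomial is just the pullback of the skeleton in \cref{prop:wronskian-skeleton}. Under $z=x/(1-x)$ one has $F_d=(1-x)^{-2m}\bigl(x^m(1-x)^m-\eps(x^m+(1-x)^m)\bigr)$. It also makes plain that only $p\nmid d$ is used. In the paper, $p\nmid d-1$ enters only to make the constant term $-2$ in the expansion at $x=1$ nonzero, which is irrelevant to criticality; this confirms your remark.

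\textbf{One notational fix.} You use $d'$ both for the reduced denominator and for the derivative, so ``$n'd'-nd'$'' literally reads as $0$. Rename the reduced denominator, e.g.\ $\bar D$, and write $n'\bar D-n\bar D'$.
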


\begin{proof}
First $x=0$ and $x=1$ are not critical points in the stated range.  The raw denominator is nonzero at both points, so the following local expansions are expansions of the reduced map.  At $x=0$,
\[
        K_d^{\rm raw}(x)=-\eps-\eps d\,x+O(x^2),
\]
whose linear coefficient is nonzero because $p\nmid d$.  At $x=1$, put $u=1-x$.  If $\eps=1$, then
\[
        K_d^{\rm raw}(x)=d\,u+O(u^2),
\]
and if $\eps=-1$, then
\[
        K_d^{\rm raw}(x)=-2-d\,u+O(u^2),
\]
where in the second case $2\ne0$ under the hypothesis $p\nmid d(d-1)$.  In both cases the first nonconstant coefficient is nonzero.  Thus any critical point satisfying the hypotheses has $x\notin\{0,1\}$.

At an uncanceled non-pole point the equation $k_d(x)=T$ is equivalently the raw equation $K_d^{\rm raw}(x)=T$, namely
\[
        (T+1)x^d-T(1-x)^d=\eps .
\]
At a critical point over this value, differentiating with respect to $x$ and using $p\nmid d$ gives
\[
        (T+1)x^{d-1}+T(1-x)^{d-1}=0.
\]
Write $A=x^m$ and $B=(1-x)^m$.  Since $x\notin\{0,1\}$, both $A$ and $B$ are nonzero.  The derivative equation gives
\[
        T(A+B)+A=0.
\]
If $A+B=0$, then this equation gives $A=0$, a contradiction.  Hence $A+B\ne0$, so $T=-A/(A+B)$ and $T+1=B/(A+B)$.  Substituting into the equation for $K_d^{\rm raw}(x)=T$ gives
\[
        \frac{B}{A+B}\,xA+\frac{A}{A+B}\,(1-x)B=\frac{AB}{A+B}=\eps .
\]
Thus $AB=\eps(A+B)$, which is \eqref{eq:critical-equation-x}.  The formulas \eqref{eq:critical-value-x} follow by replacing $A+B$ with $AB/\eps$ in $T=-A/(A+B)$ and $T+1=B/(A+B)$.
\end{proof}

\begin{theorem}[branch-collision cross-ratio defect]\label{thm:branch-collision-defect}
Assume $d\ge2$ and either $p=0$, or $p>0$ with $p\nmid d(d-1)$.  Put $m=d-1$.  Let $x\ne y$ be finite critical points of the reduced quotient $k_d$, away from cancellation and poles.  If $k_d(x)=k_d(y)$, then there exist roots of unity
\[
        \zeta,\eta\in\mu_m,
        \qquad \zeta\ne1,
        \qquad \eta\ne1,
        \qquad \eta\ne\zeta,
\]
such that
\begin{equation}\label{eq:cross-ratio-defect}
        \left(\frac{\eta-\zeta}{\eta-1}\right)^m
        +
        \left(\frac{\eta-\zeta}{1-\zeta}\right)^m
        =\eps .
\end{equation}
Conversely, any pair $(\zeta,\eta)$ satisfying these nondegeneracy conditions and \eqref{eq:cross-ratio-defect} gives a pair of critical points with the same critical value by
\[
        s=\frac{\eta-1}{\eta-\zeta},
        \qquad
        x=1-s,
        \qquad
        y=1-\zeta s,
\]
provided the resulting points are neither poles nor canceled denominator points of the reduced map.
\end{theorem}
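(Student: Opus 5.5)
The plan is to use the critical-value formulas \eqref{eq:critical-value-x} of \cref{prop:critical-equation-x} to turn the collision condition into two equalities of $m$-th powers. These equalities produce the two roots of unity. Then I solve the resulting linear equation for $x$ and rewrite the critical equation \eqref{eq:critical-equation-x} in terms of $\zeta,\eta$.

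\emph{Forward direction.} Let $x\ne y$ be critical points as in the statement with common value $T$. By \cref{prop:critical-equation-x}, $x,y\notin\{0,1\}$ and
\[
T=-\frac{\eps}{(1-x)^m}=-\frac{\eps}{(1-y)^m},\qquad T+1=\frac{\eps}{x^m}=\frac{\eps}{y^m}.
\]
Hence $(1-y)^m=(1-x)^m$ and $y^m=x^m$, with all quantities nonzero. So there are $\zeta,\eta\in\mu_m$ with $1-y=\zeta(1-x)$ and $y=\eta x$. Each of $\zeta=1$ and $\eta=1$ would force $y=x$, so $\zeta,\eta\ne1$. Subtracting the two relations gives $x(\zeta-\eta)=\zeta-1$. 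If $\eta=\zeta$ this would give $\zeta=1$, which is excluded, so $\eta\ne\zeta$. Then
\[
x=\frac{1-\zeta}{\eta-\zeta},\qquad s:=1-x=\frac{\eta-1}{\eta-\zeta}.
\]
These are exactly the formulas in the converse statement. Put $A=x^m$ and $B=s^m$, both nonzero. Dividing \eqref{eq:critical-equation-x}, $AB=\eps(A+B)$, by $\eps AB$ gives $\eps=A^{-1}+B^{-1}$. Substituting $1/s=(\eta-\zeta)/(\eta-1)$ and $1/x=(\eta-\zeta)/(1-\zeta)$ yields \eqref{eq:cross-ratio-defect}.

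\emph{Converse.} Given admissible $(\zeta,\eta)$, define $s,x,y$ as stated. A direct computation gives $x=(1-\zeta)/(\eta-\zeta)$ and $y=1-\zeta s=\eta(1-\zeta)/(\eta-\zeta)=\eta x$. Therefore $y^m=x^m$, $(1-y)^m=(1-x)^m$, and $x,y\notin\{0,1\}$. Also $x\ne y$, since $\eta\ne1$ and $x\ne0$. Reading the previous computation backwards, \eqref{eq:cross-ratio-defect} gives $AB=\eps(A+B)$ with $A+B\ne0$, and the same relation holds at $y$.

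It remains to show that a point satisfying \eqref{eq:critical-equation-x} is genuinely critical. This is the one step that needs care, namely reversing \cref{prop:critical-equation-x}. Set $T=-A/(A+B)$, so $T+1=B/(A+B)$. Then the derivative identity $(T+1)x^{m}+T(1-x)^{m}=0$ holds by construction. The same substitution as in that proof gives
\[
(T+1)x^d-T(1-x)^d=\frac{AB}{A+B}=\eps.
\]
So $x$ is a multiple root of the raw equation $K_d^{\rm raw}=T$. Because $x$ is assumed to be neither a pole nor a canceled point, this raw equation agrees locally with $k_d=T$, and hence $x$ is a critical point of $k_d$. Its value is $T=-\eps/(1-x)^m$ by the formula relating $A+B$ and $AB$. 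The same argument applies to $y$, and $(1-y)^m=(1-x)^m$ then gives $k_d(x)=k_d(y)$.
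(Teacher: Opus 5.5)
Your proof is correct and follows essentially the same route as the paper's: both invoke \cref{prop:critical-equation-x}, extract $\zeta,\eta\in\mu_m$ from equalities of $m$-th powers, solve for $s=(\eta-1)/(\eta-\zeta)$, and divide the critical equation by $x^m(1-x)^m$. The differences are minor and in your favor. You get $y^m=x^m$ directly from the second formula $T+1=\varepsilon/x^m$, rather than by the paper's subtraction of the two critical equations (which needs the extra exclusion $s^m\ne\varepsilon$), and you actually carry out the converse reversal by constructing $T$ and showing $x$ is a multiple root of the raw equation $K_d^{\rm raw}=T$, which the paper only asserts.
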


\begin{proof}
By \cref{prop:critical-equation-x}, equality of critical values gives
\[
        (1-y)^m=(1-x)^m.
\]
Hence $1-y=\zeta(1-x)$ for some $\zeta\in\mu_m$.  Put $s=1-x$, so $y=1-\zeta s$.  Since $x$ and $y$ are both critical, we have
\[
        s^m(1-s)^m=\eps\bigl(s^m+(1-s)^m\bigr)
\]
and
\[
        s^m(1-\zeta s)^m=\eps\bigl(s^m+(1-\zeta s)^m\bigr).
\]
Subtracting gives
\[
        \bigl((1-s)^m-(1-\zeta s)^m\bigr)(s^m-\eps)=0.
\]
The alternative $s^m=\eps$ is impossible, because substituting it in the critical equation gives $0=1$.  Therefore
\[
        (1-\zeta s)^m=(1-s)^m,
\]
so $1-\zeta s=\eta(1-s)$ for some $\eta\in\mu_m$.  The equality $x\ne y$ gives $\zeta\ne1$.  If $\eta=1$, then $1-\zeta s=1-s$, so $s=0$, contradicting the critical equation.  If $\eta=\zeta$, then $1-\zeta s=\zeta(1-s)$, so $\zeta=1$, again a contradiction.  Hence $\eta\ne1$ and $\eta\ne\zeta$.  Solving gives
\[
        s=\frac{\eta-1}{\eta-\zeta}.
\]
Now
\[
        1-s=\frac{1-\zeta}{\eta-\zeta}.
\]
Dividing the critical equation
\[
        s^m(1-s)^m=\eps\bigl(s^m+(1-s)^m\bigr)
\]
by $s^m(1-s)^m$ gives
\[
        \frac1{s^m}+\frac1{(1-s)^m}=\eps,
\]
which is exactly \eqref{eq:cross-ratio-defect}.  For the converse, the cross-ratio equation is exactly the critical equation after substituting the displayed value of $s$, and the same root-of-unity relations give equality of the two critical values.  Reversing the argument is valid on the open locus where the reduced map is defined and the raw denominator has not become an uncanceled pole.  The stated proviso excludes precisely the pole and canceled-denominator cases that are outside the critical-point locus of the theorem.
\end{proof}

\begin{theorem}[no characteristic-zero branch collisions]\label{thm:charzero-no-branch-collisions}
Let $k$ be algebraically closed of characteristic zero, let $d\ge2$, and put $m=d-1$.  Then the cross-ratio equation \eqref{eq:cross-ratio-defect} has no solutions satisfying the nondegeneracy conditions of \cref{thm:branch-collision-defect}.  Consequently, no two finite critical points of the reduced quotient $k_d$ away from cancellation and pole points have the same critical value.
\end{theorem}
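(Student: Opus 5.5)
The plan is to pass to $\mathbb C$ and turn \eqref{eq:cross-ratio-defect} into a real identity among sines of the angles of a triangle whose angles are integer multiples of $\pi/m$. As in the proof of \cref{thm:charzero-degree-strata}, every quantity lies in a cyclotomic field, so it suffices to take $\zeta,\eta$ to be complex $m$-th roots of unity. The last sentence of the theorem then follows from the converse direction of \cref{thm:branch-collision-defect}: any collision of critical values produces a nondegenerate solution.

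\textbf{Step 1: the signed sine identity.} I write $\zeta=e^{2i\alpha}$ and $\eta=e^{2i\beta}$ with $\alpha,\beta\in\frac{\pi}{m}\Z$. I use $e^{2iu}-e^{2iv}=2ie^{i(u+v)}\sin(u-v)$, so that
\[
        \frac{\eta-\zeta}{\eta-1}=e^{i\alpha}\frac{\sin(\beta-\alpha)}{\sin\beta},
        \qquad
        \frac{\eta-\zeta}{1-\zeta}=-e^{i\beta}\frac{\sin(\beta-\alpha)}{\sin\alpha}.
\]
The nondegeneracy conditions $\zeta\ne1$, $\eta\ne1$, $\eta\ne\zeta$ say exactly that $\sin\alpha$, $\sin\beta$, $\sin(\beta-\alpha)$ are all nonzero. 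Since $m\alpha,m\beta\in\pi\Z$, the phases $e^{im\alpha}$ and $e^{im\beta}$ are $\pm1$. Hence \eqref{eq:cross-ratio-defect} becomes
\[
        \sigma_1\Bigl(\frac{\sin B}{\sin C}\Bigr)^m+\sigma_2\Bigl(\frac{\sin B}{\sin A}\Bigr)^m=\eps,
        \qquad \sigma_i\in\{\pm1\},
\]
where I reduce $\alpha$, $\beta-\alpha$, $\pi-\beta$ modulo $\pi$ to angles $A,B,C\in(0,\pi)$ that are multiples of $\pi/m$ with $A+B+C\in\{\pi,2\pi\}$. The second case $A+B+C=2\pi$ is replaced by the supplements $\pi-A$, $\pi-B$, $\pi-C$, which have the same sines and sum to $\pi$. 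Each move changes the signs in a way I will track explicitly.

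\textbf{Step 2: parity.} The parity argument computes $\sigma_1,\sigma_2$ from the integers $j=m\alpha/\pi$, $k=m\beta/\pi$, from the parity of $m$, and from $\eps=(-1)^{m+1}$. I would show that only two sign patterns survive. In the first, the equation reads $x^m+y^m=1$ with $x=\sin B/\sin C$ and $y=\sin B/\sin A$. In the second it is a difference equation $x^m-y^m=\pm1$, possibly after permuting the roles of the three angles, which the symmetry $\tau$ permutes. The other sign combinations either give a sum of two positive terms equal to $-1$, or contradict the sign of a sine of an angle in $(0,\pi)$.

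\textbf{Step 3: exclusion, the main obstacle.} The hard step is excluding both surviving patterns using only that $A,B,C\ge\pi/m$ and $A+B+C=\pi$; this is where the sine-convexity estimate enters.

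In the difference case, $x^m-y^m=\pm1$ forces one of the ratios, say $\sin B/\sin C$, to satisfy $(\sin B/\sin C)^m\ge1$. If $B$ is the largest angle (so $\sin B\ge\sin C$), I would first bound the ratio $\sin B/\sin A$ from above using $\sin C\ge\sin(\pi/m)$, and then aim for $(\sin B/\sin C)^m-(\sin B/\sin A)^m\ne1$. The tool is concavity of $\log\sin$ on $(0,\pi)$, together with the fact that the angle increments are discrete.

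In the sum case, $\sin B$ is smaller than both $\sin A$ and $\sin C$. I would write $A=B+\delta_A$ and $C=B+\delta_C$ with $\delta_A,\delta_C\ge\pi/m$ (in the appropriate reflected sense). Then $(\sin B/\sin A)^m$ should be compared with $(1-c/m)^m$, aiming to show that the sum is either strictly greater than $1$ or strictly less than $1$, by splitting according to whether $B$ is small (of order $\pi/m$) or comparable to $\pi/3$. The equilateral configuration $A=B=C=\pi/3$ gives sum $2$, and a monotonicity argument along the lattice should push everything away from $1$. Small $m$ (say $m\le 6$) I would check directly.

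I expect this quantitative estimate, which must hold uniformly in $m$ and exactly at lattice points, to be the main difficulty.
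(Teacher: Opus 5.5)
\textbf{There is a gap at the decisive step.} Your Steps 1 and 2 follow the paper's route: embed in $\mathbb C$, convert to sines of multiples of $\pi/m$, then sort by parity. Step 3, however, is the whole content of the theorem, and you leave it as a list of intentions (``aim for'', ``should push everything away from $1$''). As written, the proposal does not prove the statement. Two concrete ideas are missing.

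\textbf{First missing idea: there is no separate difference case.} By the symmetry of \eqref{eq:cross-ratio-defect} in $\zeta,\eta$ you may take $0<r<s<m$. This also removes your $A+B+C=2\pi$ bookkeeping. Put $i=s-r$, $j=r$, $k=m-s$ and $S_a=\sin(a\pi/m)$. Clearing denominators gives the three-term relation
\[
(-1)^j(S_iS_j)^m+(-1)^k(S_iS_k)^m+(-1)^m(S_jS_k)^m=0 .
\]
Since $(-1)^m=(-1)^{i+j+k}$, equal parities of $i,j,k$ make all three signs equal, which is impossible. Otherwise the term not involving the parity outlier $a$ has sign opposite to the other two. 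Moving that term across and dividing by it shows that every surviving pattern, including your $x^m-y^m=\pm1$, is
\[
1=\Bigl(\frac{S_a}{S_b}\Bigr)^m+\Bigl(\frac{S_a}{S_c}\Bigr)^m .
\]
Attacking the difference form directly, as you propose, compares two possibly large quantities; it is the wrong normalization.

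\textbf{Second missing idea: the uniform estimate in the sum case.} No splitting by the size of the small angle is needed, and the equilateral configuration is irrelevant.
\begin{itemize}
\item The equation forces $S_a<S_b$ and $S_a<S_c$.
\item Since $a+b<m$ and $a+c<m$, this gives $a<b$ and $a<c$.
\item Hence $b,c\in[a+1,m-a-1]$, so $S_b,S_c\ge S_{a+1}$ by concavity of sine, and $3a+2\le m$.
\item Therefore $1\le 2(S_a/S_{a+1})^m$.
\end{itemize}
On the other hand, for $m\ge5$,
\[
m\log\frac{S_{a+1}}{S_a}=m\int_{a\pi/m}^{(a+1)\pi/m}\cot t\,dt\ge\pi\cot\frac{(a+1)\pi}{m}\ge\pi\cot\frac{2\pi}{5}>\log 2 .
\]
This uses $(a+1)\pi/m\le(m+1)\pi/(3m)\le 2\pi/5$, and it gives a contradiction. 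The cases $m\le4$ are checked by hand: for $m=3$ the only triple is $(1,1,1)$, and for $m=4$ the outlier $2$ has the largest sine.

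This is exactly the ``concavity of $\log\sin$'' step you anticipated, but it must be carried out. Your comparison with $(1-c/m)^m$ plus a lattice monotonicity heuristic does not yet show that the sum is always strictly below $1$ in the relevant regime.

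\textbf{A smaller correction.} The final sentence of the theorem uses the forward direction of \cref{thm:branch-collision-defect}: a collision of critical values produces a nondegenerate solution. It does not use the converse.
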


\begin{proof}
The cross-ratio equation and the nondegeneracy conditions involve only $m$-th roots of unity and rational operations, so they are defined over a cyclotomic field.  Choosing an embedding of that cyclotomic field into $\mathbb C$ preserves the existence of such a solution; hence it suffices to work over $\mathbb C$.  Suppose, to the contrary, that a nondegenerate solution exists.  The equation is symmetric in $\zeta$ and $\eta$, so write
\[
        \zeta=e^{2\pi i r/m},\qquad \eta=e^{2\pi i s/m},
        \qquad 0<r<s<m.
\]
Put
\[
        i=s-r,
        \qquad j=r,
        \qquad k=m-s,
        \qquad \theta=\frac{\pi}{m},
        \qquad S_a=\sin(a\theta).
\]
Then $i,j,k$ are positive and $i+j+k=m$.  The elementary identity
\[
        e^{2iu}-e^{2iv}=2ie^{i(u+v)}\sin(u-v)
\]
gives
\[
        \left(\frac{\eta-\zeta}{\eta-1}\right)^m
        =(-1)^j\left(\frac{S_i}{S_k}\right)^m,
        \qquad
        \left(\frac{\eta-\zeta}{1-\zeta}\right)^m
        =(-1)^k\left(\frac{S_i}{S_j}\right)^m.
\]
Since $\eps=(-1)^d=(-1)^{m+1}$, the cross-ratio equation is equivalent, after multiplying by $S_j^mS_k^m$, to
\begin{equation}\label{eq:charzero-trig-collision}
        (-1)^j(S_iS_j)^m
        +(-1)^k(S_iS_k)^m
        +(-1)^m(S_jS_k)^m=0.
\end{equation}
If $i,j,k$ have the same parity, then all three signs in \eqref{eq:charzero-trig-collision} are equal, which is impossible.  Otherwise exactly one of $i,j,k$ has parity different from the other two.  Let $a$ be this parity outlier and let $b,c$ be the remaining two indices.  Then \eqref{eq:charzero-trig-collision} is equivalent to
\begin{equation}\label{eq:outlier-sine-equation}
        1=\left(\frac{S_a}{S_b}\right)^m
          +\left(\frac{S_a}{S_c}\right)^m .
\end{equation}
Indeed, the term not involving the outlier has the sign opposite to the other two terms.

Equation \eqref{eq:outlier-sine-equation} forces $S_a<S_b$ and $S_a<S_c$.  Since $a+b<m$ and $a+c<m$, the sign of $S_u-S_v$ agrees with the sign of $u-v$ for each pair $(u,v)=(a,b),(a,c)$; this follows from
\[
        \sin(u\theta)-\sin(v\theta)
        =2\cos\left(\frac{(u+v)\theta}{2}\right)
          \sin\left(\frac{(u-v)\theta}{2}\right),
\]
with the cosine positive.  Hence $a<b$ and $a<c$.  Thus $b,c\ge a+1$ and
\[
        m=a+b+c\ge 3a+2,
        \qquad\text{so}\qquad
        a\le \frac{m-2}{3}.
\]
Moreover $S_b,S_c\ge S_{a+1}$, because on the interval $a+1\le u\le m-a-1$ the sine $\sin(u\theta)$ is minimized at the endpoints, where both endpoint values equal $S_{a+1}$.  Therefore
\begin{equation}\label{eq:sine-ratio-bound}
        1
        \le 2\left(\frac{S_a}{S_{a+1}}\right)^m .
\end{equation}
For $m\ge5$ we have
\[
        (a+1)\theta\le \frac{m+1}{3m}\pi\le \frac{2\pi}{5}.
\]
Since $\cot t$ is decreasing and positive on $(0,2\pi/5]$,
\[
\begin{aligned}
        m\log\frac{S_{a+1}}{S_a}
        &=m\int_{a\theta}^{(a+1)\theta}\cot t\,dt  \\
        &\ge \pi\cot((a+1)\theta)
         \ge \pi\cot\frac{2\pi}{5}
         >\log2 .
\end{aligned}
\]
The final strict inequality is explicit: $\cot(2\pi/5)=1/\sqrt{5+2\sqrt5}$, so $\pi\cot(2\pi/5)>1>\log2$.
Thus $2(S_a/S_{a+1})^m<1$, contradicting \eqref{eq:sine-ratio-bound}.  For $m<5$, there is nothing to check for $m=1,2$; for $m=3$ the only triple is $(1,1,1)$, where all signs agree; and for $m=4$ the only triples are permutations of $(2,1,1)$, where the parity outlier has the larger sine, already contradicting \eqref{eq:outlier-sine-equation}.  Hence no nondegenerate solution exists.

The final assertion follows immediately from \cref{thm:branch-collision-defect}: any equality of critical values for two distinct finite critical points in the stated open locus would produce such a nondegenerate solution.
\end{proof}

\section{Ramification skeletons and characteristic-zero symmetric monodromy}\label{sec:ramification-skeleton}

The branch-collision theorem reduces the characteristic-zero monodromy problem to a tame Morse calculation.  This section records the Wronskian skeleton, proves simple criticality in the tame range, and then proves full symmetric monodromy in characteristic zero.  Positive-characteristic bad reductions are deliberately deferred to Part IV.

Throughout this section $k$ is algebraically closed of characteristic $p\ge0$, $p\nmid d$ if $p>0$, and
\[
        H_d^{\rm raw}=\frac{N_d}{D_d},
        \qquad
        N_d=(-1)^d(z+1)^d-z^d,
        \qquad
        D_d=z^d-1.
\]
Let $C_d=\gcd(N_d,D_d)$ and write the reduced quotient as
\[
        h_d=\frac{n_d}{d_d},
        \qquad
        n_d=N_d/C_d,
        \qquad
        d_d=D_d/C_d.
\]
The reduced Wronskian is
\[
        W(h_d)=n_d'd_d-n_dd_d'.
\]

\begin{proposition}[Wronskian skeleton]\label{prop:wronskian-skeleton}
One has
\[
        \frac1d\bigl(N_d'D_d-N_dD_d'\bigr)
        =F_d(z),
\]
where
\[
        F_d(z)=z^{d-1}-(-1)^d(z+1)^{d-1}\bigl(z^{d-1}+1\bigr).
\]
Moreover
\[
        N_d'D_d-N_dD_d'=C_d^2 W(h_d).
\]
\end{proposition}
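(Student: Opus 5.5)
The first identity is a direct derivative computation, and the second follows from the product rule applied to the common factor $C_d$.

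\emph{First identity.} Since
\[
        N_d'=d\bigl((-1)^d(z+1)^{d-1}-z^{d-1}\bigr),
        \qquad
        D_d'=dz^{d-1},
\]
a factor $d$ comes out of the Wronskian, leaving
\[
        \bigl((-1)^d(z+1)^{d-1}-z^{d-1}\bigr)(z^d-1)
        -\bigl((-1)^d(z+1)^d-z^d\bigr)z^{d-1}.
\]
We expand this bracket. The terms $-z^{2d-1}$ and $+z^{2d-1}$ cancel. The remaining terms are
\[
        (-1)^d(z+1)^{d-1}z^d-(-1)^d(z+1)^{d-1}+z^{d-1}-(-1)^d(z+1)^dz^{d-1}.
\]
Factor $(-1)^d(z+1)^{d-1}$ out of the first, second and fourth terms. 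The resulting cofactor is
\[
        z^d-1-(z+1)z^{d-1}=-z^{d-1}-1.
\]
So the bracket equals $z^{d-1}-(-1)^d(z+1)^{d-1}(z^{d-1}+1)=F_d(z)$.

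This computation holds over $\mathbb Z[z]$. To state the identity with the factor $1/d$ in characteristic $p$, we should phrase it as $N_d'D_d-N_dD_d'=dF_d$. Under the standing hypothesis $p\nmid d$, division by $d$ is then legitimate.

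\emph{Second identity.} Write $N_d=C_dn_d$ and $D_d=C_dd_d$. The product rule gives
\[
        N_d'=C_d'n_d+C_dn_d',
        \qquad
        D_d'=C_d'd_d+C_dd_d'.
\]
Hence
\[
        N_d'D_d-N_dD_d'
        =C_d\bigl(C_d'n_dd_d+C_dn_d'd_d\bigr)-C_dn_d\bigl(C_d'd_d+C_dd_d'\bigr).
\]
The two $C_dC_d'n_dd_d$ terms cancel, and what remains is $C_d^2(n_d'd_d-n_dd_d')=C_d^2W(h_d)$.

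Neither step poses a real obstacle. The only point needing care is the sign bookkeeping in the expansion of the first identity.
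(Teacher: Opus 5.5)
Your proof is correct and follows the paper's route exactly: you expand $N_d'D_d-N_dD_d'$ directly (your intermediate terms and the cofactor $-z^{d-1}-1$ check out), and you write $N_d=C_dn_d$, $D_d=C_dd_d$ and apply the product rule so that the $C_d'$ terms cancel. Your remark that the first identity should be read as $N_d'D_d-N_dD_d'=dF_d$, with division by $d$ justified by the section's standing hypothesis $p\nmid d$, is a sensible clarification that the paper leaves implicit.
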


\begin{proof}
The first identity is the direct calculation
\[
\begin{aligned}
\frac1d(N_d'D_d-N_dD_d')
&=\bigl((-1)^d(z+1)^{d-1}-z^{d-1}\bigr)(z^d-1)  \\
&\quad-\bigl((-1)^d(z+1)^d-z^d\bigr)z^{d-1}        \\
&=z^{d-1}-(-1)^d(z+1)^{d-1}(z^{d-1}+1).
\end{aligned}
\]
The second identity follows by writing $N_d=C_dn_d$ and $D_d=C_dd_d$ and expanding; the terms involving $C_d'$ cancel.
\end{proof}

\begin{theorem}[tame simple-critical theorem]\label{thm:tame-simple-critical}
Assume that $h_d$ is nonconstant and that either $d=1$, or $p=0$, or $p>0$ and
\[
        p\nmid d(d-1).
\]  Then every finite critical point of the reduced quotient $h_d$ is simple.  The finite poles of $h_d$ are unramified, and the point at infinity is unramified.
\end{theorem}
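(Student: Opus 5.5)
The plan is a local computation in the Hilbert--90 coordinate $x=z/(z+1)$ of \cref{sec:branch-collision}, where the raw equation is
\[
        G_T(x)=(T+1)x^d-T(1-x)^d-\eps=0,\qquad \eps=(-1)^d.
\]
Its first two derivatives have the transparent form
\[
        G_T'=d\bigl((T+1)x^{m}+T(1-x)^{m}\bigr),\qquad
        G_T''=dm\bigl((T+1)x^{m-1}-T(1-x)^{m-1}\bigr),\qquad m=d-1.
\]
The case $d=1$ is disposed of first: $h_1$ is then nonconstant of degree one, so there is nothing to prove. Assume from now on $d\ge2$ and $p\nmid d(d-1)$, so that $d$ and $dm$ are units. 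Since the change of coordinate is a M\"obius transformation, ramification indices are unchanged. The points $x=0$, $x=1$ and $x=\infty$ correspond to $z=0$, $z=\infty$ and $z=-1$ respectively.

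\emph{Key step.} At any point $x_0\notin\{0,1,\infty\}$, the function $G_T$ cannot vanish to order $\ge3$. Suppose $G_T'(x_0)=G_T''(x_0)=0$, and set $T=k_d(x_0)$.
\begin{itemize}
\item If $T=0$, then $G_T'(x_0)=dx_0^m\ne0$, a contradiction.
\item If $T=-1$, then $G_T'(x_0)=-d(1-x_0)^m\ne0$, a contradiction.
\item Otherwise the two relations give $(T+1)x_0^{m}=-T(1-x_0)^{m}$ and $(T+1)x_0^{m-1}=T(1-x_0)^{m-1}$, with all four factors nonzero. Dividing the first by the second gives $x_0=-(1-x_0)$, that is $0=-1$, which is absurd.
\end{itemize}

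\emph{Transferring this to the reduced map.} In the $z$-coordinate the identity $N_d-TD_d=C_d\,(n_d-Td_d)$ holds. Write the raw equation as $N_d-TD_d$ multiplied by a unit in the relevant local ring; the unit comes from the coordinate change and the clearing of denominators, and is nonvanishing at $x_0\notin\{0,1,\infty\}$.

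\begin{itemize}
\item At an uncanceled, non-pole point, $\operatorname{ord}(h_d-T)=\operatorname{ord}G_T\le2$. So every finite critical point there is simple.
\item At a canceled point $c$, we have $c\in\mu_d$, so $c\ne0,-1,\infty$ and hence $x(c)\ne0,1,\infty$. Because $D_d=z^d-1$ is squarefree, $C_d$ vanishes simply at $c$, and $d_d(c)\ne0$. Therefore $\operatorname{ord}_c(h_d-T)=\operatorname{ord}_c G_T-1\le1$, so $c$ is not even critical.
\item The points $z=0$ and $z=\infty$ (that is, $x=0,1$) are handled by the explicit expansions in the proof of \cref{prop:critical-equation-x}. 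There the linear coefficient is $\mp\eps d$ or $\pm d$, which is nonzero. Hence $z=0$ is not critical and $z=\infty$ is unramified. The case $\eps=-1$ at $x=1$ uses $p\ne2$, which follows from $p\nmid d(d-1)$.
\item The point $z=-1$ is neither a pole nor a canceled point, since $D_d(-1)=(-1)^d-1$ and the relevant root is $-1\in\mu_d$ only for even $d$, where $N_d(-1)=-1\ne0$. A direct look shows that $H_d^{\rm raw}(-1)=-1/((-1)^d-1)$ for odd $d$, and $z=-1$ is a genuine pole for even $d$ (see the next item). In either case the first-order term $N_d'(-1)=-d(-1)^{d-1}\ne0$ decides the question.
\end{itemize}

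\emph{Poles.} The finite poles of $h_d$ are the roots of $d_d=D_d/C_d$. These are simple because $z^d-1$ is squarefree when $p\nmid d$. Hence the finite poles are unramified.

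I expect the main obstacle to be this bookkeeping rather than the algebra. The contradiction $x_0=-(1-x_0)$ is immediate; the care is needed in three places. First, the passage between the raw equation and the reduced map at canceled points must be justified. Second, the special points $z\in\{0,-1,\infty\}$, where the $x$-coordinate argument degenerates, must be treated by explicit expansion. Third, one must check that the hypothesis $p\nmid d(d-1)$ is exactly what keeps the coefficients $d$, $dm$ and $2$ nonzero.
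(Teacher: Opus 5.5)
Your argument is correct in its main step and takes a different route from the paper. The paper stays in the $z$-coordinate and works with the unreduced Wronskian skeleton $F_d(z)=z^{m}-\eps(z+1)^{m}(z^{m}+1)$. From $F_d(\alpha)=F_d'(\alpha)=0$ with $\alpha(\alpha+1)\ne0$ it derives $\alpha^{d}=1$. The identity $N_d'D_d-N_dD_d'=dF_d$ then makes such an $\alpha$ a common zero of $N_d$ and $D_d$, so the reduced Wronskian has no multiple zeros off the cancellation locus. The canceled points are handled by a separate second-order Taylor computation: the reduced derivative there is a nonzero multiple of $d^2(d-1)/(\alpha^2(\alpha+1)^2)$.

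You instead bound fiber multiplicities directly. In the coordinate $x$ the fiber polynomial $G_T$ lies in the span of $x^d$, $(1-x)^d$ and $1$. The conditions $G_T'(x_0)=G_T''(x_0)=0$ force $(T+1)x_0^{m-1}=T(1-x_0)^{m-1}=0$, which is impossible off $\{0,1\}$. Since this holds for every finite $T$, a single computation gives index at most two at uncanceled critical points. After removing the simple factor $C_d$, it also gives index one at canceled points. You therefore avoid both the division step on $F_d$ and the separate canceled-point calculation; since $p\ne2$, your ``index two'' is the same as the paper's ``simple zero of $W(h_d)$''.

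What the paper's route buys is that it keeps $F_d$ and the identity $N_d'D_d-N_dD_d'=C_d^2W(h_d)$ in the foreground. These are reused for the branch polynomial $B_{d,p}$, the good-reduction corollary, and the wild Frobenius skeleton.

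The one item that needs repair is $z=-1$.
\begin{itemize}
\item For even $d$ the point $z=-1$ \emph{is} a finite pole, and a simple one, since $D_d(-1)=0\ne N_d(-1)$. So your opening phrase ``neither a pole nor a canceled point'' is wrong there, although your poles paragraph covers the case.
\item For odd $d$ the value is $H_d^{\rm raw}(-1)=N_d(-1)/D_d(-1)=1/(-2)=-\tfrac12$, not $+\tfrac12$.
\item Also for odd $d$, $N_d'(-1)\ne0$ alone decides nothing at a regular point, because $D_d'(-1)\ne0$ as well. The correct check is $N_d'(-1)D_d(-1)-N_d(-1)D_d'(-1)=dF_d(-1)=d(-1)^{d-1}\ne0$, equivalently $(N_d+\tfrac12D_d)'(-1)=-d/2\ne0$.
\item The sign matters: with your value $T=+\tfrac12$ the analogous quantity would be $-3d/2$, which vanishes in characteristic $3$ (for example $d=5$, which satisfies $3\nmid d(d-1)$).
\end{itemize}
Likewise, at a canceled point $c$ the reason $c\ne-1$ is $N_d(-1)\ne0$, not $c\in\mu_d$, since $-1\in\mu_d$ for even $d$.
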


\begin{proof}
If $d=1$, the nonconstancy hypothesis excludes the characteristic-$3$ constant quotient; the reduced quotient is then a degree-one map, so there is no ramification to check.  Hence assume $d\ge2$.  Under the positive-characteristic tame hypothesis $p\nmid d(d-1)$, this also forces $p\ne2$, since one of $d$ and $d-1$ is even.  Thus the later canceled-point and infinity calculations never require a hidden division by $2$ in characteristic $2$.  Let $m=d-1$ and put $\epsilon=(-1)^d$.  Suppose first that $\alpha$ is a multiple zero of the unreduced skeleton
\[
        F_d(z)=z^m-\epsilon(z+1)^m(z^m+1)
\]
which is not a canceled denominator point.  Since $F_d(0)$ and $F_d(-1)$ are nonzero, we have $\alpha(\alpha+1)\ne0$.  The equations $F_d(\alpha)=0$ and $F_d'(\alpha)=0$ give
\[
        \alpha^m=\epsilon(\alpha+1)^m(\alpha^m+1)
\]
and, because $p\nmid m$,
\[
        \alpha^{m-1}
        =\epsilon(\alpha+1)^{m-1}\bigl(2\alpha^m+\alpha^{m-1}+1\bigr).
\]
Dividing the second equation by the first gives
\[
        \frac1\alpha
        =\frac{2\alpha^m+\alpha^{m-1}+1}{(\alpha+1)(\alpha^m+1)}.
\]
After clearing denominators,
\[
        (\alpha+1)(\alpha^m+1)=\alpha(2\alpha^m+\alpha^{m-1}+1),
\]
hence
\[
        \alpha^{m+1}=1,
\]
that is $\alpha^d=1$.  Since $D_d'(\alpha)=d\alpha^{d-1}\ne0$, the equality $N_d'D_d-N_dD_d'=0$ at $\alpha$ forces $N_d(\alpha)=0$.  Thus $\alpha$ is a common zero of $N_d$ and $D_d$, contrary to the assumption.  Hence, away from the cancellation locus, the reduced Wronskian has no multiple finite zeros.

It remains to check that the cancellation points themselves do not become critical after reduction.  Let $\alpha$ be a common zero of $N_d$ and $D_d$.  Then $\alpha^d=1$ and $\tau(\alpha)^d=1$.  Since the cancellation is simple, write $u=z-\alpha$ and expand
\[
        N_d=a_1u+a_2u^2+O(u^3),
        \qquad
        D_d=b_1u+b_2u^2+O(u^3),
\]
with $a_1=N_d'(\alpha)$, $b_1=D_d'(\alpha)$, $a_2=N_d''(\alpha)/2$, and $b_2=D_d''(\alpha)/2$.  The derivative at $\alpha$ of the quotient after canceling $u$ is
\[
        \frac{a_2b_1-a_1b_2}{b_1^2}
        =\frac{N_d''(\alpha)D_d'(\alpha)-N_d'(\alpha)D_d''(\alpha)}{2D_d'(\alpha)^2}.
\]
Since $2$ is invertible in the present cases, the derivative of the reduced quotient at $\alpha$ is nonzero if and only if
\[
        N_d''(\alpha)D_d'(\alpha)-N_d'(\alpha)D_d''(\alpha)\ne0.
\]
Using $\alpha^d=1$ and $(-1)^d(\alpha+1)^d=1$, one computes
\[
        N_d'(\alpha)=d\left(\frac1{\alpha+1}-\frac1\alpha\right),
        \qquad
        D_d'(\alpha)=\frac d\alpha,
\]
\[
        N_d''(\alpha)=d(d-1)\left(\frac1{(\alpha+1)^2}-\frac1{\alpha^2}\right),
        \qquad
        D_d''(\alpha)=\frac{d(d-1)}{\alpha^2}.
\]
Therefore
\[
        N_d''(\alpha)D_d'(\alpha)-N_d'(\alpha)D_d''(\alpha)
        =-\frac{d^2(d-1)}{\alpha^2(\alpha+1)^2},
\]
which is nonzero under the tame hypothesis.  Thus no canceled point is ramified in the reduced map.

Every finite pole of $h_d$ is a simple pole, since $D_d=z^d-1$ is squarefree and cancellation removes only simple factors.  Hence finite poles are unramified over the branch value $\infty$.  Finally, in the local coordinate $w=1/z$ at infinity, if $d$ is even then $H_d^{\rm raw}=dw+O(w^2)$, and if $d$ is odd and the characteristic is not $2$ then $H_d^{\rm raw}=-2-dw+O(w^2)$.  Thus infinity is also unramified.  This proves the theorem.
\end{proof}

\begin{definition}[Morse branch polynomial]\label{def:morse-branch}
For a nonconstant separable reduced quotient $h_d$ of degree $n>1$ in the tame range of \cref{thm:tame-simple-critical}, define the branch polynomial
\[
        B_{d,p}(T)=\operatorname{Res}_z\bigl(n_d(z)-Td_d(z),\,W(h_d)(z)\bigr)\in k[T].
\]
We say that $h_d$ is \emph{Morse} if $B_{d,p}(T)$ is squarefree.  Equivalently, by \cref{thm:tame-simple-critical}, the finite critical values of $h_d$ are pairwise distinct.
\end{definition}

\begin{theorem}[symmetric monodromy in the tame Morse range]\label{thm:tame-morse-Sn}
Assume either $p=0$, or $p>0$ and $p\nmid d(d-1)$.  Assume also that $\deg h_d=n>1$ and that $h_d$ is Morse.  Then the geometric monodromy group of
\[
        h_d:\PP^1\longrightarrow\PP^1
\]
is
\[
        G_{h_d}=S_n.
\]
\end{theorem}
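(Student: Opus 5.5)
The argument will be the classical one: the cover is connected, so its monodromy is transitive; every branch cycle is a transposition; and a transitive group generated by transpositions is the full symmetric group.

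\emph{Step 1: separability and inertia.} Since $p\nmid d$ and $n>1$, \cref{lem:tame-quotient-separable} shows that $h_d$ is separable, so $G_{h_d}$ is defined. By \cref{thm:tame-simple-critical}, every finite critical point of $h_d$ is simple, and the finite poles and $z=\infty$ are unramified. Hence every ramification point has index exactly $2$. In positive characteristic the index $2$ is prime to $p$, because $p\nmid d(d-1)$ forces $p\ne2$. So all inertia is tame and cyclic of order $2$.

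\emph{Step 2: each inertia generator is a transposition.} The Morse hypothesis says the finite critical values are pairwise distinct, so each branch value has exactly one ramified point above it. One should check that $\infty$ is not a branch value; this follows from the unramifiedness of the poles and of $z=\infty$ in \cref{thm:tame-simple-critical}, since every point over $T=\infty$ is a pole. For a branch value $b$, the fiber therefore has cycle type $(2,1^{n-2})$. Since inertia is tame, a generator of inertia at a point over $b$ acts on the geometric fiber with orbits of lengths equal to the ramification indices. In characteristic zero this is the standard branch-cycle description; in positive characteristic it follows from the tame structure of inertia on the Galois closure. So each inertia generator is a transposition.

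\emph{Step 3: generation.} Consider the subgroup $I\trianglelefteq G_{h_d}$ generated by all inertia groups. The quotient $G_{h_d}/I$ corresponds to an everywhere-unramified cover of $\PP^1$ inside the geometric Galois closure. Since $\PP^1_k$ over algebraically closed $k$ is simply connected, $I=G_{h_d}$. In characteristic zero this is immediate from Riemann's existence theorem, which gives $\sigma_1\cdots\sigma_r=1$ with the branch cycles generating $G$. In characteristic $p$, Riemann--Hurwitz shows there are no nontrivial étale covers of $\PP^1$, wild or tame.

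\emph{Step 4: conclusion.} $G_{h_d}$ is transitive on the $n$ sheets because $\PP^1$ is irreducible, and it is generated by transpositions. The graph on $\{1,\dots,n\}$ whose edges are these transpositions is therefore connected, so the transpositions generate $S_n$.

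The main obstacle is Step 2 in positive characteristic: passing from ``simple critical point with distinct critical value'' to ``inertia acts as a transposition on the fiber.'' I would handle this by the tame local computation. Over the completion at $b$, the fiber splits as one ramified place with $e=2$ and $n-2$ unramified places, and tame inertia $\mathbb Z/2$ acts on each factor through its ramification index. Separately, one must confirm that Morse really excludes two critical points over the same value, including the pole and canceled-point cases; \cref{def:morse-branch} together with \cref{thm:tame-simple-critical} provides this.
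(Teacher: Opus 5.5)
Your proposal is correct and follows the same route as the paper's proof: \cref{thm:tame-simple-critical} and the Morse hypothesis make every finite branch cycle a transposition, and a transitive subgroup of $S_n$ generated by transpositions is $S_n$. Several things the paper's short proof leaves implicit you spell out: separability via \cref{lem:tame-quotient-separable}, that $\infty$ is not a branch value, that the index-two inertia is tame, and, in your Step~3, that $G_{h_d}$ is generated by its inertia groups because $\PP^1$ has no nontrivial etale covers in any characteristic.
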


\begin{proof}
By \cref{thm:tame-simple-critical}, every finite critical point is simple, and no pole or point over infinity is ramified.  By the Morse hypothesis, the finite critical values are pairwise distinct.  Thus every finite branch cycle is a transposition.  The cover $\PP^1\to\PP^1$ is connected, so its geometric monodromy group is transitive.  A transitive subgroup of $S_n$ generated by transpositions is $S_n$: the graph with vertices the $n$ sheets and edges the transpositions is connected, and the group generated by edge transpositions of a connected graph is the full symmetric group.  Hence $G_{h_d}=S_n$.
\end{proof}

\begin{theorem}[full symmetric monodromy in characteristic zero]\label{thm:charzero-Sn}
Let $k$ be algebraically closed of characteristic zero, let $h_d$ be the reduced quotient associated with $H_d^{\rm raw}$, and put $n=\deg h_d$.  If $n>1$, then
\[
        G_{h_d}=S_n.
\]
Equivalently,
\[
        G_{h_d}=
        \begin{cases}
        S_d,&3\nmid d,\ d\ge2,\\
        S_{d-2},&3\mid d,\ d\ge6.
        \end{cases}
\]
\end{theorem}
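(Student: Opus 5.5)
The plan is to reduce the statement to \cref{thm:tame-morse-Sn}. In characteristic zero its divisibility hypothesis is vacuous, so only two things need checking: that $\deg h_d=n>1$ and that $h_d$ is Morse. The degree part comes directly from \cref{thm:charzero-degree-strata}. It gives $n=d$ when $3\nmid d$ and $n=d-2$ when $3\mid d$. Hence $n>1$ holds exactly when $d\ge2$ with $3\nmid d$, or $d\ge6$ with $3\mid d$; the cases $d=1$ and $d=3$ have degree one. This already yields the "equivalently" reformulation, once $G_{h_d}=S_n$ is established.

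The substantive step is the Morse property, namely that distinct finite critical points of $h_d$ have distinct critical values. I will work in the coordinate $x=z/(z+1)$. This is a Möbius change of source coordinate, so $k_d=h_d\circ(x\mapsto x/(1-x))$ has the same critical values and the same monodromy as $h_d$. By \cref{thm:tame-simple-critical}, finite poles and the point at infinity of $h_d$ are unramified, and canceled points are not critical. So all ramification lies at finite critical points of $h_d$ in the $z$-coordinate, away from poles and canceled points.

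Next I translate these to the $x$-coordinate and check which points are excluded. The point $z=\infty$ corresponds to $x=1$, which \cref{prop:critical-equation-x} shows is not critical. The point $z=-1$ corresponds to $x=\infty$, and since $h_d$ is regular and unramified at $z=-1$ (because $D_d(-1)\neq0$ for the relevant parity, or at worst it is a simple pole), there is nothing to check there. Every remaining critical point is a finite critical point of $k_d$ of the kind covered by \cref{thm:branch-collision-defect}. Given two distinct such points with equal critical values, that theorem produces a nondegenerate solution of the cross-ratio equation \eqref{eq:cross-ratio-defect}, and \cref{thm:charzero-no-branch-collisions} says no such solution exists. Critical values can also never coincide with $\infty$, since poles are unramified. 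Combining this with simplicity of the critical points, $B_{d,0}$ is squarefree and $h_d$ is Morse.

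With these facts, \cref{thm:tame-morse-Sn} gives $G_{h_d}=S_n$. The main obstacle I expect is the bookkeeping at the excluded points, namely $x=\infty$ (that is, $z=-1$) and possibly $x=1$ when it is a pole. I must confirm that no ramified point escapes the hypotheses of \cref{thm:branch-collision-defect}. I would handle this by the local expansions already given in the proofs of \cref{prop:critical-equation-x} and \cref{thm:tame-simple-critical}, supplemented by a direct check at $z=-1$: there $N_d(-1)=-(-1)^d$ and $D_d(-1)=(-1)^d-1$, and I would expand to first order to see that the point is unramified.
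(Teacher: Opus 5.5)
Your proposal is correct and follows essentially the same route as the paper. The paper also takes the degree from \cref{thm:charzero-degree-strata}, passes to $x=z/(z+1)$, checks that $x=\infty$ and $x=1$ are unramified, and uses \cref{thm:tame-simple-critical} for simple critical points and unramified poles and canceled points. It then uses \cref{thm:charzero-no-branch-collisions} for distinct critical values and concludes with \cref{thm:tame-morse-Sn}.

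One phrase is loose: regularity at $z=-1$ does not by itself give unramifiedness. The first-order check you propose settles it; for odd $d$ this amounts to $F_d(-1)=1\neq0$, or the paper's expansion $-\tfrac12-\tfrac d4t+O(t^2)$. Also, $x=1$ is never a pole, since the raw denominator $x^d-(1-x)^d$ equals $1$ there.
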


\begin{proof}
By \cref{thm:charzero-degree-strata}, the displayed degree formula is $n=d$ when $3\nmid d$ and $n=d-2$ when $3\mid d$.  The degree-one cases are exactly $d=1,3$, so assume $n>1$.

Work in the coordinate $x=z/(z+1)$.  The point $x=\infty$, which corresponds to $z=-1$, is unramified: if $d$ is even, then it is a simple pole of the raw expression $K_d^{\rm raw}$ and hence of the reduced map; if $d$ is odd, writing $t=1/x$ gives
\[
        K_d^{\rm raw}=\frac{-t^d-1}{1+(1-t)^d}
            =-\frac12-\frac d4t+O(t^2),
\]
with the evident harmless modification in the excluded case $d=1$.  The point $x=1$, which corresponds to $z=\infty$, is unramified by the infinity calculation in \cref{thm:tame-simple-critical}.  All remaining finite points of the $x$-line correspond to finite $z$-points, so \cref{thm:tame-simple-critical} gives simple finite critical points after removing cancellations and shows that finite poles and canceled denominator points are unramified.  By \cref{thm:charzero-no-branch-collisions}, no two finite non-pole critical points in this affine locus have the same critical value.

Consequently all ramification of $h_d$ occurs at simple finite critical points, these ramified points have pairwise distinct branch values, and there is no ramification at poles, canceled points, $x=1$, or $x=\infty$.  Equivalently, $h_d$ is Morse in the sense of \cref{def:morse-branch}; each finite branch cycle is a single transposition, and no additional branch cycle arises over $\infty$ or over a canceled point.

Now apply \cref{thm:tame-morse-Sn}.  Equivalently, the transposition branch cycles generate a transitive subgroup of $S_n$ because the cover is connected; hence they generate the full symmetric group.
\end{proof}

\begin{corollary}[good reductions of every fixed non-linear quotient]\label{cor:good-reductions-Sn}
Fix an integer $d\ge2$ and work initially over characteristic zero.  Let $h_d$ be the reduced quotient and put $n=\deg h_d$.  If $n>1$, then there is an explicitly computable nonzero integer $M_d$ such that, for every prime $p\nmid M_d$, the reduction of $h_d$ modulo $p$ has geometric monodromy $S_n$.
\end{corollary}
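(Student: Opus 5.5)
The plan is to show that each ingredient of the characteristic-zero proof of \cref{thm:charzero-Sn} is cut out by finitely many integer polynomial conditions. For every prime avoiding a finite bad set, all of these conditions then survive reduction, and \cref{thm:tame-morse-Sn} applies verbatim.

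First, I would fix the bad primes $p\mid 6d(d-1)$. For $p\nmid d$, \cref{thm:torsion-defect} gives $\deg h_d=d-\delta_d$. The defect $\delta_d$ counts $\zeta\in\mu_d$ with $(-1)^d(\zeta+1)^d=1$. Over $\mathbb C$ this count is $0$ or $2$ by \cref{thm:charzero-degree-strata}. Modulo $p$ it can only grow, because extra common roots of $z^d-1$ and $N_d$ appear exactly when $p$ divides the resultant of $D_d$ and $N_d/C_d^{(0)}$, where $C_d^{(0)}=\gcd(N_d,D_d)$ over $\mathbb Q$ is $1$ or $z^2+z+1$. So I would put $R_1=\operatorname{Res}(n_d^{(0)},d_d^{(0)})\in\Z\setminus\{0\}$, using the reduced characteristic-zero numerator and denominator, which lie in $\Z[z]$ up to the harmless division by $z^2+z+1$. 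For $p\nmid R_1$ (and $p\nmid d$), the reduction of $h_d$ is the reduction of the reduced form, it has the same degree $n>1$, and it is separable by \cref{lem:tame-quotient-separable}.

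Second, I need the Morse condition modulo $p$. By \cref{def:morse-branch}, $B_{d,0}(T)=\operatorname{Res}_z(n_d-Td_d,W(h_d))$ has integer coefficients and is squarefree over $\mathbb Q$, by the argument of \cref{thm:charzero-Sn} together with \cref{thm:charzero-no-branch-collisions}. Its leading coefficient is a nonzero integer $\ell_d$. Let $\Delta_d$ be the discriminant of $B_{d,0}$. Then $\Delta_d\neq0$, since $B_{d,0}$ is squarefree in characteristic zero. For $p\nmid \ell_d\Delta_d R_1$, forming the resultant commutes with reduction, because $n_d,d_d$ have the same degrees mod $p$ and the leading coefficient of $W$ is controlled. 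The reduction of $B_{d,0}$ is then $B_{d,p}$ up to a unit, its degree is unchanged, and it is squarefree mod $p$. So $h_d\bmod p$ is Morse.

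Third, I would set
\[
M_d=6\,d(d-1)\,R_1\,\ell_d\,\Delta_d\,c_d,
\]
where $c_d$ is the leading coefficient of $W(h_d)$. This coefficient is computed from $F_d$ in \cref{prop:wronskian-skeleton} divided by $C_d^2$, and is a nonzero integer up to sign. For $p\nmid M_d$ we are in the range $p\nmid d(d-1)$ with $\deg h_d=n>1$ and $h_d$ Morse, so \cref{thm:tame-morse-Sn} gives $G=S_n$. The Morse hypothesis there also needs infinity and the poles to be unramified, but \cref{thm:tame-simple-critical} already covers these for $p\nmid d(d-1)$ with $p\ne2$.

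The main obstacle is the compatibility of the resultant $B_{d,p}$ with reduction. Resultants specialize correctly only when the formal degrees do not drop, and the Morse definition uses the reduced numerator and denominator in characteristic $p$, which could differ from the reductions of the characteristic-zero ones. The factor $R_1$ is what guarantees that no new cancellation occurs. I would handle the degree drop by including the leading coefficients of $n_d$, $d_d$ and $W(h_d)$ in $M_d$. If needed, I would replace squarefreeness of $B$ with the more robust statement that the critical values are distinct and finite. That statement is a nonvanishing of an integer discriminant of the critical-value polynomial, and it is explicitly computable.
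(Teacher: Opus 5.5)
Your proposal is correct and is essentially the paper's proof: both take integer representatives $n_d,d_d$ of the reduced characteristic-zero quotient and put $d(d-1)$, $\operatorname{Res}(n_d,d_d)$, the leading coefficients and $\operatorname{disc}(B_d)$ into $M_d$, so that the degree, the absence of new cancellation and the squarefreeness of the branch polynomial survive reduction, and then apply \cref{thm:tame-morse-Sn}. The only difference is that the paper also puts $\iota_d$, $\operatorname{disc}(d_d)$, $\operatorname{Res}(W_d,d_d)$ and $\operatorname{disc}(W_d)$ into $M_d$ to carry the local conditions at poles, critical points and infinity through reduction, whereas you get these directly from \cref{thm:tame-simple-critical} in characteristic $p$; those extra factors are in any case prime to $p$ once $p\nmid d(d-1)\ell_d$ (for instance $\iota_d=\pm d$ and $\ell_d=\pm\operatorname{Res}(W_d,d_d)$), so your smaller $M_d$ works equally well.
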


\begin{proof}
Choose primitive coprime representatives $n_d,d_d\in\Z[z]$ for the reduced characteristic-zero quotient, and put $n=\deg h_d$.  By \cref{cor:reduced-denominator-degree}, $\deg d_d=n$.  Put
\[
        W_d=n_d'd_d-n_dd_d',
        \qquad
        B_d(T)=\operatorname{Res}_z(n_d(z)-Td_d(z),W_d(z))\in\Z[T].
\]
To record the local condition at infinity explicitly, set
\[
        \widehat n_d(w)=w^n n_d(1/w),\qquad
        \widehat d_d(w)=w^n d_d(1/w),
\]
and define
\[
        \iota_d=\widehat n_d'(0)\widehat d_d(0)-\widehat n_d(0)\widehat d_d'(0)\in\Z.
\]
The characteristic-zero calculation in \cref{thm:tame-simple-critical} gives $\iota_d\ne0$; it is the numerator of the local derivative of $h_d$ at infinity in the source coordinate $w=1/z$.

By \cref{thm:tame-simple-critical,thm:charzero-no-branch-collisions}, the characteristic-zero quotient is Morse, so the relevant resultants and discriminants below are nonzero.  It is enough to take $M_d$ divisible by
\[
\begin{aligned}
        &d(d-1)\,\iota_d\,\operatorname{Res}(n_d,d_d)\,
        \operatorname{disc}(d_d)\,
        \operatorname{Res}(W_d,d_d)\,
        \operatorname{disc}(W_d)\,\operatorname{disc}(B_d)  \\
        &\qquad\text{and by the nonzero leading coefficients of }n_d,d_d,W_d,B_d.
\end{aligned}
\]
Factors corresponding to constant polynomials are omitted.  For primes outside this finite set, reduction preserves the degree of the quotient, the numerator-denominator coprimality, the pole divisor, the simple finite critical divisor, the unramified local behavior at infinity, and the separation of finite critical values.  More explicitly, the leading-coefficient factors preserve the degrees of $n_d$, $d_d$, $W_d$, and $B_d$; $\operatorname{Res}(n_d,d_d)$ prevents a new numerator-denominator gcd and hence prevents new cancellation after reduction; $\operatorname{disc}(d_d)$ and $\operatorname{Res}(W_d,d_d)$ preserve simple unramified finite poles; $\iota_d$ preserves the nonzero local derivative at infinity; $\operatorname{disc}(W_d)$ preserves the reduced finite critical divisor; and $\operatorname{disc}(B_d)$ preserves distinct finite critical values.  The tame hypothesis $p\nmid d(d-1)$ is also included in the definition of $M_d$.  Thus the reduced quotient modulo $p$ remains Morse in the sense of \cref{def:morse-branch}, and \cref{thm:tame-morse-Sn} applies.
\end{proof}

\section*{Part IV. Positive-characteristic degenerations and twisted exceptionality}
\section{Off-diagonal fiber squares and twisted exceptionality}\label{sec:collision}

Throughout this section, fix a prime $\ell\ne p$ for all $\ell$-adic cohomology groups.

This section gives the structural obstruction that converts large monodromy into failure of twisted exceptionality.  The results are stated for an arbitrary separable $\tau$-equivariant rational map because no special feature of the formula for $H_d^{\rm raw}$ is needed until the final corollaries.  Off-diagonal collisions on $\Lambda_q$ are detected, away from finitely many branch fibers, by the off-diagonal finite-etale fiber square and its action under the twisted Frobenius.

\begin{definition}[twisted collision complex]\label{def:collision-complex}
Let $f:\PP^1\to\PP^1$ be a separable nonconstant $\tau$-equivariant rational map over $\F_p$.  Let $B\subset\PP^1$ be its branch locus.  Since $f\circ\tau=\tau\circ f$, the set $B$ is $\tau$-stable.  Put
\[
        U=\PP^1\setminus B,
        \qquad
        V=f^{-1}(U).
\]
Then $f:V\to U$ is finite etale.  Define the off-diagonal collision curve
\[
        C_f=(V\times_UV)\setminus\Delta,
\]
where $\Delta$ is the diagonal.  The \emph{twisted collision complex} of $(f,\tau)$ is
\[
        \mathsf{TC}(f,\tau)=R\Gamma_c(C_{f,\barF_p},\Ql).
\]
The action of $\Thetaq=\tau^{-1}\Frob_q$ on $\PP^1$ preserves $U$, $V$, and $C_f$, and hence acts on $\mathsf{TC}(f,\tau)$.
\end{definition}

\begin{remark}\label{rem:collision-no-derived}
No derived formalism is used in the arguments below.  The term ``collision complex'' refers only to the compactly supported etale cohomology complex of the ordinary off-diagonal fiber square over the finite etale locus.  This is the part of the fiber square that controls asymptotic injectivity on the twisted fixed sets.
\end{remark}

\begin{lemma}[twisted Frobenius weight bound]\label{lem:twisted-frobenius-weights}
Let $X$ be a separated curve over $\F_p$ preserved by an automorphism $\alpha$ of finite order $r$ defined over $\F_p$, and put $\Theta=\alpha^{-1}\Frob_q$.  If $\lambda$ is an eigenvalue of $\Theta$ on $H_c^i(X_{\barF_p},\Ql)$, then $\lambda^r$ is an eigenvalue of $\Frob_{q^r}$ on the same cohomology group.  In particular, for $i=1$ one has $|\lambda|\le q^{1/2}$.
\end{lemma}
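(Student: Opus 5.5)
The plan is to compute $\Theta^r$ directly and then invoke Deligne's weight theorem. Since $\alpha$ is defined over $\F_p$, it commutes with $\Frob_q$ both as an endomorphism of $X_{\barF_p}$ and on cohomology. Expanding,
\[
        \Theta^r=(\alpha^{-1}\Frob_q)^r=\alpha^{-r}\Frob_q^r=\Frob_{q^r},
\]
using $\alpha^r=1$. Cohomology is functorial, so the induced actions on $H_c^i(X_{\barF_p},\Ql)$ satisfy the same identity: $\Theta^r$ acts as $\Frob_{q^r}$. Here I mean throughout the geometric Frobenius correspondence, i.e.\ the $q$-power Frobenius endomorphism acting by pullback. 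I will make sure the conventions match Grothendieck--Lefschetz, since the trace formula later in the section requires this.

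Let $\lambda$ be an eigenvalue of $\Theta$, with eigenvector $v$. Then $\Theta^r v=\lambda^r v$, so $\lambda^r$ is an eigenvalue of $\Frob_{q^r}$. For a cleaner statement I can also argue with the whole spectrum. The action of $\Theta$ on the finite-dimensional space $H_c^i$ has eigenvalues $\lambda_j$ over $\overline{\Q}_\ell$, and the eigenvalues of $\Theta^r$ are exactly the $\lambda_j^r$ with multiplicity. This gives the first assertion.

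For $i=1$ I use Deligne's Weil II: $H_c^i$ of a separated scheme of finite type over $\F_p$ is mixed of weights $\le i$. Hence every eigenvalue $\mu$ of $\Frob_{q^r}$ on $H_c^1$ is an algebraic number whose complex conjugates all satisfy $|\mu|\le (q^r)^{1/2}$. Because $\lambda^r=\mu$, the number $\lambda$ is algebraic too, and for any embedding into $\mathbb C$,
\[
        |\lambda|=|\mu|^{1/r}\le q^{1/2}.
\]
For curves this can be made elementary, and I will use that version to avoid citing Weil II in full generality. Take the smooth compactification of the normalization of $X$. Then $H_c^1$ is an extension involving $H^0$ of finitely many points (weight $0$) and $H^1$ of a smooth projective curve (pure weight $1$ by Weil/RH). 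Normalization and finite point deletions only change $H^0$-type pieces of weight $0$.

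I expect the only real subtlety to be bookkeeping. First, $\alpha$ must be genuinely $\F_p$-rational so that it commutes with $\Frob_q$; for $\tau$ this holds because $\tau(z)=-1-z^{-1}$ has coefficients in $\F_p$. Second, the geometric-versus-arithmetic Frobenius convention must be fixed so the weight bound has the stated direction. Neither step needs computation.
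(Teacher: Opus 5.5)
Your proposal is correct and follows essentially the same route as the paper: you commute $\alpha$ past $\Frob_q$ to get $\Theta^r=\Frob_{q^r}$, pass to $r$-th powers of eigenvalues, and apply Deligne's theorem that $H_c^i$ has weights $\le i$. Your optional curve-level argument (normalization plus smooth compactification, which leaves only weight-$0$ point contributions and the pure weight-$1$ group $H^1$ of a smooth projective curve) is a valid way to avoid invoking Weil II in full generality, whereas the paper simply cites Deligne.
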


\begin{proof}
The automorphism $\alpha$ commutes with Frobenius because it is defined over $\F_p$.  Hence $\Theta^r=\Frob_{q^r}$.  If $\lambda$ is an eigenvalue of $\Theta$, then $\lambda^r$ is an eigenvalue of $\Theta^r$, hence of $\Frob_{q^r}$.  Deligne's weight bound \cite{DeligneWeilII} gives $|\lambda^r|\le q^{ri/2}$; for $i=1$ this gives $|\lambda|\le q^{1/2}$.
\end{proof}

\begin{theorem}[twisted Lefschetz collision formula]\label{thm:collision-trace}
Let $f$ be as in \cref{def:collision-complex}.  For $q=p^k$, define
\[
        N_f(q)=\#\{(x,y)\in\Lambda_q^2:x\ne y,\ x,y\in V,\ f(x)=f(y)\}.
\]
Then
\[
        N_f(q)=
        \sum_{i=0}^2(-1)^i
        \operatorname{Tr}\bigl(\Thetaq\mid H_c^i(C_{f,\barF_p},\Ql)\bigr).
\]
Moreover, define
\[
        a_f(q)=\operatorname{Tr}\!\left(\Thetaq\mid
        \Ql[\pi_0(C_{f,\barF_p})]\right).
\]
Equivalently, $a_f(q)$ is the number of geometric irreducible components of $C_{f,\barF_p}$ fixed by $\Thetaq$.  Then
\[
        N_f(q)=a_f(q)q+O_f(q^{1/2}).
\]
Consequently there is a constant $Q_f$ such that, for every $q>Q_f$ with $a_f(q)>0$, the map $f$ is not injective on $\Lambda_q$.  In particular, if $a_f(p^k)>0$ for all sufficiently large $k$, then $f$ is not $\tau$-twisted exceptional.
\end{theorem}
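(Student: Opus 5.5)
The proof has three parts: identify $N_f(q)$ as a count of twisted-Frobenius fixed points on $C_f$, apply the Grothendieck--Lefschetz trace formula to the twisted endomorphism, and then control each cohomological degree separately.

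\emph{Fixed points.} First I would check that a pair $(x,y)$ counted by $N_f(q)$ is exactly a $\barF_p$-point of $C_f$ fixed by $\Thetaq$. Here $\Thetaq$ acts diagonally on $V\times_UV$, which is legitimate because $f$ commutes with $\tau$ and is defined over $\F_p$. Conversely, a $\Thetaq$-fixed point of $C_f$ has both coordinates in $\Lambda_q\cap V$, they have the same image, and they are distinct. So $N_f(q)=\#C_f(\barF_p)^{\Thetaq}$.

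\emph{Trace formula.} Next I would apply the Lefschetz trace formula to $\Thetaq=\tau^{-1}\Frob_q$. The cleanest route is to view $\Thetaq$ as the geometric Frobenius of a twisted $\F_q$-form $C_f'$ of $C_f$. This form exists by Galois descent: $\tau$ has finite order $3$, is defined over $\F_p$, and preserves $C_f$, so the cocycle sending Frobenius to $\tau^{-1}$ defines an $\F_q$-structure on $C_{f,\barF_p}$. Its rational points are precisely the $\Thetaq$-fixed points, and its Frobenius action on $H_c^i(C_{f,\barF_p},\Ql)$ is $\Thetaq$. The standard Grothendieck--Lefschetz formula for the separated curve $C_f'$ then gives the displayed alternating sum. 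Since $C_f$ is a smooth curve, being finite étale over $U$, only $i=0,1,2$ occur.

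\emph{Degree-by-degree bounds.} Because $C_f$ is a smooth affine curve, $H_c^0=0$ provided every component is affine. This holds since each component is finite étale over $U$, and $U$ is affine once $B$ is nonempty. If $B$ is empty then $f$ is an isomorphism and $C_f$ is empty, so there is nothing to prove. For $H_c^1$, \cref{lem:twisted-frobenius-weights} with $r=3$ bounds every eigenvalue by $q^{1/2}$. The dimension of $H_c^1$ is fixed independently of $k$, so this degree contributes $O_f(q^{1/2})$. For $H_c^2$, Poincaré duality identifies $H_c^2(C_{f,\barF_p},\Ql)$ with $\Ql(-1)$ tensored with the permutation representation on $\pi_0(C_{f,\barF_p})$, compatibly with automorphisms. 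Therefore $\operatorname{Tr}(\Thetaq\mid H_c^2)=q\,a_f(q)$, and $a_f(q)$ is the number of $\Thetaq$-fixed components. Combining the three degrees yields $N_f(q)=a_f(q)q+O_f(q^{1/2})$, with an implied constant depending only on $\dim H_c^1$ and on the number of components.

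\emph{Consequences.} If $a_f(q)\ge1$, then $N_f(q)\ge q-Cq^{1/2}>0$ once $q>C^2$. This produces distinct $x,y\in\Lambda_q\cap V$ with $f(x)=f(y)$, so $f$ is not injective, and hence not bijective, on $\Lambda_q$. The final assertion follows because exceptionality requires bijectivity for infinitely many $k$.

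The main obstacle is making the twisted trace formula rigorous, specifically justifying that $\Thetaq$ is the Frobenius of an $\F_q$-form. Here the commutation of $\tau$ with $\Frob_q$ and the finiteness of the order of $\tau$ are essential. A secondary point is checking that the Tate-twisted identification of $H_c^2$ is equivariant under the automorphism $\tau$, which is what makes the $H_c^2$ trace equal to $q\,a_f(q)$.
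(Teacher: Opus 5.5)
Your proposal is correct and follows the paper's proof essentially step for step. You identify $N_f(q)$ with the $\Thetaq$-fixed points of $C_f$, apply Lefschetz, dispose of the degree-one case so that $C_f$ is affine with $H_c^0=0$, read $H_c^2$ as the Tate-twisted permutation module on components, and bound $H_c^1$ via \cref{lem:twisted-frobenius-weights} with $r=3$. The only variation is in the trace-formula step: you justify it by Galois descent to an $\F_q$-form whose Frobenius is $\Thetaq$, whereas the paper cites the equivariant Grothendieck--Lefschetz formula for a finite-order automorphism composed with Frobenius; the two are equivalent, and your route would even give the $H_c^1$ weight bound directly from Deligne applied to the twisted form.
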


\begin{proof}
Since $f$ is defined over $\F_p$ and satisfies $f\circ\tau=\tau\circ f$, it also satisfies $f\circ\Thetaq=\Thetaq\circ f$ for every $q=p^k$; hence $f$ preserves the twisted fixed sets, and $U$, $V$, and $C_f$ are stable under $\Thetaq$.  The fixed points of $\Thetaq$ on $C_f$ are precisely the ordered off-diagonal pairs $(x,y)$ with $x,y\in\Lambda_q\cap V$ and $f(x)=f(y)$.  We use the standard equivariant Grothendieck--Lefschetz trace formula for a finite-order automorphism composed with Frobenius.  Here $\tau$ is defined over $\F_p$ and has finite order, so $\Thetaq=\tau^{-1}\Frob_q$ is exactly such an endomorphism of the separated curve $C_f$; see, for example, \cite[VI]{SGA4half} or \cite[Chapter VI]{MilneEtale} for the trace formula in etale cohomology.  This gives the first formula.

If $\deg f=1$, then $B=\varnothing$, $U=V=\PP^1$, and $C_f=(\PP^1\times_{\PP^1}\PP^1)\setminus\Delta=\varnothing$.  The displayed formulas hold with $N_f(q)=0$ and $a_f(q)=0$.  Hence assume $\deg f>1$.  By Riemann--Hurwitz the branch locus is nonempty, so $U$ and $V$ are nonempty affine curves.  Since $V\to U$ is finite etale, the off-diagonal fiber square $C_f$ is a smooth affine curve over $\barF_p$, possibly disconnected, with no proper zero-dimensional components.  Hence $H_c^i(C_f,\Ql)=0$ for $i\notin\{0,1,2\}$ and $H_c^0=0$.  The group $H_c^2(C_f,\Ql)$ is a direct sum of one copy of $\Ql(-1)$ for each geometric irreducible component of $C_f$.  Thus, after the Tate twist, the induced action is the permutation representation on $\pi_0(C_{f,\barF_p})$.  A component fixed by $\Thetaq$ contributes $q$ to the untwisted top-cohomology trace, and a component in a longer cycle contributes zero to the trace.  By the definition of $a_f(q)$, the top term is $a_f(q)q$.

The remaining term is $H_c^1$.  Applying \cref{lem:twisted-frobenius-weights} with $\alpha=\tau$ and $r=3$ shows that every eigenvalue of $\Thetaq$ on $H_c^1$ has absolute value at most $q^{1/2}$.  The dimension of $H_c^1$ depends only on $f$.  This gives the estimate.  Choose $Q_f$ so that the error term has absolute value less than $q$ for all $q>Q_f$.  If $a_f(q)>0$ and $q>Q_f$, then $N_f(q)>0$, so $f$ has an off-diagonal collision on $\Lambda_q$.  If this happens for all sufficiently large $q=p^k$, then only finitely many $k$ can give bijectivity on $\Lambda_q$, which is exactly the negation of $\tau$-twisted exceptionality.
\end{proof}

\begin{proposition}[top cohomology and geometric monodromy]\label{prop:top-monodromy}
Let $\Omega$ be a geometric generic fiber of $f:V\to U$, and let $G_f\le\operatorname{Sym}(\Omega)$ be the geometric monodromy group.  Then the geometric irreducible components of $C_f$ are naturally indexed by the $G_f$-orbits on
\[
        \Omega^{(2)}=\{(\alpha,\beta)\in\Omega^2:\alpha\ne\beta\}.
\]
Consequently,
\[
        H_c^2(C_{f,\barF_p},\Ql)(1)
        \cong
        \Ql\bigl[\Omega^{(2)}/G_f\bigr]
\]
as a permutation representation of the arithmetic automorphisms preserving the cover.
\end{proposition}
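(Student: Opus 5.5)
The plan is to use the Galois correspondence between finite étale covers of $U_{\barF_p}$ and finite $\pi_1$-sets. Fix a geometric generic point $\bar u$ of $U$. Let $\pi=\pi_1^{\rm et}(U_{\barF_p},\bar u)$, which acts on the fiber $\Omega=f^{-1}(\bar u)$ through its image $G_f\le\operatorname{Sym}(\Omega)$. The fiber product $V\times_UV$ is finite étale over $U$, and its fiber over $\bar u$ is $\Omega\times\Omega$ with the diagonal $\pi$-action. The diagonal $\Delta$ is open and closed in $V\times_UV$, because $V\to U$ is étale and hence unramified. Therefore $C_f$ is again finite étale over $U$, with fiber $\Omega^{(2)}$.

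Next, for a finite étale cover $Y\to U_{\barF_p}$ with $U$ connected, the connected components of $Y$ correspond bijectively to the $\pi$-orbits on the geometric fiber. The $\pi$-orbits on $\Omega^{(2)}$ coincide with the $G_f$-orbits, since $\pi$ acts through $G_f$. Because $C_f$ is smooth of dimension one, its connected components are its irreducible components. This gives the indexing of the geometric irreducible components of $C_f$ by $\Omega^{(2)}/G_f$.

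For the cohomology, $C_{f,\barF_p}$ is a disjoint union of smooth connected curves $C_O$, one for each orbit $O$. Compactly supported cohomology is additive, and for each smooth connected curve the trace map gives a canonical isomorphism $H^2_c(C_O,\Ql)\cong\Ql(-1)$. Twisting by $(1)$ then yields $\bigoplus_O\Ql=\Ql[\Omega^{(2)}/G_f]$.

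Equivariance is the step requiring care. Let $\gamma$ be an arithmetic automorphism preserving the cover, for example $\Thetaq$ or an element of the arithmetic fundamental group. Such a $\gamma$ permutes the components of $C_{f,\barF_p}$, and the trace isomorphism is functorial for isomorphisms between connected smooth curves. With the Tate twist absorbing the $q$-factor, each $\gamma$ therefore acts by the permutation matrix of the induced map on $\pi_0$. To match this with the action on orbits, I would choose a path or specialization so that $\gamma$ acts on $\Omega$, normalizing $G_f$ inside the arithmetic monodromy. It then acts on $\Omega^{(2)}/G_f$, and this action is compatible with the component bijection by naturality of the fiber functor. The main obstacle is making this compatibility precise when $\gamma$ involves $\tau$ as well as Frobenius. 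My first approach is to observe that $\gamma$ lifts to an automorphism of the scheme $C_f$ over the automorphism $\gamma_U$ of $U$. Then $\gamma$ transports the fiber at $\bar u$ to the fiber at $\gamma_U(\bar u)$. Identifying these fibers along an étale path changes the induced bijection on $\Omega$ only by an element of $G_f$, so the map on orbit sets is well defined and agrees with the map on $\pi_0$.
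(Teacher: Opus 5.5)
Your proposal is correct and follows essentially the same route as the paper. Both arguments identify the components of the fiber square with $\pi_1(U_{\barF_p})$-orbits, hence $G_f$-orbits, on pairs of sheets, discard the diagonal, and use that each smooth connected component contributes one copy of $\Ql(-1)$ to $H_c^2$. You also justify two things the paper leaves implicit: that the diagonal is open and closed, and how $\Thetaq$ acts compatibly on components and on orbits.
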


\begin{proof}
In the present rational-map setting, $V_{\barF_p}$ is a dense open subset of $\PP^1_{\barF_p}$ and is therefore connected.  For a connected finite etale cover $V\to U$, the connected components of the fiber product $V\times_UV$ over $\barF_p$ correspond to the orbits of the geometric fundamental group of $U$ on $\Omega\times\Omega$.  The diagonal component corresponds to the diagonal orbit.  Removing it leaves the orbits on ordered distinct pairs.  The image of the geometric fundamental group in $\operatorname{Sym}(\Omega)$ is precisely $G_f$, giving the indexing set.  The assertion about $H_c^2(1)$ follows because the top compactly supported cohomology of each smooth affine irreducible component is one-dimensional after Tate twist.
\end{proof}

\begin{corollary}[monodromy obstruction to twisted exceptionality]\label{cor:2trans-obstruction}
Let $f$ be a separable $\tau$-equivariant rational map over $\F_p$ with $\deg f>1$.  If the geometric monodromy group $G_f$ is $2$-transitive, then $f$ is not $\tau$-twisted exceptional.  More precisely,
\[
        N_f(q)=q+O_f(q^{1/2}),
\]
so $f$ has off-diagonal collisions on $\Lambda_q$ for all sufficiently large $q=p^k$.
\end{corollary}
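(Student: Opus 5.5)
The plan is to combine \cref{prop:top-monodromy} with \cref{thm:collision-trace}, so that $2$-transitivity becomes a statement about the number of Frobenius-fixed components of $C_f$.

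First, assume $G_f$ is $2$-transitive on the geometric generic fiber $\Omega$, where $|\Omega|=\deg f>1$. Then $G_f$ acts transitively on the set $\Omega^{(2)}$ of ordered pairs of distinct sheets. This set is nonempty because $\deg f\ge2$. By \cref{prop:top-monodromy}, $C_{f,\barF_p}$ therefore has exactly one geometric irreducible component. In particular $C_f$ is nonempty and geometrically irreducible.

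Next, we check that this unique component is fixed by $\Thetaq=\tau^{-1}\Frob_q$ for every $q=p^k$. The reason is that $\Thetaq$ is an automorphism of $C_{f,\barF_p}$: it preserves $C_f$, as noted in \cref{def:collision-complex}, using $f\circ\tau=\tau\circ f$ and the fact that $f$ and $\tau$ are defined over $\F_p$. Any automorphism permutes the irreducible components, and with only one component it must fix it. Equivalently, $\Thetaq$ acts on the one-dimensional representation $\Ql[\pi_0(C_{f,\barF_p})]$ as the identity. This is a permutation representation, so there is no sign to worry about. Hence $a_f(q)=1$ for every $q$.

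Finally, substitute $a_f(q)=1$ into \cref{thm:collision-trace}. This gives $N_f(q)=q+O_f(q^{1/2})$. For $q>Q_f$ we then have $N_f(q)>0$, so there is an off-diagonal pair $x\ne y$ in $\Lambda_q\cap V$ with $f(x)=f(y)$. Thus $f$ is not injective on $\Lambda_q=\PP^1(\barF_p)^{\Thetaq}$, and hence not bijective there, for all but finitely many $k$. This contradicts \cref{def:twisted-exceptional}.

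The only step needing care is the first: the identification of components with orbits on $\Omega^{(2)}$. This uses the connectedness of $V_{\barF_p}$, which holds because $V$ is a dense open subset of $\PP^1$, together with the convention that $G_f$ is the monodromy of the separable cover. Both are already supplied by \cref{prop:top-monodromy}, so no real obstacle remains.
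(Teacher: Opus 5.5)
Your proposal is correct and follows essentially the same route as the paper: $2$-transitivity gives a single $G_f$-orbit on $\Omega^{(2)}$, hence by \cref{prop:top-monodromy} a single geometric component of $C_f$, which every $\Thetaq$ must fix, so $a_f(q)=1$ and \cref{thm:collision-trace} yields $N_f(q)=q+O_f(q^{1/2})$. Your explicit check that $\Omega^{(2)}$ is nonempty when $\deg f\ge2$ is a small detail that the paper leaves implicit.
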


\begin{proof}
The connected-cover hypothesis in \cref{prop:top-monodromy} applies because $V$ is a dense open subset of $\PP^1$.  If $G_f$ is $2$-transitive, then it has one orbit on $\Omega^{(2)}$.  The connected-cover hypotheses used in \cref{prop:top-monodromy} hold here because $V$ is a nonempty open subset of $\PP^1$.  Thus $C_f$ is geometrically irreducible.  Since there is only one geometric component, every arithmetic automorphism preserving the cover, in particular every $\Thetaq$, fixes it.  Therefore $a_f(q)=1$ for every $q$, and Theorem~\ref{thm:collision-trace} gives $N_f(q)=q+O_f(q^{1/2})$.  This is positive for all sufficiently large $q$.
\end{proof}

\begin{corollary}[tame Morse quotients are not twisted-exceptional]\label{cor:tame-morse-not-exceptional}
Let $p>0$, assume $p\nmid d(d-1)$, and suppose $\deg h_d>1$ and $h_d$ is Morse.  Then $h_d$ is not $\tau$-twisted exceptional.
\end{corollary}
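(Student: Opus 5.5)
The plan is to chain three earlier results. \Cref{thm:tame-morse-Sn} gives full symmetric monodromy, and $S_n$ is $2$-transitive. \Cref{cor:2trans-obstruction} then converts $2$-transitivity into failure of twisted exceptionality.

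First, I will check that $h_d$ satisfies the standing hypotheses of \cref{def:collision-complex}. It must be defined over $\F_p$, separable, nonconstant and $\tau$-equivariant, of degree $n=\deg h_d>1$.

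\begin{itemize}
\item Rationality: the raw numerator and denominator $N_d,D_d$ have integer coefficients. Their gcd $C_d$ is the product of the canceled factors, so it is a monic factor of $z^d-1$ over $\barF_p$. It is also the gcd computed over $\F_p$, since gcds are invariant under field extension. Hence $h_d\in\F_p(z)$.
\item Equivariance: this is \cref{prop:tau-equivariance}.
\item Separability: since $p\nmid d$, this is \cref{lem:tame-quotient-separable}, which applies because $h_d$ is nonconstant ($\deg h_d>1$). Alternatively, the Morse hypothesis already presupposes a separable quotient in \cref{def:morse-branch}.
\item Degree convention: because $p\nmid d$, no Frobenius factor is removed under \cref{def:quotient-conventions}. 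So the separable degree equals the morphism degree $n$.
\end{itemize}

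Second, I will apply \cref{thm:tame-morse-Sn}. Its hypotheses are exactly $p\nmid d(d-1)$, $n>1$, and the Morse property, so it gives $G_{h_d}=S_n$ as the geometric monodromy group of the cover over $\barF_p$. Since $n\ge2$, the group $S_n$ acts $2$-transitively on the $n$ sheets. (For $n=2$ it is transitive on the two ordered distinct pairs.)

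Third, I will invoke \cref{cor:2trans-obstruction} with $f=h_d$. It gives $N_{h_d}(q)=q+O_{h_d}(q^{1/2})$. Hence $h_d$ has off-diagonal collisions on $\Lambda_q$ for all sufficiently large $q=p^k$, so it is injective on $\Lambda_q$ for only finitely many $k$. By \cref{def:twisted-exceptional}, this means $h_d$ is not $\tau$-twisted exceptional.

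The only step needing care is the first. One must confirm that the monodromy group in \cref{thm:tame-morse-Sn} is the same object as $G_f$ in \cref{prop:top-monodromy}, namely the image of the geometric fundamental group of $U=\PP^1\setminus B$ acting on a generic fiber. Both are the geometric monodromy of the finite étale restriction of the same separable cover $h_d$ over $\barF_p$, so they agree. Beyond this, the argument is purely a matter of assembling the earlier results.
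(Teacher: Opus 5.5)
Your proposal is correct and follows the paper's proof: \cref{thm:tame-morse-Sn} gives $G_{h_d}=S_n$, which is $2$-transitive, and \cref{cor:2trans-obstruction} then rules out $\tau$-twisted exceptionality. The paper leaves implicit the hypotheses you check explicitly ($h_d\in\F_p(z)$, separability via \cref{lem:tame-quotient-separable}, and the absence of a Frobenius factor since $p\nmid d$), and you verify them correctly.
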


\begin{proof}
By \cref{thm:tame-morse-Sn}, the geometric monodromy group is the full symmetric group, hence is $2$-transitive.  Apply \cref{cor:2trans-obstruction}.
\end{proof}

\begin{corollary}[positive-characteristic degree-two quotients are not twisted-exceptional]\label{cor:degree-two-not-exceptional}
In positive characteristic, let $h_d$ be one of the tame degree-two quotient maps classified in \cref{thm:degree-two}.  Then $h_d$ is not $\tau$-twisted exceptional.  Thus neither the positive-characteristic case $d=2$ with $p\ne2,3$ nor the sporadic pair $(p,d)=(11,5)$ yields an infinite quotient-permutation family on the sets $\Lambda_q$.
\end{corollary}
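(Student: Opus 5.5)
The plan is to reduce the statement to \cref{cor:2trans-obstruction}.  Once we know that each positive-characteristic degree-two quotient is separable and that its geometric monodromy group is $2$-transitive, we are done.  For $\deg h_d=2$ the second fact is automatic.  The separable sheets of a separable degree-two cover form a two-element set $\Omega$, and $G_{h_d}$ is a transitive subgroup of $\operatorname{Sym}(\Omega)\cong S_2$.  The covering curve $V$ is connected, being a dense open subset of $\PP^1$, so transitivity holds and $G_{h_d}=S_2$.  This group acts simply transitively on the two ordered pairs of distinct sheets, hence is $2$-transitive.

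The first step is to verify the hypotheses of \cref{cor:2trans-obstruction} for each case in \cref{thm:degree-two}.  The tame hypothesis $p\nmid d$ holds in both cases: for $d=2$ we have $p\ne2,3$, and for $(p,d)=(11,5)$ we have $11\nmid5$.  Each $h_d$ is then nonconstant of degree two, so \cref{lem:tame-quotient-separable} shows it is separable.  For $(11,5)$ one can also see separability directly from the explicit formula $h_5=(-2z^2-2z+1)/(z^2+4z+1)$, since its Wronskian does not vanish identically.  Next, $h_d$ commutes with $\tau$ by \cref{prop:tau-equivariance}.  Finally, $h_d$ is defined over $\F_p$: it is obtained by cancelling a gcd of polynomials with coefficients in $\F_p$, and that gcd can be taken in $\F_p[z]$.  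So $h_d$ is a separable $\tau$-equivariant rational map over $\F_p$ with $\deg h_d=2>1$.

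With these hypotheses in place, \cref{cor:2trans-obstruction} gives $N_{h_d}(q)=q+O(q^{1/2})$.  Hence $h_d$ has off-diagonal collisions on $\Lambda_q$ for all sufficiently large $q=p^k$, so $h_d$ permutes $\Lambda_q$ for at most finitely many $k$.  This is exactly the failure of $\tau$-twisted exceptionality in \cref{def:twisted-exceptional}, and it gives the final sentence of the statement for both families.

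For $d=2$ there is an alternative route through \cref{cor:tame-morse-not-exceptional}, provided $p\nmid d(d-1)=2$ and $h_2$ is Morse.  A degree-two tame map has exactly two simple critical points, and they could share a critical value, so the Morse check is not automatic.  The $S_2$ argument above avoids this issue entirely, so I would not use this route.

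The main obstacle is therefore not group theory but bookkeeping of the hypotheses: confirming that the reduced map is genuinely defined over the prime field and separable in the sporadic characteristic-$11$ case, where \cref{lem:tame-quotient-separable} applies because $11\nmid5$.
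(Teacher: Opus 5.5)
Your proposal is correct and is essentially the paper's proof: a separable degree-two map has geometric monodromy $S_2$, which is $2$-transitive, so \cref{cor:2trans-obstruction} applies. Where the paper argues separability case by case (the degree is below the characteristic for $(11,5)$, and the excluded characteristics handle $d=2$), you cite \cref{lem:tame-quotient-separable}, which is equally valid.

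One small correction to your aside, which does not affect the main argument. Two distinct critical points of a degree-two map cannot share a critical value: each has ramification index $2$, so that fiber would have total multiplicity at least $4>2$. Hence $h_d$ is automatically Morse, and the route through \cref{cor:tame-morse-not-exceptional} would also work, though you do not need it.
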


\begin{proof}
A separable rational map of degree two has geometric monodromy $S_2$, which is $2$-transitive on a two-point fiber.  Apply Corollary~\ref{cor:2trans-obstruction}.  The degree-two cases in \cref{thm:degree-two} are separable because their degree is smaller than the characteristic in the sporadic case and because the excluded characteristics are precisely the inseparable or collapsed cases for $d=2$.
\end{proof}

\begin{lemma}[inertia orbits and ramification indices]\label{lem:inertia-orbit-lengths}
Let $\phi:X\to Y$ be a finite separable cover of smooth curves over an algebraically closed field, let $y\in Y$ be a branch value, and let $I$ be an inertia group at a point of the Galois closure above $y$.  In the induced action of $I$ on a geometric generic fiber of $\phi$, the orbits are indexed by the points $x\in X$ above $y$, and the orbit attached to $x$ has length $e_x(\phi)$.

In particular, if the fiber over $y$ has exactly one ramified point of index $m$ and all other points over $y$ are unramified, then the inertia image has one orbit of length $m$ and fixes the remaining sheets.  If $m=2$, the monodromy group contains a transposition; if $m$ is prime, it contains an $m$-cycle fixing the unramified sheets.
\end{lemma}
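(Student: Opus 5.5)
The plan is to work with the Galois closure and its decomposition over the branch value. Let $\psi:Z\to Y$ be the Galois closure of $\phi$, with group $G$. Then $X=Z/H$ for the stabilizer $H\le G$ of a chosen sheet. The geometric generic fiber $\Omega$ of $\phi$ is identified, as a $G$-set, with $G/H$. Fix a point $w\in Z$ above $y$, and let $I=I_w\le D_w=G_w$ be its inertia group. Since the field is algebraically closed, the decomposition group equals the inertia group, so $G_w=I$.

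The first key step is the standard double-coset description of the fiber. The points of $Z$ over $y$ form the set $G/I$ (by transitivity of $G$ on the fiber). The points of $X=Z/H$ over $y$ are therefore the double cosets $H\backslash G/I$. These correspond bijectively to the $I$-orbits on $G/H$, with $HgI$ matching the orbit $I\cdot g^{-1}H$. This gives the indexing of inertia orbits by points $x\in X$ above $y$.

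The second step computes the orbit lengths. For $x$ corresponding to $HgI$, the image point is $gw$ in suitable conventions. The local ramification index satisfies
\[
e_x(\phi)=e(gw\mid y)/e(gw\mid x)=|I_{gw}|/|I_{gw}\cap H|=|I|/|I\cap g^{-1}Hg|,
\]
using that ramification in the Galois cover $Z\to Z/H$ at $gw$ has index $|H\cap I_{gw}|$, and that $I_{gw}=gIg^{-1}$. On the other hand, the $I$-orbit of $g^{-1}H$ in $G/H$ has length $|I|/|I\cap g^{-1}Hg|$. The two quantities agree. This step needs only multiplicativity of ramification indices in towers and the Galois-cover fact $e=|\text{inertia}|$. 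No tameness is needed, because multiplicativity of $e$ holds regardless. I expect the only real care to lie here, in matching conjugation conventions so that the correct coset is paired with the correct point.

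The consequences then follow directly. If exactly one point has $e_x=m$ and all others have $e_x=1$, then $I$ acts on $\Omega$ with one orbit of size $m$ and fixes every other sheet. When $m$ is prime, the image of $I$ acting on that orbit is a transitive group of prime degree $m$, so its order is divisible by $m$. Cauchy's theorem then gives an element $\sigma$ of order $m$ in the image of $I$. Since $\sigma$ fixes all other sheets, its cycle type is a product of $m$-cycles supported on the $m$-orbit, hence a single $m$-cycle. The case $m=2$ is the special case giving a transposition. Finally, the image of $I$ lies in $G_f$ viewed inside $\operatorname{Sym}(\Omega)$, which completes the argument.
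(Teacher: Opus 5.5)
Your argument is correct, but it takes a different route from the paper's. The paper argues locally. After completing at $y$, the points of $X$ over $y$ correspond to the field factors of $k(X)\otimes\widehat{k(Y)}_y$. Since residue extensions are trivial, the local Galois group (decomposition $=$ inertia) acts on the embeddings belonging to the factor at $x$ as a single orbit of size $[\widehat{k(X)}_x:\widehat{k(Y)}_y]=e_x(\phi)$.

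You stay global in the Galois closure instead. You identify the fiber of $\phi$ over $y$ with $H\backslash G/I$ and pass to $I$-orbits on $G/H$ by inversion, matching $HgI$ with the orbit of $g^{-1}H$. You then compare orbit lengths via $e_x(\phi)=|gIg^{-1}|/|H\cap gIg^{-1}|=|I|/|I\cap g^{-1}Hg|$. This uses only three facts:
\begin{itemize}
\item ramification indices are multiplicative in towers;
\item $e=|I_w|$ in a Galois cover with trivial residue extensions;
\item the inertia group of $Z\to Z/H$ at $gw$ is $H\cap I_{gw}$.
\end{itemize}
None of these needs tameness, so the wild case is covered exactly as in the paper.

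What each route buys: yours makes the point--orbit correspondence and its conjugation conventions fully explicit and reduces everything to finite group theory. The paper's local route identifies each orbit intrinsically with a factor of the completed tensor product, which is the form that connects directly to the standard branch-cycle description via complete discrete valuation rings. The concluding Cauchy-theorem step for prime $m$ (and hence $m=2$) is the same in both.
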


\begin{proof}
After completing at $y$, the points of $X$ above $y$ correspond to the extensions of the completed local field of $Y$ occurring in the tensor product with the function field of $X$.  In the Galois closure, the decomposition group acts transitively on the places above a fixed point $x$, and the quotient by inertia records the residue extension.  Since the base field is algebraically closed, the residue extensions are trivial, so the inertia orbit over $x$ has size equal to the ramification index $e_x(\phi)$.  This is the standard local description of branch cycles via completed discrete valuation rings; compare \cite[Chapter III, \S6]{SerreLocalFields}.

In the final situation, all unramified sheets are singleton inertia orbits and hence are fixed by every inertia element.  On the unique nontrivial orbit of length $m$, the inertia image is a transitive permutation group.  If $m=2$, any nonidentity element on that orbit is a transposition.  If $m$ is prime, the order of the transitive inertia image is divisible by $m$; by Cauchy's theorem it contains an element of order $m$, and such an element is an $m$-cycle on the length-$m$ orbit.  Since the remaining orbits are singletons, this $m$-cycle fixes the unramified sheets.
\end{proof}

\begin{proposition}[Wronskian and an $S_n$ criterion]\label{prop:wronskian-criterion}
Let $p>0$, assume $p\nmid d$, and write $H_d^{\rm raw}=N_d/D_d$ before cancellation, with
\[
        N_d=(-1)^d(z+1)^d-z^d,
        \qquad
        D_d=z^d-1.
\]
Then
\[
        \frac1d\bigl(N_d'D_d-N_dD_d'\bigr)
        =z^{d-1}-(-1)^d(z+1)^{d-1}(z^{d-1}+1).
\]
Let $h_d$ be the reduced form of $H_d^{\rm raw}$, and put $n=\deg h_d$.  Suppose $n>1$, $h_d$ is separable, $h_d$ is indecomposable over $\barF_p$, and $h_d$ has a simple branch point, i.e. a branch value over which exactly one point is ramified and its ramification index is $2$.  Then the geometric monodromy group of $h_d$ is $S_n$.  Consequently $h_d$ is not $\tau$-twisted exceptional.
\end{proposition}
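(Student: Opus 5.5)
The Wronskian identity is the computation already recorded in \cref{prop:wronskian-skeleton}; only the assumption $p\nmid d$ is needed to divide by $d$. I would cite that proposition, or repeat the two-line expansion, and turn to the group-theoretic part.

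For the monodromy claim I would pass to the geometric monodromy group $G=G_{h_d}\le S_n$, acting on a geometric generic fiber of the separable cover $h_d:\PP^1\to\PP^1$ over $\barF_p$. Since $\PP^1$ is connected, $G$ is transitive. Indecomposability over $\barF_p$ gives primitivity, by the standard correspondence between intermediate fields of $\barF_p(z)/\barF_p(h_d(z))$ and blocks of imprimitivity for the point stabilizer. Here L\"uroth's theorem identifies every intermediate field with a rational function field, so a proper block system yields a nontrivial decomposition $h_d=g\circ r$. Next I would apply \cref{lem:inertia-orbit-lengths} at the simple branch value $y$. Over $y$ exactly one point has index $2$ and all others are unramified, so the inertia image has one orbit of length $2$ and fixes every other sheet. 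Some inertia element is therefore a transposition in $G$. The key point is that $m=2$ is prime, so this holds even when the point is wildly ramified, which happens when $p=2$: the lemma only uses orbit lengths, not tameness.

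Then I would invoke the classical theorem of Jordan that a primitive permutation group containing a transposition is the full symmetric group. Briefly, the relation ``$i=j$ or $(i\,j)\in G$'' is a $G$-invariant equivalence relation, because conjugating a transposition gives a transposition. Primitivity makes it either trivial or universal. It is not trivial, because $G$ contains a transposition, so it is universal. Hence $G$ contains every transposition and $G=S_n$. The final assertion then follows from \cref{cor:2trans-obstruction}: $S_n$ is $2$-transitive for $n>1$, $h_d$ is separable of degree $>1$, and $h_d$ is $\tau$-equivariant and defined over $\F_p$ by \cref{prop:tau-equivariance}.

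I expect the main obstacle to be the primitivity step, namely justifying carefully that indecomposability over $\barF_p$ is equivalent to primitivity of the geometric monodromy group. I would do this through the Galois correspondence for the separable extension $\barF_p(z)/\barF_p(h_d)$ together with L\"uroth's theorem, and state explicitly that separability of $h_d$ is what makes the Galois closure and the fiber action available.
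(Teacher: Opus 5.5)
Your proposal is correct and follows essentially the same route as the paper: recompute or cite the Wronskian identity, obtain a transposition from \cref{lem:inertia-orbit-lengths} at the simple branch value (valid also in wild characteristic), deduce primitivity from indecomposability over $\barF_p$, conclude $G_{h_d}=S_n$ by the primitive-plus-transposition theorem, and finish with \cref{cor:2trans-obstruction}. The only difference is that you prove the two group-theoretic inputs yourself, namely the L\"uroth/Galois-correspondence argument for primitivity and the invariant-equivalence-relation proof of Jordan's theorem, whereas the paper cites Fried--MacRae and the classical result without proof.
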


\begin{proof}
The displayed identity is a direct calculation:
\[
\begin{aligned}
\frac1d(N_d'D_d-N_dD_d')
&=\bigl((-1)^d(z+1)^{d-1}-z^{d-1}\bigr)(z^d-1)  \\
&\quad-\bigl((-1)^d(z+1)^d-z^d\bigr)z^{d-1}        \\
&=z^{d-1}-(-1)^d(z+1)^{d-1}(z^{d-1}+1).
\end{aligned}
\]
The separability assumption ensures that the geometric monodromy action is defined on $n$ separable sheets; the simple-branch-point condition is then a statement about this separable cover.  After canceling common factors, the simple branch point hypothesis says that over the corresponding branch value there is one point of ramification index $2$ and all other points are unramified.  By \cref{lem:inertia-orbit-lengths}, including in wild characteristic, the inertia image on a generic fiber contains a transposition.  Indecomposability of a rational function over an algebraically closed field is equivalent to primitivity of its geometric monodromy action; see Fried--MacRae \cite{FriedMacRae} for the separated-variables framework underlying this equivalence.  A primitive permutation group containing a transposition is the full symmetric group $S_n$.  Thus $G_{h_d}=S_n$, and Corollary~\ref{cor:2trans-obstruction} proves non-exceptionality.
\end{proof}

\section{Positive-characteristic monodromy: conditional reduction and architecture}\label{sec:positive-architecture}

The collision obstruction turns monodromy into an arithmetic obstruction.  The following conditional theorem and conjectural architecture separate the proved exceptional mechanisms from the remaining positive-characteristic monodromy problem.

\begin{theorem}[conditional quotient classification in the two-transitive range]\label{thm:classification-2transitive-range}
Fix a prime $p$ and write $d=p^s d_0$ with $p\nmid d_0$.  Assume that every separable quotient outside the following three structural alternatives has $2$-transitive geometric monodromy:
\[
        \deg h_{d_0}\le1,
        \qquad
        d_0=p^a+1\text{ for some }a\ge1,
        \qquad
        (p,d_0)=(19,6).
\]
Then every $\tau$-twisted exceptional quotient in this range belongs to one of these three alternatives.  Within the first two alternatives, the nonconstant degree-one quotients are those classified in \cref{thm:degree-one}, and the Frobenius-sparse plus branch $d_0=p^a+1$ has exactly the quotient criteria of \cref{thm:plus-branch-exceptionality}.  The sporadic alternative $(p,d_0)=(19,6)$ is isolated here only as an allowed alternative.  This is a quotient-level statement; trace-zero permutation exponents require the additional fiber and denominator conditions of \cref{prop:mult-desc}.
\end{theorem}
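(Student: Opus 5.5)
The proof is a contrapositive argument built on \cref{cor:2trans-obstruction}, together with the Frobenius reduction of \cref{prop:p-power-reduction}. First I would fix what ``$\tau$-twisted exceptional quotient'' means when $s>0$. By \cref{prop:p-power-reduction}, $h_d=\Frob_{p^s}\circ h_{d_0}$. Frobenius $\Frob_{p^s}$ is defined over $\F_p$, commutes with $\tau$ and with $\Frob_q$, and therefore restricts to a bijection of every $\Lambda_q$. So $h_d$ is bijective on $\Lambda_q$ if and only if $h_{d_0}$ is, and $\tau$-twisted exceptionality of $h_d$ is equivalent to that of $h_{d_0}$. By \cref{def:quotient-conventions}, the separable quotient attached to $h_d$ is $h_{d_0}$, so we may take $s=0$ and work only with $h_{d_0}$. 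If $h_{d_0}$ is constant, it is not a cover, and it fails to be injective on $\Lambda_q$ (which has $q+1\ge3$ points), so it is never exceptional; in any case it lies in the first alternative.

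Next, suppose $h_{d_0}$ is $\tau$-twisted exceptional and $d_0$ lies outside all three alternatives. Then $\deg h_{d_0}\ge2$. By \cref{lem:tame-quotient-separable}, $h_{d_0}$ is separable, since it is nonconstant and $p\nmid d_0$. It is defined over $\F_p$ and commutes with $\tau$ by \cref{prop:tau-equivariance}. The standing hypothesis then says its geometric monodromy group $G_{h_{d_0}}$ is $2$-transitive. Now \cref{cor:2trans-obstruction} gives $N_{h_{d_0}}(q)=q+O(q^{1/2})$, so $h_{d_0}$ is non-injective on $\Lambda_q$ for all large $q$. This contradicts exceptionality, which requires bijectivity for infinitely many $k$. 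Hence every exceptional quotient lies in one of the three alternatives.

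The remaining sentences of the statement are bookkeeping. The degree-at-most-one quotients are exactly those listed in \cref{thm:degree-one} (with $h_{p^a-1}=\tau^2$). The plus branch $d_0=p^a+1$ is governed by \cref{thm:plus-branch-exceptionality}. I would also check that $p^a+1$ is prime to $p$, which it is, so the reduction to $d_0$ is consistent. Finally, I would note that the trace-zero permutation property requires \cref{prop:mult-desc} on top of this quotient-level statement.

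The main point needing care is the Frobenius step. I must justify that composing with $\Frob_{p^s}$ preserves bijectivity on $\Lambda_q$; this holds because $\Frob_{p^s}$ commutes with $\tau^{-1}\Frob_q$ and is injective on $\barF_p$. I must also make sure the ``$2$-transitive'' hypothesis is applied to the separable part, since the trace formula of \cref{thm:collision-trace} needs a separable $f$. Everything else is a direct citation.
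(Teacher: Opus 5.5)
Your proposal is correct and follows essentially the same route as the paper. Both arguments remove the Frobenius factor $\Frob_{p^s}$, which commutes with $\Thetaq$ and so is a bijection on each twisted fixed set, and then rule out exceptionality outside the three alternatives by applying \cref{cor:2trans-obstruction} to $h_{d_0}$ under the $2$-transitivity hypothesis. The only differences are minor. You check separability (via \cref{lem:tame-quotient-separable}) and $\tau$-equivariance explicitly, where the paper leaves them implicit. You dispose of the constant case by non-injectivity on $\Lambda_q$, whereas the paper appeals to the nonconstancy requirement in \cref{def:twisted-exceptional}.
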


\begin{proof}
By \cref{prop:p-power-reduction}, $h_d=\Frob_{p^s}\circ h_{d_0}$ on values.  This Frobenius factor does not affect quotient-level twisted exceptionality: for every $q=p^k$, the map $\Frob_{p^s}$ commutes with $\Thetaq=\tau^{-1}\Frob_q$ because $\tau$ is defined over $\F_p$, hence $\Frob_{p^s}$ restricts to a bijection of the twisted fixed set $\PP^1(\barF_p)^{\Thetaq}$.  It does, however, multiply the full morphism degree by $p^s$.  Thus the separable cover relevant for monodromy and quotient-level twisted exceptionality is $h_{d_0}$.  If $\deg h_{d_0}\le1$, then \cref{thm:degree-one} gives the low-degree branch.  The constant subcase is excluded by \cref{def:twisted-exceptional}, which only applies to nonconstant rational functions; every nonconstant degree-one quotient is an automorphism commuting with $\tau$ and hence is bijective on each twisted fixed set.  No trace-zero fiber or denominator condition is part of this quotient-level statement.  If $d_0=p^a+1$ with $a\ge1$, \cref{thm:plus-branch-exceptionality} gives exactly the sparse quotient-permutation criteria.  If $(p,d_0)=(19,6)$, this theorem makes no further claim beyond retaining it as one of the stated alternatives.  All remaining quotients of degree greater than one have $2$-transitive monodromy by hypothesis, and Corollary~\ref{cor:2trans-obstruction} rules out twisted exceptionality.  Therefore every twisted exceptional quotient in the stated range belongs to one of the three listed alternatives.
\end{proof}

\begin{conjecture}[positive-characteristic sparse/sporadic/lacunary/generic architecture]
\label{conj:trichotomy}
Let $p>0$, let $p\nmid d$, and let $h_d$ be the reduced separable quotient associated with $H_d^{\rm raw}$ in characteristic $p$.  The characteristic-zero case is completely settled by \cref{thm:charzero-Sn}; in positive characteristic, apart from the small overlaps $d=1,2,3$, the following alternatives are expected to be exhaustive.
\begin{enumerate}[label=\textup{(\roman*)}]
\item If $d=p^a-1$ with $a\ge1$, then $h_d=\tau^2$ is the linear Frobenius-sparse branch.
\item If $d=p^a+1$ with $a\ge1$, then $h_d$ is the Kummer or Artin--Schreier Frobenius-sparse branch of \cref{thm:frob-sparse-normal-forms}.
\item If $(p,d)=(19,6)$, then, under the certificate-execution hypothesis of \cref{thm:sporadic-V4}, $h_d$ is the Klein-four sporadic quotient.
\item If $p\ne2,3$ and $d=2p^a+1$ with $a\ge1$, then $h_d$ belongs to the first nonsparse Frobenius--lacunary tower.  Put $n=\deg h_d$, equivalently the degree of the separable quotient under \cref{def:quotient-conventions}.  This paper proves that this tower is primitive for every such $a$, and proves $A_n\le G_{h_d}\le S_n$ for $a=1$.  For $a>1$, the expected conclusion is again $A_n\le G_{h_d}\le S_n$; it would follow from sufficiently controlling the explicit wild inertia representation of \cref{thm:two-r-plus-one-normal-form}.
\item In every remaining positive-characteristic case with $\deg h_d=n>1$, one expects
\[
        A_n\le G_{h_d}\le S_n.
\]
In particular the remaining quotients should not be $\tau$-twisted exceptional.
\end{enumerate}
\end{conjecture}

\begin{remark}
The sporadic alternative is forced by the computer-assisted \cref{thm:sporadic-V4}; without it the conjecture is false under the certificate-execution hypothesis.  The exclusion of $d=p^a+1$ with $a\ge1$ is forced by \cref{cor:sparse-monodromy}, since those quotients have cyclic monodromy for $p\ne3$ and elementary abelian monodromy for $p=3$.  For $p\ne2,3$, the first nonsparse lacunary tower is no longer a primitivity problem by \cref{thm:first-lacunary-primitive}; for $a>1$ it is a local wild-inertia problem.  The cohomological collision obstruction is unconditional.  Since characteristic zero is now settled by \cref{thm:charzero-Sn}, the remaining difficult problem is the positive-characteristic generic monodromy theorem: prove primitivity outside the lacunary towers and extract a small-support branch cycle from the branch-collision equation, from bad-reduction branch-cycle products, or from the wild Wronskian skeleton.  This is the same structural division that appears in the classical exceptional-cover literature, where the arithmetic permutation property is converted into a statement about components of a fiber square and then into monodromy; compare Fried--Guralnick--Saxl \cite{FriedGuralnickSaxl1993}, Guralnick--Tucker--Zieve \cite{GTZ2007}, and the monodromy classifications of Guralnick--Zieve and Guralnick--Rosenberg--Zieve \cite{GZ2010,GRZ2010}.
\end{remark}

\section{The sporadic \texorpdfstring{$V_4$}{V4} quotient in characteristic \texorpdfstring{$19$}{19}}\label{sec:sporadic}

The first positive-characteristic escape mechanism beyond the Frobenius-sparse branches is a genuine low-monodromy sporadic quotient.  It belongs after the characteristic-zero symmetric-monodromy theorem: it is not a branch-collision lemma for characteristic zero, but an exceptional degeneration in characteristic $19$.  The theorem in this section is computer-assisted: the finite polynomial identities and matrix products are verified by the reproducible Sage certificate in \cref{app:computations}.

\begin{theorem}[computer-assisted sporadic Klein-four quotient]\label{thm:sporadic-V4}
Assume that the SageMath certificate in \cref{app:computations} executes successfully.  Work over $\F_{19}$, and take geometric monodromy after base change to $\barF_{19}$.  For $d=6$, let
\[
        \phi(z)=\frac{z}{z+1},
        \qquad
        \phi^{-1}(x)=\frac{x}{1-x},
        \qquad
        \tau_x=\phi\tau\phi^{-1},
        \qquad
        \tau_x(x)=\frac1{1-x}.
\]
The source-coordinate form of the quotient is
\[
        K_6(x)=h_6(\phi^{-1}(x)),
\]
and it is obtained by reducing
\[
        K_6^{\rm raw}(x)=\frac{1-x^6}{x^6-(1-x)^6}.
\]
After cancellation one has
\[
        K_6(x)=\frac{-x^4-x^3+x+1}{6x^3-9x^2+5x-1}.
\]
This expression changes the source coordinate to $x$ but leaves the target in the original $z$-coordinate.  The conjugated self-map in the $x$-coordinate is
\[
        \widetilde h_6(x)
        =\phi\circ h_6\circ\phi^{-1}(x)
        =\frac{K_6(x)}{K_6(x)+1}
        =\frac{-x^4-x^3+x+1}{-x^4+5x^3-9x^2+6x}.
\]
The four transformations
\[
        \gamma_0(x)=x,
\]
\[
        \gamma_1(x)=\frac{x+6}{2x-1},
        \qquad
        \gamma_2(x)=\frac{x+10}{9x-1},
        \qquad
        \gamma_3(x)=\frac{x-2}{13x-1}
\]
form a Klein four-group of deck transformations of $K_6$, equivalently of $\widetilde h_6$.  Hence $\widetilde h_6$ is a Galois cover of degree four and
\[
        G_{\widetilde h_6}=V_4,
\]
equivalently $G_{h_6}=V_4$ in the original $z$-coordinate.  Moreover,
\[
        \tau_x^{-1}\gamma_1\tau_x=\gamma_2,
        \qquad
        \tau_x^{-1}\gamma_2\tau_x=\gamma_3,
        \qquad
        \tau_x^{-1}\gamma_3\tau_x=\gamma_1.
\]
Consequently the original quotient $h_6$ is a nonsparse $\tau$-twisted exceptional quotient.  Equivalently, the $x$-coordinate self-map $\widetilde h_6$ induces a bijection on
\[
        \PP^1(\barF_{19})^{\tau_x^{-1}\Frob_{19^k}}
\]
for every $k\ge1$.
\end{theorem}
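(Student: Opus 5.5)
The plan is to verify everything by finite computations over $\F_{19}$, most of which the certificate in \cref{app:computations} executes, and then to deduce the Galois and exceptionality conclusions structurally.

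\textbf{Step 1: degree four.} Compute $\gcd\bigl(1-x^6,\,x^6-(1-x)^6\bigr)$ over $\F_{19}$. By \cref{thm:torsion-defect} with $d=6$ the torsion defect is $2$, so the reduced degree is $4$. One checks that $19\equiv1\pmod 6$, so $\mu_6\subset\F_{19}$, and that $1+x+y=0$ has exactly two solutions in $\mu_6^2$; this accounts for the degree drop from $6$. Cancelling the quadratic common factor gives the displayed $K_6$. Composing with $\phi$ on the target gives $\widetilde h_6=K_6/(K_6+1)$, and simplifying the numerator and denominator gives the displayed quartic ratio.

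\textbf{Step 2: deck transformations.} Verify that $\widetilde h_6\circ\gamma_i=\widetilde h_6$ for $i=1,2,3$. It suffices to check the polynomial identity
\[
N(\gamma_i(x))\,D(x)=D(\gamma_i(x))\,N(x)
\]
after clearing denominators, where $\widetilde h_6=N/D$. Next check that the matrices of $\gamma_1,\gamma_2,\gamma_3$ square to scalars and that $\gamma_1\gamma_2=\gamma_3$ in $\operatorname{PGL}_2(\F_{19})$, so that $\{\gamma_0,\ldots,\gamma_3\}\cong V_4$. These transformations form a group of automorphisms of $\PP^1$ of order $4=\deg\widetilde h_6$. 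Since $\widetilde h_6$ is a separable map of degree $4<19$ that is invariant under the group, it factors through the quotient by the group, and that quotient already has degree $4$. Hence $\widetilde h_6$ is the quotient map, the cover is Galois, and $G_{\widetilde h_6}=V_4$. The same holds for $h_6$, since conjugation by $\phi$ does not change monodromy. Finally verify the three conjugation identities $\tau_x^{-1}\gamma_i\tau_x=\gamma_{i+1}$ by $2\times2$ matrix multiplication, up to scalars.

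\textbf{Step 3: exceptionality, the main obstacle.} The structural input is that the deck group is abelian and that $\tau_x$ normalizes it, permuting $\gamma_1,\gamma_2,\gamma_3$ cyclically. Set $\Theta=\tau_x^{-1}\Frob_{q}$. The $\gamma_i$ are defined over $\F_{19}$, so they commute with $\Frob_q$, and therefore $\Theta\gamma_i\Theta^{-1}=\tau_x^{-1}\gamma_i\tau_x$.

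Now suppose $x\ne y$ are $\Theta$-fixed with $\widetilde h_6(x)=\widetilde h_6(y)$. Since the cover is Galois, $y=\gamma_i x$ for some $i\ne0$. Applying $\Theta$ gives $y=\Theta(\gamma_i x)=\gamma_{i+1}x$, so $\gamma_{i+1}^{-1}\gamma_i$ fixes $x$. This element is a nontrivial element of $V_4$, namely the third nonidentity element. So $x$ must be a fixed point of some nonidentity $\gamma_j$. Such fixed points are exactly the ramification points, and there are only finitely many of them.

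The remaining work is to rule out collisions among $\Theta$-fixed ramification points. For this I would use the $a_f(q)$ computation of \cref{thm:collision-trace}. In $V_4$ the orbits on ordered distinct pairs are the three classes $\{(\alpha,\gamma_i\alpha)\}$, and $\Theta$ permutes them $3$-cyclically, so $a_f(q)=0$ for every $q$. Rather than rely on asymptotics, I would argue directly. Each fiber over a branch value consists of two points, and the ramification index there is $2$, because every stabilizer has order $2$. If $x$ is fixed by $\gamma_j$, then its fiber-mate is $\gamma_{j'}x$, with stabilizer conjugate to $\langle\gamma_j\rangle$. The same rotation argument shows the mate is not $\Theta$-fixed unless it equals $x$.

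Injectivity of $\widetilde h_6$ on the finite $\Theta$-fixed set then implies bijectivity, because $\widetilde h_6$ commutes with $\Theta$ and so maps the set into itself. This holds for every $k$, which gives twisted exceptionality. The delicate point I expect to check carefully is this last ramified-fiber case.
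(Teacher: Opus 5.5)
Your proposal is correct. Steps 1--2 match the paper: cancel $x^2-x+1$, verify the deck identities and the $V_4$ relations in $\operatorname{PGL}_2(\F_{19})$, and identify $k(\widetilde h_6)$ with the fixed field because $|V_4|=\deg\widetilde h_6$. The genuine difference is in Step 3.

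The paper first shows, by certificate, that no ramification point is twisted-fixed. It computes the fixed-point quadratics $x^2-x-3$, $x^2+4x+1$, $x^2-6x+6$ of the three involutions, and checks that they are irreducible over $\F_{19}$ and that no involution fixes $\infty$. It then splits on the parity of $k$: Frobenius acts trivially on their roots for even $k$, and as $x\mapsto S-x$ for odd $k$. In each case it verifies gcd conditions against $x^2-x+1$ and $(S-x)(1-x)-1$. Only after that does it run the graph-rotation argument from your first paragraph of Step 3.

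You can dispense with all of that by iterating the rotation. If $x\neq y$ are $\Theta$-fixed and $y=\gamma_ix$, two applications of $\Theta$ give $y=\gamma_1x=\gamma_2x=\gamma_3x$. Hence $\operatorname{Stab}(x)=V_4$, and so $y=\gamma_1x=x$, a contradiction. Equivalently, $\{g:gx=y\}$ is a coset of $\operatorname{Stab}(x)$ that is stable under the $3$-cycle on the involutions and misses $\gamma_0$; it would then have size $3\nmid4$.

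Every fiber of the Galois quotient is a single $V_4$-orbit, so this argument handles ramified and unramified fibers uniformly. Consequently:
\begin{itemize}
\item the ramified-fiber case you flag as delicate needs no separate analysis;
\item your unproved claim that all stabilizers have order $2$ is not needed (it is true in odd characteristic, since distinct commuting involutions of $\PP^1$ cannot share a fixed point);
\item the detour through $a_f(q)=0$ can be dropped, which is just as well, since by itself it only bounds $N_f(q)$ by $O(q^{1/2})$ on the etale locus.
\end{itemize}

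What your route buys is that exceptionality becomes a purely structural consequence of the deck group and the cyclic $\tau_x$-conjugation action. The fixed-point quadratics, the irreducibility checks, and the parity-of-$k$ gcd certificate all drop out of the proof. The paper's route additionally exhibits the branch locus explicitly and shows that it misses every twisted fixed set. That fact also follows structurally, since $\Theta$ carries $\operatorname{Fix}(\gamma_j)$ onto the disjoint set $\operatorname{Fix}(\gamma_{j+1})$.
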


\begin{proof}
Reducing
\[
        K_6^{\rm raw}(x)=\frac{1-x^6}{x^6-(1-x)^6}
\]
modulo $19$, the numerator and denominator have common factor $x^2-x+1$.  Division gives the displayed formula for $K_6$.  Since $\phi(t)=t/(t+1)$, the conjugated self-map in the $x$-coordinate is
\[
        \widetilde h_6(x)=\phi(K_6(x))=\frac{K_6(x)}{K_6(x)+1},
\]
and simplifying gives the displayed formula for $\widetilde h_6$.

The computer-assisted certificate in \cref{app:computations} verifies the required polynomial identities and matrix products.  In particular, it verifies
\[
        K_6\circ\gamma_i=K_6\qquad (i=0,1,2,3),
\]
checks that the four projective classes are pairwise distinct, checks that each nonidentity class has square equal to the identity, and checks closure under multiplication.  Hence these four transformations form a Klein four-group in $\operatorname{PGL}_2(\F_{19})$.  Because $\widetilde h_6=\phi\circ K_6$ differs from $K_6$ only by a target automorphism, the same transformations are deck transformations of $\widetilde h_6$.  Since $|V_4|=\deg K_6=\deg \widetilde h_6=4$, the fixed field $k(x)^{V_4}$ has degree four in $k(x)$ and equals both $k(K_6)$ and $k(\widetilde h_6)$.  Hence the cover is the quotient by this group, its geometric monodromy group is $V_4$, and away from ramification its fibers are exactly $V_4$-orbits.

The displayed conjugation relations are likewise verified in \cref{app:computations} by multiplication in $\operatorname{PGL}_2(\F_{19})$, using $\tau_x(x)=1/(1-x)$.  The same certificate also verifies
\[
        \widetilde h_6\circ\tau_x=\tau_x\circ\widetilde h_6,
\]
which is also the coordinate conjugate of the original relation $h_6\circ\tau=\tau\circ h_6$.  It remains to check that the cyclic permutation of the off-diagonal components gives actual injectivity on the twisted fixed sets, including possible branch-point intersections of the graphs.

Because $19\nmid |V_4|$, the quotient is tame and Galois.  Thus a source point lies over a branch value if and only if its stabilizer in $V_4$ is nontrivial; equivalently, the ramification points are exactly the fixed points of nontrivial deck transformations.  The certificate verifies that the fixed-point equations $x=\gamma_i(x)$ for the three nonidentity deck transformations are, up to nonzero scalar factors, the quadratics
\[
        x^2-x-3,
        \qquad
        x^2+4x+1,
        \qquad
        x^2-6x+6,
\]
respectively.  It also checks that each nontrivial $\gamma_i$ has nonzero lower-left matrix entry, so none fixes $\infty$; hence the three quadratics account for all ramification points.  These quadratics are irreducible over $\F_{19}$.  The branch-point exclusions used below are the following explicit gcd checks in $\F_{19}[x]$:
\[
\begin{array}{c|c|c|c}
\text{deck involution} & f_i(x) & S_i & \text{odd-}k\text{ test polynomial} \\
\hline
\gamma_1 & x^2-x-3 & 1 & (1-x)(1-x)-1 \\
\gamma_2 & x^2+4x+1 & -4 & (-4-x)(1-x)-1 \\
\gamma_3 & x^2-6x+6 & 6 & (6-x)(1-x)-1
\end{array}
\]
Here $S_i$ is the sum of the two roots of $f_i$.  In each row,
\[
        \gcd(f_i,x^2-x+1)=1,
        \qquad
        \gcd\bigl(f_i,(S_i-x)(1-x)-1\bigr)=1.
\]
If $k$ is even, then Frobenius acts trivially on their roots, so the twisted fixed-point equation would force
\[
        x=\tau_x(x),
        \qquad\text{equivalently}\qquad x^2-x+1=0.
\]
This quadratic is coprime to each of the three fixed-point quadratics above.  If $k$ is odd, Frobenius acts on a root of $x^2-Sx+P$ as $x\mapsto S-x$.  The twisted fixed-point equation is then
\[
        S-x=\frac1{1-x}.
\]
For the three values $S=1,-4,6$, the gcd calculation recorded in \cref{app:computations} with the corresponding fixed-point quadratic again gives no common root.  Thus no branch point of the $V_4$-cover lies in any twisted fixed set.

Now suppose $x$ and $y$ are two distinct $\tau_x^{-1}\Frob_{19^k}$-fixed points with $\widetilde h_6(x)=\widetilde h_6(y)$.  Since $\phi$ is a target automorphism, this is equivalent to $K_6(x)=K_6(y)$.  By the fixed-field description above, the fiber away from the checked branch locus is a $V_4$-orbit, so there is a nontrivial $\gamma_i$ with $y=\gamma_i x$.  The graph of $\gamma_i$ is sent by $\tau_x^{-1}\Frob_{19^k}$ to the graph of $\tau_x^{-1}\gamma_i\tau_x$, which is the next nontrivial graph in the displayed three-cycle.  Because the pair $(x,y)$ is itself fixed, it lies in the intersection of two distinct nontrivial graphs.  Hence $x$ is fixed by a nontrivial deck transformation, contradicting the branch-point check.  Therefore no off-diagonal collision occurs on the twisted fixed set.

Since $\widetilde h_6$ is $\tau_x$-equivariant, it maps the finite $\tau_x^{-1}\Frob_{19^k}$-fixed set to itself.  The injectivity just proved is therefore bijectivity.  Conjugating by $\phi$ gives the equivalent $\tau$-twisted exceptionality statement for the original quotient $h_6$ in the $z$-coordinate.
\end{proof}

\begin{corollary}[conditional classification after the sporadic verification]\label{cor:classification-2transitive-range-verified}
Under the hypotheses of \cref{thm:classification-2transitive-range}, and assuming successful execution of the characteristic-$19$ block of the certificate in \cref{app:computations}, the sporadic alternative $(p,d_0)=(19,6)$ is indeed a $\tau$-twisted exceptional quotient.  Hence the theorem's three alternatives consist of the degree-one quotients, the Frobenius-sparse plus branch, and this certificate-verified Klein-four quotient; all quotients outside those alternatives are ruled out by the two-transitive collision obstruction.
\end{corollary}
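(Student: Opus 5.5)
The corollary is a bookkeeping combination of two results already in hand, so the plan is to chain them rather than prove anything new. First, invoke \cref{thm:classification-2transitive-range}. Under its standing hypothesis (every separable quotient outside the three alternatives has $2$-transitive geometric monodromy), it says that every $\tau$-twisted exceptional quotient $h_{d_0}$ falls into one of three classes: $\deg h_{d_0}\le1$, the plus branch $d_0=p^a+1$, or the sporadic pair $(p,d_0)=(19,6)$. Inside that proof, every quotient outside these classes is already excluded by \cref{cor:2trans-obstruction}, so the final clause of the corollary (``all quotients outside those alternatives are ruled out'') is just that theorem restated. We also record, as in that proof, that the Frobenius factor $\Frob_{p^s}$ from \cref{prop:p-power-reduction} is a bijection on every twisted fixed set. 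Hence quotient-level exceptionality of $h_d$ and of $h_{d_0}$ are equivalent.

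Second, we upgrade the third alternative from ``allowed'' to ``realized.'' For this we invoke \cref{thm:sporadic-V4}, under the stated hypothesis that the characteristic-$19$ certificate block executes successfully. That theorem gives the following facts for $p=19$, $d=6$:
\begin{enumerate}[label=\textup{(\alph*)}]
\item $h_6$ is a tame Klein-four Galois cover;
\item $\tau$ permutes the three nontrivial deck involutions cyclically;
\item no ramification point lies on any twisted fixed set.
\end{enumerate}
From these it concludes that $h_6$ is bijective on $\Lambda_{19^k}$ for every $k\ge1$. In particular bijectivity holds for infinitely many $k$, so $h_6$ is $\tau$-twisted exceptional in the sense of \cref{def:twisted-exceptional}. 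Since $19\nmid 6$, we have $d_0=d=6$, and no Frobenius twist intervenes.

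Third, we identify the other two alternatives with the named families. The degree-one alternative consists of the nonconstant quotients of \cref{thm:degree-one}. Each is a $\tau$-commuting automorphism and hence trivially exceptional; constant quotients are excluded by convention. The plus branch is governed exactly by \cref{thm:plus-branch-exceptionality}. Assembling the three pieces gives the stated description.

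There is essentially no obstacle. The one point needing care is that the conclusion is conditional on certificate execution, and this hypothesis must be carried explicitly. It would also be worth remarking that the corollary does not claim the plus branch is exceptional for every $p$: by \cref{thm:plus-branch-exceptionality}(iii) it never is for odd $p\ne3$. So ``consist of'' should be read as describing the allowed alternatives, with exceptionality within them determined by the cited criteria.
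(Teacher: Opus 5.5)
Your proposal is correct and follows the paper's own proof. The paper also cites \cref{thm:sporadic-V4} (under its certificate-execution hypothesis) for the exceptionality of the $(19,6)$ quotient and \cref{thm:classification-2transitive-range} for excluding everything outside the three alternatives; your extra remarks on the degree-one and plus-branch alternatives, including the caveat from \cref{thm:plus-branch-exceptionality}(iii), are accurate elaborations of points that theorem's proof already records.
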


\begin{proof}
The sporadic assertion is \cref{thm:sporadic-V4} under its certificate-execution hypothesis.  The exclusion of all quotients outside the three alternatives is exactly the conclusion of \cref{thm:classification-2transitive-range}.
\end{proof}

\begin{remark}
The certificate-verified sporadic quotient of \cref{thm:sporadic-V4} shows that the correct high-level classification cannot be a simple dichotomy between the branches $d=p^a\pm1$ and generic $A_n/S_n$ monodromy.  The expected statement is instead the sparse/sporadic/lacunary/generic architecture formulated in \cref{conj:trichotomy}.  Notice also that the sporadic branch is not a degree-two accident: the degree-two sporadic $(p,d)=(11,5)$ has monodromy $S_2$ and is ruled out by the collision obstruction, whereas $(19,6)$ is Galois with monodromy $V_4$.
\end{remark}

\section{Positive-characteristic ramification skeletons and bad reductions}\label{sec:positive-ramification}

The characteristic-zero proof uses Morse separation.  In positive characteristic, critical values may collide even when the quotient is neither sparse nor sporadic.  The correct replacement is a ramification-skeleton analysis: prove primitivity, identify a small-support branch cycle when possible, and then use the collision obstruction to rule out twisted exceptionality.

\begin{remark}[why the global problem is harder]\label{rem:morse-false}
Although critical-value separation is automatic in characteristic zero by \cref{thm:charzero-no-branch-collisions}, it is not true in every positive characteristic outside the Frobenius-sparse exponents $d=p^a\pm1$.  For example, in characteristic $7$ with $d=5$, which is not of the form $7^a\pm1$, one computes
\[
        W(h_5)=-2(z-2)(z+3)(z^2+1)(z^2+z-3)(z^2+2z+2)
\]
and
\[
        B_{5,7}(T)=2(T-3)^2(T-1)^2(T+2)^2(T-2)(T+3).
\]
Thus several simple critical points have collided critical values.  This does not by itself imply small monodromy, but it shows that the corrected generic monodromy conjecture cannot be proved by asserting uniform Morse separation.  The remaining problem is to handle these bad reductions by branch-cycle products, primitivity, and exceptional-cover theory.
\end{remark}

\begin{corollary}[computer-assisted bad branch-value reduction with full monodromy]\label{cor:bad-reduction-7-5}
Assume that the SageMath certificate in \cref{app:computations} executes successfully.  In characteristic $7$ and for $d=5$, the reduced quotient $h_5$ has geometric monodromy
\[
        G_{h_5}=S_5.
\]
Consequently this bad Morse reduction is not $\tau$-twisted exceptional.
\end{corollary}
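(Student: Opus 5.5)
The plan is to apply \cref{prop:wronskian-criterion} with $p=7$, $d=5$. Here $p\nmid d(d-1)=20$, so we are in the tame range of \cref{thm:tame-simple-critical}. \Cref{lem:tame-quotient-separable} makes $h_5$ separable once it is nonconstant. Its degree is $n=5-\delta_5$. To compute $\delta_5$, note that $\mu_5\subset\F_{7^4}$ and look at the cancellation $\gcd(N_5,D_5)$. Using \cref{thm:degree-one,thm:degree-two} ($d=5$ is not $7^a-1$, and degree two in characteristic $7$ only occurs for $d=2$), and reading off the Wronskian factorization in \cref{rem:morse-false} (total degree $10=2n-2$), we get $n=5$ and $\delta_5=0$.

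The criterion needs three inputs.

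\emph{Separability.} This is done above.

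\emph{A simple branch point.} The branch polynomial $B_{5,7}(T)$ recorded in \cref{rem:morse-false} has simple roots $T=2$ and $T=-3$. By \cref{thm:tame-simple-critical}, every finite critical point is simple and neither poles nor infinity ramify. So over $T=2$ there is exactly one ramified point, and it has index $2$; that is a simple branch point. To justify "exactly one," note that each critical point contributes to the multiplicity of its critical value in $B_{5,7}$, so a simple root of $B_{5,7}$ has exactly one critical point over it. The certificate in \cref{app:computations} would confirm this factorization.

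\emph{Indecomposability over $\barF_7$.} Since $n=5$ is prime, any decomposition $h_5=g_1\circ g_2$ has $\deg g_1\cdot\deg g_2=5$, so one factor is a Möbius map. Hence $h_5$ is indecomposable and its geometric monodromy group is primitive, in fact transitive of prime degree.

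With all three inputs, \cref{prop:wronskian-criterion} gives $G_{h_5}=S_5$ directly. There is also a cheaper route that skips Fried--MacRae: a transitive group of prime degree $5$ contains a $5$-cycle, \cref{lem:inertia-orbit-lengths} supplies a transposition, and a $5$-cycle together with a transposition generates $S_5$.

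Non-exceptionality then follows from \cref{cor:2trans-obstruction}, because $S_5$ is $2$-transitive.

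The main obstacle is certifying the simple branch point rigorously. We need the exact factorization of $W(h_5)$ and the squarefree factors $(T-2)(T+3)$ of $B_{5,7}$ over $\F_7$. For each of those roots we must also check that no second critical point lies over it, including points whose critical values lie in an extension field. The plan is to rely on the Sage certificate for these polynomial identities, and to cross-check by hand that $h_5(2)$ and $h_5(-3)$ take the claimed values while no other root of $W(h_5)$ maps to them.
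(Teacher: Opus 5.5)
Your proposal is correct and follows essentially the paper's argument: degree $5$ from the certificate, a simple root of $B_{5,7}$ together with \cref{thm:tame-simple-critical} giving a transposition, prime degree giving primitivity and hence $S_5$, and then \cref{cor:2trans-obstruction}. Your resultant-multiplicity argument that a simple root of $B_{5,7}$ carries exactly one critical point is a useful detail the paper leaves implicit; one small slip is that the displayed $W(h_5)$ has degree $8=2\cdot5-2$, not $10$, and in any case $\delta_5=0$ follows directly from the certificate's check $\gcd(N_5,D_5)=1$.
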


\begin{proof}
The reproducible Sage certificate in \cref{app:computations}, summarized in Remark~\ref{rem:morse-false}, gives $\deg h_5=5$ and shows that the branch polynomial has two simple roots, namely $T=2$ and $T=-3$.  By \cref{thm:tame-simple-critical}, the corresponding branch points are simple critical values, so the geometric monodromy contains a transposition.  Since the degree is the prime number $5$, the monodromy action is primitive.  A primitive subgroup of $S_5$ containing a transposition is the full symmetric group.  The final assertion follows from \cref{cor:2trans-obstruction}.
\end{proof}

\begin{proposition}[wild Frobenius skeleton]\label{prop:wild-skeleton}
Assume $p>0$, $p\nmid d$, and write
\[
        d-1=p^s e,
        \qquad s\ge1,
        \qquad p\nmid e.
\]
Then
\[
        F_d(z)=F_{e+1}(z)^{p^s},
\]
where $F_r$ denotes the Wronskian skeleton attached to exponent $r$:
\[
        F_r(z)=z^{r-1}-(-1)^r(z+1)^{r-1}\bigl(z^{r-1}+1\bigr).
\]
Consequently the unreduced affine Wronskian skeleton is a $p^s$-fold Frobenius thickening of the tame skeleton for $e+1$.  On the affine $z$-line, the reduced Wronskian divisor is obtained from this skeleton by subtracting the contribution $2\,\operatorname{div}(C_d)$ coming from cancellations, where $C_d=\gcd(N_d,D_d)$.  This affine statement does not include any possible contribution at $z=\infty$, which must be checked separately in a local coordinate when projective ramification is needed.
\end{proposition}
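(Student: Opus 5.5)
The plan is to compute directly with the explicit skeleton of \cref{prop:wronskian-skeleton}, then read off the divisor statement from the factorization $N_d'D_d-N_dD_d'=C_d^2W(h_d)$.

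\emph{Frobenius identity.} Write $m=d-1=p^se$ and $r=e+1$, so $r-1=e$. Because $d=p^se+1$ is odd when $p$ is odd, and $p^s$ is even with $e$ odd when $p=2$ (the case $p=2$ forces $d$ odd, since $p\nmid d$), the sign satisfies $(-1)^d=(-1)^{e+1}$ in the ground field. When $p=2$ both signs equal $1$, so there is nothing to check. Freshman's dream in characteristic $p$ then gives
\[
z^{d-1}=(z^{e})^{p^s},\qquad (z+1)^{d-1}=\bigl((z+1)^e\bigr)^{p^s},\qquad z^{d-1}+1=(z^e+1)^{p^s}.
\]
Since $p^s$-th powering is a ring endomorphism of $k[z]$ and $(\pm1)^{p^s}=\pm1$, this yields $F_d=F_{e+1}^{p^s}$.

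\emph{Divisor statement.} By \cref{prop:wronskian-skeleton},
\[
dF_d=N_d'D_d-N_dD_d'=C_d^2\,W(h_d),
\]
and $d$ is a unit because $p\nmid d$. Taking divisors on the affine $z$-line gives
\[
\operatorname{div}W(h_d)=p^s\operatorname{div}F_{e+1}-2\operatorname{div}(C_d).
\]
This is exactly the claimed thickened-skeleton-minus-cancellation description. The affine caveat at $z=\infty$ needs no proof, since it only records that this divisor identity says nothing about the point at infinity.

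\emph{Main obstacle.} The only delicate point is the sign bookkeeping $(-1)^d$ versus $(-1)^{e+1}$, including the case $p=2$ where signs are trivial. A second check is that $F_d$ is not identically zero, so that divisors make sense. This follows from $F_{e+1}\ne0$, whose leading coefficient is a sign: if $e\ge1$ the top term is $-(-1)^{e+1}z^{2e}$, and if $e=0$ we get $F_1=1+(1+1)=3$. The case $e=0$, that is $d=p^s+1$, would fail when $p=3$, because then $F_1=3=0$ in the ground field. I would flag this case and treat it as the Frobenius-sparse branch of \cref{thm:frob-sparse-normal-forms}, where the identity still holds formally but the divisor statement degenerates. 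For $p\ne3$ no issue arises, and for $p=3$ with $e\ge1$ the leading-coefficient argument again applies.
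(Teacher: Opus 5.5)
Your core argument is the same as the paper's. You apply the Frobenius identity to $z^{d-1}$, $(z+1)^{d-1}$ and $z^{d-1}+1$, match the signs, and read the affine divisor off $dF_d=N_d'D_d-N_dD_d'=C_d^2W(h_d)$ from \cref{prop:wronskian-skeleton}. Your explicit check that $F_d\ne0$ is a reasonable addition that the paper leaves implicit. Two side remarks are wrong, though neither damages the main argument.

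First, your parity justification is false as written. For odd $p$, $d=p^se+1$ need not be odd: for $p=3$, $e=1$ one gets $d=4$. The correct reason, which the paper uses, is that $p^s$ is odd. Hence $p^se\equiv e\pmod 2$, and so $(-1)^d=(-1)^{e+1}$.

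Second, the ``$e=0$'' case you flag does not occur. The hypothesis $p\nmid e$ excludes $e=0$, and $e=0$ would in any case mean $d=1$, not $d=p^s+1$. The branch $d=p^s+1$ is $e=1$, where $F_2(z)=-(z^2+z+1)$ has leading coefficient $-1$ in every characteristic; in characteristic $3$ it equals $-(z-1)^2$. So your leading-coefficient argument for $e\ge1$ already covers every admissible case, and nothing degenerates.

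For example, take $p=3$ and $d=3^a+1$. Then $C_d=z-1$, and the divisor formula gives $W(h_d)$ equal to a nonzero constant times $(z-1)^{2\cdot 3^a-2}$. This agrees with the Artin--Schreier normal form of \cref{thm:frob-sparse-normal-forms}, whose only ramification is wild of index $3^a$ at $z=1$. Drop the claim that the divisor statement degenerates on the Frobenius-sparse branch.
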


\begin{proof}
Since $d-1=p^s e$, the Frobenius identity gives
\[
        z^{d-1}=(z^e)^{p^s},
        \qquad
        (z+1)^{d-1}=((z+1)^e)^{p^s},
        \qquad
        z^{d-1}+1=(z^e+1)^{p^s}.
\]
For odd $p$, $p^s$ is odd and $(-1)^d=(-1)^{e+1}$; for $p=2$ all signs are $1$.  Hence
\[
        F_d(z)=\left(z^e-(-1)^{e+1}(z+1)^e(z^e+1)\right)^{p^s}=F_{e+1}(z)^{p^s}.
\]
The final assertion is an affine statement on $\mathbb A^1_z$ and follows from \cref{prop:wronskian-skeleton}: before cancellation the skeleton is $F_d$, while after cancellation one has $N_d'D_d-N_dD_d'=C_d^2W(h_d)$.  Thus finite affine divisorial multiplicities in the reduced Wronskian are those of $F_d$ with the $2\,\operatorname{div}(C_d)$ cancellation contribution removed.  The projective point $z=\infty$ is not encoded by this affine polynomial identity and must be handled by a separate local expansion.  In wild characteristic, the order of vanishing of the Wronskian is a different exponent, not automatically the tame ramification index minus one.  The local ramification index must be read from the first nonzero term in the local expansion of $h_d$.
\end{proof}

\begin{lemma}[Jones cycle theorem, applied form]\label{lem:jones-cycle-applied}
Let $G\le S_n$ be a primitive permutation group.  If $G$ contains an $m$-cycle fixing the remaining $n-m$ points, with
\[
        3\le m\le n-3,
\]
then $A_n\le G$.  This is the form of Jones' cycle theorem used below; it is \cite[Corollary 1.3]{Jones2014} applied to a cycle with at least three fixed points.
\end{lemma}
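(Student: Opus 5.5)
The statement is an applied form of Jones' classification of primitive groups containing a cycle. I will therefore derive it from that classification and not prove it from scratch. Jones' theorem (\cite[Theorem 1.2]{Jones2014}) says the following. Let $G\le S_n$ be primitive and contain a cycle fixing $k\ge3$ points. Then $A_n\le G$. If instead $k\le2$, then $G$ lies in an explicit list of exceptional groups: affine groups with $n=p$ or $n=2^a$, projective groups $\mathrm{PGL}_2(q)\le G\le\mathrm{P\Gamma L}_2(q)$ with $n=q+1$, $\mathrm{PGL}_d(q)$-type groups with $n=(q^d-1)/(q-1)$, and a few sporadic Mathieu and small cases. The plan is to match our hypotheses with the condition ``at least three fixed points.''

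The $m$-cycle fixes exactly $n-m$ points. The assumption $m\le n-3$ gives $n-m\ge3$, so the cycle fixes at least three points. This is exactly the regime of \cite[Corollary 1.3]{Jones2014}, which concludes $A_n\le G$ directly. The assumption $m\ge3$ excludes the degenerate cases. Transpositions ($m=2$) are handled separately by the classical Jordan argument: a primitive group containing a transposition is all of $S_n$. The identity ($m=1$) gives no information. So the first step is to check that our setup matches Jones' hypotheses: $G$ primitive of degree $n$, and the cycle a genuine $m$-cycle with support of size $m$ and the remaining $n-m$ points fixed pointwise. Both conditions are part of the statement.

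For a self-contained fallback, I would also note the classical route. When $m$ is prime, Jordan's theorem covers it: a primitive group containing a $p$-cycle with $p\le n-3$ contains $A_n$. For general $m$, Jones' proof uses the classification of finite simple groups through the list of primitive groups containing cycles, due to Müller and others. So no elementary proof is expected, and citing the corollary is the intended argument.

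The only real obstacle is bookkeeping. One must make sure that the cycle produced in later applications (via \cref{lem:inertia-orbit-lengths}, from a single ramified point of index $m$ with all other sheets unramified) really fixes the other $n-m$ sheets pointwise, and that the monodromy group is primitive there (via indecomposability over $\barF_p$). Neither point belongs to the lemma itself, which reduces to quoting \cite[Corollary 1.3]{Jones2014}.
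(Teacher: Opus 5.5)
Your proposal is correct and is the paper's argument: the lemma has no separate proof, and the paper simply invokes \cite[Corollary 1.3]{Jones2014} after observing, as you do, that $m\le n-3$ means the $m$-cycle fixes $n-m\ge3$ points. Your paraphrase of Jones' exceptional list for $k\le2$ is imprecise: for instance, every affine group $\mathrm{AGL}_d(q)$ of degree $n=q^d$ contains a Singer $(q^d-1)$-cycle fixing only $0$, so the $k=1$ affine case is not limited to $n=p$ or $n=2^a$. Nothing in your argument depends on that part of the theorem, however.
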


\begin{theorem}[wild small-support criterion]\label{thm:wild-small-support}
Let $p>0$, write $d=p^sd_0$ with $p\nmid d_0$, and let
\[
        h^{\mathrm{sep}}=h_{d_0}
\]
be the separable quotient attached to $h_d$ under the convention of \cref{def:quotient-conventions}.  Put $n=\deg h^{\mathrm{sep}}>1$.  Suppose that the geometric monodromy group $G_{h^{\mathrm{sep}}}$ is primitive and that some branch inertia group contains an element whose cycle decomposition on the $n$ separable sheets is one $m$-cycle and $n-m$ fixed points, with $2\le m\le n-3$.  Then
\[
        A_n\le G_{h^{\mathrm{sep}}}\le S_n.
\]
Consequently $G_{h^{\mathrm{sep}}}$ is $2$-transitive, the separable quotient is not $\tau$-twisted exceptional, and neither is the full Frobenius twist $h_d$.
\end{theorem}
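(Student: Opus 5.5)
The plan splits according to the cycle length $m$ and then applies the collision obstruction.

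\emph{Case $m=2$.} The branch inertia element is a transposition. A primitive permutation group containing a transposition is $S_n$, the same classical fact used in \cref{prop:wronskian-criterion}. So $G_{h^{\mathrm{sep}}}=S_n$.

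\emph{Case $3\le m\le n-3$.} This is exactly the range of \cref{lem:jones-cycle-applied}. The group $G_{h^{\mathrm{sep}}}$ is primitive by hypothesis and contains an $m$-cycle fixing the remaining $n-m\ge3$ points, so $A_n\le G_{h^{\mathrm{sep}}}$. The upper bound $G_{h^{\mathrm{sep}}}\le S_n$ is automatic, since the monodromy acts on the $n$ separable sheets. The only point to record is that the inertia element lies in the geometric monodromy group; this holds because inertia groups at points of the Galois closure are subgroups of the geometric Galois group. This remains true in wild characteristic, as in \cref{lem:inertia-orbit-lengths}.

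\emph{Two-transitivity.} The hypothesis $m\le n-3$ forces $n\ge 4$ when $m=2$ and $n\ge6$ when $m\ge3$. For $n\ge4$, the alternating group $A_n$ is $(n-2)$-transitive, so in particular $2$-transitive. Hence every group between $A_n$ and $S_n$ is $2$-transitive on the $n$ separable sheets.

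\emph{Non-exceptionality of $h^{\mathrm{sep}}$.} We apply \cref{cor:2trans-obstruction} to $f=h^{\mathrm{sep}}=h_{d_0}$. This requires that $h_{d_0}$ be a separable, $\tau$-equivariant rational map over $\F_p$ of degree greater than one. Separability is \cref{lem:tame-quotient-separable}, since $p\nmid d_0$ and $h_{d_0}$ is nonconstant. Equivariance is \cref{prop:tau-equivariance}. The coefficients lie in $\F_p$ because $N_{d_0},D_{d_0}\in\F_p[z]$, so their gcd and the reduced quotient are defined over $\F_p$. The corollary then gives $N_f(q)=q+O(q^{1/2})$, so $h^{\mathrm{sep}}$ is not $\tau$-twisted exceptional.

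\emph{Passing to $h_d$.} By \cref{prop:p-power-reduction}, $h_d=\Frob_{p^s}\circ h_{d_0}$. As in the proof of \cref{thm:classification-2transitive-range}, the map $\Frob_{p^s}$ commutes with $\Theta_q$ because $\tau$ is defined over $\F_p$, so it restricts to a bijection of $\Lambda_q$. Hence $h_d$ is bijective on $\Lambda_q$ exactly when $h_{d_0}$ is, and the non-exceptionality transfers to $h_d$.

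I expect the main obstacle to be bookkeeping rather than substance. One point is the case $m=2$, which Jones' lemma as stated does not cover and so must be handled separately by the transposition argument. The other is making sure the hypotheses of \cref{cor:2trans-obstruction} hold for the separable part: it must be defined over $\F_p$ and separable, not merely for the Frobenius-twisted map $h_d$.
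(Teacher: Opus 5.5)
Your proposal is correct and follows the paper's proof essentially step for step. The case $m=2$ uses primitivity plus a transposition, the range $3\le m\le n-3$ uses \cref{lem:jones-cycle-applied}, non-exceptionality of $h_{d_0}$ comes from \cref{cor:2trans-obstruction}, and the result transfers to $h_d=\Frob_{p^s}\circ h_{d_0}$ because $\Frob_{p^s}$ commutes with $\Thetaq$ and so permutes each $\Lambda_q$; your extra checks ($2$-transitivity of $A_n$ for $n\ge4$, and that $h_{d_0}$ is separable, $\tau$-equivariant and defined over $\F_p$) are bookkeeping the paper leaves implicit.
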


\begin{proof}
If $m=2$, the primitive-transposition theorem gives $G_{h^{\mathrm{sep}}}=S_n$.  If $m\ge3$, then the hypotheses give a primitive group containing an $m$-cycle with $n-m\ge3$ fixed points, so \cref{lem:jones-cycle-applied} applies and gives $A_n\le G_{h^{\mathrm{sep}}}$.  Thus in all cases $A_n\le G_{h^{\mathrm{sep}}}\le S_n$.  The non-exceptionality of $h^{\mathrm{sep}}$ follows from \cref{cor:2trans-obstruction}.  Finally, $h_d=\Frob_{p^s}\circ h^{\mathrm{sep}}$ on values by \cref{def:quotient-conventions}.  Since $\tau$ is defined over $\F_p$, one has $\Frob_{p^s}\Thetaq=\Thetaq\Frob_{p^s}$ for every $q=p^k$; hence Frobenius is a bijection on every twisted fixed set.  Thus composing with this Frobenius factor cannot remove or create off-diagonal collisions, so the full quotient $h_d$ is not $\tau$-twisted exceptional either.
\end{proof}

\begin{remark}[Wild inertia versus Wronskian order]\label{rem:wild-inertia-warning}
In wild characteristic, the vanishing order of the Wronskian does not by itself
determine the ramification index.  For example, in the branch \(d=2p^a+1\) with \(a\ge1\) and \(p\ne2,3\) of
\Cref{sec:first-lacunary}, the Wronskian thickening has order \(2p^a-2\) or
\(2p^a\), while the local ramification index at the two wild points is \(p^a\).
Thus the global monodromy problem is not merely a Morse problem with thicker
critical points; it is a wild-inertia problem.
\end{remark}

\begin{remark}[the smaller nonsparse wild exponent $d=11$]\label{rem:d11-wild}
In characteristic $2$, the exponent $d=11$ is already nonsparse and wild: it is not of the form $2^a-1$ or $2^a+1$, and $d-1$ is divisible by $2$.  A direct computation gives $\deg h_{11}=11$ and
\[
        W(h_{11})=(z^2+z+1)^4(z^3+z+1)^2(z^3+z^2+1)^2.
\]
This example shows why one should not single out $d=13$ as the first nonsparse wild exponent.
\end{remark}

\begin{example}[a nonsparse wild quotient of prime degree]\label{ex:2-13}
Let $p=2$ and $d=13$.  This is not a Frobenius-sparse exponent of the form $2^a-1$ or $2^a+1$.  In characteristic $2$ the reduced quotient has degree $13$, and
\[
        W(h_{13})=(z^3+z+1)^4(z^3+z^2+1)^4.
\]
The reduced skeleton is
\[
        S(z)=(z^3+z+1)(z^3+z^2+1),
\]
and a direct resultant computation gives
\[
        \operatorname{Res}_z(n_{13}(z)-Td_{13}(z),S(z))
        =(T^3+T+1)(T^3+T^2+1),
\]
which is squarefree.  Thus the six skeleton critical points have distinct critical values.  A local expansion at any root of $z^3+z+1$ or $z^3+z^2+1$ begins in degree $4$, so the corresponding ramification index is $4$, not $5$.  Since the global degree is prime, the monodromy group is primitive.  To turn this example into an unconditional alternating/symmetric-monodromy theorem one must compute the local wild inertia action, or invoke a small-support theorem strong enough for the resulting inertia element.  This is a useful test case for the general wild-inertia problem.
\end{example}

\begin{question}[ramification-skeleton form of the generic monodromy problem]\label{q:ramification-skeleton}
Let $p>0$, let $p\nmid d$, let $h_d$ be the reduced separable quotient, put $n=\deg h_d$, and assume $d$ is not a Frobenius-sparse exponent $p^a-1$ or $p^a+1$ with $a\ge1$ up to the small overlaps $d=1,2,3$, and not the sporadic pair $(p,d)=(19,6)$.  In the remaining positive-characteristic cases, determine whether one can prove at least one of the following concrete certificates:
\begin{enumerate}[label=\textup{(\roman*)}]
\item the branch-cycle group of $h_d$ is primitive and contains either a transposition or an element whose cycle decomposition is one $m$-cycle and $n-m$ fixed points for some $2\le m\le n-3$;
\item the branch-cycle group is $2$-transitive by another branch-cycle, local-inertia, or classification argument.
\end{enumerate}
In certificate \textup{(i)}, the transposition subcase would give $G_{h_d}=S_n$ by the primitive-transposition theorem, while the single-cycle subcase would give $A_n\le G_{h_d}\le S_n$ by \cref{thm:wild-small-support}.  Either certificate would rule out $\tau$-twisted exceptionality by \cref{cor:2trans-obstruction}.  This is a programmatic question rather than a classification theorem: in wild Frobenius--lacunary cases, the Wronskian skeleton identifies the critical support but does not by itself determine the Galois-closure inertia.
\end{question}

\begin{remark}
This formulation is deliberately stronger and more precise than merely asking for $2$-transitivity.  It identifies the concrete missing ingredients: primitivity, control of branch-value collisions, and in wild cases the actual local inertia representation.  It also explains the two Frobenius-sparse exclusions.  The branch $d=p^a-1$ collapses to the linear map $\tau^2$, while $d=p^a+1$ collapses to a monomial or an Artin--Schreier map; the sporadic pair $(19,6)$ gives a certificate-verified Klein-four Galois cover.  These are the structural exceptional alternatives.  Outside these branches, the Wronskian skeleton is a source of possible branch-cycle information, but it supplies an alternating/symmetric certificate only after branch-value coalescence and wild local inertia have been controlled.
\end{remark}

\section{Frobenius--lacunary towers}\label{sec:lacunary-towers}

The branch $d=p^a+1$ with $a\ge1$ is only the first member of a broader Frobenius--lacunary phenomenon.  Fix integers $c\ge1$ and $a\ge1$, and put
\[
        d=cp^a+1.
\]
Then the binomial expansion of $H_d^{\rm raw}$ has bounded $p$-adic support as $a$ varies.  The case $c=1$ is the sparse $p^a+1$ branch of \cref{sec:frobenius-sparse}; when $p\ne2,3$, the case $c=2$ is the first nonsparse tower analyzed in \cref{sec:first-lacunary}.

\begin{proposition}[bounded-support lacunary form]\label{prop:bounded-support-lacunary}
Let $p>0$, let $r=p^a$ with $a\ge1$, let $c\ge1$ with $p\nmid c$, and put $d=cr+1$.  In the coordinate
\[
        x=\frac{z}{z+1},
        \qquad
        K_d^{\rm raw}(x)=\frac{(-1)^d-x^d}{x^d-(1-x)^d},
\]
one has the raw identity
\[
        K_{cr+1}^{\rm raw}(x)=
        \frac{(-1)^{cr+1}-x(x^r)^c}
        {x(x^r)^c-(1-x)(1-x^r)^c}.
\]
Thus, before any cancellation or reduction, for fixed $c$ the numerator has at most two ordinary monomial terms and the denominator has at most $2c+2$ ordinary monomial terms as $a$ varies; in particular both raw supports are bounded in terms of $c$.
\end{proposition}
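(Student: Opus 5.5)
The plan is to substitute $d=cr+1$ directly into the $x$-coordinate raw formula \eqref{eq:Kd-coordinate} and then apply the Frobenius identity $(1-x)^r=1-x^r$ in characteristic $p$. No cancellation or reduction is used anywhere; this is purely an identity of raw expressions.

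First, I will write $x^d=x\cdot x^{cr}=x(x^r)^c$. This gives the numerator $(-1)^{cr+1}-x(x^r)^c$, which has at most two monomial terms, namely the constant and $x^{cr+1}$.

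Second, for the denominator I will use $(1-x)^d=(1-x)\bigl((1-x)^r\bigr)^c$. Since $r=p^a$, the freshman's-dream identity $(1-x)^r=1-x^r$ holds in $\F_p[x]$, so $(1-x)^d=(1-x)(1-x^r)^c$. This gives the displayed raw identity.

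Third, I will count terms. Expanding $(1-x^r)^c=\sum_{j=0}^c(-1)^j\binom cj x^{jr}$ gives at most $c+1$ monomials. Multiplying by $1-x$ gives at most $2(c+1)$ monomials $x^{jr}$ and $x^{jr+1}$, and these exponents are pairwise distinct because $r\ge2$. The extra term $x(x^r)^c=x^{cr+1}$ may combine with $-(-x)(-1)^cx^{cr}\cdot$ (the term $j=c$ times $-x$), so the total is still at most $2c+2$, and possibly fewer after combination. Both bounds depend only on $c$ and not on $a$.

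The argument is essentially routine. The only point needing care is that coefficients (binomial coefficients or the leading combination) may vanish mod $p$, which only decreases the counts. This is harmless because the statement asserts upper bounds.
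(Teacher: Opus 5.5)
Your proposal is correct and follows the paper's proof essentially verbatim: substitute $x^{cr+1}=x(x^r)^c$ and $(1-x)^{cr+1}=(1-x)(1-x^r)^c$ (Frobenius, valid also for $p=2$), then count at most $c+1$ terms in $(1-x^r)^c$ and at most $2c+2$ after multiplying by $1-x$. You also note that the additional denominator monomial $x^{cr+1}$ coincides with the $j=c$ term $(-1)^c x^{cr+1}$ of $-(1-x)(1-x^r)^c$, so it adds no new exponent; this makes explicit a point that the paper's count leaves implicit.
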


\begin{proof}
The identities
\[
        x^{cr+1}=x(x^r)^c,
        \qquad
        (1-x)^{cr+1}=(1-x)(1-x^r)^c
\]
are immediate from $r=p^a$.  Substituting them into the definition of $K_d^{\rm raw}$ gives the formula.  Expanding $(1-x^r)^c$ gives at most $c+1$ terms, and multiplying by $(1-x)$ gives the union of two adjacent translates, hence at most $2c+2$ ordinary monomial terms.  The numerator has at most two terms before cancellation.
\end{proof}

\begin{corollary}[Frobenius-thickened skeleton in lacunary towers]\label{cor:lacunary-skeleton}
With the hypotheses of \cref{prop:bounded-support-lacunary}, the Wronskian skeleton satisfies
\[
        F_{cr+1}=F_{c+1}^{r}.
\]
In particular, after removing cancellations and poles, the geometric support of the critical divisor is contained in a set whose size is bounded in terms of $c$ and is independent of $a$.
\end{corollary}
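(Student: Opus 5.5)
The identity $F_{cr+1}=F_{c+1}^{r}$ is a direct specialization of \cref{prop:wild-skeleton}, and the support statement then follows from the cancellation formula of \cref{prop:wronskian-skeleton}. Set $d=cr+1$, so $d-1=cr=p^a c$ with $p\nmid c$. Since $d\equiv1\pmod p$, we have $p\nmid d$, and the hypotheses of \cref{prop:wild-skeleton} hold with $s=a\ge1$ and $e=c$. That proposition gives $F_d=F_{e+1}^{p^s}=F_{c+1}^{r}$ immediately. If one prefers not to cite it, the same identity comes from applying Frobenius termwise:
\[
z^{cr}=(z^c)^r,\qquad (z+1)^{cr}=((z+1)^c)^r,\qquad z^{cr}+1=(z^c+1)^r .
\]
The sign also matches: $(-1)^{cr+1}=(-1)^{c+1}$ when $p$ is odd because $r$ is odd, and all signs equal $1$ when $p=2$.

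For the support statement, use \cref{prop:wronskian-skeleton}. On the affine line we have
\[
d\,F_d=N_d'D_d-N_dD_d'=C_d^2\,W(h_d).
\]
Here $d\neq0$ in $k$, so every affine zero of the reduced Wronskian $W(h_d)$ is a zero of $F_d=F_{c+1}^r$. Its geometric support is therefore contained in the zero set of $F_{c+1}$. The polynomial $F_{c+1}$ is nonzero, since its leading term is $-(-1)^{c+1}z^{2c}$, and it does not depend on $a$. Hence the support has at most $2c$ points.

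Removing poles and canceled points only shrinks this set. We then add the point $z=\infty$, which the affine identity does not see, as \cref{prop:wild-skeleton} warns. The critical divisor of the reduced map $h_d$, the separable map itself, has support contained in the zero set of $F_{c+1}$ together with $\{\infty\}$. This set has size at most $2c+1$, independent of $a$.

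I expect no real obstacle here. The only subtle point is wording: the statement concerns geometric support, not multiplicities. In wild characteristic the Wronskian order does not determine the ramification index (\cref{rem:wild-inertia-warning}), so the proof should claim only support containment, with $\infty$ allowed as the one extra point. If one wanted the sharper support bound $2c$, one would need a local expansion at infinity, but the bound $2c+1$ already suffices.
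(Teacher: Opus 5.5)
Your proposal is correct and follows the paper's proof. The identity is \cref{prop:wild-skeleton} specialized to $s=a$, $e=c$, with the same three Frobenius identities and the same parity check on the sign. Your derivation of the support bound from $d\,F_d=C_d^2W(h_d)$, with $\infty$ added conservatively, spells out the ``in particular'' clause that the paper leaves to the affine divisor statement of \cref{prop:wild-skeleton}.
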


\begin{proof}
This is \cref{prop:wild-skeleton} with $d-1=cr$.  Explicitly,
\[
        z^{d-1}=(z^c)^r,
        \qquad
        (z+1)^{d-1}=((z+1)^c)^r,
        \qquad
        z^{d-1}+1=(z^c+1)^r,
\]
and the sign agrees with the sign in $F_{c+1}$ because $r$ is odd for odd $p$, while in characteristic two all signs are trivial.
\end{proof}

\begin{remark}
The corollary explains why the global monodromy problem has a genuinely wild part.  For fixed $c$ and $a\to\infty$, the degree grows like $cp^a$, but the visible critical support remains bounded.  The missing information is not the location of the critical support; it is the wild inertia representation carried by the Frobenius thickening.  The tower $c=2$ is the first place where this representation can be analyzed far enough to prove primitivity and, for $a=1$, alternating or symmetric monodromy.
\end{remark}

\section{The first nonsparse Frobenius--lacunary branch}\label{sec:first-lacunary}

The sparse branch $d=p^a+1$ with $a\ge1$ is not the only place where the binomial expansion of $H_d^{\rm raw}$ collapses under Frobenius.  The next collapse is the nonsparse branch
\[
        d=2p^a+1\quad(a\ge1).
\]
It does not produce cyclic monodromy.  Instead it produces a two-point wild cover: all branch values are concentrated at the two fixed points of the diagonalized order-three automorphism.  This section records the exact normal form.  It is one of the main lessons of the global-monodromy problem: outside the sparse branches, the right dichotomy is not simply ``generic versus exceptional'', but rather ``Frobenius--lacunary wild versus genuinely generic''.

Assume throughout this section that $p\ne2,3$, put $r=p^a$ with $a\ge1$, and let $d=2r+1$.  As in \cref{sec:frobenius-sparse}, use the coordinate $u=M^{-1}(z)$ in which $\tau$ acts as $u\mapsto \omega u$.

\begin{theorem}[normal form for the first nonsparse lacunary branch]\label{thm:two-r-plus-one-normal-form}
Let $k$ be algebraically closed of characteristic $p\ne2,3$, let $r=p^a$ with $a\ge1$, and let $d=2r+1$.  The conjugate quotient
\[
        \Phi_d=M^{-1}\circ h_d\circ M
\]
has the following reduced forms.
\begin{enumerate}[label=\textup{(\roman*)}]
\item If $r\equiv1\pmod3$, then
\[
        \Phi_{2r+1}(u)
        =-\omega^2u^r\frac{u^{r-1}+2}{2u^{r-1}+1},
        \qquad
        \deg \Phi_{2r+1}=2r-1.
\]
\item If $r\equiv2\pmod3$, then
\[
        \Phi_{2r+1}(u)
        =-\omega^2u^{-r}\frac{1+2u^{r+1}}{2+u^{r+1}},
        \qquad
        \deg \Phi_{2r+1}=2r+1.
\]
\end{enumerate}
In both cases the only branch values are $0$ and $\infty$.  The points $u=0$ and $u=\infty$ have local ramification index $r=p^a$, and every other point above $0$ or $\infty$ is unramified.
\end{theorem}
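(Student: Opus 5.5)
The plan is to repeat the proof of \cref{thm:frob-sparse-normal-forms}(ii) one Frobenius level higher, and then read off all ramification from an explicit derivative.

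\textbf{Step 1: Frobenius-collapsed raw quotient.} With $r=p^a$ and $d=2r+1$, put $Z=z^r$. The Frobenius identities
\[
(z+1)^{2r+1}=(z+1)(Z+1)^2,\qquad z^{2r+1}=zZ^2
\]
give, since $d$ is odd,
\[
H_{2r+1}^{\rm raw}(z)=\frac{-(z+1)(Z+1)^2-zZ^2}{zZ^2-1}.
\]
This is a rational function $G(z,Z)$ of bounded shape.

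\textbf{Step 2: conjugate by $M$.} As in the sparse case, $M(u)^r=M(v)$, with $v=u^r$ if $r\equiv1\pmod3$ and $v=u^{-r}$ if $r\equiv2\pmod3$. I would then prove a two-variable identity of the form
\[
M^{-1}\bigl(G(M(u),M(v))\bigr)=-\omega^2\,v\,\frac{u^{-1}v+2}{2u^{-1}v+1},
\]
or whatever the correct analogue of $\omega^2/(uv)$ turns out to be. The method is the one in \cref{prop:filtered-normal-form}:
\begin{itemize}
\item write $M(u)=\omega(1-\omega u)/(1-u)$;
\item expand numerator and denominator;
\item observe that the root-of-unity filtering kills all but a few monomials in $u$ and $v$.
\end{itemize}
Substituting $v=u^{\pm r}$ then yields the two displayed formulas. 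In case (ii), $v=u^{-r}$ turns $u^{-1}v$ into $u^{-(r+1)}$, which after clearing gives the factor $u^{r+1}$. This identity is the step I expect to be the main obstacle, mostly because the $\omega$-prefactor and the signs are easy to get wrong. I would cross-check the result against \cref{prop:filtered-normal-form}, whose filtered binomial sums $A_{d,r_\pm}$ collapse under Lucas' theorem to only a handful of terms when $d=2r+1$.

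\textbf{Step 3: reducedness and degree.} In the displayed forms, the two factors $u^{r-1}+2$ and $2u^{r-1}+1$ (respectively $1+2u^{r+1}$ and $2+u^{r+1}$) are coprime. Their resultant is a power of $4-1=3$, which is nonzero since $p\ne3$. Neither factor vanishes at $u=0$. Hence the forms are reduced, and the degrees are $\max(2r-1,\,r-1)=2r-1$ and $\max(r+1,\,2r+1)=2r+1$. The degree drop $2$ in case (i) is consistent with \cref{thm:torsion-defect}: the two primitive cube roots of unity are canceled, as $3\mid d$ exactly when $r\equiv1\pmod3$.

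\textbf{Step 4: ramification.} Write $w=u^{\,r\mp1}$, so that $\Phi=-\omega^2u^{\pm r}g(w)$ with $g$ a degree-one fractional linear map in $w$ of determinant $\pm3\ne0$. Because $(u^{\pm r})'=0$ in characteristic $p$, we get
\[
\Phi'(u)=-\omega^2u^{\pm r}g'(w)\,w'(u),\qquad w'(u)=(r\mp1)\,u^{r\mp1-1}.
\]
Here $r\mp1\equiv\mp1\ne0$ in $k$. So $\Phi'$ has no zeros on $\mathbb G_m$ away from the poles. The poles are the roots of a separable polynomial $2w+1$ or $2+w$ in $u$, so they are simple. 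The point $\infty$ is handled by the substitution $u\mapsto1/u$, which swaps the two cases up to symmetry. Therefore all ramification sits at $u=0$ and $u=\infty$.

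Near $u=0$, $\Phi$ equals $u^{\pm r}$ times a unit, so the local index is $r$. Since $\Phi(0)$ and $\Phi(\infty)$ lie in $\{0,\infty\}$, these are the only branch values. The other points over $0$ and $\infty$ are the roots of the separable factors $u^{r\mp1}+\cdots$, so they are unramified.
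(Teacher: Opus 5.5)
Your proposal is correct, but it reaches the normal form by a genuinely different route from the paper's.

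\textbf{The paper's route.} It substitutes $d=2r+1$ directly into the single-variable filtered normal form of \cref{prop:filtered-normal-form}. It expands $(1-u)^{2r+1}=(1-u)(1-u^r)^2$ into six monomials and sorts them by residue mod $3$, separately for $r\equiv1$ and for $r\equiv2$.

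\textbf{Your route.} You prove a single two-variable identity, and the identity you wrote down is exactly right, not just a template. Put $a_j=(1-\omega^ju)(1-\omega^jv)^2$. Then $M(u)M(v)^2=a_1/a_0$ and $M(\omega u)M(\omega v)^2=a_2/a_1$, because the scalar prefactor is $\omega^3=1$. Since $G=A(B-1)/(A-1)$ with $A=zZ^2$ and $B=\tau(z)\tau(Z)^2$, this gives $G(M(u),M(v))=(a_2-a_1)/(a_1-a_0)$. Hence $M^{-1}\circ G=(a_2+\omega^2a_1+\omega a_0)/(a_2+\omega a_1+\omega^2a_0)$. Filtering $(1-u)(1-v)^2$ by total degree mod $3$ leaves $3\omega(v^2+2uv)$ over $-3\omega^2(u+2v)$. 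Cancelling the $3$ uses $p\ne3$, and the result is $-\omega^2v(v+2u)/(2v+u)$.

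\textbf{Comparison.} Your version is uniform: the identity holds for independent $u,v$ and does not depend on $r$. The case split enters only through $M(u)^r=M(u^{\pm r})$. It is the direct continuation of the paper's own sparse-branch argument in \cref{thm:frob-sparse-normal-forms}(ii). Filtering $(1-u)(1-v)^c$ instead, with a prefactor $\omega^{c+1}$, would give analogous normal forms for the towers $d=cr+1$. The paper's route needs no new identity, but it redoes the residue bookkeeping in each case.

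\textbf{Ramification.} Your analysis through $w=u^{r\mp1}$ and a M\"obius factor of determinant $\pm3$ is the paper's derivative computation repackaged. One small inaccuracy: $u\mapsto1/u$ does not swap case (i) into case (ii). It turns $u^{\pm r}g(u^{r\mp1})$ into $v^{\mp r}\tilde g(v^{r\mp1})$ with $\tilde g(w)=g(1/w)$, so the inner exponent is unchanged. All you need at $u=\infty$ is that $\Phi$ is $v^{\mp r}$ times a unit at $v=0$, which gives index $r$ immediately.
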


\begin{proof}
Use the filtered normal form of \cref{prop:filtered-normal-form}.  Since
\[
        (1-u)^{2r+1}=(1-u)(1-u^r)^2
        =1-u-2u^r+2u^{r+1}+u^{2r}-u^{2r+1},
\]
only the six displayed exponents can occur.

If $r\equiv1\pmod3$, then $d\equiv0\pmod3$, so the numerator filter has residues congruent to $2$ and the denominator filter has residues congruent to $1$.  Writing $T=u^3$, this gives
\[
        A_{d,2}(T)=2T^{(r-1)/3}+T^{2(r-1)/3},
        \qquad
        A_{d,1}(T)=-1-2T^{(r-1)/3}.
\]
Substitution in \cref{prop:filtered-normal-form} gives
\[
        \Phi_{2r+1}(u)
        =-\omega^2u^r\frac{u^{r-1}+2}{2u^{r-1}+1}.
\]
There is no common factor between numerator and denominator, and the degree is $2r-1$.

If $r\equiv2\pmod3$, then $d\equiv2\pmod3$.  The numerator filter has residue $0$ and the denominator filter has residue $2$, giving
\[
        A_{d,0}(T)=1+2T^{(r+1)/3},
\]
and
\[
        A_{d,2}(T)=-2T^{(r-2)/3}-T^{(2r-1)/3}.
\]
Again \cref{prop:filtered-normal-form} gives the stated expression
\[
        \Phi_{2r+1}(u)
        =-\omega^2u^{-r}\frac{1+2u^{r+1}}{2+u^{r+1}}.
\]
Its degree is $2r+1$.

It remains to check ramification.  In the first case, ignoring the nonzero scalar $-\omega^2$, write
\[
        f(u)=u^r\frac{u^{r-1}+2}{2u^{r-1}+1}.
\]
Since $r=0$ and $r-1=-1$ in the ground field, differentiation on the affine line gives
\[
        f'(u)=\frac{3u^{2r-2}}{(2u^{r-1}+1)^2}.
\]
Thus the only finite affine critical point away from poles is $u=0$.  Locally $f(u)=2u^r+O(u^{2r-1})$, so its ramification index at $0$ is $r$.  At infinity, in the coordinate $v=1/u$, one has $f(u)=\frac12u^r+O(u)$, so the ramification index at infinity is again $r$.  The remaining points over $0$ and $\infty$ are the simple roots of $u^{r-1}+2$ and $2u^{r-1}+1$, respectively, and are unramified.

In the second case, write
\[
        f(u)=u^{-r}\frac{1+2u^{r+1}}{2+u^{r+1}}.
\]
The derivative of the factor $u^{-r}$ vanishes in characteristic $p$, while $(u^{r+1})'=u^r$.  Hence
\[
        f'(u)=\frac{3}{(2+u^{r+1})^2}
\]
away from poles.  Thus there are no affine critical points away from the poles.  At $u=0$ the function has a pole of order $r$, so the local ramification index is $r$.  At infinity, with $v=1/u$, one has $f(u)=2u^{-r}+O(u^{-2r-1})$, equivalently $f=v^r(2+O(v^{r+1}))$, so the local ramification index at infinity is also $r$.  The other zeros and poles are simple.  Hence the only branch values are $0$ and $\infty$.
\end{proof}

\begin{lemma}[pole-divisor indecomposability criterion]\label{lem:pole-divisor-indecomp}
Let $f:\PP^1\to\PP^1$ be a nonconstant rational function of degree $n=2r-1$ or $n=2r+1$, with $r>1$ and $\gcd(r,n)=1$.  Suppose that the pole divisor of $f$ has the form
\[
        \operatorname{div}_\infty(f)=rP+Q_1+\cdots+Q_{n-r},
\]
where the $Q_i$ are distinct and different from $P$.  Then $f$ is indecomposable over the algebraic closure.  If $f$ is separable, then its geometric monodromy action is primitive.
\end{lemma}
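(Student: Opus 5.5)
Suppose $f=g\circ h$ over $\barF_p$ with $\deg g=a\ge2$ and $\deg h=b\ge2$, so $ab=n$. I will show that the shape of the pole divisor forbids this. After composing $h$ with a Möbius transformation on the left and $g$ with its inverse on the right, I may assume that $h(P)=\infty$; this leaves $f$ and its pole divisor unchanged. The pole divisor of $f$ is then the pullback by $h$ of the pole divisor of $g$. Write $\operatorname{div}_\infty(g)=\sum_j m_j R_j$, where $\sum m_j=a$ and the $R_j$ are distinct. Then
\[
        \operatorname{div}_\infty(f)=h^*\operatorname{div}_\infty(g)=\sum_j m_j\, h^*R_j .
\]
For each $j$, the divisor $h^*R_j$ is effective of degree $b$. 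Distinct $j$ give divisors with disjoint supports.

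Next I compare multiplicities. Every point $Q_i$ occurs in $\operatorname{div}_\infty(f)$ with multiplicity $1$. So each $R_j$ whose fiber contains some $Q_i$ must have $m_j=1$, and the corresponding $Q_i$ must be unramified for $h$. The point $P$ lies over $R_0=\infty$. Each pole of $f$ lies over exactly one $R_j$, and the multiplicities add up. Therefore every $R_j$ with $j\ne0$ has $h^*R_j$ supported entirely on the $Q_i$; this forces $m_j=1$ and means that $h^*R_j$ consists of $b$ distinct $Q$'s. Over $R_0$, the fiber is $e_P P+\sum(\text{some }Q\text{'s})$, and it contributes $m_0e_P\,P+m_0\sum Q$. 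Multiplicity one at the $Q$'s then forces either $m_0=1$ or the fiber over $R_0$ to be $\{P\}$ alone.

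Now the cases. If $m_0=1$, then $e_P=r$, so $b\ge r$. If the fiber over $R_0$ is $\{P\}$ alone, then $e_P=b$ and $m_0b=r$, so $b\mid r$; together with $b\mid n$ this gives $b\mid\gcd(r,n)=1$, which contradicts $b\ge2$. So it remains to treat $m_0=1$ and $b\ge r$. Since $b\mid n$ and $n\le 2r+1<2b+2$ (using $r\le b$), we get $a=n/b\le2$, hence $a=2$. Then $n=2b$ is even. This contradicts the fact that $n=2r\pm1$ is odd. I expect this case split to be the main obstacle: the delicate point is controlling the fiber of $h$ over $R_0$, which can mix $P$ with some of the $Q$'s. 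The argument above shows that the divisibility constraint $\gcd(r,n)=1$ handles the fiber consisting of $P$ alone, while parity handles the fiber containing other points.

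Finally, for the monodromy assertion: once $f$ is known to be indecomposable over $\barF_p$ and separable, its geometric monodromy group is primitive. This is the standard Lüroth correspondence between intermediate fields $k(f)\subset L\subset k(x)$ and block systems, invoked as in \cref{prop:wronskian-criterion}. Separability is needed to make the Galois closure and the monodromy action well defined. No ramification input is required beyond the pole divisor.
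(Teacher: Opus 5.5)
Your proof is correct and is essentially the paper's argument: both split on whether the pole multiplicity of $g$ at $h(P)$ exceeds one, which forces the fiber to be $\{P\}$ and $\deg h\mid\gcd(r,n)$, or equals one, which forces $e_h(P)=r$ and hence $\deg h\ge r$ and $\deg g\le 2$, and both get primitivity from the L\"uroth correspondence between intermediate fields and block systems. The only cosmetic difference is that you close the second case using the parity of $n$, whereas the paper uses $\deg h\le n/2$ together with $r\nmid n$.
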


\begin{proof}
Suppose, to the contrary, that $f=g\circ h$ with
\[
        \deg h=m>1,
        \qquad
        \deg g=\ell>1.
\]
Let $R$ be a pole of $g$ of multiplicity $A$.  If $S\in h^{-1}(R)$ has ramification index $B=e_h(S)$, then $S$ is a pole of $f$ of multiplicity $AB$.

All pole multiplicities of $f$ are equal to $1$, except for the unique pole $P$ of multiplicity $r$.  Let $R_0=h(P)$ and let $A$ be the pole multiplicity of $g$ at $R_0$; write $B=e_h(P)$.  Then
\[
        AB=r.
\]
If $A>1$, then every point in $h^{-1}(R_0)$ would give a pole of $f$ of multiplicity at least $A$.  Since $P$ is the only nonsimple pole, $P$ is the only point over $R_0$.  Hence $m=B=r/A$ divides $r$.  But $m$ also divides $n=\deg f$, and $\gcd(r,n)=1$, so $m=1$, contradiction.

Thus $A=1$ and $B=r$.  Therefore $m\ge r$.  Since $\ell\ge2$, one has
\[
        m\le \frac n2.
\]
If $n=2r-1$, then $n/2<r$, contradiction.  If $n=2r+1$, then $m\le r$; hence $m=r$, but $r\nmid n$ by hypothesis.  This is again impossible.  Therefore no nontrivial decomposition exists.

Finally assume $f$ is separable.  For a separable cover of curves, decomposability of the rational function is equivalent to imprimitivity of the geometric monodromy action: an intermediate rational function is the same thing as an intermediate function field between $k(f)$ and $k(u)$, hence a nontrivial block system.  Thus the monodromy action is primitive.
\end{proof}

\begin{theorem}[primitivity of the first lacunary tower]\label{thm:first-lacunary-primitive}
Let $p\ne2,3$, let $r=p^a$ with $a\ge1$, and let $d=2r+1$.  Then the reduced quotient $h_d$ is indecomposable.  Consequently its geometric monodromy group is primitive.
\end{theorem}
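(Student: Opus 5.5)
The plan is to apply \cref{lem:pole-divisor-indecomp} to the conjugate normal form $\Phi_d=M^{-1}\circ h_d\circ M$ of \cref{thm:two-r-plus-one-normal-form}. Conjugation by the fractional linear maps $M$ and $M^{-1}$ preserves indecomposability over $\barF_p$: a decomposition $h_d=g\circ h$ gives $\Phi_d=(M^{-1}\circ g)\circ(h\circ M)$ with the same inner and outer degrees, and conversely. It is therefore enough to prove that $\Phi_d$ is indecomposable. The scalar $-\omega^2$ is a target automorphism and can be dropped, so we may work with the function $f$ used in the proof of \cref{thm:two-r-plus-one-normal-form}.

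First I check the numerical hypotheses of the lemma. In case (i) the degree is $n=2r-1$, and in case (ii) it is $n=2r+1$. In both cases $r=p^a>1$ and $\gcd(r,n)=\gcd(r,\pm1)=1$, since $n\equiv\pm1\pmod r$.

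Next I read off the pole divisors.

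In case (i), $f(u)=u^r(u^{r-1}+2)/(2u^{r-1}+1)$ has the following poles:
\begin{itemize}
\item simple poles at the $r-1$ roots of $2u^{r-1}+1$, which are distinct because the derivative $2(r-1)u^{r-2}=-2u^{r-2}$ does not vanish at a nonzero root;
\item a pole at $u=\infty$ of order $r+(r-1)-(r-1)=r$.
\end{itemize}
The divisor is therefore $r\cdot\infty+Q_1+\cdots+Q_{r-1}$, and $n-r=r-1$ as the lemma requires.

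In case (ii), $f(u)=u^{-r}(1+2u^{r+1})/(2+u^{r+1})$ has the following poles:
\begin{itemize}
\item a pole of order $r$ at $u=0$, since the numerator and denominator do not vanish there;
\item simple poles at the $r+1$ roots of $u^{r+1}+2$, which are distinct because $(r+1)u^r=u^r\ne0$ at these nonzero roots;
\item none at $\infty$, since $f\to 2\cdot u^{-r}\cdot u^{r+1}/u^{r+1}\to 0$ there.
\end{itemize}
The divisor is therefore $r\cdot0+Q_1+\cdots+Q_{r+1}$, and again $n-r=r+1$.

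We also need the absence of cancellation, so that the reduced degree is really $n$. This was already recorded in \cref{thm:two-r-plus-one-normal-form}. I would recheck it quickly: in case (i), a common root of $u^{r-1}+2$ and $2u^{r-1}+1$ would force $u^{r-1}=-2=-1/2$, i.e.\ $4=1$, i.e.\ $3=0$, which is excluded since $p\ne3$. Case (ii) is the same argument with $u^{r+1}$ in place of $u^{r-1}$.

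With all hypotheses in place, \cref{lem:pole-divisor-indecomp} gives that $\Phi_d$, and hence $h_d$, is indecomposable over $\barF_p$. For the monodromy statement I use that $h_d$ is separable. Since $p\nmid d=2r+1$, \cref{lem:tame-quotient-separable} gives separability of the nonconstant $h_d$; directly, $f'\ne0$ by the derivative computations in \cref{thm:two-r-plus-one-normal-form}. The final clause of the lemma then gives primitivity of the geometric monodromy action.

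The main obstacle is only bookkeeping: confirming that the pole multiplicity at the special point is exactly $r$ and that the remaining poles are simple and distinct. This is what makes the $\gcd(r,n)=1$ argument in the lemma apply. I expect no genuine difficulty beyond the characteristic checks $p\ne2,3$ already used above.
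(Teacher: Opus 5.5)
Your proposal is correct and follows the paper's own proof. Like the paper, you read off the pole divisors $r[\infty]+\sum[\alpha]$ and $r[0]+\sum[\alpha]$ from the two normal forms of \cref{thm:two-r-plus-one-normal-form}, check $\gcd(r,n)=1$ and that the simple poles are distinct, and apply \cref{lem:pole-divisor-indecomp}, including its separable clause for primitivity. Your extra checks are details the paper leaves implicit: that indecomposability is invariant under conjugation by $M$ and under the scalar $-\omega^2$, and that the $p\ne3$ condition rules out cancellation.
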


\begin{proof}
Use the normal forms of \cref{thm:two-r-plus-one-normal-form}.  If $r\equiv1\pmod3$, then, up to a nonzero scalar,
\[
        \Phi_{2r+1}(u)=u^r\frac{u^{r-1}+2}{2u^{r-1}+1}
\]
has degree $n=2r-1$.  Its pole divisor is
\[
        r[\infty]+\sum_{2\alpha^{r-1}+1=0} [\alpha].
\]
The roots of $2u^{r-1}+1$ are distinct because $p\nmid r-1$.  Hence the pole divisor has one pole of multiplicity $r$ and $r-1=n-r$ simple poles.

If $r\equiv2\pmod3$, then, again up to a nonzero scalar,
\[
        \Phi_{2r+1}(u)=u^{-r}\frac{1+2u^{r+1}}{2+u^{r+1}}
\]
has degree $n=2r+1$.  Its pole divisor is
\[
        r[0]+\sum_{2+\alpha^{r+1}=0} [\alpha],
\]
and the roots of $2+u^{r+1}$ are distinct because $p\nmid r+1$.  Thus the same pole-divisor pattern holds, with $n-r=r+1$ simple poles.

In both cases $\gcd(r,n)=1$.  The derivative computations in \cref{thm:two-r-plus-one-normal-form} show that the normal forms are separable, hence so is $h_d$.  Applying \cref{lem:pole-divisor-indecomp} proves indecomposability, and its separable monodromy clause gives primitivity.
\end{proof}

\begin{theorem}[large monodromy in the prime-exponent lacunary branch]\label{thm:two-p-plus-one-large-monodromy}
Let $p\ne2,3$, let $d=2p+1$, and let $h_d$ be the reduced quotient.  Put
\[
        n=\deg h_d=
        \begin{cases}
        2p-1,&p\equiv1\pmod3,\\
        2p+1,&p\equiv2\pmod3.
        \end{cases}
\]
Then the geometric monodromy group satisfies
\[
        A_n\le G_{h_d}\le S_n.
\]
Consequently $h_d$ is not $\tau$-twisted exceptional.
\end{theorem}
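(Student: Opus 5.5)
The plan is to feed the case $a=1$ (so $r=p$) into \cref{thm:wild-small-support}, with $h^{\mathrm{sep}}=h_d$. The two inputs that criterion needs are primitivity and a single-cycle inertia element. Since $d=2p+1$ is prime to $p$, the full quotient $h_d$ is already the separable quotient of \cref{def:quotient-conventions}; no Frobenius factor has to be stripped. Conjugation by the fractional linear map $M$ preserves degree, ramification data and geometric monodromy, so I will work throughout with the normal form $\Phi_{2p+1}$ of \cref{thm:two-r-plus-one-normal-form}. That theorem gives $n=2p-1$ when $p\equiv1\pmod3$ and $n=2p+1$ when $p\equiv2\pmod3$, which is the displayed degree. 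Separability follows from the derivative computation in its proof, since $f'\not\equiv0$.

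First, primitivity of $G_{h_d}$ is exactly \cref{thm:first-lacunary-primitive} with $a=1$.

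Second, I produce the inertia element. Take the branch value $\infty$ of $\Phi_{2p+1}$. By \cref{thm:two-r-plus-one-normal-form}, exactly one point over $\infty$ is ramified, with index $r=p$: this is $u=\infty$ when $p\equiv1\pmod3$ and $u=0$ when $p\equiv2\pmod3$. Every other point over $\infty$ is unramified. The ramification is wild, but \cref{lem:inertia-orbit-lengths} holds in wild characteristic. It says the inertia image at this branch value has a single nontrivial orbit, of length $p$, and fixes the remaining $n-p$ sheets. Because $p$ is prime, Cauchy's theorem, as in that lemma, gives an element acting as one $p$-cycle with $n-p$ fixed points. I expect this to be the only delicate step: we must avoid claiming anything about the full wild inertia group, which is not cyclic. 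The lemma already isolates exactly what is needed, namely orbit lengths plus Cauchy, so no description of the higher ramification groups is required.

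Third, I check the numerical hypotheses of \cref{thm:wild-small-support}, which need $2\le m\le n-3$ with $m=p$. If $p\equiv1\pmod3$, then $p\ge7$ and $n-p=p-1\ge6$. If $p\equiv2\pmod3$, then $p\ge5$ and $n-p=p+1\ge6$. In both cases $3\le p\le n-3$, so the Jones-cycle case of \cref{lem:jones-cycle-applied} applies, giving $A_n\le G_{h_d}\le S_n$. Since $n\ge13>3$, $A_n$ is $2$-transitive, and \cref{cor:2trans-obstruction}, through the last clause of \cref{thm:wild-small-support}, shows that $h_d$ is not $\tau$-twisted exceptional.
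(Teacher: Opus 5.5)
Your proposal is correct and follows essentially the same route as the paper: primitivity from \cref{thm:first-lacunary-primitive}, a $p$-cycle fixing the other $n-p$ sheets from the final clause of \cref{lem:inertia-orbit-lengths} applied at the unique wild point over a branch value of the normal form in \cref{thm:two-r-plus-one-normal-form}, and then \cref{thm:wild-small-support} with $m=p$ together with \cref{cor:2trans-obstruction}. One harmless slip: the smallest degree is $n=11$ (at $p=5$), not $13$, and this still leaves $A_n$ two-transitive.
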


\begin{proof}
By \cref{thm:first-lacunary-primitive}, $G_{h_d}$ is primitive.  By \cref{thm:two-r-plus-one-normal-form}, over one of the two branch values there is exactly one ramified point of ramification index $p$, while all other points over that branch value are unramified.  Since $p$ is prime, the final clause of \cref{lem:inertia-orbit-lengths} gives an actual $p$-cycle in the inertia image on the sheets specializing to the ramified point, and this element fixes the remaining $n-p$ sheets.  Here $n-p\ge3$ for $p\ge5$.

Now apply \cref{thm:wild-small-support} with $m=p$.  This gives $A_n\le G_{h_d}\le S_n$, and the final assertion follows from the two-transitive collision obstruction, \cref{cor:2trans-obstruction}.
\end{proof}

\begin{example}
The first cases of \cref{thm:two-p-plus-one-large-monodromy} include
\[
        (p,d,n)=(5,11,11),\qquad (7,15,13),\qquad (11,23,23),\qquad (13,27,25).
\]
These exponents are not of the form $p^a-1$ or $p^a+1$ with $a\ge1$, and they are not the Klein-four sporadic pair $(19,6)$.  They are instead the first members of a nonsparse Frobenius--lacunary wild branch.  Their large monodromy follows from the exact wild normal form, the pole-divisor primitivity theorem, and Jordan's theorem, not from tame Morse separation.
\end{example}

\begin{remark}
For $a>1$, \cref{thm:first-lacunary-primitive} proves primitivity, so the obstacle is no longer decomposability.  The same normal form gives a unique wild ramified point of index $p^a$ over each of the branch values $0$ and $\infty$.  In the Galois closure, the wild inertia subgroup acts transitively on the corresponding $p^a$ local branches as a $p$-subgroup; the full inertia group may also have a tame prime-to-$p$ quotient.  This transitive $p$-subgroup need not contain a single $p^a$-cycle.  Thus Jones' cycle theorem cannot be applied blindly.  The remaining problem in this tower is the explicit local inertia representation.  If that representation contains a cycle with at least three fixed sheets in its global action, then \cref{thm:wild-small-support} and primitivity immediately give $A_n\le G\le S_n$.
\end{remark}

\begin{proposition}[local different in the higher lacunary tower]\label{prop:higher-lacunary-different}
Let $p\ne2,3$, let $r=p^a$ with $a\ge1$, and let $d=2r+1$.  Write $f$ for the normal-form map $\Phi_{2r+1}$ of \cref{thm:two-r-plus-one-normal-form}, with its harmless nonzero scalar factor suppressed.  At each of the two wild points in that theorem, the completed local extension has ramification index $r$.  Its different exponent is
\[
        2r-2\quad\text{if }r\equiv1\pmod3,
        \qquad
        2r\quad\text{if }r\equiv2\pmod3.
\]
Equivalently, take $t=f$ at a ramified point above the branch value $0$, take $t=1/f$ at a ramified point above the branch value $\infty$, and use the source local parameter $u$ at a finite source point and $v=1/u$ at source infinity.  If $u_{\rm loc}$ denotes the chosen source local parameter, then
\[
        v_{u_{\rm loc}}\left(\frac{dt}{du_{\rm loc}}\right)=
        \begin{cases}
        2r-2,&r\equiv1\pmod3,\\
        2r,&r\equiv2\pmod3.
        \end{cases}
\]
\end{proposition}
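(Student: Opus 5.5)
The plan is to compute the different exponent directly as the valuation of $dt/du_{\rm loc}$, where $t$ is the local uniformizer of the target at the branch value. For a finite separable extension of complete discrete valuation rings with algebraically closed residue field, the different exponent equals $v_{u_{\rm loc}}(dt/du_{\rm loc})$. So the two formulations in the statement agree, and we only need the derivative valuations. We will use the explicit normal forms of \cref{thm:two-r-plus-one-normal-form} together with the derivative formulas already computed in its proof. The scalar $-\omega^2$ is suppressed, as the statement permits, since it changes $t$ only by a unit.

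\emph{Case $r\equiv1\pmod3$.} Here $f(u)=u^r(u^{r-1}+2)/(2u^{r-1}+1)$, and the proof of \cref{thm:two-r-plus-one-normal-form} gives
\[
        f'(u)=\frac{3u^{2r-2}}{(2u^{r-1}+1)^2}.
\]
At $u=0$ we take $t=f$ and $u_{\rm loc}=u$. The denominator is a unit at $0$ and $3\neq0$, so $v_u(dt/du)=2r-2$.

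At $u=\infty$, which lies over the branch value $\infty$, we take $t=1/f$ and $v=1/u$. Write
\[
        t=v^r\,\frac{2+v^{r-1}}{1+2v^{r-1}}.
\]
Since $(v^r)'=0$ in characteristic $p$, we get $dt/dv=v^r\,g'(v)$ with $g=(2+v^{r-1})/(1+2v^{r-1})$. By the quotient rule, $g'(v)=(r-1)v^{r-2}(1-4)/(1+2v^{r-1})^2$, which has valuation $r-2$ because $(r-1)\cdot(-3)=3\neq0$. Hence the total valuation is $2r-2$.

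\emph{Case $r\equiv2\pmod3$.} Here $f=u^{-r}(1+2u^{r+1})/(2+u^{r+1})$. At $u=0$, which lies over the branch value $\infty$, we take $t=1/f=u^r(2+u^{r+1})/(1+2u^{r+1})$. As before the factor $u^r$ is a $p$-th power, so only the second factor is differentiated. Its derivative is $(r+1)u^r\cdot(1-4)/(\cdots)^2$, which has valuation $r$ since $(r+1)(-3)=-3\neq0$. Thus $v_u(dt/du)=2r$.

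At $u=\infty$, which lies over the branch value $0$, we take $t=f$ and $v=1/u$. Then
\[
        t=v^r\,\frac{v^{r+1}+2}{2v^{r+1}+1},
\]
and the same computation gives valuation $r+r=2r$.

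The only delicate points are bookkeeping. First, we must match each wild point to its correct branch value, and hence use $t=f$ or $t=1/f$ as appropriate. Second, every differentiation must use that $p$-th powers have zero derivative, while the coefficient of the surviving term is $\pm3(r\mp1)$, which is a unit because $p\neq3$ and $r\equiv0$ in the ground field.

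Finally, we can cross-check these values against Riemann--Hurwitz. In case (i), with $n=2r-1$ and no other ramification, we need $2n-2=2(2r-2)$, which holds since $4r-4=4r-4$. In case (ii), with $n=2r+1$, we need $2n-2=2\cdot2r$, which also holds. This confirms that the two wild points carry the entire different.
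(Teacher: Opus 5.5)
Your proposal is correct and matches the paper's proof essentially step for step. You use the same four local expressions $t=u^r\cdot(\text{unit})$ or $t=v^r\cdot(\text{unit})$, and the same observation that the $p$-power factor has zero derivative, so only the unit factor contributes, with coefficient $\pm3$. You also make the same identification of the different exponent with $v_{u_{\rm loc}}(dt/du_{\rm loc})$ for a separable extension of complete discrete valuation rings with trivial residue extension.

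Your explicit quotient-rule computations at the points at infinity, and the Riemann--Hurwitz consistency check ($2n-2$ equals twice the local exponent in both cases), are small additions the paper does not spell out. Conversely, the paper explicitly justifies finiteness and monogenicity of $k[[u]]$ over $k[[t]]$ via Weierstrass preparation.
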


\begin{proof}
Consider first the case $r\equiv1\pmod3$.  Up to a nonzero scalar, the local expression at $u=0$ is
\[
        t=f(u)=u^r\frac{u^{r-1}+2}{2u^{r-1}+1}.
\]
The expansion begins with $2u^r$, so the ramification index is $r$.  The derivative computed in \cref{thm:two-r-plus-one-normal-form} is
\[
        f'(u)=\frac{3u^{2r-2}}{(2u^{r-1}+1)^2},
\]
so $v_u(f')=2r-2$.  At $u=\infty$, set $v=1/u$ and use the target parameter $t=1/f$.  Then
\[
        t=v^r\frac{2+v^{r-1}}{1+2v^{r-1}},
\]
and differentiation gives valuation $2r-2$ again.

If $r\equiv2\pmod3$, the normal form is
\[
        f(u)=u^{-r}\frac{1+2u^{r+1}}{2+u^{r+1}}
\]
up to a nonzero scalar.  At $u=0$ the branch value is $\infty$, so use $t=1/f$:
\[
        t=u^r\frac{2+u^{r+1}}{1+2u^{r+1}}.
\]
Since $r=0$ and $r+1=1$ in the ground field, differentiating gives
\[
        \frac{dt}{du}=\frac{-3u^{2r}}{(1+2u^{r+1})^2},
\]
so the different exponent is $2r$.  At $u=\infty$, with $v=1/u$ and target parameter $t=f$, one obtains
\[
        t=v^r\frac{2+v^{r+1}}{1+2v^{r+1}},
\]
and the same derivative valuation $2r$.  In each of the four local forms just displayed, $t$ has the form $u^r$ times a unit, or the same expression in the parameter $v=1/u$.  Hence $k[[u]]$ is integral and finite over $k[[t]]$: equivalently, by Weierstrass preparation, the equation $f(U)-t=0$ gives a distinguished monic polynomial for the chosen uniformizer over $k[[t]]$ after multiplying by a unit.  The extension is separable because the displayed derivative is not identically zero.  Thus the completed source ring is monogenic over the completed base, and
\[
        \Omega_{k[[u]]/k[[t]]}
        \cong k[[u]]/(dt/du)\,du.
\]
The standard monogenic different formula for complete discrete valuation rings, applied to this finite separable extension, therefore gives the different exponent as $v_u(dt/du)$; see, for example, \cite[Chapter III, \S6]{SerreLocalFields}.  This proves the claim.
\end{proof}

\begin{remark}[why the Jones-cycle certificate is still missing]\label{rem:Jones-cycle-still-missing}
Proposition~\ref{prop:higher-lacunary-different} is a useful obstruction to overclaiming.  The local ramification index $p^a$ does not by itself produce a $p^a$-cycle in wild characteristic.  The wild inertia subgroup in the Galois closure is a transitive $p$-subgroup on the $p^a$ local sheets, while the full inertia group can also have a tame prime-to-$p$ quotient; a transitive $p$-group of degree $p^a$ need not contain a single $p^a$-cycle.  The regular elementary abelian group is the basic counterexample when $a>1$.  Thus even the exact local different exponent does not certify cyclic wild inertia.  Therefore the proposed shortcut
\[
        \text{ramification index }p^a \quad\Longrightarrow\quad
        \text{Jones cycle}
\]
is invalid for $a>1$.  A proof of alternating or symmetric monodromy in the higher lacunary tower must compute the actual Galois-closure inertia or supply a different two-transitivity certificate.
\end{remark}

\section{The remaining wild-inertia problem}\label{sec:questions}

The remaining global problem concerns positive-characteristic monodromy outside the resolved sparse, sporadic, and lacunary branches.  The auxiliary fixed-degree, higher-dimensional, and full-field lifting results are recorded in the appendices, so the main text can close with the precise monodromy obstruction that remains.  The paper now proves full symmetric monodromy in characteristic zero for every non-linear quotient, proves full symmetric monodromy in the tame Morse range over positive characteristic, proves full symmetric monodromy in the bad reduction $(p,d)=(7,5)$ conditionally on the characteristic-$7$ certificate block, proves primitivity throughout the first nonsparse lacunary tower $d=2p^a+1$ with $a\ge1$ for $p\ne2,3$, and proves alternating or symmetric monodromy in the prime-exponent subbranch $d=2p+1$ for $p\ne2,3$.  The remaining issue is substantive: by Proposition~\ref{prop:higher-lacunary-different} and Remark~\ref{rem:Jones-cycle-still-missing}, the higher lacunary tower has explicitly computable wild local different, and the ramification index $p^a$ alone does not furnish the Jones cycle needed for the alternating/symmetric conclusion.

Thus the concrete assertion one would like to prove is the following.  We record the remaining issue as a concrete problem.

\begin{question}[remaining wild-inertia or two-transitivity certificate]
Let $p>0$, let $p\nmid d$, let $h_d$ be the reduced quotient, put $n=\deg h_d$, and assume that $d$ is not one of the two Frobenius-sparse branches $p^a-1$ or $p^a+1$ with $a\ge1$, not one of the small overlaps $d=1,2,3$, and not the Klein-four sporadic pair $(p,d)=(19,6)$.  If $h_d$ is not already covered by the tame Morse theorem, by the certificate-dependent bad-reduction theorem $(p,d)=(7,5)$, or by the prime-exponent lacunary theorem, prove one of the following two certificates:
\begin{enumerate}[label=\textup{(\roman*)}]
\item $G_{h_d}$ is primitive and contains a transposition or an element whose cycle decomposition is one $m$-cycle and $n-m$ fixed points for some $2\le m\le n-3$; or
\item $G_{h_d}$ is $2$-transitive by some other branch-cycle, local-inertia, or classification argument.
\end{enumerate}
In certificate \textup{(i)}, the transposition subcase would give $G_{h_d}=S_n$ by the primitive-transposition theorem, while the single $m$-cycle subcase with $m\ge3$ would give $A_n\le G_{h_d}\le S_n$ by Jones' cycle theorem.  Either certificate would then rule out $\tau$-twisted exceptionality by \cref{cor:2trans-obstruction}.  For $p\ne2,3$, the higher lacunary tower $d=2p^a+1$ with $a>1$ is the first test case where certificate \textup{(i)} is not presently justified by the known local normal form.
\end{question}

\appendix

\section*{Appendices. Supplemental constructions and computational certificates}

\section{Attribution and comparison with Ding--Song--Xiong}\label{sec:dsx-comparison}

This section is included to clarify attribution.  All descent results used in the proofs above have been proved internally; the comparison below is only for attribution and for recovery of known branches.  Ding--Song--Xiong \cite{DSX2026} prove the two-step descent and use it to construct explicit sparse trinomials, pentanomials, and seven-term polynomials over $\F_{q^3}$.  The descent propositions in \cref{sec:descent} are reformulations of their method in Hilbert--90 notation, and the $Q+1$ branch below is their theorem, not a new result of this paper.  No proof below depends on an external DSX theorem number; the references in this section are for attribution and comparison.

Their trace-zero inputs are:
\begin{enumerate}[label=\textup{(\roman*)}]
\item for $Q=p^\ell$, the map $(X^q-X)\circ X^{Q+1}$ permutes $\Gamma_q$ if and only if $\gcd(q-1,Q+1)=1$ \cite[Theorem 1.6]{DSX2026};
\item the map $(X^q-X)\circ X^3$ permutes $\Gamma_q$ if and only if $q\equiv2\pmod3$ \cite[Theorem 1.7]{DSX2026}.
\end{enumerate}
Combined with the additive fiber criterion, these yield their full-field families \cite[Theorems 1.1, 1.4, 1.8]{DSX2026}.  For example,
\[
        (X^q-X)^{Q+1}
        =X^{qQ+q}-X^{qQ+1}-X^{Q+q}+X^{Q+1},
\]
and adding trace terms of the form $\Tr(cX^{R+S})$ or $\Tr(cX^R)$ gives their sparse specializations after reduction modulo $X^{q^3}-X$ and, in some cases, Frobenius conjugation.  We do not reproduce the term-by-term derivation of every named sparse corollary here; the purpose of the present paper is instead to isolate the fixed-exponent quotient formula $H_d^{\rm raw}$ and reduced map $h_d$ and prove the torsion-defect rigidity theorems that produce the Mersenne branch.

The overlap is therefore as follows.  The case $d=3$ in \cref{thm:trace-zero-classification} recovers the Ding--Song--Xiong cubic trace-zero theorem.  The $Q+1$ trace-zero branch is recovered in \cref{cor:Qplus1-branch} from the cyclic Frobenius-sparse quotient normal form.  The new trace-zero family isolated by the torsion-defect analysis is the characteristic-two Mersenne branch $d=2^a-1$ with $a\ge1$, subject to $\gcd(a,k)=1$ and $3\nmid a$.

\section{Fixed positive-characteristic quotient-degree strata}\label{app:fixed-degree-strata}

The main text uses only the degree-one, degree-two, and characteristic-zero quotient strata.  For completeness, and for computational work in fixed positive characteristic, we record the general finite linear test and the automatic-language consequence.  The automata result is intended as an algorithmic appendix, not as a closed-form classification of the accepted languages.

For positive characteristic, a uniform closed-form classification for every fixed $m\ge3$ would require controlling near-maximal intersections of the toric line $1+X+Y=0$ with cyclic torsion subgroups.  The following finite determinantal test is the form in which the problem is actually used in computations; it is the higher-degree analogue of the coefficient comparisons in \cref{sec:rigidity}.

\begin{proposition}[finite linear test for a fixed quotient stratum]\label{prop:fixed-m-linear-test}
Let $k$ be any field, let
\[
        N_d(z)=(-1)^d(z+1)^d-z^d=\sum_{j=0}^d n_jz^j,
        \qquad
        D_d(z)=z^d-1,
\]
and fix $m\ge0$.  Then the reduced quotient $h_d$ associated with $H_d^{\rm raw}=N_d/D_d$ has degree at most $m$ if and only if there exist polynomials
\[
        P(z)=\sum_{i=0}^m p_iz^i,
        \qquad
        Q(z)=\sum_{i=0}^m q_iz^i,
\]
with $Q\ne0$, satisfying
\begin{equation}\label{eq:fixed-m-linear-system}
        \sum_{i=0}^m q_i n_{j-i}=p_{j-d}-p_j
        \qquad(0\le j\le d+m),
\end{equation}
where $n_t=0$ for $t\notin[0,d]$ and $p_t=0$ for $t\notin[0,m]$.  Equivalently, the coefficient vector $(p_0,\ldots,p_m,q_0,\ldots,q_m)$ lies in the kernel of an explicit $(d+m+1)\times(2m+2)$ matrix over $k$ and has at least one nonzero $q_i$.

The condition $\deg h_d=m$ is obtained by imposing the degree-at-most-$m$ condition and excluding the analogous kernels for $m-1$.
\end{proposition}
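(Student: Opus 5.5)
The plan is to translate ``degree of the reduced quotient at most $m$'' into the existence of a cross-multiplication identity $Q(z)N_d(z)=P(z)D_d(z)$ with $\deg P,\deg Q\le m$ and $Q\ne0$. Then I read off coefficients, exactly as in the proofs of \cref{lem:degree-one-geometric} and \cref{thm:degree-two}.

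\emph{Forward direction.} Suppose $\deg h_d\le m$, and write $h_d=n_d/d_d$ in lowest terms. Note that $D_d=z^d-1\ne0$ always, so $H_d^{\rm raw}$ is a genuine rational function even when $N_d$ vanishes or has low degree. Put $C=\gcd(N_d,D_d)$, so that $N_d=Cn_d$ and $D_d=Cd_d$. Then $\max(\deg n_d,\deg d_d)=\deg h_d\le m$; the constant case is covered by the convention of \cref{def:quotient-conventions}, and the case $N_d=0$ is handled with $n_d=0$, $d_d=1$. Take $Q=d_d$ and $P=n_d$. Then $QN_d=d_dCn_d=PD_d$, and $Q\ne0$.

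\emph{Converse.} Suppose $QN_d=PD_d$ with $Q\ne0$ and $\deg P,\deg Q\le m$. Then $N_d/D_d=P/Q$ in $k(z)$, because $D_d\ne0$. Reducing $P/Q$ to lowest terms can only lower the degrees, so the reduced form of $N_d/D_d$ has numerator and denominator of degree at most $m$. Hence $\deg h_d\le m$.

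\emph{The linear system.} The coefficient of $z^j$ in $Q(z)N_d(z)$ is $\sum_i q_in_{j-i}$. Since $P(z)(z^d-1)=\sum_t p_tz^{t+d}-\sum_t p_tz^t$, its coefficient of $z^j$ is $p_{j-d}-p_j$. Both products have degree at most $d+m$, so matching the coefficients for $0\le j\le d+m$ gives exactly \eqref{eq:fixed-m-linear-system}. Each of these $d+m+1$ equations is linear in the $2m+2$ unknowns $(p_0,\ldots,p_m,q_0,\ldots,q_m)$, which gives the stated matrix. The requirement $Q\ne0$ becomes the condition that some $q_i\ne0$.

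\emph{Exact degree.} The statement $\deg h_d=m$ is equivalent to the degree-at-most-$m$ condition holding while the degree-at-most-$(m-1)$ condition fails; for $m=0$ the second condition is void.

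The only point needing care is the converse over an arbitrary field, including when $N_d$ degenerates, for example when $N_d=0$. That case is harmless: $D_d\neq0$ keeps the rational function well defined, and lowest-terms reduction is monotone in degree.
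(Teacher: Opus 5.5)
Your proposal is correct and follows essentially the same route as the paper: you encode $\deg h_d\le m$ as a cross-multiplication identity $QN_d=PD_d$ with $\deg P,\deg Q\le m$ and $Q\ne0$, compare coefficients in degrees $0\le j\le d+m$, and obtain the exact stratum as the $m$ condition minus the $m-1$ condition. The paper also notes that a kernel vector with $Q=0$ forces $P=0$, so no nonzero kernel vector is lost by requiring $Q\ne0$; separately, your $N_d=0$ case is vacuous, since $N_d(0)=(-1)^d\ne0$.
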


\begin{proof}
The reduced quotient has degree at most $m$ precisely when it can be represented as $P/Q$ with $\deg P,\deg Q\le m$ and $Q\not=0$.  Since $H_d^{\rm raw}=N_d/D_d$ as a rational function, this is equivalent to
\[
        Q(z)N_d(z)=P(z)D_d(z)=P(z)(z^d-1).
\]
Comparing coefficients of $z^j$ gives \eqref{eq:fixed-m-linear-system}.  Conversely, any solution with $Q\ne0$ gives the displayed polynomial identity and hence a representative of degree at most $m$ after removing any common factor of $P$ and $Q$.  A solution with $Q=0$ would force $P(z)(z^d-1)=0$, hence $P=0$ in the polynomial ring; thus no nonzero kernel vector is lost by requiring $Q\ne0$.  The last assertion is immediate.
\end{proof}

\begin{remark}
\Cref{prop:fixed-m-linear-test} is not a cosmetic reformulation.  For fixed $m$ the interior equations say that the binomial-coefficient string of $N_d$ satisfies a linear recurrence of order at most $m$ away from the two ends.  The cases $m=1$ and $m=2$ are exactly the recurrences solved in \cref{thm:degree-one,thm:degree-two}.  For $m\ge3$ the same system separates genuine positive-characteristic sporadic cancellations from the characteristic-zero strata of \cref{thm:charzero-degree-strata}.
\end{remark}

\begin{theorem}[automatic decision procedure for fixed-degree strata]\label{thm:automatic-fixed-degree}
Fix a prime $p$ and an integer $m\ge0$.  Let
\[
        \mathcal D_{p,\le m}=\{d\ge1:p\nmid d,\ \deg h_d\le m\text{ over }\bar{\mathbb F}_p\}.
\]
Then the base-$p$ expansions of the integers in $\mathcal D_{p,\le m}$ form an effectively computable regular language.  Equivalently, there is an explicit finite automaton, depending only on $p$ and $m$, which decides whether $\deg h_d\le m$ from the base-$p$ digits of $d$.  Consequently the exact stratum
\[
        \mathcal D_{p,m}=\{d:p\nmid d,\ \deg h_d=m\}
\]
is also effectively regular.

In particular, the tame positive-characteristic fixed-degree problem is not an infinite search over $d$: for every fixed $(p,m)$ with $p\nmid d$ imposed, it reduces to minimization and inspection of a finite automaton.
\end{theorem}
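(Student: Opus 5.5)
The plan is to make the finite linear test of \cref{prop:fixed-m-linear-test} into a first-order statement about base-$p$ digits. The key tool is the Büchi--Bruyère theorem: a subset of $\mathbb N^r$ definable in $\langle\mathbb N,+,V_p\rangle$, or more generally by first-order formulas using $p$-automatic predicates, has a regular set of base-$p$ expansions, and the automaton can be built effectively from the formula. The first step is to get rid of the existential quantifier over $\bar{\mathbb F}_p$. The coefficients $n_j$ of $N_d$ lie in $\F_p$, and by \cref{prop:fixed-m-linear-test} the condition $\deg h_d\le m$ asks for a kernel vector with some $q_i\ne0$ of a matrix with $\F_p$-entries. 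That kernel is defined over $\F_p$, so if such a vector exists over $\bar{\mathbb F}_p$, one exists with entries in $\F_p$. Hence
\[
        \mathcal D_{p,\le m}=\bigcup_{(P,Q)\in\F_p^{2m+2},\,Q\ne0}\mathcal D_{P,Q},
\]
where $\mathcal D_{P,Q}$ is the set of $d$ with $p\nmid d$ satisfying \eqref{eq:fixed-m-linear-system} for the fixed tuple $(P,Q)$. This is a finite union, and regular languages are closed under finite unions, so it suffices to prove that each $\mathcal D_{P,Q}$ is effectively regular.

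Second, I express the coefficient predicates automatically. For $1\le j\le d-1$ we have $n_j=(-1)^d\binom dj$. The boundary coefficients are $n_0=(-1)^d-0$ (the constant term of $(-1)^d(z+1)^d$) and $n_d=(-1)^d-1$. By Lucas' theorem, $\binom dj\bmod p$ is the product of $\binom{d_i}{j_i}$ over the digit pairs $(d_i,j_i)$. A two-tape automaton reading the digits of $(d,j)$ in parallel and tracking the running product in $\F_p$ therefore recognizes the predicates $\binom dj\equiv c$ for each $c\in\F_p$. The sign $(-1)^d$ is also automatic. For odd $p$ it equals $(-1)^{\sum_i d_i}$, since $p\equiv1\pmod 2$. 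For $p=2$ the sign is trivial. The predicate $p\nmid d$ is a last-digit test.

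Third, I write the defining condition of $\mathcal D_{P,Q}$ as a first-order formula. It reads: for every $j\le d+m$, the sum $\sum_{i=0}^m q_i n_{j-i}$ equals $p_{j-d}-p_j$. Since $m$ is fixed, the $m+1$ shifts $j-i$ and the boundary cases ($j-i<0$, $j-i=0$, $j-i=d$, $j-i>d$, and $j<d$ or $j\ge d$ when evaluating $p_{j-d}$) are handled by a finite case split. Each case is a Presburger condition relating $j$ and $d$, conjoined with the automatic predicates above and with a finite table of values in $\F_p$. The result is a first-order formula in $d$ with one universally quantified variable $j$, so the Büchi--Bruyère theorem gives an effectively constructible automaton for $\mathcal D_{P,Q}$. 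For the exact stratum, $\mathcal D_{p,m}=\mathcal D_{p,\le m}\setminus\mathcal D_{p,\le m-1}$ is a Boolean combination of regular languages and is again regular; for $m=0$ one uses $\mathcal D_{p,\le -1}=\varnothing$.

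I expect the main obstacle to be the bookkeeping at the ends of the coefficient string. There the sign $(-1)^d$, the extra constants in $n_0$ and $n_d$, and the terms $p_{j-d}$ all mix, and the argument must guarantee that every case is expressed through predicates already shown to be automatic. I would first write the formula separately for the three index windows $j<m$, $m\le j<d$ and $j\ge d$ to check that each is Presburger-plus-Lucas. Effectivity then follows from the constructive proof of the Büchi--Bruyère correspondence, together with the fact that the union ranges over $p^{2m+2}$ explicit tuples.
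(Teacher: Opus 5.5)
Your proposal is correct and follows the paper's route. You reduce to the finitely many $\F_p$-coefficient vectors with nonzero $Q$-part via \cref{prop:fixed-m-linear-test} and the field-extension invariance of the kernel. You encode $\binom dt \bmod p$ by Lucas' theorem as an automatic relation, and you treat the shifted indices and the bounded universal quantifier over $j$ through closure of automatic relations under Boolean operations and projection. You finish with a finite union, the last-digit test for $p\nmid d$, and the difference $\mathcal D_{p,\le m}\setminus\mathcal D_{p,\le m-1}$. The only difference is presentational: you invoke the B\"uchi--Bruy\`ere definability theorem for $\langle\mathbb N,+,V_p\rangle$, whereas the paper describes the automaton operationally (carries, running Lucas products, endpoint comparisons) before citing the same closure properties.
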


\begin{proof}
We use least-significant-digit first base-$p$ words, read in parallel with zero padding.  Integers are represented canonically by finite words with no trailing zero in this convention, except that $0$ is represented by the one-letter word $0$; allowing additional padded zeros does not change any recognized relation and is used for parallel reading of tuples such as $(d,j,t)$.

By \cref{prop:fixed-m-linear-test}, the condition $\deg h_d\le m$ over $\bar{\mathbb F}_p$ is equivalent to the existence, after scalar extension to $\bar{\mathbb F}_p$, of a kernel vector whose $Q$-part is nonzero.  Since the matrix entries lie in $\mathbb F_p$, both the full matrix rank and the rank after imposing $q_0=\cdots=q_m=0$ are unchanged by field extension; therefore such a vector exists over $\bar{\mathbb F}_p$ if and only if one exists over $\mathbb F_p$.  Thus it is enough to test the finitely many coefficient vectors
\[
        (p_0,\ldots,p_m,q_0,\ldots,q_m)\in\mathbb F_p^{2m+2}
\]
with nonzero $Q$-part satisfying the coefficient equations
\[
        \sum_{i=0}^m q_i n_{j-i}=p_{j-d}-p_j
        \qquad(0\le j\le d+m),
\]
where
\[
        n_t=(-1)^d\binom dt-\mathbf 1_{t=d}
\]
with the convention $n_t=0$ for $t\notin[0,d]$.  Since the vector space $\mathbb F_p^{2m+2}$ is finite, we may test the finitely many coefficient vectors with nonzero $Q$-part one at a time and take their union.

For a fixed such coefficient vector, each displayed equation is a first-order condition in the variables $d$ and $j$ over the natural numbers with addition, order, base-$p$ digit predicates, and the relation
\[
        \binom dt\equiv c\pmod p
        \qquad(c\in\mathbb F_p).
\]
Lucas' theorem expresses this relation as
\[
        \binom dt\equiv
        \prod_{\nu\ge0}\binom{d_\nu}{t_\nu}\pmod p,
\]
where $d=\sum d_\nu p^\nu$ and $t=\sum t_\nu p^\nu$.  Thus the relation is recognized by a finite automaton reading the base-$p$ expansions of $d$ and $t$ in parallel: the automaton keeps only the running product in $\mathbb F_p$ and moves to a zero state if some $t_\nu>d_\nu$.  The sign $(-1)^d$, the boundary tests $t=d$, $t\in[0,d]$, and the shifted symbols $p_{j-d}$ and $p_j$ are also finite-state conditions.  More explicitly, because $m$ is fixed, the alternatives $j-d=t$ with $0\le t\le m$, $j-i=t$ with $0\le i\le m$, and $0\le j\le d+m$ are recognized by the standard addition automata for base-$p$ words with constants and order.  Thus the variable upper bound and all shifted indices in \eqref{eq:fixed-m-linear-system} are encoded inside the same automatic structure; no unbounded memory is required beyond the usual carry state.

Operationally, for a fixed such coefficient vector the finite state records only: the vector itself, the carries for the additions and subtractions defining $j-d$, $j-i$, and $d+m$, the running Lucas product in $\mathbb F_p$ for each binomial coefficient queried by the equation, the comparison states for endpoint tests such as $0\le j\le d+m$, and the single-field-element accumulator for the coefficient equation.  This makes the asserted effectivity a finite construction rather than an abstract existence statement.

For a fixed such coefficient vector, build the automatic relation $E(d,j)$ asserting that the coefficient equation at index $j$ holds.  The bounded universal quantifier over all $j$ with $0\le j\le d+m$ is implemented by first recognizing the existential counterexample relation $\exists j\,(0\le j\le d+m\ \wedge\ \neg E(d,j))$, then projecting away $j$, and finally complementing the resulting language of bad $d$.  These closure properties are standard in the first-order theory of automatic structures over $(\mathbb N,+,<,V_p)$ and finite automata over numeration systems; see B\"uchi \cite{Buchi1960} and Allouche--Shallit \cite[Chapters 5--6]{AlloucheShallit}.  Therefore, for each fixed such coefficient vector, the set of $d$ satisfying all equations is regular.  A finite union over the finitely many coefficient vectors with nonzero $Q$-part gives a regular language for the exponents with $\deg h_d\le m$ before imposing the tame condition.  Intersect this language with the regular language of canonical least-significant-digit-first base-$p$ words for positive integers whose first digit is nonzero.  Because the words are read least-significant digit first, this first digit is the residue of $d$ modulo $p$; requiring it to be nonzero imposes exactly $p\nmid d$ and gives the regular language for $\mathcal D_{p,\le m}$.  Finally,
\[
        \mathcal D_{p,m}=\mathcal D_{p,\le m}\setminus \mathcal D_{p,\le m-1},
\]
with the evident convention for $m=0$, so the exact-degree stratum is regular as well.
\end{proof}

\begin{remark}
The theorem is intentionally an algorithmic consequence of Lucas' theorem and automata closure properties, not a closed-form human-readable classification for arbitrary $m$.  It is stronger than checking one exponent at a time: it produces, for each fixed $(p,m)$, a finite object whose accepted language is precisely the desired infinite set of exponents.  A closed-form list may still be preferable for small $m$; the degree-one and degree-two theorems are exactly such hand-minimized automata.
\end{remark}

\section{Higher-dimensional Hilbert--90 quotients}\label{sec:all-trace-zero-dimensions}

The cubic quotient used throughout the paper is the one-dimensional member of a general projective Hilbert--90 quotient.  This section gives the promised extension from the trace-zero plane to arbitrary trace-zero dimension.  For $n=3$ it recovers the curve $\Lambda_q$, the raw formula $H_d^{\rm raw}$, and the reduced map $h_d$ after choosing an affine coordinate.  The degenerate case $n=1$ is excluded throughout this section: then $\Gamma_{1,q}=\{0\}$ and every $P_d$ permutes it trivially, so the quotient criterion below would have no meaningful projective quotient.

Assume $n\ge2$.  Let $L=\F_{q^n}$, $K=\F_q$, and let $\sigma(x)=x^q$.  Put
\[
        \Gamma_{n,q}=\ker\Tr_{L/K}.
\]
Let
\[
        U_n=\left\{[X_0:\cdots:X_{n-1}]\in\PP^{n-1}:
        \sum_{i=0}^{n-1}X_i=0,
        \prod_{i=0}^{n-1}X_i\ne0\right\}.
\]
Thus $U_n$ is the complement of the coordinate hyperplanes in the projective trace-zero hyperplane, and $\dim U_n=n-2$.  Let
\[
        \rho[X_0:X_1:\cdots:X_{n-1}]=[X_1:X_2:\cdots:X_{n-1}:X_0]
\]
be the cyclic shift, and put $\Theta_{n,q}=\rho^{-1}\Frob_q$.

\begin{proposition}[projective Hilbert--90 quotient in dimension $n$]\label{prop:h90-all-n}
The map
\[
        \lambda_n:\Gamma_{n,q}^*/K^*\longrightarrow U_n(\barF_q)^{\Theta_{n,q}},
        \qquad
        x\longmapsto [x:\sigma x:\cdots:\sigma^{n-1}x],
\]
is a bijection.
\end{proposition}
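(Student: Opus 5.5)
The plan is to imitate the proof of \cref{prop:h90-quotient}, replacing the affine coordinate $z=\sigma x/x$ by the full projective orbit vector. The steps are: well-definedness and landing in the twisted fixed set; injectivity; and surjectivity via a descent argument, which is where the substantive work lies.

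\emph{Well-definedness and image.} For $x\in\Gamma_{n,q}^*$, every conjugate $\sigma^i x$ is nonzero, and $\sum_i\sigma^ix=\Tr_{L/K}(x)=0$. Hence $[x:\sigma x:\cdots:\sigma^{n-1}x]$ lies in $U_n$. Replacing $x$ by $cx$ with $c\in K^*$ scales every coordinate by $c$, so the point depends only on the class of $x$ in $\Gamma_{n,q}^*/K^*$. For the twisted fixed-point condition, apply $\Frob_q$ coordinatewise: it sends the point to $[\sigma x:\sigma^2x:\cdots:\sigma^{n}x]$, and since $\sigma^n x=x$ this equals $\rho[x:\sigma x:\cdots:\sigma^{n-1}x]$. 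Therefore $\Theta_{n,q}=\rho^{-1}\Frob_q$ fixes the point. I will check carefully that the shift convention for $\rho$ matches: $\rho$ moves coordinate $X_1$ into the first slot, which is exactly what Frobenius does here.

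\emph{Injectivity.} If $[x:\sigma x:\cdots]=[y:\sigma y:\cdots]$, then $y=cx$ for some $c\in L^*$ and $\sigma y=c\,\sigma x$. It follows that $\sigma(c)\sigma(x)=c\,\sigma(x)$, so $\sigma c=c$ and $c\in K^*$.

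\emph{Surjectivity (main step).} Let $P=[X_0:\cdots:X_{n-1}]\in U_n(\barF_q)$ be fixed by $\Theta_{n,q}$. All coordinates are nonzero, so normalize $X_0=1$. The fixed-point condition reads $\Frob_q(P)=\rho(P)$, i.e.\ $[X_0^q:\cdots:X_{n-1}^q]=[X_1:\cdots:X_{n-1}:X_0]$. Comparing with the normalization $X_0=1$ gives a scalar $\lambda=X_1$ with $X_i^q=\lambda^{-1}\cdots$; more cleanly, set $z_i=X_{i+1}/X_i$ (indices mod $n$), so that the condition becomes $z_i^q=z_{i+1}$. Hence $z:=z_0$ satisfies $z_i=z^{q^i}$, and the cyclic relation $z_n=z_0$ forces $z^{q^n}=z$. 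Thus $z\in L^*$, and $\Nm_{L/K}(z)=\prod_i z_i=1$ because the product telescopes. By multiplicative Hilbert 90 there is $x\in L^*$ with $\sigma x/x=z$. Then $\sigma^i x/x=\prod_{j<i}z_j=X_i/X_0$, so $P=[x:\sigma x:\cdots:\sigma^{n-1}x]$. Finally, the trace-zero hyperplane condition $\sum_i X_i=0$ gives $\Tr(x)=0$, so $x\in\Gamma_{n,q}^*$.

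The one point requiring care is whether $\barF_q$-points with the projective scalar ambiguity could fail to descend to $L$. The ratio normalization above removes this ambiguity, which is why I work with the ratios $z_i$ rather than the coordinates themselves.
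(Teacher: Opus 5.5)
Your proof is correct and follows the paper's argument: the same Frobenius-shift check that the point is $\Theta_{n,q}$-fixed, the same comparison of second coordinates for injectivity, and multiplicative Hilbert 90 for surjectivity. The one difference is cosmetic. You apply Hilbert 90 to the ratio $z=X_1/X_0$ and get $z^{q^n}=z$ directly from the cyclic relation. The paper instead first picks $\F_{q^n}$-rational coordinates and applies Hilbert 90 to the scalar $c$ with $X_i^q=cX_{i+1}$. After normalizing $X_0=1$ one has $c=z^{-1}$, so the two computations coincide.
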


\begin{proof}
If $x\in\Gamma_{n,q}^*$, then all its conjugates are nonzero and their sum is zero, so the displayed point lies in $U_n$.  Frobenius sends it to its cyclic shift, hence it is fixed by $\Theta_{n,q}$.  Multiplication of $x$ by an element of $K^*$ does not change the projective point.

The map is injective on $K^*$-orbits.  Indeed, if
\[
        [x:\sigma x:\cdots:\sigma^{n-1}x]
        =[y:\sigma y:\cdots:\sigma^{n-1}y],
\]
then $y=cx$ for some $c\in L^*$.  Comparing the second coordinates gives $\sigma(c)=c$, hence $c\in K^*$.

For surjectivity, let $P=[X_0:\cdots:X_{n-1}]\in U_n(\barF_q)^{\Theta_{n,q}}$.  Since $\Theta_{n,q}^n=\Frob_{q^n}$, the point $P$ is defined over $\F_{q^n}$ after a projective scaling.  Choose coordinates $X_i\in\F_{q^n}^*$ for $P$.  The relation $\Frob_q(P)=\rho(P)$ gives a scalar $c\in\F_{q^n}^*$ with
\[
        X_i^q=cX_{i+1}
        \qquad(i\bmod n).
\]
Iterating around the cycle gives $\Nm_{\F_{q^n}/\F_q}(c)=1$.  By multiplicative Hilbert 90 there is $b\in\F_{q^n}^*$ with $b^{q-1}=c^{-1}$.  Replacing $X_i$ by $bX_i$ makes the scalar equal to $1$, so the new coordinates satisfy $X_i^q=X_{i+1}$.  Thus they are
\[
        [x:\sigma x:\cdots:\sigma^{n-1}x]
\]
with $x=X_0$.  Since the coordinates sum to zero, $x\in\Gamma_{n,q}^*$.
\end{proof}

For the fixed-exponent trace-zero map
\[
        P_d(x)=\sigma(x)^d-x^d,
\]
the projective quotient is the rational self-map of the trace-zero hyperplane
\begin{equation}\label{eq:Psi-n-d}
        \Psi_{n,d}([X_0:\cdots:X_{n-1}])
        =[X_1^d-X_0^d:X_2^d-X_1^d:\cdots:X_0^d-X_{n-1}^d].
\end{equation}
The target coordinates sum to zero, and $\Psi_{n,d}$ commutes with the cyclic shift $\rho$.

\begin{proposition}[higher-dimensional torsion base locus]\label{prop:higher-torsion-base}
After base change to an algebraically closed field $k$ of characteristic $p$, the base locus of $\Psi_{n,d}$ on the projective trace-zero hyperplane is the finite torsion scheme
\[
        B_{n,d}\cong
        \left\{(u_1,\ldots,u_{n-1})\in\boldsymbol\mu_d^{\,n-1}:
        1+u_1+\cdots+u_{n-1}=0\right\},
\]
where the isomorphism is obtained from the affine chart $X_0=1$ by setting $u_i=X_i/X_0$.
\end{proposition}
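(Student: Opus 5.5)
The plan is to read off the base locus directly from the defining formula \eqref{eq:Psi-n-d}. On the trace-zero hyperplane $H=\{\sum X_i=0\}\subset\PP^{n-1}$, the map $\Psi_{n,d}$ is given by the $n$ forms $G_i=X_{i+1}^d-X_i^d$ (indices mod $n$). Its base locus is the closed subscheme of $H$ cut out by all of them. The first step is therefore to show that no base point lies on a coordinate hyperplane. If $G_i=0$ for all $i$, then $X_0^d=X_1^d=\cdots=X_{n-1}^d$, so either all $X_i$ vanish, which is impossible in projective space, or none does. Scheme-theoretically, the ideal $(G_0,\dots,G_{n-1})$ contains $X_{j}^d-X_0^d$ for every $j$. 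So on the open set $X_j=0$ it contains a power of $X_0$, and hence powers of every coordinate, making the locus empty there. Consequently the whole base scheme is contained in the chart $X_0\ne0$, and it is legitimate to work only in that affine chart.

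In the chart $X_0=1$, put $u_i=X_i$ for $1\le i\le n-1$. The hyperplane equation becomes $1+u_1+\cdots+u_{n-1}=0$, and the ideal is generated by the successive differences $u_{i+1}^d-u_i^d$ together with $u_1^d-1$ and $1-u_{n-1}^d$ (with $u_0=1$). Telescoping shows that this ideal equals $(u_1^d-1,\dots,u_{n-1}^d-1)$: each generator is a difference of two of these, and conversely each $u_i^d-1$ is a sum of consecutive generators. This gives exactly the defining ideal of $\boldsymbol\mu_d^{\,n-1}$ in $\Gm^{n-1}$, intersected with the affine hyperplane. The points automatically have all $u_i$ invertible because $u_i^d=1$, so we land in $\Gm^{n-1}$ as claimed. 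Finiteness follows because $\boldsymbol\mu_d^{\,n-1}$ is itself a finite scheme.

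The only delicate point is keeping the statement scheme-theoretic in the wild case $p\mid d$. There $\boldsymbol\mu_d$ is nonreduced, so I would avoid arguing pointwise and instead use the ideal identity above verbatim; it holds over any ring. I also want to confirm that the base locus of a rational map given by a tuple of forms means the subscheme cut out by those forms, and that no common factor of the $G_i$ restricted to $H$ needs removing. Since the $G_i$ share no common factor of positive degree, the base locus is at most finite and no such factor exists. For $n=3$, a sanity check is that this recovers $L_0\cap\boldsymbol\mu_d^2$ from \cref{thm:torsion-defect} and \cref{prop:p-power-reduction}.
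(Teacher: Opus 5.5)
Your argument is correct and is essentially the paper's proof: you show the base scheme misses every coordinate hyperplane (the paper uses a cyclic primality argument, you use $I+(X_j)\ni X_i^d$), then dehomogenize at $X_0=1$ and telescope the ideal to $(u_1^d-1,\ldots,u_{n-1}^d-1)$ plus the hyperplane equation, which is valid scheme-theoretically also when $p\mid d$. One caveat: your closing claim that the $G_i$ restricted to $H$ share no common factor is false for $n=3$ whenever the base locus is nonempty, since on $H\cong\PP^1$ the common linear factors are exactly the canceled points; the claim is also unnecessary, because the base locus is by definition the subscheme cut out by the given forms, with no factors removed.
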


\begin{proof}
Let
\[
        S=k[X_0,\ldots,X_{n-1}]/(X_0+\cdots+X_{n-1})
\]
and let $I\subset S$ be the homogeneous ideal generated by the coordinates of \eqref{eq:Psi-n-d}, equivalently by
\[
        X_1^d-X_0^d,\ X_2^d-X_1^d,\ldots,\ X_0^d-X_{n-1}^d.
\]
The base locus is $\operatorname{Proj}(S/I)$, with the usual saturation by the irrelevant ideal.  If a homogeneous prime $\mathfrak p$ in this Proj contains one coordinate $X_i$, then the equations in $I$ force $X_{i+1},X_{i+2},\ldots$ cyclically into $\mathfrak p$; hence all coordinates lie in $\mathfrak p$, contradicting the condition that $\mathfrak p$ is a point of Proj.  Thus the support of the saturated base locus is contained in the torus open $X_0\cdots X_{n-1}\ne0$.  Let $J$ be the saturation of $I$ by the irrelevant ideal.  Because no associated geometric support remains on the complement of this torus open, the saturated closed subscheme is obtained scheme-theoretically by contraction from the torus localization:
\[
        J=S\cap I\,S[(X_0\cdots X_{n-1})^{-1}].
\]
This saturation-contraction step preserves possible nilpotent structure; in particular, no reduction is being taken when $p\mid d$.

On the affine chart $X_0=1$ inside the torus, put $u_i=X_i/X_0$.  The localized equations are exactly
\[
        u_i^d-1=0\qquad(1\le i\le n-1),
\]
and the trace-zero hyperplane equation becomes
\[
        1+u_1+\cdots+u_{n-1}=0.
\]
Therefore, as a closed subscheme after saturation by the irrelevant ideal, the base locus is
\[
        \operatorname{Spec} k[u_1^{\pm1},\ldots,u_{n-1}^{\pm1}]/
        (u_1^d-1,\ldots,u_{n-1}^d-1,1+u_1+\cdots+u_{n-1}),
\]
which is precisely the stated torsion scheme.  This argument also covers the nonreduced case when $p\mid d$.
\end{proof}

\begin{theorem}[all-dimensional quotient criterion]\label{thm:all-n-quotient-criterion}
Assume $n\ge2$.  The map $P_d(x)=\sigma(x)^d-x^d$ permutes $\Gamma_{n,q}$ if and only if the following three conditions hold:
\begin{enumerate}[label=\textup{(\roman*)}]
\item $\gcd(d,q-1)=1$;
\item the underlying set of geometric points of the base locus $B_{n,d}$ has no point fixed by $\Theta_{n,q}$;
\item under condition \textup{(ii)}, the rational map $\Psi_{n,d}$ is defined on $U_n(\barF_q)^{\Theta_{n,q}}$ and induces a bijection on
\[
        U_n(\barF_q)^{\Theta_{n,q}}.
\]
\end{enumerate}
For $n=3$, after the affine coordinate $z=X_1/X_0$, this criterion is the specialization of \cref{prop:mult-desc} to $r=d$ and $B(z)=z^d-1$, namely to the quotient formula $H_d^{\rm raw}$ and its reduced map $h_d$.  Only the underlying geometric points of $B_{n,d}$ enter condition \textup{(ii)}; nilpotent structure is irrelevant there because the criterion concerns values of the induced function on a finite twisted fixed set.
\end{theorem}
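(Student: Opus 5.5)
The plan is to copy the proof of \cref{prop:mult-desc}, replacing \cref{prop:h90-quotient} by \cref{prop:h90-all-n}. Three basic facts drive everything.

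\emph{Homogeneity.} For $c\in K^*$ and $x\in\Gamma_{n,q}$ one has $P_d(cx)=c^dP_d(x)$, because $\sigma(c)=c$.

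\emph{Quotient compatibility.} For $x\in\Gamma_{n,q}^*$ with $\lambda_n(x)=[x:\sigma x:\cdots:\sigma^{n-1}x]$, we have $\sigma^i(P_d(x))=\sigma^{i+1}(x)^d-\sigma^i(x)^d$. Hence the coordinate vector $(P_d(x),\sigma P_d(x),\ldots,\sigma^{n-1}P_d(x))$ is exactly the coordinate vector of $\Psi_{n,d}$ at the representative $(x,\sigma x,\ldots)$. This holds whenever that vector is nonzero.

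\emph{Image in $\Gamma_{n,q}$.} Telescoping gives $\Tr P_d(x)=\sum_i(\sigma^{i+1}x^d-\sigma^ix^d)=0$. So $P_d$ maps $\Gamma_{n,q}$ into itself, and $P_d(0)=0$.

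\emph{Base locus versus zeros of $P_d$.} The vector above vanishes iff $P_d(x)=0$, since its entries are the Galois conjugates of $P_d(x)$. That happens iff $(\sigma x/x)^d=1$, i.e. iff all ratios $X_{i+1}/X_i$ lie in $\mu_d$. By \cref{prop:higher-torsion-base}, applied in the chart $u_i=X_i/X_0$ (all $X_i\ne0$ on $U_n$), this is exactly the condition that $\lambda_n(x)$ is a geometric point of $B_{n,d}$. Since $\lambda_n$ is a bijection onto $U_n(\barF_q)^{\Theta_{n,q}}$, the following are equivalent: $P_d$ has a zero on $\Gamma_{n,q}^*$; $B_{n,d}$ has a $\Theta_{n,q}$-fixed geometric point. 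Only underlying points matter here, which justifies the remark on nilpotents.

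\emph{Necessity.} Suppose $P_d$ permutes $\Gamma_{n,q}$. Then (ii) holds, since otherwise some nonzero $x$ has $P_d(x)=0=P_d(0)$. Away from the base locus, $P_d(x)\ne0$ lies in $\Gamma_{n,q}^*$. Compatibility gives $\lambda_n(P_d(x))=\Psi_{n,d}(\lambda_n(x))$, so $\Psi_{n,d}$ is defined on the fixed set and maps it into itself. Injectivity on each fiber $K^*x$, via homogeneity, forces $c\mapsto c^d$ to be injective on $K^*$, which is (i). A permutation of $\Gamma_{n,q}^*$ commuting with the $K^*$-orbit structure induces a bijection on $\Gamma_{n,q}^*/K^*$, giving (iii).

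\emph{Sufficiency.} Assume (i)–(iii). By (ii), $P_d$ sends $\Gamma_{n,q}^*$ into $\Gamma_{n,q}^*$. By (iii) and the compatibility square, it is bijective on the set of orbits. By (i) and homogeneity, it is bijective from each orbit $K^*x$ onto the orbit $K^*P_d(x)$. Together with $0\mapsto0$, $P_d$ is a bijection.

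\emph{Specialization to $n=3$.} For $n=3$, put $z=X_1/X_0=\sigma x/x$; then $X_2/X_0=z\,\sigma(z)=z\tau(z)$. Dehomogenizing the first two coordinates of $\Psi_{3,d}$ gives $(\sigma x^d-\sigma^2x^d)/(x^d-\sigma x^d)$. After dividing by $x^d$ and using $z^q=\tau(z)$, this becomes $z^d(\tau(z)^d-1)/(z^d-1)\cdot(\pm\text{sign bookkeeping})$, which is $H_d^{\rm raw}$. The base-locus condition becomes $\mu_d\cap\Lambda_q=\varnothing$, via $\tau(z)\in\mu_d$ whenever $z\in\mu_d\cap\Lambda_q$, as in \cref{cor:Qplus1-branch}.

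The main obstacle is the base-locus identification. Two points need care. First, "$\Psi_{n,d}$ defined at a point" must mean that the given coordinate vector is nonzero. A priori a rational map could extend through a base point after cancellation, so I would state (iii) using the displayed coordinates, as the theorem's phrasing "under condition (ii)" permits. Second, the representative-independence of the compatibility needs checking; it holds because scaling by $c\in K^*$ scales all coordinates by $c^d$.
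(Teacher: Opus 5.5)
Your proposal is correct and follows essentially the same route as the paper: you use fiber homogeneity $P_d(cx)=c^dP_d(x)$ to get (i), and the conjugate identity $\sigma^i(P_d(x))=\sigma^{i+1}(x)^d-\sigma^i(x)^d$ together with \cref{prop:h90-all-n} to identify $\Psi_{n,d}$ with the induced quotient map; you then observe that the image vanishes exactly at the $\Theta_{n,q}$-fixed base-locus points, which is condition (ii). The only cosmetic difference is in the $n=3$ specialization: the paper uses the hyperplane relation $X_2/X_0=-1-z$ to obtain $H_d^{\rm raw}$ as an identity of rational maps, and in your computation no sign bookkeeping is needed, since dividing by $x^d$ gives exactly $z^d(\tau(z)^d-1)/(z^d-1)$.
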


\begin{proof}
The zero element of $\Gamma_{n,q}$ is fixed.  On $\Gamma_{n,q}^*$, scalar multiplication by $c\in K^*$ changes $P_d(x)$ by the factor $c^d$.  Hence the map on each $K^*$-fiber is bijective precisely when $\gcd(d,q-1)=1$.

By \cref{prop:h90-all-n}, the quotient of $\Gamma_{n,q}^*$ by $K^*$ is the twisted fixed set $U_n^{\Theta_{n,q}}$.  Formula \eqref{eq:Psi-n-d} is obtained by applying $\sigma^i$ to $P_d(x)$:
\[
        \sigma^i(P_d(x))=\sigma^{i+1}(x)^d-\sigma^i(x)^d.
\]
Thus $\Psi_{n,d}$ is the quotient map wherever the image is nonzero.  The image is zero exactly when all displayed differences vanish, i.e. exactly at the underlying geometric point set of the base locus of \cref{prop:higher-torsion-base}.  Moreover, for a twisted fixed point coming from $x\in\Gamma_{n,q}^*$, the target coordinates are the conjugates $\sigma^i(P_d(x))$.  If any one of these coordinates is zero, then $P_d(x)=0$, so all of them are zero and the point is in the base locus.  Hence every non-base image of a twisted fixed point again lies in $U_n$.  Therefore condition (ii) is the higher-dimensional denominator condition, and condition (iii) is the quotient bijectivity condition.  Only the underlying geometric points of $B_{n,d}$ matter here: the criterion evaluates the rational map on actual twisted fixed points, so nilpotent structure in a nonreduced base locus cannot create additional inputs.  Combining the fiber and quotient conditions proves the criterion.

When $n=3$, the hyperplane $X_0+X_1+X_2=0$ with $z=X_1/X_0$ gives $X_2/X_0=-1-z$.  The quotient map \eqref{eq:Psi-n-d} becomes the same rational map as \eqref{eq:Hd}, and the base-locus condition becomes $\mu_d\cap\Lambda_q=\varnothing$.  Thus the result recovers precisely the fixed-exponent specialization $r=d$, $B(z)=z^d-1$ of \cref{prop:mult-desc}, not the full generality of that proposition for arbitrary $r$ and $B$.
\end{proof}

For the higher-dimensional collision complexes below, fix a prime $\ell\ne p$.

\begin{definition}[higher-dimensional collision complex]\label{def:higher-collision-complex}
Let $W\subset U_n$ be a nonempty open subset defined over $\F_p$ and stable under $\rho$.  Since $U_n$ is a smooth equidimensional variety of dimension $n-2$, the open subset $W$ is also smooth and equidimensional of dimension $n-2$.  Let
\[
        f:V\to W
\]
be a $\rho$-equivariant finite etale restriction of $\Psi_{n,d}$ over $W$, and assume that $V$ is also defined over $\F_p$ and that $f$ is separable and generically finite of degree $N$.  These hypotheses imply that $W$ and $V$ are preserved by
\[
        \Theta_{n,q}=\rho^{-1}\Frob_q
\]
for every $q=p^k$.  Define the off-diagonal collision variety
\[
        C_f=(V\times_WV)\setminus\Delta.
\]
Then $V$ is smooth and equidimensional of dimension $n-2$, and $C_f$, when nonempty, is smooth and equidimensional of dimension $n-2$; moreover $C_f$ is preserved by $\Theta_{n,q}$.  The higher-dimensional twisted collision complex is
\[
        R\Gamma_c(C_{f,\barF_p},\Ql),
\]
with the induced action of $\Theta_{n,q}$.
\end{definition}

\begin{definition}[higher-dimensional twisted exceptionality]\label{def:higher-twisted-exceptional}
Let $f:V\to W$ be a $\rho$-equivariant morphism of $\rho$-stable varieties over $\F_p$.  We say that $f$ is $\Theta_n$-twisted exceptional in dimension $n$ if, for infinitely many $k\ge1$, it induces a bijection
\[
        V(\barF_p)^{\Theta_{n,p^k}}
        \longrightarrow
        W(\barF_p)^{\Theta_{n,p^k}}.
\]
For a rational quotient map $\Psi_{n,d}$, this terminology refers to a chosen finite-etale restriction $f:V\to W$ as in \cref{def:higher-collision-complex}; an off-diagonal collision on $V(\barF_p)^{\Theta_{n,p^k}}$ rules out bijectivity for that same $k$ for any extension of the same quotient map whose domain contains the two colliding points and on which the map is defined at them.
\end{definition}

\begin{proposition}[higher-dimensional collision trace]\label{prop:higher-collision-trace}
With the notation of \cref{def:higher-collision-complex}, let $r=n-2$ and let $N_f(q)$ be the number of ordered off-diagonal collisions on $V(\barF_p)^{\Theta_{n,q}}$.  Then
\[
        N_f(q)=\sum_i(-1)^i
        \operatorname{Tr}\bigl(\Theta_{n,q}\mid H_c^i(C_{f,\barF_p},\Ql)\bigr).
\]
Define $a_f(q)$ as follows.  If $r\ge1$, let $a_f(q)$ be the trace of $\Theta_{n,q}$ on the permutation representation generated by the $r$-dimensional geometric irreducible components of $C_f$; equivalently, it is the number of such components fixed by $\Theta_{n,q}$.  If $r=0$, let $a_f(q)$ be the number of $\Theta_{n,q}$-fixed geometric points of $C_f$.  Then for $r\ge1$
\[
        N_f(q)=a_f(q)q^r+O_f(q^{r-1/2}).
\]
For $r=0$, the formula is the exact zero-dimensional count $N_f(q)=a_f(q)$.  In particular, if $r\ge1$ and $a_f(p^k)>0$ for all sufficiently large $k$, then $f$ is not $\Theta_n$-twisted exceptional in dimension $n$ in the sense of \cref{def:higher-twisted-exceptional}.
\end{proposition}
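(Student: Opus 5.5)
The proof follows the one-dimensional argument of \cref{thm:collision-trace}, with the curve $C_f$ replaced by the equidimensional variety $C_f$ of dimension $r=n-2$.

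\textbf{Step 1: the trace formula.} Since $\rho$ is defined over $\F_p$ and $f$ is $\rho$-equivariant, $f$ commutes with $\Theta_{n,q}=\rho^{-1}\Frob_q$. The $\Theta_{n,q}$-fixed geometric points of $C_f$ are therefore exactly the ordered pairs $(x,y)$ with $x\ne y$, $x,y\in V(\barF_p)^{\Theta_{n,q}}$ and $f(x)=f(y)$. Moreover $\rho$ has finite order $n$, so $\Theta_{n,q}$ is a finite-order automorphism composed with Frobenius on the separated scheme $C_f$ of finite type. The equivariant Grothendieck--Lefschetz trace formula \cite[VI]{SGA4half} then gives the displayed alternating sum.

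When $r=0$, $C_f$ is a finite étale scheme. Its cohomology is concentrated in $H_c^0$, which is the permutation representation on its geometric points, so the trace is the number of fixed points and $N_f(q)=a_f(q)$ exactly.

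\textbf{Step 2: the top degree.} Let $r\ge1$. Since $C_f$ is smooth and equidimensional of dimension $r$, $H_c^i=0$ for $i>2r$. Also $H_c^{2r}(C_{f,\barF_p},\Ql)\cong\Ql(-r)^{\pi_0}$, with one copy for each geometric irreducible component; these coincide with the connected components because $C_f$ is smooth. After the Tate twist this is the permutation representation on components. A component fixed by $\Theta_{n,q}$ contributes $q^r$ to the trace, and a component moved in a longer cycle contributes $0$, so the top term equals $a_f(q)q^r$.

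\textbf{Step 3: the lower degrees.} For $i\le 2r-1$ I would generalize \cref{lem:twisted-frobenius-weights}. Since $\Theta_{n,q}^n=\Frob_{q^n}$, every eigenvalue $\lambda$ of $\Theta_{n,q}$ on $H_c^i$ satisfies $|\lambda|^n\le (q^n)^{i/2}$ by Deligne's theorem that $H_c^i$ is mixed of weights $\le i$ \cite{DeligneWeilII}. Hence $|\lambda|\le q^{i/2}\le q^{r-1/2}$. The dimensions $\dim H_c^i$ depend only on $f$, and there are finitely many $i$, so the lower-degree contribution is $O_f(q^{r-1/2})$. This proves $N_f(q)=a_f(q)q^r+O_f(q^{r-1/2})$.

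\textbf{Step 4: the consequence.} Choose $Q_f$ so that the error term is smaller than $q^r$ for all $q>Q_f$. If $a_f(p^k)>0$ for all large $k$, then $N_f(p^k)>0$ for all large $k$, so there is an off-diagonal collision in $V(\barF_p)^{\Theta_{n,p^k}}$. This rules out bijectivity for all but finitely many $k$, as required by \cref{def:higher-twisted-exceptional}.

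The main obstacle is Step 2: one must check that $C_f$ is genuinely smooth and equidimensional of dimension $r$, with no lower-dimensional pieces. This holds because $V\times_WV\to V$ is finite étale and $\Delta$ is open and closed in it (a section of a separated étale map). If lower-dimensional pieces did occur, only the $r$-dimensional components would contribute to degree $2r$, which matches the definition of $a_f(q)$.
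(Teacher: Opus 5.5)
Your proposal is correct and follows essentially the same route as the paper's proof: Grothendieck--Lefschetz for the twisted operator $\Theta_{n,q}$; the top-degree component count giving $a_f(q)q^r$; Deligne's weight bound transferred through $\Theta_{n,q}^n=\Frob_{q^n}$ to bound the lower degrees by $O_f(q^{r-1/2})$; and the same positivity argument for non-exceptionality. Your extra check that $C_f$ is smooth and equidimensional is a correct supplement to what the paper simply takes from \cref{def:higher-collision-complex}: the diagonal is open and closed in the finite etale $V\times_WV$.
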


\begin{proof}
The fixed points of $\Theta_{n,q}$ on $C_f$ are exactly ordered pairs of distinct twisted fixed points with the same image.  The first formula is Grothendieck--Lefschetz.  If $r=0$, then $C_f$ is finite etale over the chosen locus and the trace formula is simply the exact count of fixed geometric points, which is the stated definition of $a_f(q)$.  If $r\ge1$, then \cref{def:higher-collision-complex} makes $C_f$ smooth and equidimensional of dimension $r$, and the top compactly supported cohomology contributes one copy of $\Ql(-r)$ for each $r$-dimensional irreducible component fixed by $\Theta_{n,q}$, giving the main term $a_f(q)q^r$.

It remains to justify the error term for the twisted operator.  The cyclic shift $\rho$ has order $n$ and commutes with Frobenius, so $\Theta_{n,q}^n=\Frob_{q^n}$ on $C_f$ and on compactly supported cohomology.  If $\alpha$ is an eigenvalue of $\Theta_{n,q}$ on $H_c^i(C_{f,\barF_p},\Ql)$, then $\alpha^n$ is an eigenvalue of $\Frob_{q^n}$ on the same space.  Deligne's weight bounds \cite{DeligneWeilII} give $|\alpha^n|\le q^{ni/2}$, hence $|\alpha|\le q^{i/2}$.  All terms with $i<2r$ therefore contribute $O_f(q^{r-1/2})$.  The final assertion for $r\ge1$ follows exactly as in \cref{thm:collision-trace}.
\end{proof}

\section{Additive full-field lifts}\label{sec:fullfield}

This appendix is supplemental to the monodromy results in the main text.  Throughout this section,
\[
        q=p^k,
        \qquad
        L=\F_{q^3},
        \qquad
        K=\F_q,
        \qquad
        \Gamma_q=\ker\Tr_{L/K}.
\]
The trace-zero permutations above can be lifted to permutations of $\F_{q^3}$ by the additive Hilbert--90 fiber.

\begin{lemma}[additive fiber criterion]\label{lem:additive-fiber}
Let $\delta(x)=x^q-x$, so $\delta:L\to\Gamma_q$ is surjective with fibers $u+K$.  Suppose $F:L\to L$ and $G:\Gamma_q\to\Gamma_q$ satisfy
\[
        \delta\circ F=G\circ\delta.
\]
Then $F$ permutes $L$ if and only if $G$ permutes $\Gamma_q$ and $F$ is injective on every fiber $u+K$ of $\delta$.
\end{lemma}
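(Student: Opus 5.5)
The plan is a direct counting argument on the finite sets $L$ and $\Gamma_q$, using the identity $\delta\circ F=G\circ\delta$ to transfer injectivity between $F$ and $G$. First I will record the basic facts about $\delta$. It is $K$-linear with kernel $\{x:x^q=x\}=K$, so its fibers are exactly the cosets $u+K$. Its image lies in $\Gamma_q$ because $\Tr_{L/K}(x^q-x)=0$, and since $|L|/|K|=q^2=|\Gamma_q|$ (the dimension count from \cref{prop:h90-quotient}), $\delta$ is onto $\Gamma_q$. Every fiber therefore has exactly $q$ elements. Because $L$ is finite, permuting $L$ is the same as being injective on $L$, and likewise for $G$ on $\Gamma_q$.

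For the forward direction, assume $F$ permutes $L$. Then $F$ is injective on every fiber $u+K$ automatically. For surjectivity of $G$, take $y\in\Gamma_q$. Choose $x\in L$ with $\delta(x)=y$, and then $w\in L$ with $F(w)=x$. This gives $G(\delta w)=\delta(F(w))=y$. Since $\Gamma_q$ is finite, $G$ is then a permutation.

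For the converse, assume $G$ is bijective and $F$ is injective on each fiber. Suppose $F(x)=F(x')$. Applying $\delta$ gives $G(\delta x)=G(\delta x')$, and injectivity of $G$ yields $\delta x=\delta x'$. So $x$ and $x'$ lie in the same fiber $x+K$, and fiber injectivity gives $x=x'$. Hence $F$ is injective on the finite set $L$, and therefore a permutation.

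There is no real obstacle here. The only point needing care is the surjectivity of $\delta$ onto $\Gamma_q$, which the dimension count settles. It is also worth noting that, in the converse, $F$ maps each fiber $u+K$ into the fiber over $G(\delta u)$, which has the same size $q$; fiber injectivity is therefore equivalent to fiberwise bijectivity. This is the form in which the lemma will actually be applied later.
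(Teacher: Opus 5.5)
Your proposal is correct and follows the paper's proof essentially step for step. Surjectivity of $\delta$ onto $\Gamma_q$ comes from $\ker\delta=K$ and $|\Gamma_q|=q^2$, and then the standard fiber argument runs in both directions, with finiteness turning injectivity into bijectivity. The only difference is that you spell out the surjectivity of $G$ in the forward direction, which the paper compresses into the phrase that the commutative diagram ``forces $G$ to be bijective on the quotient.''
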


\begin{proof}
First note the additive Hilbert--90 exactness used in the statement.  For every $x\in L$,
\[
        \Tr_{L/K}(x^q-x)=0,
\]
so $\delta(L)\subseteq\Gamma_q$.  Also $\ker(\delta)=K$, hence
\[
        |\delta(L)|=|L|/|K|=q^2.
\]
Since $L/K$ is finite and separable, the trace map $\Tr_{L/K}:L\to K$ is a nonzero $K$-linear functional, hence is surjective; therefore $|\Gamma_q|=q^2$.  It follows that $\delta(L)=\Gamma_q$, and the fibers are exactly the cosets $u+K$.

Now apply the usual fiber criterion.  If $F$ is bijective, then it is injective on fibers and the commutative diagram forces $G$ to be bijective on the quotient.  Conversely, if $G$ is bijective and $F$ is injective on every fiber, then two points with the same image under $F$ must have the same image under $G\circ\delta$, hence lie in the same fiber of $\delta$, where injectivity applies.  Since $L$ is finite, injectivity implies bijectivity.
\end{proof}

\begin{corollary}[full-field Mersenne lifts]\label{cor:fullfield}
Let $q=2^k$ with $k\ge1$, let $a\ge1$, put $d=2^a-1$, and let $m\ge0$.  For $c\in\F_{q^3}$ define
\[
        \mathcal F_{a,m,c}(X)
        =(X^q-X)^d
        +cX^{2^m}+c^qX^{q\cdot 2^m}+c^{q^2}X^{q^2\cdot 2^m}.
\]
Then $\mathcal F_{a,m,c}$ permutes $\F_{q^3}$ if and only if
\[
        \gcd(a,k)=1,
        \qquad 3\nmid a,
        \qquad \Tr_{\F_{q^3}/\F_q}(c)\ne0.
\]
\end{corollary}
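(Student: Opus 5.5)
The plan is to apply the additive fiber criterion, \cref{lem:additive-fiber}, with $F=\mathcal F_{a,m,c}$ and $\delta(x)=x^q-x$. Put
\[
        T_c(X)=cX^{2^m}+c^qX^{q2^m}+c^{q^2}X^{q^22^m}=\Tr_{L/K}\bigl(cX^{2^m}\bigr).
\]
This takes values in $K$, so $\delta(T_c(x))=0$. Hence
\[
        \delta(F(x))=\delta\bigl((x^q-x)^d\bigr)=(x^q-x)^{dq}-(x^q-x)^d=P_d(\delta x).
\]
This gives $\delta\circ F=G\circ\delta$ with $G=P_d|_{\Gamma_q}$, and $G$ maps $\Gamma_q$ into itself because $\Tr(y^q-y)=0$. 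By \cref{lem:additive-fiber}, $F$ permutes $L$ if and only if $P_d$ permutes $\Gamma_q$ and $F$ is injective on every coset $u+K$. By \cref{cor:mersenne}, the first condition is exactly $\gcd(a,k)=1$ and $3\nmid a$.

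It remains to analyse the fiber condition. On $u+K$, with $t\in K$, the term $(x^q-x)^d$ is constant, since $\delta(u+t)=\delta(u)$. In characteristic two,
\[
        T_c(u+t)=T_c(u)+\Tr_{L/K}\bigl(c\,t^{2^m}\bigr)=T_c(u)+t^{2^m}\Tr_{L/K}(c),
\]
because $T_c$ is additive ($2^m$-power Frobenius is additive) and $t^{2^m}\in K$ is pulled out of the $K$-linear trace. So $F(u+t)=F(u)+\Tr(c)\,t^{2^m}$. The map $t\mapsto t^{2^m}$ is a bijection of $K$. Hence $F$ is injective on the fiber if $\Tr(c)\neq0$, and constant on it, so not injective since $|K|\ge2$, if $\Tr(c)=0$. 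This condition does not depend on $u$.

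Combining the two conditions gives the equivalence. The only delicate points are the identity $\delta\circ F=P_d\circ\delta$, which needs $T_c(L)\subseteq K$, and the $K$-semilinearity $\Tr(ct^{2^m})=t^{2^m}\Tr(c)$. Both are routine, so no real obstacle is expected beyond invoking \cref{cor:mersenne}.
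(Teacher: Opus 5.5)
Your proposal is correct and follows the paper's proof essentially step for step. The paper likewise observes that the trace-linear term lies in $K$ and is killed by $\delta$, so that $\delta\circ\mathcal F_{a,m,c}=P_d\circ\delta$; it invokes \cref{cor:mersenne} for the quotient condition, reads off the fiber shift $\Tr_{L/K}(c)\,t^{2^m}$ together with bijectivity of $t\mapsto t^{2^m}$ on $K$, and concludes with \cref{lem:additive-fiber}.
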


\begin{proof}
Let $\delta=X^q-X$.  The first term factors through $\delta$, and
\[
        \delta((\delta X)^d)=(\delta X)^{dq}-(\delta X)^d=P_d(\delta X).
\]
Thus it induces the trace-zero map of \cref{cor:mersenne}.  The remaining trace-linear term lies in $K$, so it is killed by $\delta$.  On a fiber $u+t$ with $t\in K$, the trace-linear term changes by
\[
        \Tr_{L/K}(c(u+t)^{2^m})-\Tr_{L/K}(cu^{2^m})
        =\Tr_{L/K}(c)t^{2^m}.
\]
Since $t\mapsto t^{2^m}$ is a bijection of $K$, the fiber map is injective if and only if $\Tr_{L/K}(c)\ne0$.  Apply \cref{lem:additive-fiber}.
\end{proof}

\begin{theorem}[stable sparsity obstruction for Mersenne lifts]\label{thm:mersenne-sparsity-obstruction}
Let $q=2^k$ with $k\ge1$, let $a\ge1$, put $d=2^a-1$, and assume
\[
        1<d<q-1.
\]
Let $\delta(X)=X^q-X=X^q+X$, and set $M=q^3-1$.  For a positive exponent $e$, write
\[
        \langle e\rangle_M=1+((e-1)\bmod M)\in\{1,\ldots,M\}.
\]
Throughout this theorem, reducing exponents in the function algebra of $\F_{q^3}$ means replacing each nonconstant monomial $X^e$ by $X^{\langle e\rangle_M}$ and then combining like monomials; in particular the class $0\bmod M$ is represented by $X^M$, not by the constant function $1$.  With polynomial representatives taken over $\F_{q^3}$ in this canonical sense, any polynomial $F$ satisfying
\[
        \delta(F)=P_d(\delta X)
\]
has at least $d$ nonconstant monomial terms.  This lower bound is sharp: one may take
\[
        F_0(X)=X^{q^2d}+\sum_{j=1}^{d-1}X^{q(d-j)+j}.
\]
Consequently, for any sequence of pairs $(a,k)$ with $1<2^a-1<2^k-1$ and $a\to\infty$, every such additive primitive has a number of nonconstant quotient monomial terms tending to infinity.  In the stable range there is therefore no bound independent of $a$ analogous to the bounded-term Ding--Song--Xiong trinomials or pentanomials.
\end{theorem}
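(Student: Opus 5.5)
The plan is to work in the function algebra $\mathcal A=\F_{q^3}[X]/(X^{q^3}-X)$. It has the canonical monomial basis $X^0,X^1,\ldots,X^M$, which is $q^3$ functions with $X^0=1$ and $X^M$ distinct. In this basis $\delta(F)=F^q+F$ is additive (we are in characteristic two). The $q$-power acts on basis elements by $cX^e\mapsto c^qX^{\langle qe\rangle_M}$ for $e\ge1$, and fixes the constant monomial. Hence the exponent set $\{1,\ldots,M\}$ splits into orbits under $e\mapsto\langle qe\rangle_M$. Writing $e=e_0+e_1q+e_2q^2$ in base $q$, this action is the cyclic rotation of the digit triple $(e_0,e_1,e_2)$. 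The solution set of $\delta(F)=P_d(\delta X)$ is a coset $F_0+\ker\delta$. So the proof splits into three steps: verify the particular solution $F_0$, describe $\ker\delta$, and count terms orbit by orbit.

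\textbf{Step 1: the particular solution.} Since $d=2^a-1$, Lucas' theorem makes every $\binom dj$ odd, so
\[
(X^q+X)^d=\sum_{j=0}^dX^{q(d-j)+j}.
\]
Hence
\[
P_d(\delta X)=(\delta X)^{dq}+(\delta X)^d=\sum_{j=0}^d\bigl(X^{q^2(d-j)+qj}+X^{q(d-j)+j}\bigr).
\]
Because $d<q-1$, each exponent here is already a reduced base-$q$ digit triple: $(j,d-j,0)$ or $(0,j,d-j)$. Next compute $F_0^q+F_0$. The $q$-th power of $X^{q^2d}$ reduces to $X^d$, and the $q$-th power of $X^{q(d-j)+j}$ is $X^{q^2(d-j)+qj}$. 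Matching these against the expansion, including the endpoint terms $j=0,d$, gives $\delta(F_0)=P_d(\delta X)$. This is a routine bookkeeping check.

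\textbf{Step 2: the kernel.} An element $G=\sum g_eX^e$ lies in $\ker\delta$ exactly when $G^q=G$ in $\mathcal A$. Comparing coefficients in the canonical basis, this says: $g_0\in K$, $g_M\in K$, and $g_{\langle qe\rangle_M}=g_e^q$ along each orbit. On an orbit of size three this forces the restriction of $G$ to be either $0$ or $cX^e+c^qX^{qe}+c^{q^2}X^{q^2e}$ with $c\neq0$; in the second case all three monomials are present.

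\textbf{Step 3: counting.} The nonconstant monomials of $F_0$ have digit triples $(j,d-j,0)$ for $1\le j\le d-1$, together with $(0,0,d)$. Each such triple contains a zero digit and a nonzero digit, so its rotation orbit has size exactly three. Two triples $(j,d-j,0)$ and $(j',d-j',0)$ lie in the same orbit only if the zero sits in the same position after rotation, which forces $j=j'$. Since $j$ and $d-j$ are both nonzero, none of them lies in the orbit of $(0,0,d)$, which has two zero digits. So $F_0$ contributes exactly one monomial to each of $d$ pairwise distinct orbits of size three.

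For any solution $F=F_0+G$, look at one of these $d$ orbits. If $G$ vanishes on it, $F$ has exactly one term there. Otherwise $G$ fills all three positions of the orbit, and adding the single $F_0$ term can cancel at most one of them, leaving at least two terms. Either way each of the $d$ orbits carries at least one nonconstant term of $F$. This gives the bound $\ge d$, with equality attained by $F_0$, and the statement as $a\to\infty$ follows immediately.

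The step needing the most care is Step 2 together with the normalization. The key point is that the $\langle\cdot\rangle_M$ convention (class $0$ represented by $X^M$, not by the constant $1$) is exactly what makes Frobenius a permutation of the canonical monomial basis. That in turn justifies the coefficient-by-coefficient description of $\ker\delta$ and keeps constant terms separate from the $X^M$ term.
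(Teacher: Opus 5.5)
Your proof is correct and uses the same orbit-counting strategy as the paper: exponents split into orbits under multiplication by $q$ modulo $M$, $\delta$ acts orbit-locally, the $d$ relevant three-element orbits must each carry a term, and $F_0$ shows the bound is sharp. The differences are only organizational. You identify the orbits by rotating base-$q$ digit triples, whereas the paper uses an integer collision computation. You also obtain the lower bound from the coset $F_0+\ker\delta$ and the all-or-nothing structure of $\ker\delta$ on three-element orbits, whereas the paper argues directly that the target is nonzero on $d$ orbits and that a monomial contributes only to its own orbit.
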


\begin{proof}
Write
\[
        (X^q+X)^d=\sum_{j=0}^d X^{e_j},
        \qquad
        e_j=q(d-j)+j.
\]
All binomial coefficients are $1$ in characteristic two because $d=2^a-1$.  Under the stable hypothesis all exponents appearing below are positive and not congruent to $0$ modulo $M$, so the canonical exponent convention agrees with ordinary residue computations in $\mathbb Z/M\mathbb Z$.  For a coefficient $\alpha\in\F_{q^3}$, the operator $\delta$ sends a monomial $\alpha X^e$ to $\alpha^qX^{qe}+\alpha X^e$.  Thus coefficients may change by Frobenius, but exponents remain inside the orbit of $e$ under multiplication by $q$ on $\mathbb Z/M\mathbb Z$.

Because $d<q-1$, the exponents $e_j$ are distinct, nonzero, and less than $q^2+q+1$; hence none is fixed by multiplication by $q$ modulo $M=q^3-1$.  We next check the orbit collisions.  If $e_i\equiv qe_j\pmod M$, then in fact $qe_j<M$, since $e_j\le qd<q(q-1)$; hence equality holds in the integers.  The equation $e_i=qe_j$ is
\[
        qd-(q-1)i=q(qd-(q-1)j),
\]
which reduces to $i=q(j-d)$.  Thus the only solution with $0\le i,j\le d$ is $(i,j)=(0,d)$, giving
\[
        qe_d=e_0=qd.
\]
The case $e_i\equiv q^2e_j\pmod M$ is equivalent, after multiplying by $q$, to $qe_i\equiv e_j\pmod M$, and the same argument gives the same collision.  Therefore the $d+1$ exponents $e_0,\ldots,e_d$ occupy exactly $d$ distinct $q$-orbits.

Thus the support of
\[
        P_d(\delta X)=(X^q+X)^{dq}+(X^q+X)^d
\]
meets exactly $d$ distinct $q$-orbits: one orbit contributes the two exponents $d$ and $q^2d$, and for each $1\le j\le d-1$ one orbit contributes the two exponents $e_j$ and $qe_j$.

The target has no contribution on a $q$-fixed exponent class, and the support analysis above placed every nonzero target contribution on a length-three orbit.  A monomial $aX^e$ contributes only to the $q$-orbit of the exponent class of $e$, namely as $a^qX^{qe}+aX^e$ in characteristic two, with the same canonical reduction of positive exponents.  Thus no monomial from one orbit can help produce a nonzero target contribution on another orbit.  Consequently every preimage of $P_d(\delta X)$ must contain at least one monomial from each of these $d$ nonzero orbits, independently of the coefficient field.  This proves the lower bound.

Finally,
\[
        \delta\bigl(X^{q^2d}\bigr)=X^d+X^{q^2d}
\]
as functions on $\F_{q^3}$, and for $1\le j\le d-1$,
\[
        \delta\bigl(X^{e_j}\bigr)=X^{e_j}+X^{qe_j}.
\]
Adding these identities gives $\delta(F_0)=P_d(\delta X)$, so the bound is sharp.
\end{proof}

\begin{example}[stable support count for $q=8$ and $d=3$]\label{ex:stable-mersenne-q8-d3}
Take $q=8$, $a=2$, and $d=2^a-1=3$.  Then $M=q^3-1=511$, and \cref{thm:mersenne-sparsity-obstruction} predicts that any additive primitive of $P_3(X^8-X)$ has at least three nonconstant quotient monomials.  The sharp primitive supplied by the theorem is
\[
        F_0(X)=X^{192}+X^{17}+X^{10}.
\]
Indeed, in characteristic two,
\[
\begin{aligned}
        (X^8+X)^3 &= X^{24}+X^{17}+X^{10}+X^3,\\
        (X^8+X)^{24} &= X^{192}+X^{136}+X^{80}+X^{24},
\end{aligned}
\]
so the $X^{24}$ terms cancel and
\[
        P_3(X^8-X)=X^{192}+X^{136}+X^{80}+X^{17}+X^{10}+X^3.
\]
On the other hand,
\[
        \delta(F_0)=F_0^8-F_0=F_0^8+F_0
\]
in characteristic two, and reducing positive exponents modulo $511$ gives exactly the same six monomials.  The three terms of $F_0$ represent the three distinct $q$-orbits in the proof of \cref{thm:mersenne-sparsity-obstruction}.
\end{example}

\begin{theorem}[unstable Mersenne exponent-collision classification]\label{thm:unstable-mersenne-classification}
Let $q=2^k$ with $k\ge1$, let $a\ge1$, let $M=q^3-1$, and put $d_a=2^a-1$.  Let $b$ be the least nonnegative residue of $a$ modulo $3k$.

As polynomial functions on $\mathbb F_{q^3}$,
\[
        P_{d_a}=X^{d_aq}-X^{d_a}
        =
        \begin{cases}
        P_{2^b-1},& b>0,\\[2pt]
        0,& b=0.
        \end{cases}
\]
Thus every unstable Mersenne coincidence is a residue collapse modulo $q^3-1$; it is not a new exponent family on $\mathbb F_{q^3}$.

More precisely, let $S=q^2+q+1$, let $R$ be the least residue of $2^b$ modulo $S$, and let
\[
        I_R=\{0,1,\ldots,R-1\}\subset \mathbb Z/S\mathbb Z.
\]
For $b>0$ define the affine order-three map
\[
        T_{b,k}(j)=qj-q(2^b-1)\pmod S.
\]
Then, among polynomial representatives with coefficients in $\mathbb F_{q^3}$ and with positive exponents reduced by the same canonical convention $X^e=X^{\langle e\rangle_M}$, where $\langle e\rangle_M=1+((e-1)\bmod M)$, the minimum possible number of nonconstant monomial terms in a polynomial $F$ satisfying
\[
        F^q-F=P_{d_a}(X^q-X)
\]
as a function on $\mathbb F_{q^3}$ is
\[
        \nu_{b,k}=\frac12\bigl|I_R\triangle T_{b,k}(I_R)\bigr|.
\]
For $b=0$ this minimum is $0$.  For $0<b\le k$ one has $\nu_{b,k}=2^b-1$, recovering the stable lower bound when $b<k$.
\end{theorem}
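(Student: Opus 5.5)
The plan has two parts: first collapse the exponent $d_a$ modulo $M$, then run the orbit-counting argument from the proof of \cref{thm:mersenne-sparsity-obstruction}, now carried out on $\mathbb Z/S\mathbb Z$ rather than on integer exponents.

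\textbf{The collapse.} Since $2^{3k}=q^3\equiv1\pmod M$, we have $2^a\equiv2^b\pmod M$, hence $d_a\equiv 2^b-1\pmod M$. On $\mathbb F_{q^3}$ two monomials $X^e$ and $X^{e'}$ with $e,e'>0$ and $e\equiv e'\pmod M$ define the same function, since both vanish at $0$. The same applies to the exponents $qd_a$ and $q(2^b-1)$.
\begin{itemize}
\item If $b>0$, this gives $P_{d_a}=P_{2^b-1}$.
\item If $b=0$, both $d_a$ and $qd_a$ are positive multiples of $M$. Hence $X^{qd_a}=X^{d_a}=X^M$, and $P_{d_a}=0$, so the minimum is $0$.
\end{itemize}
From now on assume $b>0$ and put $d=2^b-1$. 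Since $d$ is a Mersenne number, every binomial coefficient in $(X^q+X)^d=\sum_{j=0}^dX^{e_j}$ equals $1$ in characteristic two, where $e_j=qd-(q-1)j$.

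\textbf{Indexing by $\mathbb Z/S\mathbb Z$.} Because $M=(q-1)S$, the class of $e_j$ modulo $M$ depends only on $j\bmod S$. Conversely, distinct classes of $j$ modulo $S$ give distinct classes of $e_j$ modulo $M$. A direct computation gives
\[
qe_j-qd=(q-1)(qd-qj),
\]
so $qe_j\equiv e_{T_{b,k}(j)}\pmod M$. Hence the exponents reachable from $(X^q+X)^d$ are indexed by $\mathbb Z/S\mathbb Z$, and multiplication by $q$ becomes the affine map $T=T_{b,k}$. Since $q^3\equiv1$, $T$ has order dividing $3$, so its orbits have length $1$ or $3$.

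\textbf{Reducing the target to $I_R$.} This is the step I expect to be the main obstacle. Reduce the sum $\sum_{j=0}^dX^{e_j}$ modulo $M$, cancelling repeated classes in pairs in characteristic two. Write $2^b=mS+R$. The integers $0\le j<2^b$ cover each residue class modulo $S$ exactly $m$ times, plus once more for the classes in $I_R$. The delicate point is showing that the $m$-fold part cancels. I would argue this carefully using $2^b\equiv R\pmod S$ together with the parity of the multiplicities. If $m$ is odd, I would instead absorb the complete residue system into a $T$-stable contribution that $\delta$ kills. I would also check by hand the canonical convention at the class $0\bmod M$, i.e. the case $e_j\equiv0$.

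Once this is done, the target is $\delta\bigl(\sum_{j\in I_R}X^{e_j}\bigr)$. Its support, indexed in $\mathbb Z/S\mathbb Z$, is therefore $I_R\triangle T(I_R)$, with all coefficients in $\F_2$.

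\textbf{Counting orbit by orbit.} A monomial $\alpha X^{e_j}$ maps under $\delta$ to $\alpha^qX^{e_{T(j)}}+\alpha X^{e_j}$. So $\delta$ preserves the decomposition of $\mathbb Z/S\mathbb Z$ into $T$-orbits, and the minimum number of terms is a sum of independent minima, one per orbit $O$.
\begin{itemize}
\item On a fixed point of $T$, the target coefficient vanishes, so nothing is needed there.
\item On a $3$-orbit, the target restricted to $O$ is $\delta(\mathbf 1_{I_R\cap O})$. This is zero exactly when $|I_R\cap O|\in\{0,3\}$; otherwise it has support of size $2$.
\item When the target on $O$ is zero, the minimum is $0$. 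When it is nonzero, at least one monomial is needed, and one suffices: in a $3$-cycle, the two-element target support is a pair $\{i,T(i)\}$, so the single monomial $X^{e_i}$ hits it (or its complement in the same orbit does).
\end{itemize}
Adding kernel elements of $\delta$ can never reduce support below this. Summing over orbits gives $\nu_{b,k}=\tfrac12|I_R\triangle T(I_R)|$.

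\textbf{The range $0<b\le k$.} Here $R=2^b$ and $d<S$. The collision analysis of \cref{thm:mersenne-sparsity-obstruction} shows that the only identification inside the orbits meeting $I_R$ is $T(d)=0$. Hence $|I_R\triangle T(I_R)|=2(2^b-1)$ and $\nu_{b,k}=2^b-1$, which recovers the stable bound when $b<k$. The boundary case $b=k$ must be checked separately, since there $d=q-1$.
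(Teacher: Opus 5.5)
Your proposal is correct and follows essentially the same route as the paper: you collapse $d_a$ modulo $M$, index exponent classes by $\mathbb Z/S\mathbb Z$ so that multiplication by $q$ becomes $T_{b,k}$, discard the complete residue system (a $T_{b,k}$-stable set that $\delta$ kills), and count one monomial per $3$-orbit on which the target is nonzero. The case $b=k$ that you left open needs no separate argument. As in the paper, $R=2^b\le q$ gives $T_{b,k}(I_R)=\{0\}\cup\{S-qi:1\le i\le 2^b-1\}$ with every $S-qi\in[2q+1,q^2+1]$, so $I_R\cap T_{b,k}(I_R)=\{0\}$ for all $0<b\le k$; equivalently, the stable collision argument you borrowed only needs $qe_j<M$, which still holds when $d=q-1$.
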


\begin{proof}
The functional reduction is immediate from $M=q^3-1$ together with the canonical positive-exponent convention.  If $b>0$, then positive exponents congruent modulo $M$ define the same monomial function on $\mathbb F_{q^3}$, so $X^{d_a}=X^{2^b-1}$ and likewise for the $q$-multiple exponent.  If $b=0$, then $d_a$ is a positive multiple of $M$, so both $X^{d_aq}$ and $X^{d_a}$ are represented by the same nonconstant monomial function $X^M$; they also agree at $0$.  Hence $P_{d_a}=0$.

It remains to count the minimum number of terms in a primitive for the additive Hilbert--90 operator $\delta(F)=F^q-F=F^q+F$.  Write
\[
        (X^q+X)^{2^b-1}=\sum_{j=0}^{2^b-1}X^{e_j},
        \qquad
        e_j=q(2^b-1)-(q-1)j.
\]
The nonzero exponent class of $e_j$ modulo $M=(q-1)S$ is determined by the residue of $j$ modulo $S$.  Multiplication of exponents by $q$ sends the residue $j$ to
\[
        T_{b,k}(j)=qj-q(2^b-1)\pmod S,
\]
and $T_{b,k}^3=1$ because $q^3\equiv1\pmod S$ and $1+q+q^2=S$.

Only the parity of the number of occurrences of each residue matters, since the characteristic is two.  The interval $0\le j\le 2^b-1$ has length $2^b$.  Modulo $S$, its parity support differs from $I_R$ at most by the full set $\mathbb Z/S\mathbb Z$; the full set is fixed by $T_{b,k}$ and therefore cancels in the symmetric difference.  Hence the support of
\[
        P_{2^b-1}(X^q-X)=\delta\bigl((X^q+X)^{2^b-1}\bigr)
\]
on the set of $q$-orbits is represented exactly by
\[
        I_R\triangle T_{b,k}(I_R).
\]
On a $q$-orbit of exponent classes of length three, each monomial term contributes to two adjacent classes in that orbit, with Frobenius-conjugate coefficients.  In the present target all coefficients are in $\mathbb F_2$.  For an orbit written as $\{e,qe,q^2e\}$, the three possible two-position supports are produced by
\[
        \delta(X^e)=X^e+X^{qe},\qquad
        \delta(X^{qe})=X^{qe}+X^{q^2e},\qquad
        \delta(X^{q^2e})=X^{q^2e}+X^e.
\]
Hence every nonzero two-term target vector on a length-three orbit is produced by one monomial term, and no monomial from another orbit can contribute to it.  A length-three orbit on which the target vector is zero requires no term in a term-minimal primitive: after combining like canonical monomials, any terms supported there contribute an element of $\ker\delta$ on that orbit and may be deleted without changing the target.

The same omission argument is needed on $q$-fixed exponent classes, including the class $0\bmod M$ represented by $X^M$.  If such a class has combined coefficient $\alpha$ in a primitive, then its contribution is $(\alpha^q+\alpha)X^e$.  The target coefficient on every $q$-fixed class is zero, so necessarily $\alpha^q+\alpha=0$; the term is therefore in $\ker\delta$ and can be omitted from a term-minimal primitive.  Therefore the minimum number of monomial terms in a primitive is one for each nonzero length-three orbit, namely
\[
        \frac12|I_R\triangle T_{b,k}(I_R)|.
\]
If $0<b\le k$, then $R=2^b\le q$.  The two intervals $I_R$ and $T_{b,k}(I_R)$ meet in the single residue $0$, so the displayed formula gives $\nu_{b,k}=R-1=2^b-1$.
\end{proof}

\begin{corollary}[bounded unstable Mersenne lifts are residue collapses]\label{cor:no-new-unstable-mersenne}
Fix $B\ge0$.  For every $q=2^k$ with $k\ge1$ and every $a\ge k$, the minimum number of quotient monomial terms in an additive primitive for $P_{2^a-1}(X^q-X)$ is the explicitly computable number $\nu_{b,k}$ of \cref{thm:unstable-mersenne-classification}, where $b\equiv a\pmod{3k}$.  Hence, for each fixed $k$, every bounded-term unstable example is explained by one of the finitely many residues $b\pmod{3k}$ with $\nu_{b,k}\le B$; on $\mathbb F_{q^3}$ it is literally the lower residue exponent $2^b-1$, or the zero map when $b=0$.

Consequently, for each fixed $k$, the unstable range supplies isolated or residue-collapse sparse representatives among the finitely many residue classes $b\pmod{3k}$, but no new stable Mersenne family whose exponent grows independently of the reduction modulo $q^3-1$.
\end{corollary}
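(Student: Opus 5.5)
The plan is to derive the corollary directly from \cref{thm:unstable-mersenne-classification}, which already supplies the count and the residue collapse; only the translation into the stated form remains. Fix $k$ and $q=2^k$, and let $a\ge k$. Let $b$ be the least nonnegative residue of $a$ modulo $3k$. Then $2^a\equiv 2^b\pmod{q^3-1}$ when $b>0$, since the multiplicative order of $2$ modulo $q^3-1=2^{3k}-1$ is $3k$, and $d_a\equiv 0\pmod M$ when $b=0$. The theorem gives the functional identity $P_{2^a-1}=P_{2^b-1}$, or $P_{2^a-1}=0$ when $b=0$. Consequently the additive-primitive problem $F^q-F=P_{d_a}(X^q-X)$ on $\F_{q^3}$ is literally the same problem as for the exponent $2^b-1$, and the same theorem gives its minimal term count $\nu_{b,k}=\tfrac12|I_R\triangle T_{b,k}(I_R)|$, which is $0$ when $b=0$. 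This number is explicitly computable from $(b,k)$ alone, because $R$, $S$, and $T_{b,k}$ are.

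For the bounded-term statement, fix $B$. The map $a\mapsto b$ takes only the $3k$ values $0,\dots,3k-1$. Hence the set of $a\ge k$ whose minimal count is at most $B$ is a union of full residue classes $b\pmod{3k}$ with $\nu_{b,k}\le B$. On each such class the polynomial function $P_{2^a-1}$ on $\F_{q^3}$ coincides with $P_{2^b-1}$, or with $0$. So every bounded-term example is explained by one of these finitely many residues.

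For the final sentence, observe that within fixed $k$ the functions $P_{2^a-1}$ range over at most $3k$ distinct functions as $a$ varies. Any ``family'' with growing $a$ therefore cycles through these finitely many representatives and yields no exponent genuinely growing on $\F_{q^3}$. This contrasts with the stable range, where \cref{thm:mersenne-sparsity-obstruction} forces the count $d=2^a-1$ to grow.

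The only mild obstacle is the bookkeeping. The hypothesis $a\ge k$ is not actually needed for the reduction; it merely delimits the unstable range, and the formula holds for all $a$. One should also note that the order of $2$ modulo $2^{3k}-1$ is exactly $3k$, so that distinct residues $b$ are the right indexing set. I expect no other difficulty.
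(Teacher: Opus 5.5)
Your proposal is correct and follows the paper's own argument. The paper reads the corollary off directly from \cref{thm:unstable-mersenne-classification}: $P_{2^a-1}$ depends only on $b\equiv a\pmod{3k}$, the minimal primitive length is then $\nu_{b,k}$, and so bounded length is a condition on finitely many residues; your added bookkeeping is fine, though the congruence $2^a\equiv 2^b\pmod{q^3-1}$ only needs $2^{3k}\equiv 1$, not that the order of $2$ is exactly $3k$.
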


\begin{proof}
This is a direct restatement of the exact orbit count.  The function $P_{2^a-1}$ depends only on the residue of $a$ modulo $3k$, and the least number of quotient monomials in an additive primitive is exactly $\nu_{b,k}$.  Thus any sparse unstable representative is sparse because its exponent has collapsed to a sparse residue class in the function algebra of $\mathbb F_{q^3}$.
\end{proof}

\begin{remark}
The lifts in \cref{cor:fullfield} are standard additive fiber-separating lifts.  \Cref{thm:mersenne-sparsity-obstruction} and \cref{thm:unstable-mersenne-classification} together give the precise obstruction: in the stable range the quotient primitive needs $2^a-1$ terms, while in the unstable range every sparse example is explained by reduction of the exponent modulo $q^3-1$.
\end{remark}

\section{Computational certificates}\label{app:computations}

The following SageMath certificate verifies the finite-field calculations used in the sporadic and bad-reduction statements; see \cite{SageMath} for SageMath.  It uses only polynomial arithmetic over finite fields and matrix multiplication in $\operatorname{PGL}_2$.  The code is self-contained and can be run either in SageCell with language set to \emph{Sage} or locally as a \texttt{.sage} file.  A successful execution prints a start line, the Sage version, one confirmation line for each block, and terminates with a line reading \texttt{All computational certificates passed.}  If any identity fails, Sage raises an \texttt{AssertionError}.  The computer-assisted claims are deterministic finite-field identity checks, conditional only on successful execution of this displayed certificate.  The corresponding theorem and corollary are stated with this condition explicitly.  The certificate verifies the degree-two characteristic-$11$ calculation, \cref{thm:sporadic-V4}, and \cref{cor:bad-reduction-7-5}.  No random search, external files, or version-specific packages are used.

The expected successful output has the following form, with the Sage version depending on the installation:
\begin{verbatim}
Starting computational certificates...
Sage version: <SageMath version>
Characteristic 11 degree-two quotient: passed
Characteristic 19 Klein-four quotient: passed
Characteristic 7 bad Morse reduction: passed
All computational certificates passed.
\end{verbatim}

The certificate code is:
\begin{verbatim}
print("Starting computational certificates...")
import sage.version
print("Sage version:", sage.version.version)

# Degree-two sporadic quotient in characteristic 11.
F = GF(11); R.<z> = PolynomialRing(F)
N = -(z+1)^5 - z^5
D = z^5 - 1
g = gcd(N,D)
assert g == z^3 - 4*z^2 + 4*z - 1
assert N//g == -2*z^2 - 2*z + 1
assert D//g == z^2 + 4*z + 1
assert gcd(N//g,D//g) == 1
assert max((N//g).degree(), (D//g).degree()) == 2
print("Characteristic 11 degree-two quotient: passed")

# Klein-four quotient in characteristic 19.
F = GF(19); R.<x> = PolynomialRing(F)
K_num = 1 - x^6
K_den = x^6 - (1-x)^6
g = gcd(K_num,K_den)
hn = K_num//g; hd = K_den//g
assert hn == -x^4 - x^3 + x + 1
assert hd == 6*x^3 - 9*x^2 + 5*x - 1
assert gcd(hn,hd) == 1
assert max(hn.degree(), hd.degree()) == 4

# K_6 has source coordinate x and target coordinate z. The actual
# x-coordinate self-map is htilde = K_6/(K_6+1).
htn = hn
htd = hn + hd
assert htn == -x^4 - x^3 + x + 1
assert htd == -x^4 + 5*x^3 - 9*x^2 + 6*x
assert gcd(htn,htd) == 1
assert max(htn.degree(), htd.degree()) == 4

def mobius(M,t):
    a,b,c,d = M[0,0],M[0,1],M[1,0],M[1,1]
    return (a*t+b)/(c*t+d)

def K(t):
    return hn(t)/hd(t)

def htilde(t):
    return htn(t)/htd(t)

gammas = [matrix(F,[[1,0],[0,1]]),
          matrix(F,[[1,6],[2,-1]]),
          matrix(F,[[1,10],[9,-1]]),
          matrix(F,[[1,-2],[13,-1]])]

for M in gammas:
    assert M.det() != 0
    assert (K(mobius(M,x)) - K(x)).numerator() == 0
    assert (htilde(mobius(M,x)) - htilde(x)).numerator() == 0

def peq(A,B):
    avec = [A[0,0],A[0,1],A[1,0],A[1,1]]
    bvec = [B[0,0],B[0,1],B[1,0],B[1,1]]
    return all(avec[i]*bvec[j] == avec[j]*bvec[i]
               for i in range(4) for j in range(4))

# The four projective classes are distinct, the three nonidentity
# elements are involutions, and the set is closed under multiplication.
for i in range(4):
    for j in range(i+1,4):
        assert not peq(gammas[i], gammas[j])
for i in range(1,4):
    assert peq(gammas[i]*gammas[i], gammas[0])
for M in gammas:
    for Nmat in gammas:
        assert any(peq(M*Nmat,G) for G in gammas)
        assert peq(M*Nmat, Nmat*M)

tau = matrix(F,[[0,1],[-1,1]])       # x |-> 1/(1-x)
tau_inv = tau^(-1)
assert peq(tau_inv*gammas[1]*tau, gammas[2])
assert peq(tau_inv*gammas[2]*tau, gammas[3])
assert peq(tau_inv*gammas[3]*tau, gammas[1])
assert (htilde(mobius(tau,x)) - mobius(tau, htilde(x))).numerator() == 0
assert (K(mobius(tau,x)) - mobius(tau, K(x))).numerator() != 0

# Branch fixed-point exclusions for the V4 quotient.
fixed_quads = [x^2-x-3, x^2+4*x+1, x^2-6*x+6]

def fixed_poly(M):
    a,b,c,d = M[0,0],M[0,1],M[1,0],M[1,1]
    return R(c*x^2 + (d-a)*x - b)

for M,f in zip(gammas[1:], fixed_quads):
    assert M[1,0] != 0              # no nonidentity deck involution fixes infinity
    assert fixed_poly(M).monic() == f.monic()

assert all(f.is_irreducible() for f in fixed_quads)
assert gcd(x^2-x+1, prod(fixed_quads)) == 1

for S,f in zip([1,-4,6], fixed_quads):
    assert gcd(f, (S-x)*(1-x)-1) == 1

print("Characteristic 19 Klein-four quotient: passed")

# Bad Morse reduction: characteristic 7, d=5.
F = GF(7); R.<z,T> = PolynomialRing(F,2)
N = -(z+1)^5 - z^5
D = z^5 - 1
g = gcd(R(N),R(D))
assert g == 1
n = R(N)//g; dden = R(D)//g
assert max(n.degree(z), dden.degree(z)) == 5
W = n.derivative(z)*dden - n*dden.derivative(z)
expected_W = (-2*(z-2)*(z+3)*(z^2+1)
              *(z^2+z-3)*(z^2+2*z+2))
assert factor(W) == factor(expected_W)
B = (n - T*dden).resultant(W,z)
expected_B = (2*(T-3)^2*(T-1)^2*(T+2)^2
              *(T-2)*(T+3))
assert factor(B) == factor(expected_B)

print("Characteristic 7 bad Morse reduction: passed")
print("All computational certificates passed.")
\end{verbatim}

\end{document}